\let\@fnsymbol\@arabic
\newcommand{\Pois}{\mbox{Pois}}
\renewcommand{\Exp}{{\mbox{Exp}}}
\newcommand{\hReff}{\widehat{\mathfrak{R}}_{\text{eff}}}
\newcommand{\Reff}{{\mathfrak{R}}_{\text{eff}}}
\newcommand{\Rn}{\mathfrak{R}_0}
\newcommand{\Rinf}{{\mathfrak{R}}_\infty}
\newcommand{\irt}{\eta}
\newcommand{\rrt}{\gamma}
\newcommand{\Bern}{\text{Bern}}
\newcommand{\sbm}{\mathrm{SBM}}
\newcommand{\psbm}{\mathrm{PSBM}}
\newcommand{\bp}{\mathrm{BP}}
\newcommand{\diag}{\mathrm{diag}}
\newcommand{\Mgen}{M}
\newcommand{\Msir}{C}
\newcommand{\hMgen}{\widehat{M}}
\newcommand{\hMsir}{\widehat{C}}
\newcommand{\tMsir}{C_1}
\newcommand{\ttMsir}{C_2}
\newcommand{\ttX}{\X^{(2)}}
\newcommand{\ttF}{\calF^{(2)}}
\newcommand{\ttH}{H_2}
\title{A Law of Large Numbers for SIR on the Stochastic Block Model: \\ A Proof via Herd Immunity} 
\author{
  Christian Borgs\textsuperscript{1, 3} \quad
  Karissa Huang\textsuperscript{2} \quad
  Christian Ikeokwu\textsuperscript{1}
}
\begin{document}
\maketitle
\begin{abstract}
In this paper, we study the dynamics of the susceptible-infected-recovered (SIR)  model on a network with community structure, namely the stochastic block model (SBM).  As usual, the SIR model is a stochastic model for an epidemic where infected vertices infect susceptible neighbors at some rate $\irt$ and recover at rate $\rrt$, and the SBM is a random graph model where vertices are partitioned into a finite number of communities. The connection probability between two vertices depends on their community affiliation, here scaled in such a way that the average degrees have a finite limit as the network grows. 
We prove laws of large numbers (LLN) for the trajectory of the epidemic to a system of ordinary differential equations  over any time horizon (finite or infinite), including in particular a LLN for the final size of the infection. 

Our proofs rely on two main ingredients: (i) a new coupling of the SIR epidemic and the randomness of the SBM, revealing a vector valued random variable that drives the epidemic (related to what is usually called the ``force of the infection'' via a linear transformation),  and (ii) a novel technique for analyzing the limiting behavior of the infinite time horizon for the infection, using the fact that once the infection passes the herd immunity threshold it dies out quickly and has a negligible impact on the overall size of the infection.

\footnotetext[1]{Department of Electrical Engineering and Computer Science, University of California, Berkeley.}
\footnotetext[2]{Department of Statistics, University of California, Berkeley.}
\footnotetext[3]{Bakar Institute of Digital Materials for the Planet (BIDMaP).}

\end{abstract}

\section{Introduction}
\label{sec:intro}
The susceptible-infected-recovered (SIR) model was introduced by Kermack and McKendrick in 1927, and is a simple model used to describe the behavior of a disease in a closed, finite population \cite{kermack1927}. In the simplest version of the model, the population is divided into three compartments -- susceptible, infected, and recovered  -- whose sizes can be thought of as  deterministic, time-varying, continuous variables $s=s(t)$, $i=i(t)$, $r=r(t)$ adding up to one. The dynamics of the infection are governed by constants $\irt$ and $\rrt$, and described by a well-known system of differential equations representing a ``flow of individuals'' from $S$ to $I$ and $I$ to $R$ at rates $\irt  s i$ and $\rrt i$, respectively. In the slightly more realistic stochastic setting first considered by \cite{bartlett1949evolutionary}, the population consists of $n$ individuals again divided into compartments of susceptible, infected, and recovered individuals.  The time dynamics of the model are now stochastic, with infected individuals infecting susceptible individuals at rate $\irt/n$, and infected individuals recovering at rate $\rrt$. As $n$ becomes large, the stochastic model is well approximated by the deterministic one, mathematically expressed as a law of large numbers (LLN). This implies that on any bounded time horizon, the fraction of susceptible, infected, and recovered individuals converge to deterministic functions $s(t)$, $i(t)$, and $r(t)$ given by the solutions of the usual differential equations, see, e.g., \cite{vonbahr1980threshold} for the study of a somewhat  related version in discrete time (the so-called Reed-Frost model) with a generalization to the Kermack-McKendrick SIR model, or the excellent review article \cite{darling08:dfq-markov} for a short, streamlined proof based on Gronwall’s inequality and Doob’s $L^2$-inequality.



It is well known that the homogeneity assumption inherent in the original Kermack and McKendrick model is not realistic in practice, as contact rates between individuals vary by age and other demographic factors \cite{Jacquez1988}. While this can be, to some extent, addressed by creating more compartments stratified by these demographic factors, another property which a realistic model should take into account is the underlying graph structure of the contact network.  Indeed, it is well known that the underlying graph structure 
plays a crucial role in the dynamics of the infection throughout the graph. As such, there has been interest in studying the SIR model on various random graphs including the Erd\"os-R\'enyi graph \citep{neal2003sir, ball22:sir-rewire}, graphs with a given degree sequence (configuration models) \citep{Decreusefond2012, Bohman2012, barbour2013approximating, janson14:lln-sir}, and graphs with local household structure \citep{Ball2010, house2012modelling} or tunable local clustering, \cite{Britton2008}.

In this paper, we consider the SIR epidemic on the stochastic block model (SBM), a network model with community structure. In the SBM, nodes are partitioned into $K$ communities and the distribution of edges between vertices is determined by a $K\times K$ matrix and is dependent on the group membership of the endpoints \cite{holland1983stochastic}. The SBM has appeared independently in many contexts in the statistics, computer science, and mathematics literature and there are many examples of networks with known community structure \citep{holland1983stochastic, bui1987graph, girvan2002community, bollobas_2007}. In addition there is a vast literature on the use of the stochastic block model in community detection problems \citep{abbe2018community, lee2019review}.

Despite its widespread study and use, somewhat surprisingly unlike other well-studied graph models, no rigorous work has established analogous law of large numbers results for a limiting system of ordinary differential equations for the stochastic SIR epidemic on the SBM. The rigorous derivation of these is the goal of the present paper.  

Our work is motivated and inspired by the proofs of LLNs for  the Erd\"os-R\'enyi graph $G(n, p)$ and variations of $G(n, p)$ involving the dropping and rewiring of edges as the infection spreads, as well as LLNs for graphs of a given degree sequence \citep{neal2003sir, janson14:lln-sir, ball22:sir-rewire}. Our work differs from these in one important aspect, in all these works, the dynamics of the infection process are primarily driven by one random variable, often referred to as the force of the infection. For $G(n, p)$ and its variants, this random variable is, roughly speaking, the number of edges between infected and susceptible vertices, and for the configuration model it is the number of not-yet explored or matched edges emanating from infected vertices. 

In contrast to the configuration model where the resulting limiting equations were derived heuristically before being established mathematically rigorously, to our knowledge, there has been no conjecture for the correct differential equations describing a LLN for SIR on the SBM.

To address this, we build on ideas from  \cite{ball22:sir-rewire} and \citet*{janson14:lln-sir}. First, we couple the randomness of the SBM and that of SIR by only exploring edges in the graph when they attempt to spread the SIR infection.  More precisely, once a vertex gets infected, we draw its degree (with a distribution that depends on the community type of the infected vertex), and then attach what we call active half-edges to it, each  endowed with an exponential clock governing future infection events.  Whenever these clocks click, we determine the label of the other endpoint, and ``test'' whether that endpoint is still susceptible before spreading the infection to a new  vertex.  This representation then naturally leads us to consider a LLN for three vector valued random variables: 1) the number of susceptible vertices in each community, 2) the number of infected vertices in each community and, 3) the number of not yet explored, active half-edges emanating from vertices of each community type.

Having derived this representation, both the heuristic derivation and the mathematical proof of a LLN for finite time horizons is straightforward, with proofs and quantitative error bounds following from standard techniques involving Gronwall’s inequality and Doob’s $L^2$-inequality as, e.g., laid out in \cite{darling08:dfq-markov}. However, while these bounds are uniform for all times in a finite time interval $[0,t_0]$, the size of the error bounds turn out to be exponential in $t_0$, which prevents us from using the same techniques to prove a law of large numbers for an infinite time-horizon and, in particular, for what is usually called the final size of the infection.

The standard way of overcoming such a challenge is to apply a global time rescaling to the epidemic so that the entire epidemic happens on a finite time horizon \citep{ethier1986random, daley1999epidemic, kiss2017mathematics}. However in the case of the SBM, because the infection rate differs depending on the community, it is unclear how to appropriately apply such a global time rescaling. To overcome this problem, we develop a new technique to prove the infinite time law of large numbers. This proof technique is based on the observation that the infection must eventually pass the herd immunity threshold and that after this point, the infection dies out quickly. Thus, for a large enough time $t_1$ after the infection has passed herd immunity,  any outbreak that occurs after time $t_1$ has a negligible effect on the final state of the epidemic at time infinity (and, in fact, for all times between $t_1$ and infinity).  We believe that this technique should be applicable in much more generality than  traditional time rescaling techniques allowing for the analysis of more models but will also greatly simplify proofs of the final size LLN for many models.

We close this introduction by briefly mentioning two alternative methods which could  at least in principle be used to derive some of the results established in this paper, even though to our knowledge, they have not yet been applied to the stochastic block model.
One of these methods involves the theory of local graph limits  \citep{cocomello2023exact, alimohammadi2023epidemic}, which can be used to study the final size of an epidemic starting from a constant fraction of initially infected vertices, provided these are chosen uniformly at random. Another method involves the method of forward and backward branching processes, see, e.g.,  \cite{barbour2013approximating}.  We will discuss these and other related works in Section \ref{sec:related work} below, where we put our work into the context of the existing literature.

This paper is organized as follows. In Section \ref{sec:prelim} we define our model and state our main results, including the laws of large numbers for both the finite and infinite time horizon. In Section \ref{sec:actual-prelim}, we define  a multi-graph version of the SBM and then couple the SIR epidemic to the randomness of this model. As, e.g., in the case of the multi-graph vs. simple graph version of the configuration model, conditioning the multi-graph version of the SBM to be simple will give back the original simple graph version of the model, allowing us to transfer our results for the multi-graph model to the original one.  We will also give the heuristic derivation of the LLN in this section. In Section \ref{sec:lln}, we prove the law of large numbers in the finite time case where the number of initially infected vertices grows linearly with $n$, and in Section \ref{sec:uniqueness-implicit-solution} we extend the results from Section \ref{sec:lln} to the infinite time horizon. Section \ref{sec:o-n} extends all results to the case where there is only $o(n)$ many initially infected vertices, with some of the more technical proofs deferred to an appendix.  In Section~\ref{sec:discussion},  we discuss the implications of our result, including a discussion of our notion of herd-immunity and its relation to the standard notion of the force of an infection, as well as an alternative heuristic derivation of our results using what is often called the pair-approximation.  We kept Section~\ref{sec:discussion} essentially self-contained, so after reading the summary of our results in Section~\ref{sec:prelim} and possibly Section~\ref{sec:actual-prelim}, the reader not interested in technical details might want to jump to the discussion section.


\bigskip

\noindent{\bf Acknowledgements}: We are grateful to Tom Britton and Remco Van der Hofstad for their insightful discussions and for encouraging us to pursue this write-up, particularly in the early stages of developing the concept of using herd immunity to prove a law of large numbers for the epidemic’s final phase. Christian Borgs also extends thanks to the Simons Institute for the Theory of Computing for their support during the fall semester of 2022, and all three authors appreciate the Institute’s hospitality during that time. Lastly, Karissa Huang acknowledges support from the National Science Foundation under grant DGE 2146752.



\section{Notation and Statements of Results}
\label{sec:prelim}

\subsection{Notation} 

Throughout this paper, we use $n$ to denote the number of individuals or number of vertices in the contact graph.
We use $\pto$ for convergence in probability as $n\to\infty$,
and we say that an event holds with high probability if it holds with probability tending to $1$ as $n\to\infty$. For a positive integer $K$, we use $[K]$ to denote the set $[K]=\{1,\dots,K\}$.  We usually use bold letters for vectors, and if $\vec a=(a_k)_{k\in K}$, we use $\diag(\vec a)$ for the diagonal matrix with entries $(\diag(\vec a))_{kk}=a_k$.  As usual, we will use $\N$ to denote the set of natural numbers, and $\N_0$ to denote the set of non-negative integers.

\subsection{Model Definition}
\label{sec:model}

We consider an SIR epidemic spreading on a contact network of $n$ individuals with a community structure modeled by a stochastic block model with $K$ communities. To define the model, we first
assign a label $k(v)\in [K]$ to each individual in $V=[n]$,
leading to a labeled set  $\bm V=(V,k(\cdot))$ of $n$ vertices with labels in $[K]$.  In a second step, we connect vertices  at random, with the probability of an edge between two vertices $u$ and $v$ depending on the community labels $k(u)$ and $k(v)$.   This leads to the following formal definition.

\begin{definition}[Stochastic Block Model (SBM)]\label{def:SBM}
    Let $K \in \N$,  let $\bm V=(V,k(\cdot))$ be a set of vertices with labels in $[K]$, and let $W$ be a symmetric, non-negative matrix such that each row contains at least one non-zero element. We say that a random graph $G=(V,E)$ is drawn from a {\textbf stochastic block model with affinity matrix} $W$, and write $G \sim \sbm( \bm V, W)$,
    if a pair of vertices $\{u,v\}$ is joined by an edge with probability $\frac 1n W_{k(u)k(v)}$ independently for all 
    $n\choose 2$ pairs  
    $\{u,v\}$.  We will use $n_k$ to denote the number of vertices with label $k$, i.e. $n_k=\sum_{v\in V}\ind{k(v) = k}$.  
\end{definition}

\begin{remark}
\label{asmp:W-nk-n-asmp}
i) Note that the assumption that each row $W_{k, \cdot}$ of $W$ contains at least one non-zero elements can be made without loss of generality, since otherwise vertices of color $k$ would always be isolated, in which case we can just delete them from the graph $G$.

ii) The SBM is often defined by choosing the labels of the vertices $i.i.d.$ from some random distribution $\bm \rho=(\rho_k)_{k\in [K]}$ over $[K]$. The results of this paper can easily be formulated for this version of the SBM as well; in fact, all our results hold for an arbitrary label distribution over $[n]$, as long as w.h.p., for all $k\in [K]$ the ratio $n_k/n$ stays bounded away from $0$ uniformly in $n$. 
\end{remark}

In order to analyze SIR epidemic on the stochastic block model, 
it will be convenient to first study it on a slightly different model which allows for self-loops and multiple edges.  We call it the Poisson Stochastic Block Model (PSBM).

\begin{definition}[Poisson Stochastic Block Model (PSBM)]
    Let $\bm V$ and $W$ be as in Definition~\ref{def:SBM}.
We say that a random multi-graph $G$ is drawn from a Poisson Stochastic Block Model, $G\sim \psbm(\bm V,W)$, if each pair of vertices $\{u,v\}$ (for $u, v \in [V]$) is joined by
$A_{uv}$ edges, and each vertex $v$ has 
$A_{vv}$ self-loops, where $A_{uv}=A_{vu}\sim \Pois\left(W_{k(v)k(v)}/n\right)$ independently for all $v\leq u$.  We use $D_k$ 
\begin{equation}\label{Dk}
    D_k=\sum_{\ell\in [K]} W_{k\ell}\frac{n_\ell}{n}
\end{equation}
to denote the average degree of a vertex of type $k$.
\end{definition}

\begin{remark}\label{rem:Poisson-degrees}
The degree of a vertex $v$ in $G\sim \psbm(\bm V,W)$ is distributed according to $d_v\sim\Pois(D_{k(v)})$, and its degree into the vertices with community label $\ell$ is Poisson distributed with mean $W_{k(v)\ell}\frac{n_\ell}{n}$.  Setting 
\begin{equation}\label{pellk}
  p_{k\to\ell}=\frac 1{D_k}W_{k\ell}\frac{n_\ell}{n},  
\end{equation}
 we furthermore can couple these in such a way that conditioned on $d_v$, the degree into a community with label $k$ can be obtained by choosing labels $k_1,\dots, k_{d_v}$
i.i.d. with probability $p_{k(v)\to k_i}$ and setting the degree into community $\ell$ as the number of times $\ell$ will appear among 
$k_1,\dots, k_{d_v}$.  
\end{remark}



Next,
we formally define an SIR infection on {a general multi-graph $G$, introducing at the same time some of the notation used later. Denote the number of edges between two vertices $u$ and $v$ by $A_{uv}$. The SIR model is then a continuous-time Markov process whose state at time $t$ is given by the sets of susceptible, infected, and recovered vertices, 
$V^S(t)$, $V^I(t)$ and $V^R(t)$, respectively (with their union being equal to the set of vertices $V$ in $G$).
Starting from an initial state
$V^S(0)$, $V^I(0)$, and $V^R(0)$ at time $t=0$, at time $t> 0$ each  $u\in V^I(t)$ then} infects a susceptible vertex $v$ at rate $\irt {A_{uv}}$, and recovers at rate $\rrt$, independently for all vertices $u\in V^I(t)$.  
We adopt the standard assumption that our processes are càdlàg (or rcll), so that when vertex $v$ gets infected at time $t$, $v\in V^S(t')$ for  $t'<t$ and $v\in V^I(t')\cup V^R(t')$ for  $t'\geq t$ (and similarly for the recovery of a vertex $u$ at time $t$).

We will use the notation
$$V^S_k(t)=V^S(t)\cap V_k, \;\;S_k(t)=|V^S_k(t)|\;\;\text{ and }\;\; S(t)=|V^S(t)|. $$
We define $V^I_k(t)$, $I_k(t)$ and $I(t)$, and $V^R_k(t)$,  $R_k(t)$  and $R(t)$ analogously.

\subsection{Statements of Results}

In this paper, we separately discuss two types of initial conditions: one in which the number of initially infected vertices is proportional to $n$, and one where it is $o(n)$.

We start with the former, for which we will establish laws of large numbers for the time evolution as well as the final size of the infection.  
We will need the following assumptions on the initial state of the infection:

 \begin{assumption}\label{ass:initial-SandI}
 As $n\to\infty$,
 \begin{enumerate}[i)]
\item 
\label{asmp:init-s}
$\frac{1}{n} S_k(0)\pto s_k(0)>0$   for all $k\in[K]$,
\item
\label{asmp:init-i}
$\frac{1}{n} I_k(0)\pto i_k(0)\geq 0$   for all $k\in[K]$.

 \end{enumerate}
 We will assume that the initially infected and recovered vertices are chosen at random, possibly depending on their community labels, but not 
on the realization of $\sbm(\bm V,W)$ or $\psbm(\bm V,W)$. 
\end{assumption}

We are making the assumption on the random choice of initially infected and recovered vertices to avoid trivial examples, like an initial set of infected vertices that consists of a connected component in $G$ (in which case
all that happens is the independent recovery of all initially infected vertices).  Note that equivalently, we could assume an arbitrary starting configuration
$V^S(0)$, $V^I(0)$, and $V^R(0)$, and draw $G\sim \sbm(\bm V,W)$ or $G\sim \psbm(\bm V,W)$ independently 
of this starting configuration.

\begin{theorem}[Law of Large Numbers for Finite Time]
\label{thm:LLN}
       Consider the SIR epidemic on $G\sim \psbm(\bm V,W)$ or $G\sim \sbm(\bm V,W)$, with the initial state of the infection  obeying the conditions from Assumption~\ref{ass:initial-SandI}.  
    Let $s_k(t)$, $i_k(t)$,  and $x_k(t)$ be the unique solutions to 
    \begin{equation}\label{eq:s_dfq_1}
        \frac{ds_k(t)}{dt} = -\irt s_k(t) \sum_\ell x_\ell(t) 
        {W_{\ell k}},
    \end{equation}
    \begin{equation}\label{eq:x_dfq_1}
        \frac{dx_k(t)}{dt} = -(\irt + \rrt) x_k(t) + \irt s_k(t) \sum_\ell x_\ell(t) W_{\ell k}, 
    \end{equation}
    \begin{equation}\label{eq:i_dfq_1}
        \frac{di_k(t)}{dt} = -\rrt i_k(t) + \irt s_k(t) \sum_\ell x_\ell(t) 
        {W_{\ell k}}, 
    \end{equation}
with initial conditions $s_k(0)$ and $x_k(0)=i_k(0)$, and let $0<t_0<\infty$, and let $\eps>0$ be arbitrary.  Then as $n \to \infty,$

    \begin{equation}
        \P\left( \sup_{t\in [0,t_0] } \left\| \left(\frac 1n \S(t), \frac 1n\I(t)\right)
        - \left(\s(t), \i(t)\right)\right\|_2 > \eps \right) \to 0,
    \end{equation}
    where $\vec S(t)$ is the vector $(S_1(t),\dots, S_K(t))$, and similarly for $\vec I(t)$, $\vec s(t)$ and $\vec i(t)$.
\end{theorem}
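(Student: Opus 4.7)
The plan is to first prove the theorem for PSBM and then transfer the result to SBM via the standard conditioning-on-simplicity argument. Working on PSBM, I would build on the coupling sketched in the introduction and made rigorous in Section~\ref{sec:actual-prelim}: when a vertex $v$ with community label $k$ becomes infected, attach a fresh bundle of active half-edges of size $\Pois(D_k)$, each endowed with an independent $\Exp(\irt)$ infection clock, and give $v$ an independent $\Exp(\rrt)$ recovery clock. When an infection clock on a $k$-half-edge rings, sample a target community $\ell$ with probability $p_{k\to\ell}$ as in Remark~\ref{rem:Poisson-degrees}, pick a uniform vertex in community $\ell$, test whether it is still in $V^S$, and only then carry out the infection. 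This realizes SIR on PSBM as a continuous-time Markov process whose projection onto $(\vec S(t),\vec I(t),\vec X(t))$, with $X_k(t)$ denoting the total number of active half-edges attached to currently infected $k$-vertices, has tractable $\mathcal{F}_t$-compensators.

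Next I would compute the drift. Summing the firing rates of the individual clocks and using the telescoping identity $\sum_{v\in V^I_k}d^*_v=X_k$ to handle recovery-induced losses,
\begin{equation*}
\mathrm{drift}\,S_\ell=-\irt\,S_\ell\sum_k\frac{X_k W_{k\ell}}{nD_k},\qquad \mathrm{drift}\,X_k=-(\irt+\rrt)X_k+D_k\sum_\ell \irt\,\frac{X_\ell W_{\ell k}S_k}{nD_\ell},
\end{equation*}
with the analogous expression for $I_\ell$. Introducing $s_k=S_k/n$, $i_k=I_k/n$, and the rescaling $x_k=X_k/(nD_k)$, these drifts match exactly the right-hand sides of (\ref{eq:s_dfq_1})--(\ref{eq:i_dfq_1}).

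The third step is a classical Kurtz-type ODE approximation in the spirit of \cite{darling08:dfq-markov}. Writing $\vec Y_n(t)=(\vec s,\vec i,\vec x)(t)$ and decomposing $\vec Y_n(t)=\vec Y_n(0)+\int_0^t\vec b(\vec Y_n(u))\,du+\vec M_n(t)$ into deterministic drift plus a pure-jump martingale, I would use that $\vec b$ is locally Lipschitz and that $\vec Y_n$ stays in a bounded region up to time $t_0$ with high probability (since $s_k,i_k\in[0,1]$ and $x_k\le (\sum_v d_v)/(nD_k)$, with $\sum_v d_v$ concentrating around $\sum_k D_k n_k$ and $\max_v d_v=O(\log n)$ w.h.p. by a standard Poisson-tail union bound). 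Doob's $L^2$-inequality applied to $\vec M_n$, combined with the quadratic-variation bound $\langle\vec M_n\rangle_{t_0}=O((\log n)^2/n)$ (where the $\log n$ comes from the largest $X_k$-jump produced by a single high-degree recovery), yields $\sup_{t\le t_0}\|\vec M_n(t)\|_2=o_p(1)$. Gronwall's inequality applied to the Lipschitz drift then closes the argument and gives uniform convergence in probability of $\vec Y_n$ to the unique ODE solution on $[0,t_0]$.

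Finally, to pass from PSBM to SBM I would use the standard fact that conditioning a PSBM on being a simple graph gives back the SBM, together with $\P(\psbm(\bm V,W)\text{ is simple})$ being bounded below by a positive constant as $n\to\infty$ under the standing assumptions on $W$ and $n_k/n$, so any event of PSBM-probability $1-o(1)$ has SBM-probability $1-o(1)$. The step I expect to be the main obstacle is controlling the recovery-induced jumps in $\vec X$: removing all of a vertex's remaining half-edges at once means that $(\vec S,\vec I,\vec X)$ is not itself Markov, so the compensator analysis must be carried out on the richer underlying Markov state, and the martingale bound must accommodate the $O(\log n)$ maximum degree without spoiling the $n^{-1/2}$-type fluctuation rate.
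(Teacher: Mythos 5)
Your proposal follows essentially the same route as the paper: realize SIR on PSBM via the exploration process, compute the compensator on the richer Markov state tracking per-vertex active degrees, apply the Darling--Norris machinery (Doob's $L^2$-inequality on the jump martingale plus Gronwall on the Lipschitz drift), control the quadratic variation using an $O(\log n)$ bound on the maximum active degree, and transfer to SBM by conditioning on simplicity. You also correctly isolate the main technical wrinkle — that $(\vec S,\vec I,\vec X)$ is not Markov on its own, so the compensator analysis must be done on the enriched state — which is exactly what the paper does via $Y^n(t)$.

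One step as stated would fail: conditioning $\psbm(\bm V,W)$ on simplicity does \emph{not} give back $\sbm(\bm V,W)$; it gives a model where each edge appears with probability $\frac{W_{k\ell}/n}{1+W_{k\ell}/n}$. The paper's Lemma~\ref{lem:coupling} instead conditions $\psbm(\bm V,W')$ with the modified affinity $W'_{k\ell}=W_{k\ell}/(1-W_{k\ell}/n)$. This is not purely cosmetic, because the PSBM drift is then given in terms of $W'$ rather than $W$, so the drift vector $\bm\beta$ of the Markov chain on $G'\sim\psbm(\bm V,W')$ does not equal $\bm b(\hat{\bm y}_t)$ exactly; one needs the extension of Darling--Norris that tolerates an $O(1/n)$ drift error (Remark~\ref{rmk:drift-error-darling} in the paper) before passing to SBM. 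Your argument covers the PSBM case cleanly, but the SBM transfer needs this extra drift-error control to be correct.
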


\begin{remark} \label{rmk:r_dfq}
i) Assumption~\ref{ass:initial-SandI} does not have any condition on the set of initially recovered vertices, and our theorem does not state any LLN for $\frac 1nR_k(t)$. However, if we add the assumption that  for all $k\in[K]$ ,
$n_k/n$ converges in probability to some $\rho_k$, we get the convergence of $R_k(t)/n=n_k/n-S_k(t)/n-I_k(t)/n$ to $r_k(t)=\rho_k-s_k(t)-i_k(t)$ for free.

ii) Note that the assumptions of the theorem include the case where $\i(0)=0$, in which case the solution of the differential equations are constant, $\i(t)\equiv 0$ and $\s(t)\equiv \s(0)$ for all $t$.  Note, however, that  this does not imply that the actual infection dies out.  Indeed, if $\frac{\I(0)}n\to 0$ 
but $I(0)\to\infty$ (and the basic reproduction number, $\Rn$, is larger than one), the infection will eventually take off, see Theorem~\ref{thm:one-vertex-final-size} below.  It just takes an amount of time which diverges as $n\to\infty$!  By contrast,  $\frac 1n(\S(t)-\S(0))\to 0$ and $\frac{1}n\I(t)\to 0$ uniformly for $t$ on any \emph{bounded} time interval, consistent with the statements of the theorem.
\end{remark}

The above remark raises the question of whether the bounds of Theorem~\ref{thm:LLN} can be generalized to non-compact time intervals if we assume that $\i(0)\neq 0$.  Theorem~\ref{thm:lln-final-size} below shows that this is indeed possible.  It in particular gives a law of large numbers for the final state of the infection.  

Before stating the theorem, we note that for fixed $n$, the infection will eventually die out, leaving just a set of vertices which never got infected (and hence are still susceptible) and a set of  vertices that got infected at some point, but eventually recovered. The number of vertices in the latter set is usually referred to as the \emph{final size of the infection}.  
Since the two sets are complements of each other, the final size of the infection (for the individuals in community $k$) is therefore equal to
$
I_k(0)+S_k(0)-S_k(\infty).
$
Under the convergence assumption for the intitial state, Assumption~\ref{ass:initial-SandI}, a law of large numbers for the final size of the infection is therefore equivalent to a 
law of large numbers for
    \[
        \S(\infty) = \lim_{t\to\infty} \S(t).
    \]
The next theorem generalizes Theorem~\ref{thm:LLN} to unbounded time intervals, 
and in particular relates the quantity $\S(\infty)$ to the limit $t\to\infty$ of $\s(t)$, where $\s(t)$ and
$\x(t)$ are the solutions of the differential equations \eqref{eq:s_dfq_1} and
\eqref{eq:x_dfq_1}.  Note that the 
existence of the limit $\lim_{t\to\infty}\s(t)$ follows from the monotonicity of $s_k(t)$ for all $k$, which in turn is a consequence of \eqref{eq:s_dfq_1}.

\begin{theorem}
\label{thm:lln-final-size}
        Under the additional assumptions that $W$ is irreducible and  $i(0) >0$ as $n\to\infty$, the statement of Theorem~\ref{thm:LLN} holds uniformly for all $t\in \R_+$,
        
        \begin{equation}
            \P\left( \sup_{t\in [0,\infty) } \left\| \left(\frac 1n \S(t), \frac 1n\I(t)\right)
            - \left(\s(t), \i(t)\right)\right\|_2 > \eps \right) \to 0,
        \end{equation}
    so in particular
        \[
            \P\left(\left\|\frac{1}{n}\S(\infty) -  \s(\infty)\right\|_2 > \eps\right) \xrightarrow{n\to\infty} 0.
        \]
     where $\s(\infty)=\lim_{t\to\infty}\s(t)$.
\end{theorem}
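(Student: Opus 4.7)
The plan is to bootstrap the finite-time LLN (Theorem~\ref{thm:LLN}) by an additional argument showing that once the epidemic has passed an appropriate ``herd immunity'' threshold, the tail of the trajectory contributes negligibly to $\vec S(\infty)$. Concretely, I would split the analysis at a deterministically chosen time $t_1$: on $[0,t_1]$ invoke Theorem~\ref{thm:LLN}, and on $[t_1,\infty)$ dominate the stochastic epidemic by a strictly subcritical multitype branching process.

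First, I would establish the relevant facts about the ODE \eqref{eq:s_dfq_1}--\eqref{eq:x_dfq_1}. Because $W$ is irreducible and $\vec i(0)\neq 0$, a Grönwall/positivity argument applied to \eqref{eq:x_dfq_1} shows that $x_k(t)>0$ for every $k$ and every $t>0$, so $s_k(t)$ is strictly decreasing and bounded below by $0$; hence $\vec s(\infty):=\lim_{t\to\infty}\vec s(t)$ exists with nonnegative entries. A short analysis of the Jacobian of \eqref{eq:x_dfq_1} around $\vec x=0$ shows that the effective ``linearized generator'' is $M(\vec s) := \irt\,\diag(\vec s)\,W -(\irt+\rrt)I$, whose Perron eigenvalue $\lambda_1(\vec s)$ is continuous and strictly decreasing in each coordinate of $\vec s$. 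One checks (e.g.\ using $\|\vec x(t)\|_1$ as a Lyapunov functional once $\lambda_1<0$) that $\vec x(t)\to 0$, and hence $\vec i(t)\to 0$. Fix a small $\delta>0$ (to be chosen); by continuity, we can pick $t_1<\infty$ such that simultaneously $\|\vec s(t_1)-\vec s(\infty)\|_2<\delta$, $\|\vec x(t_1)\|_2<\delta$, $\|\vec i(t_1)\|_2<\delta$, and $\lambda_1(\vec s(t_1))<-c$ for some $c=c(W,\irt,\rrt)>0$.

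Next, I apply Theorem~\ref{thm:LLN} on the finite interval $[0,t_1]$. The proof in Section~\ref{sec:lln} also controls the active half-edges $\vec X(t)$ (they are part of the driving Markov chain), so with probability $1-o(1)$ the empirical triple $(\tfrac1n\vec S(t_1),\tfrac1n\vec I(t_1),\tfrac1n\vec X(t_1))$ lies within $\delta$ of $(\vec s(t_1),\vec i(t_1),\vec x(t_1))$. Restart the epidemic at $t_1$: every susceptible count satisfies $S_k(t)\leq S_k(t_1)$, so the per-half-edge infection rate into community $\ell$ is bounded above by $\irt p_{k\to\ell}(S_\ell(t_1)/n_\ell)$, which in turn is bounded (via Remark~\ref{rem:Poisson-degrees} and the closeness at $t_1$) by the corresponding rate in a multitype continuous-time branching process with mean offspring matrix $\widetilde M$ satisfying $\lambda_1(\widetilde M) \leq \lambda_1(\vec s(t_1))+O(\delta)<-c/2$. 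Using the half-edge coupling of Section~\ref{sec:actual-prelim} this domination is exact, not just in distribution.

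Since the dominating branching process is strictly subcritical with a uniform spectral gap, standard tail estimates (for instance on the total progeny of a subcritical multitype Galton--Watson tree, applied to each of the at most $2\delta n$ initial infectives in $\vec I(t_1)$, or a direct exponential supermartingale for $\|\vec X(t)\|_1$) show that the total number of additional infection events after $t_1$ is bounded by $C\delta n$ with probability $1-o(1)$, for a constant $C=C(W,\irt,\rrt)$ independent of $\delta$. Consequently $\sup_{t\geq t_1}\|\tfrac1n\vec S(t)-\vec s(\infty)\|_2 \leq (C+1)\delta$ and $\sup_{t\geq t_1}\tfrac1n\vec I(t)\leq C\delta$ with high probability; combining with Theorem~\ref{thm:LLN} on $[0,t_1]$ and choosing $\delta$ small enough relative to $\eps$ yields the uniform bound on $[0,\infty)$, and in particular $\tfrac1n\vec S(\infty)\to\vec s(\infty)$. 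The main obstacle is the subcritical domination step: one must make sure the spectral gap really persists when we replace $\vec s(t_1)$ by the (slightly smaller) random $\vec S(t)/n$ for $t>t_1$, which requires a self-consistent bootstrap saying that the random $\vec S$ cannot drop by more than $O(\delta)$ before the branching process goes extinct — exactly the feedback one gets by combining the $C\delta n$ bound on total future infections with the a priori subcriticality at $t_1$.
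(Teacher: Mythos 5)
Your proposal takes essentially the same route as the paper: invoke the finite-time LLN on $[0,t_1]$ for a $t_1$ chosen past the herd-immunity threshold, then dominate the residual epidemic on $[t_1,\infty)$ by a strictly subcritical multi-type process. The one implementation difference is in the tail step — rather than progeny-tail bounds or an exponential supermartingale for $\|\vec X(t)\|_1$, the paper's Lemma~\ref{lem:herd-immune} bounds the conditional \emph{first moments} $\E[\|\hat\s(t_0)-\hat\s(\infty)\|_2\mid\calF(t_0)]$ and $\E[\sup_{t\geq t_0}\|\hat\x(t)\|_2\mid\calF(t_0)]$ directly, by summing the expected sizes of successive generations of the post-$t_0$ outbreak (a Neumann-series bound by $(1-\hMsir(t_0))^{-1}$), and then applies Markov's inequality; this needs only first moments, no large-deviation control of the subcritical tree. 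Finally, the ``self-consistent bootstrap'' you raise at the end is not actually needed: since $S_k(t)$ is non-increasing in $t$, for all $t\geq t_1$ the per-half-edge infection rates are bounded above by those computed at $t_1$, so the subcriticality can only improve with time — the monotonicity you already noted closes the domination cleanly, with no feedback argument required.
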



The second statement of Theorem \ref{thm:lln-final-size} gives a law of large numbers for the final size of the infection when the number of initially infected vertices is a constant fraction of the size of the graph. In Theorem \ref{thm:one-vertex-final-size}, we consider the law of large numbers 
for the regime where the number of initially infected vertices is $o(n)$.  

We start with the simple case of a single, initially infected vertex.  We need some definitions. 
\begin{definition}[Poisson Multi-type Branching Process]
\label{def:pois-bp}
Consider a $K\times K$ matrix $\Mgen$ with non-negative entries, and the
multi-type, discrete time Poisson branching process where vertices of type $k$ have $\Pois(\Mgen_{k\ell})$ children of type $\ell$.    We will use  ${\bp}_k(\Mgen)$ to denote the distribution of the labeled tree corresponding to this branching process starting from a root of type $k$.
\end{definition}

\begin{definition}[Infection Tree for arbitrary graphs]
Consider the SIR epidemic on an arbitrary graph $G$ starting from a single vertex $v$ in $G$. We use $\calT_{G, v}(t)$ to denote the infection tree on $G$ at time $t$, defined as the tree on all vertices that were infected at or before time $t$ (with labels indicating whether they are currently infected or recovered), and an edge between two vertices if it is an edge in  $G$ along which the infection has spread. If $G$ carries community labels in some set $[K]$, we add these labels as a second label to the vertices in $\calT_{G, v}(t)$.

We consider two special cases: the case where $G\sim\sbm(\V,W)$ for an infection starting from single vertex $v$ chosen uniformly at random from all vertices with label $k$, in which case we denote the infection tree by $\calT_k^{\sbm}(t)$, and the case where the infection starts at the root of a Poisson branching process $\bp_k(\Mgen)$, in which case we denote the infection tree by $\calT_k^{\bp(\Mgen)}(t)$, or simply by $\calT_k^{\bp}(t)$ when $\Mgen$ is clear from the context.
\end{definition}

\begin{remark}
It is well known and easy to see that $\calT_k^{\bp(\Mgen)}(\infty)$ can be represented as a standard, discrete time branching process as follows: starting from a vertex $v$ of type $k(v)$, choose an exponentially distributed recovery time $T_v\sim \exp(\rrt)$ and then choose $\Pois(p_{T_v}\Mgen_{k(v)\ell})$ children of type $\ell$, where
$p_{T_v}=1-e^{-\irt T_v}$.  (To see this, just note that $p_{T_v}$ is the probability that an $\Exp(\irt)$ distributed infection clock clicks before the recovery clock of the vertex $v$ trying to infect its children runs out).  

As a consequence, the expected number of  children of $v$ 
with label $\ell$ in $\calT_k^{\bp(\Mgen)}(\infty)$ is $$\Msir_{k(v)\ell}=\E_{T_v}[p_{T_v}\Mgen_{k(v)\ell}]
=
\frac{ \irt}{\irt + \rrt}\Mgen_{k(v)\ell}.
$$
Under the assumption that $\Mgen$ is irreducible, standard branching process results then imply that the probability that the infection on
$\bp_k(\Mgen)$ survives forever is non-zero if and only if the largest eigenvalue\footnote{Note that in general, $\Mgen$ can have complex eigenvalues - but by the Perron-Frobenius Theorem, it has a single, non-degenerate eigenvalue which is positive and equal to the largest eigenvalue in absolute value., i.e., the spectral radius.  We trust that the use of  $\lambda_{\max}(\Msir)$ for this eigenvalue does not cause any confusion.},
$\lambda_{\max}(\Msir)$, is larger than $1$.  Defining
\begin{equation}
\label{pi-k-def}
\pi_k=\P\left(\left|\calT_k^{\bp(\Mgen)}(\infty)\right|=\infty\right)
\quad
\text{and}\quad
\Rn=\lambda_{\max}\left(\frac{ \irt}{\irt + \rrt} \Mgen\right)
\end{equation}
we therefore have that $\pi_k>0$ 
if and only if $\Rn>1$.
\end{remark}

\begin{lemma}
\label{lem:branching-process-coupling}
    Consider the SIR epidemic on $G\sim \psbm(\bm V,W)$ or $G\sim \sbm(\bm V,W)$, with the initial state of the infection  obeying the conditions from Assumption~\ref{ass:initial-SandI}, and assume that $V^I(0)$ consists of a single vertex $v$ with label $k$.  Let $\Mgen$ be the matrix with matrix elements
    $$
    \Mgen=W\diag(\s(0)).
    $$
        Then there exists a sequence $N_n\to\infty$ as $n\to\infty$ and a
coupling of 
$\calT_{G,v}(t)$
and $\calT_k^{\bp(\Mgen)}(t)$ such that 
    \[
        \P\left(\calT_{G,v}(t)         = \calT_k^{{\bp(\Mgen)}}(t) \text{ for all } t \text{ such that }|\calT_k^{\sbm}(t)| \leq N_n\right) \to 1 \text{ as } n\to\infty.
    \]
\end{lemma}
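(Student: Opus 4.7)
The plan is to place the SIR epidemic on $G\sim\psbm(\bm V,W)$ and the SIR epidemic on $\bp_k(\Mgen)$ on a common probability space using the exploration coupling developed in Section~\ref{sec:actual-prelim}, and then argue that the two infection trees coincide for as long as the number of infected vertices in the SBM has not exceeded a slowly-growing threshold $N_n$. The $\sbm$ version will follow from the $\psbm$ version either by conditioning on the graph being simple (an event whose probability is bounded away from $0$) or, equivalently, by replacing Poisson counts with Binomial ones throughout the coupling, incurring only an extra $O(N_n/n)$ error.

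The coupling is built one infection at a time. Whenever a vertex $w$ of type $k'$ becomes infected, we draw a single recovery time $T_w\sim\Exp(\rrt)$ to be used by both processes. By Remark~\ref{rem:Poisson-degrees}, for each type $\ell$ the number of $w$'s type-$\ell$ half-edges whose infection clock rings before $T_w$ is $\Pois\bigl(W_{k'\ell}\frac{n_\ell}{n}(1-e^{-\irt T_w})\bigr)$, and by Poisson thinning this splits into independent ``initially susceptible'' and ``initially non-susceptible'' reveals, with rates proportional to $S_\ell(0)/n$ and $(n_\ell-S_\ell(0))/n$ respectively; the non-susceptible reveals spread no infection and may be ignored. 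On the branching process side, the remark following Definition~\ref{def:pois-bp} gives that the number of type-$\ell$ children of $w$ that $w$ actually infects is $\Pois\bigl(W_{k'\ell}s_\ell(0)(1-e^{-\irt T_w})\bigr)$. We maximally couple this with the ``initially susceptible'' reveal count, and pair each successful susceptible reveal (a uniform vertex of $V_\ell^S(0)$) with a fresh BP child of the same type.

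The coupling can fail at step $w$ only if either (i) the maximal Poisson coupling disagrees in some coordinate $\ell$, or (ii) a susceptible reveal lands on a vertex of $V_\ell^S(0)$ that has already been explored. The standard total-variation bound for Poissons shows that the single-step failure probability of (i) is at most $C\max_\ell|S_\ell(0)/n-s_\ell(0)|\leq C\delta_n$, where $\delta_n\to 0$ is some rate of convergence that holds with high probability by Assumption~\ref{ass:initial-SandI}. For (ii), given that at most $E$ vertices of $V_\ell^S(0)$ have been explored, the per-reveal failure probability is $E/S_\ell(0)=O(E/n)$; since each infected vertex generates $O(1)$ reveals in expectation, having infected at most $N_n$ vertices so far gives $E=O(N_n)$ with high probability, and hence a per-reveal bound of $O(N_n/n)$.

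Summing the failure probabilities over the first $N_n$ infections and their $O(N_n)$ reveals yields a total of $O(N_n\delta_n+N_n^2/n)$. Picking any $N_n\to\infty$ slowly enough that both $N_n\delta_n\to 0$ and $N_n=o(\sqrt{n})$ --- for instance, $N_n=\min(n^{1/3},\delta_n^{-1/2})$ --- makes this vanish, which is exactly the statement of the lemma. The main technical difficulty will be to control the tail of the number of reveals and the distribution of previously explored vertices across the $K$ communities as the infection evolves; but since the per-vertex Poisson degrees are bounded in mean, this reduces to a standard concentration argument for sums of bounded-mean Poissons.
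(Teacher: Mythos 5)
Your proposal is correct and follows the same overall strategy as the paper — the PSBM exploration coupling, Poisson total-variation comparisons, and a birthday-paradox bound for hitting an already-explored vertex — but compresses the paper's decomposition. The paper inserts an intermediate branching process $\bp_k(\hMgen)$ with mean matrix $\hMgen=W\diag(\vec n/n)$: Lemma~\ref{lem:exact-coupling-sir-bp} couples $\calT_k^{\psbm}$ to $\calT_k^{\bp(\hMgen)}$ via an edge-deletion mechanism (deleting each newly created edge with probability $1-S_\ell(t^-)/n_\ell$), and the proof of the present lemma then maximally couples the offspring of $\bp_k(\hMgen)$ to those of $\bp_k(\Mgen)$, giving a total error of order $O(N_n^2/n+N_n\eps_n)$. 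You instead couple $\calT_k^{\psbm}$ directly to $\calT_k^{\bp(\Mgen)}$ by thinning each vertex's reveals into "initially susceptible" and "initially non-susceptible" streams and maximally coupling the susceptible stream with the branching-process offspring. This is an equivalent bookkeeping scheme, produces the same error order $O(N_n\delta_n+N_n^2/n)$, and is a valid alternative; in fact your thinning treats a nonempty $V^R(0)$ more transparently than the paper's $h_\ell(t)=1-S_\ell(t)/n_\ell$ formula, which is written for the case $S_\ell(0)=n_\ell$. The paper's two-step factorization has the modest advantage that Lemma~\ref{lem:exact-coupling-sir-bp} is a reusable stand-alone ingredient, later invoked verbatim for $O(1)$ initially infected vertices (Remark~\ref{rem:many-initial-vertices-coupling}).

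Two points to tighten. First, the lemma asserts the existence of a \emph{deterministic} sequence $N_n\to\infty$, so you cannot set $N_n=\min(n^{1/3},\delta_n^{-1/2})$ when $\delta_n=\max_\ell|S_\ell(0)/n-s_\ell(0)|$ is random under Assumption~\ref{ass:initial-SandI}; argue instead, as the paper does, that $\delta_n\pto 0$ implies the existence of a deterministic $N_n\to\infty$ with $N_n\delta_n\pto 0$. Second, the claim that $E=O(N_n)$ with high probability, and that the running sum of per-reveal collision probabilities is $O(N_n^2/n)$, should be backed by the observation that the total number of reveals produced by the first $N_n$ infected vertices is stochastically dominated by $\Pois(\|W\|_\infty N_n)$, after which Poisson concentration finishes the job. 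You should also note explicitly that the coupling matches the \emph{time-labelled} trees $\calT_{G,v}(t)$ and $\calT_k^{\bp(\Mgen)}(t)$, not just their final combinatorial shapes: this follows because you reuse $T_w$ for both processes and can let the coupled reveal/child pairs inherit the same ring times under the maximal Poisson coupling.
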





Lemma \ref{lem:branching-process-coupling} allows us to relate the initial behavior of the epidemic to the behavior of the  continuous time branching process $\calT_k^{\bp(\Mgen)}(t)$. 
In particular, if $\Rn\leq 1$, then the branching process $\calT_k^{\bp(\Mgen)}(t)$ dies out eventually, and any outbreak on the original graph will be of constant size, and if $\Rn>1$ then $\calT_k^{\bp(\Mgen)}(t)$ survives forever with probability $\pi_k$, and there will be an outbreak of size at least $N_n$ on the original graph with  asymptotic probability $\pi_k$.

In fact, it turns out that with high probability, 
the outbreak will be either of constant size, or of order $\Theta(n)$.
The next theorem makes this precise, and also gives the relative size of an outbreak if it occurs. 
More precisely, it shows that conditioned on there being a large outbreak, the probability that a random vertex in $V_k$ gets eventually infected is given by the survival probability of what is known as the backward branching process, see, e.g., \citep{barbour2013approximating, barbour:couplings-for-bp,diekmann00:math-epi}.

In our context, the backward branching process is the Poisson branching process $\bp_k(\Msir)$ with transition matrix
\begin{equation}\label{backward-def}
    \Msir =\frac{ \irt}{\irt + \rrt}W\diag(\s(0)),
\end{equation} 
and its survival probability is
\begin{equation}
\label{theta-k-def}
\theta_k=\P\left(|\bp_k(\Msir)|=\infty\right).
\end{equation}

\begin{theorem}
\label{thm:one-vertex-final-size}
    Assume that $W$ is irreducible and consider the SIR epidemic on $G \sim \sbm(\V, W)$ or $G \sim \psbm(\V, W)$ obeying Assumptions \ref{ass:initial-SandI} with $\i(0)=0$ and assume further that
        \[
       1-\prod_k (1-\pi_k)^{I_k(0)}\pto \pi
    \]
for some $\pi\in [0,1]$.  Then 
   $\frac{\I(0)+\S(0)-\S(\infty)}n$,
       converges in probability to a random vector which is equal to 
       $ \text{\rm diag}(\vec\theta)  \s(0)$ with probability $\pi$ and equal to $0$ with probability $1-\pi$.
\end{theorem}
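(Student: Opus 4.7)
The plan is to combine the early-time branching-process coupling of Lemma~\ref{lem:branching-process-coupling} with the macroscopic law of large numbers of Theorem~\ref{thm:lln-final-size}, bridging the two regimes by restarting the epidemic once an outbreak reaches linear size.

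\emph{Dichotomy.} Because the initially infected vertices are random (Assumption~\ref{ass:initial-SandI}) and $I(0)=o(n)$, I apply Lemma~\ref{lem:branching-process-coupling} simultaneously to each $v\in V^I(0)$, coupling the corresponding infection subtrees with $I(0)$ independent copies of $\bp_{k(v)}(\Mgen)$ up to total size $N_n=o(n)$. Let $\mathcal A$ be the event that at least one of these forward trees survives indefinitely. By independence of the couplings and the hypothesis on $1-\prod_k(1-\pi_k)^{I_k(0)}$, we get $\P(\mathcal A)\to\pi$. On $\mathcal A^c$, every subtree dies out with $O_{\mathbb P}(1)$ total progeny, so $\I(0)+\S(0)-\S(\infty)=o_{\mathbb P}(n)$, giving the ``$=0$ with probability $1-\pi$'' case.

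\emph{Bridge to macroscopic size.} On $\mathcal A$, standard supercritical multi-type BP theory applied to the irreducible matrix $\Mgen$ says the surviving trees grow exponentially with type composition converging to the right Perron eigenvector $\vec u$ of $\Mgen$. For small $\varepsilon>0$, define $\tau_\varepsilon=\inf\{t:I(t)\geq\varepsilon n\}$. Iterating Lemma~\ref{lem:branching-process-coupling} on sequentially depleted PSBMs (valid because the number of consumed vertices remains $o(n)$ throughout $[0,\tau_\varepsilon]$) shows that on $\mathcal A$, $\frac1n\I(\tau_\varepsilon)\pto\varepsilon\vec u$ and $\frac1n\S(\tau_\varepsilon)\pto\s(0)-\varepsilon\vec u$.

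\emph{Restart and $\varepsilon\to 0$.} By the strong Markov property, the unexplored subgraph at $\tau_\varepsilon$ is again a PSBM on a vertex set whose community counts differ from the original by $o(n)$, so Theorem~\ref{thm:lln-final-size} applies to the restarted epidemic with macroscopic initial data $(\s(0)-\varepsilon\vec u,\varepsilon\vec u)$, giving $\frac1n\S(\infty)\approx\s^{(\varepsilon)}(\infty)$. Integrating \eqref{eq:s_dfq_1} and using \eqref{eq:x_dfq_1} to eliminate $\int_0^\infty x_\ell(t)\,dt$ in favor of $s_\ell(0)-s^{(\varepsilon)}_\ell(\infty)-\varepsilon u_\ell$, and setting $\theta^{(\varepsilon)}_k=1-s^{(\varepsilon)}_k(\infty)/s_k(0)$, yields (using the symmetry of $W$)
\[
-\log\!\bigl(1-\theta^{(\varepsilon)}_k\bigr)=\sum_\ell \Msir_{k\ell}\,\theta^{(\varepsilon)}_\ell+O(\varepsilon),
\]
with $\Msir$ as in \eqref{backward-def}. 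Letting $\varepsilon\to 0$ reduces this to the Poisson-offspring survival fixed point $1-\theta_k=\exp(-\sum_\ell\Msir_{k\ell}\theta_\ell)$. By Perron-Frobenius on the irreducible $\Msir$, together with $\Rn>1$ (forced by $\pi>0$, else the theorem is trivial), the unique non-zero fixed point is the vector $\vec\theta$ of \eqref{theta-k-def}, and the strictly positive starting direction $\vec u$ selects it in the limit. Hence $\s^{(\varepsilon)}(\infty)\to\diag(\mathbf 1-\vec\theta)\s(0)$, giving the claimed $\diag(\vec\theta)\s(0)$ for $(\I(0)+\S(0)-\S(\infty))/n$.

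\emph{Main obstacle.} The delicate step is the bridge in the second paragraph: the window $N_n\leq I(t)\leq\varepsilon n$ is covered by neither Lemma~\ref{lem:branching-process-coupling} (valid only up to size $N_n$) nor Theorem~\ref{thm:lln-final-size} (which needs a linear initial fraction). I would handle this by iterated branching couplings on sequentially depleted PSBMs, exploiting irreducibility of $W$ to force the community profile onto $\vec u$ well before the epidemic becomes macroscopic. The exchange of the limits $n\to\infty$ and $\varepsilon\to 0$ is a second-order concern, resolvable by taking $\varepsilon=\varepsilon_n\to 0$ sufficiently slowly.
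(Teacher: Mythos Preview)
Your overall architecture matches the paper's: a dichotomy via the forward branching-process coupling, a bridge to macroscopic size, a restart invoking the infinite-horizon LLN, and then an $\varepsilon\to 0$ limit. The $\varepsilon\to 0$ step is also close in spirit to the paper, which packages it as Lemma~\ref{lem:sinfty-continuity} via the backward branching process rather than manipulating the implicit equation directly.

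The genuine gap is exactly where you flagged it: the bridge from size $N_n$ to size $\varepsilon n$. Your proposal of ``iterated branching couplings on sequentially depleted PSBMs'' does not work. Lemma~\ref{lem:branching-process-coupling} (via Lemma~\ref{lem:exact-coupling-sir-bp}) is only valid while the total number of explored vertices is $o(\sqrt n)$, because the failure probability of the coupling is of order $N_n^2/n$; iterating it cannot carry you to $\varepsilon n$ infected vertices. The paper handles this window with a completely different tool, Theorem~\ref{thm:bounds_on_x}: it passes to a discrete-time chain, shows that $H(T)=\X(T)\cdot\v-\zeta_0 T$ (with $\v$ the Perron right eigenvector of the infection matrix) is a submartingale up to the stopping time $\tau_\varepsilon$, truncates the Poisson increments to apply Azuma--Hoeffding, and concludes that once the epidemic exceeds $N_n$ it w.h.p.\ reaches $\tau_\varepsilon$ with $\zeta_1\varepsilon S(0)\le\|\X(\tau_\varepsilon)\|_1\le\zeta_2\varepsilon S(0)$. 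This submartingale argument is the missing idea in your proposal.

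Two further points where you over-commit. First, you claim $\tfrac1n\I(\tau_\varepsilon)\pto\varepsilon\vec u$ with a specific eigenvector profile; the paper neither proves nor needs this. It only shows the state $\hat\y_{\tau_\varepsilon}$ lands in a compact set $\calC_\varepsilon\subset U_0$ (bounded above and below in $\|\hat\x\|_1$), and Lemma~\ref{lem:sinfty-continuity} then gives the $\varepsilon\to 0$ limit uniformly over $\calC_\varepsilon$. Second, at $\tau_\varepsilon$ the relevant datum is $\X(\tau_\varepsilon)$, not $\I(\tau_\varepsilon)$: the infected vertices no longer carry fresh $\Pois(D_k)$ half-edges, so $x_k(0)=i_k(0)$ fails and Theorem~\ref{thm:lln-final-size} does not directly apply. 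The paper instead invokes the more general Theorem~\ref{thm:LLN-final-size}, which allows arbitrary $(\s(0),\i(0),\x(0))\in\calC$.
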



Note that the theorem covers both the case where the vector $(I_k(0))_{k\in [K]}$ has a finite limit, and the case where $I(0)\to\infty$.   In the first case,  $\pi<1$, while in the second, we have that $\pi=1$ (provided $\Rn>1$), leading to an outbreak with probability tending to $1$ as $n\to\infty$.

As a consequence of Lemma \ref{lem:branching-process-coupling} and Theorem \ref{thm:one-vertex-final-size} we have the following corollary, which in particular implies that w.h.p.,  the epidemic starting from a single vertex either dies out after only infecting a finite number of vertices, or grows to size $\Theta(n)$.
\begin{corollary}
\label{cor:final-size}
     Consider the setting from Theorem~\ref{thm:one-vertex-final-size}, with $V^I(0)$ consisting of a
single infected vertex of label $k$, and let $N_n$ be an arbitrary sequence of integers with $N_n\to\infty$ and
$N_n/n\to 0$ as $n\to\infty$.
Then the following hold:

i) The probability that $S(0)-S(\infty)>N_n$ converges to $\pi_k$.

ii) If $\Rn\leq 1$, the final size $S(0)-S(\infty)$ converges in distribution to the size of $\calT_k^{ \sbm}(\infty)$, showing in particular that the limiting distribution of $S(0)-S(\infty)$
has exponentially decaying tails if $\Rn<1$.

iii) Assume $\Rn>1$.  When conditioned on being at most $N_n$, the final size converges in distribution to 
$|\calT_k^{ \sbm}(\infty)|$ conditional on extinction, showing in particular that the conditional final size distribution has exponentially decaying tails.

iv) If $\Rn>1$ and we condition on the event that the final size is at least $N_n$, then  $\frac{\S(0)-\S(\infty)}n$,
       converges in probability to 
       $ \text{\rm diag}(\vec\theta)  \s(0)$, showing in particular that with high probability, the final size of the infection is of order $\Theta(n)$ if we condition on it being at least $N_n$.
\end{corollary}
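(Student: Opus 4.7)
The plan is to rely on the coupling of Lemma~\ref{lem:branching-process-coupling} between the SBM infection tree $\calT_{G,v}$ and the Poisson multi-type branching process tree $\calT_k^{\bp(\Mgen)}$, which holds until the tree first exceeds size $N_n$, combined with Theorem~\ref{thm:one-vertex-final-size} for the large-outbreak regime. Since $S(0) - S(\infty) = |\calT_{G,v}(\infty)| - 1$, the coupling matches the law of the final size on $\{S(0) - S(\infty) \leq N_n\}$ with that of $|\calT_k^{\bp(\Mgen)}(\infty)| - 1$ on $\{|\calT_k^{\bp(\Mgen)}(\infty)| \leq N_n\}$, up to $o(1)$ probability. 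Part (i) is then immediate: since $\pi_k = \P(|\calT_k^{\bp(\Mgen)}(\infty)| = \infty)$ and $N_n \to \infty$, $\P(|\calT_k^{\bp(\Mgen)}(\infty)| > N_n) \to \pi_k$, and the coupling transfers this limit to the SBM final size.

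For part (ii), when $\Rn \leq 1$ the branching process is (sub)critical and hence a.s.\ finite, so for each fixed $m$ the coupling yields $\P(S(0) - S(\infty) = m) \to \P(|\calT_k^{\bp(\Mgen)}(\infty)| - 1 = m)$, giving distributional convergence to the branching process total progeny. In the strictly subcritical case $\Rn < 1$, the latter admits exponentially decaying tails by standard multi-type subcritical branching process theory, e.g., via a Perron--Frobenius argument on the joint probability generating functional.

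For part (iii), when $\Rn > 1$, the coupling identifies the conditional law of $S(0) - S(\infty)$ given $\{S(0) - S(\infty) \leq N_n\}$ with the conditional law of $|\calT_k^{\bp(\Mgen)}(\infty)| - 1$ given $\{|\calT_k^{\bp(\Mgen)}(\infty)| \leq N_n\}$, and the latter converges as $N_n \to \infty$ to the law of the branching process conditioned on extinction. A classical duality result shows that a supercritical Poisson multi-type branching process conditioned on extinction is itself a Poisson multi-type branching process, with offspring matrix obtained from $\Msir$ by column-scaling with the type-wise extinction probabilities $1-\pi_\ell$; this dual is subcritical, so exponential tails for the conditional total progeny follow as in part (ii).

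For part (iv), apply Theorem~\ref{thm:one-vertex-final-size} to the configuration $V^I(0) = \{v\}$ with $k(v) = k$: here $1 - (1-\pi_k) = \pi_k$, so the theorem yields $(\S(0) - \S(\infty))/n \pto Y$ where $Y = \diag(\vec\theta)\s(0)$ with probability $\pi_k$ and $Y = 0$ otherwise. By part (i) the event $\{S(0) - S(\infty) \geq N_n\}$ has probability converging to $\pi_k$ and coincides asymptotically with $\{Y \neq 0\}$, since on $\{Y = 0\}$ the final size is $o(n) \ll N_n$ with high probability. Conditioning therefore yields the claimed convergence in probability to $\diag(\vec\theta)\s(0)$. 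The main obstacle is the duality step in part (iii): while classical for single-type branching processes, the multi-type version requires care to identify the explicit offspring matrix of the conditioned process and verify its subcriticality so that the exponential-tail estimate transfers.
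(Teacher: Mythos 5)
Your overall plan — couple to the branching process, use Theorem~\ref{thm:one-vertex-final-size} for the macroscopic regime, and a duality argument for the conditioned tail — matches the spirit of the paper's proof, and your explicit duality step for the exponential tails in (iii) is a nice touch (the paper states but does not really justify that claim; your argument that the conditioned supercritical Poisson multi-type BP is again a Poisson multi-type BP with mean matrix column-scaled by the extinction probabilities, and hence subcritical so that Lemma~\ref{lem:subcritical-tails} applies, is the right way to fill that in). However, there is a genuine gap that appears in both your (i) and (iv).

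The coupling in Lemma~\ref{lem:branching-process-coupling} comes with its own sequence, call it $N_n'$, determined inside the lemma; the Corollary is stated for an \emph{arbitrary} sequence $N_n \to\infty$, $N_n/n\to 0$, which may be much larger than $N_n'$. You write that ``the coupling matches the law of the final size on $\{S(0)-S(\infty)\leq N_n\}$'' and then ``transfer'' $\P(|\calT_k^{\bp(M)}|>N_n)\to\pi_k$ to the epidemic. This transfer is only valid as written if $N_n\leq N_n'$. When $N_n > N_n'$ the coupling tells you nothing about whether the epidemic final size lands in $(N_n',N_n]$; you need a separate argument ruling out an intermediate regime. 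The paper handles this with an explicit claim: choosing any $\eps\in(0,\vec\theta\cdot\s(0))$, $\P\big(S(0)-S(\infty)\in(N_n',\eps n)\big)\to 0$, proved by sandwiching between the coupling bound $\P(S(0)-S(\infty)>N_n')\to\pi_k$ and the bound $\P(S(0)-S(\infty)\geq \eps n)\to\pi_k$ from Theorem~\ref{thm:one-vertex-final-size}. Only with this claim in hand can one pass from the lemma's $N_n'$ to the arbitrary $N_n$ of the Corollary. Your proof of (i) for $\Rn>1$ silently assumes this; it needs to be made explicit and proved.

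The same gap appears in (iv). You write that on $\{Y=0\}$ the final size is ``$o(n)\ll N_n$ with high probability''. This is backwards: $N_n$ is itself $o(n)$ (possibly very slowly growing, e.g.\ $\log n$), so knowing the final size is $o(n)$ on the extinction event does \emph{not} give final size $\leq N_n$. What you actually need on the extinction event is that the final size is either $\leq N_n'$ (where the coupling controls it and the BP is finite) or lies in the intermediate window $(N_n',\eps n)$, and the latter has vanishing probability by the no-intermediate-regime claim. So the identification of $\{\text{final size}\geq N_n\}$ with the macroscopic outbreak event requires exactly the same missing ingredient.
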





    

\begin{remark}
    Since $W$ is symmetric, even if it is not irreducible, it can be decomposed into isolated irreducible sub-components. As long as Assumption \ref{ass:initial-SandI} is true in each sub-component, then Theorem \ref{thm:lln-final-size} and Theorem \ref{thm:one-vertex-final-size} can be applied separately in each sub-component. 
\end{remark}

\subsection{Related Work}
\label{sec:related work}

In this section we place our results and methods in the context of the existing literature. As discussed in the introduction, for both $G(n, p)$ and the configuration model, the dynamics of the infection are primarily driven by a scalar-valued random variable determining the force of infection. For $G(n, p)$ the force of the infection is the same for all susceptible vertices and proportional to the number of active half-edges, while for the configuration model, the force of the infection on vertices of degree $d$  depends on $d$ and is proportional to $d$ times the number of active half-edges. Due to this fact, for $G(n, p)$ and the configuration model, a global time-rescaling can be applied to the stochastic epidemic such that the entire epidemic happens over the course of a finite time horizon. While our methods are inspired by \citep{neal2003sir, ball22:sir-rewire, janson14:lln-sir}, such a global time-rescaling cannot be applied in the case of the stochastic block model due to the fact that the force of infection is vector-valued rather than scalar-valued. Thus we develop new, fairly general, tools using the idea of herd immunity to derive the LLN for the final size in the stochastic block model. 

Another method for deriving the trajectory of the epidemic curve is to analyze the forward and backward branching processes of the epidemic. In \cite{barbour2013approximating}, the authors study the forward and backward branching processes in the case of graphs with bounded degree and apply their results to determine the epidemic curves for $G(n, p)$ and the Volz configuration model \cite{volz08:sir}. \cite{bhamidi2014front} generalize the approach of \cite{barbour2013approximating} to the setting of sparse random graphs with degrees having finite second moments under a slightly less general epidemic setting. It is plausible that this approach could be generalized to the stochastic block model using multi-type Crump-Mode-Jagers continuous time branching processes%
\footnote{We thank J\'ulia Komj\'athy for discussing this approach after one of us presented  the current work at a conference on random networks.}
to prove a LLN for the epidemic curve. However, to our knowledge this has not yet been done nor has it been used even on a heuristic level to derive the correct differential equations describing the LLN, while these fall out naturally from our approach.

There also exist several methods of analyzing the SIR trajectories using local graph approximations, provided the initially infected set of vertices is a constant proportion of all vertices chosen uniformly at random.
In \cite{cocomello2023exact}, the authors study SIR and SEIR models with (possibly) time-varying rates on locally tree-like graphs, and derive a law of large numbers result. In \cite{alimohammadi2023epidemic}, the authors propose a local algorithm for approximation of the epidemic trajectory and leverage local graph limit theory to show that the SIR evolution on a sequence of graphs with a local limit converges to the same process in the graph limit. However, in these approaches, it is not easy to connect the differential equation phase of the epidemic to sub-linear initial conditions due to the fact that at the stopping time where a fixed fraction of the vertices has been infected, the state of the epidemic is not that of an epidemic started at uniformly chosen set of infected vertices.

By contrast, our work allows  for general initial conditions of the epidemic (specifying the number of active half-edges emanating from each vertex separately), and in particular can be used to connect the initial phase starting from a sub-linear number of infected vertices to the phase where the differential equation approximation can be used to prove a LLN.  But of course, the results from  \cite{cocomello2023exact} and \cite{alimohammadi2023epidemic}, while not allowing to analyze the case with $o(n)$ initially infected vertices, are much more general as far as the underlying graph model is concerned, and in fact do not even require an underlying stochastic graph model as long as the sequence has a local limit.


\section{Preliminaries}\label{sec:actual-prelim}
\subsection{Poisson Stochastic Block Model Coupling}
\label{sec:coupling}

We will start with a simple lemma which will give us a coupling between the stochastic block model and the Poisson stochastic block model.  Using this coupling, our proof of Theorem~\ref{thm:LLN} for a graph drawn from $\psbm(\bm V,W)$ will imply Theorem~ \ref{thm:LLN} for the standard stochastic block model  $\sbm(\bm V,W)$.

\begin{lemma}
\label{lem:coupling}
Let $K $, $W$, $k(\cdot)$, and  $n_k$ be as in Definition~\ref{def:SBM}. Assume that $n>\max_{k,\ell}W_{k,\ell}$, and let $W'_{k(u)k(v)} = \frac{W_{k(u)k(v)}}{1-W_{k(u)k(v)}/n}$, and consider a multi-graph $G'\sim \psbm(\bm V,W')$. If we condition on $G'$ being simple, then the conditional distribution of $G'$ is given by 
$\sbm(\bm V, W)$. Furthermore, the probability that $G'$ is simple is  bounded away from $0$, uniformly in $n$.  
\end{lemma}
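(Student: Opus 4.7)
The plan is a routine matching of Poisson conditionals against Bernoulli laws; I would split it into the distributional identification and the lower bound on the simplicity probability.

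First, I would work one pair at a time. For any pair $\{u,v\}$ with $u\ne v$, the multiplicity $A_{uv}\sim \Pois(W'_{k(u)k(v)}/n)$ satisfies
\[
\P(A_{uv}=1\mid A_{uv}\le 1)=\frac{W'_{k(u)k(v)}/n}{1+W'_{k(u)k(v)}/n}.
\]
Plugging in $W'_{k\ell}=W_{k\ell}/(1-W_{k\ell}/n)$ — which is well defined because $n>\max_{k,\ell}W_{k\ell}$ — this simplifies to exactly $W_{k(u)k(v)}/n$, matching the SBM connection probability. This algebraic identity is what dictated the choice of $W'$ and is the one non-trivial step of this part. For self-loops, conditioning on $A_{vv}=0$ simply removes them.

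Second, because the multiplicities $(A_{uv})_{u\le v}$ in the PSBM are mutually independent, the event ``$G'$ is simple'' factors as the intersection of the independent events $\{A_{vv}=0\}$ (one per vertex) and $\{A_{uv}\le 1\}$ (one per unordered pair $u<v$). The conditioning therefore factorizes, and conditionally on this event the indicators $\mathbf 1[A_{uv}\ge 1]$ for $u<v$ are mutually independent $\Bern(W_{k(u)k(v)}/n)$ random variables. This is exactly the law $\sbm(\bm V,W)$, proving the first claim.

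For the second claim, I would simply compute
\[
\P(G'\text{ is simple})=\prod_v e^{-W'_{k(v)k(v)}/n}\cdot\prod_{u<v}e^{-W'_{k(u)k(v)}/n}\!\left(1+\frac{W'_{k(u)k(v)}}{n}\right),
\]
and bound each product separately. With $c=\max_{k,\ell}W_{k\ell}$ and $n$ large enough that $W'_{k\ell}\le 2c$ uniformly, the first product is at least $e^{-2c}$. For the second, using $\log(1+x)\ge x-x^2$ on $[0,1]$, each factor contributes at least $-(W'_{k(u)k(v)}/n)^2\ge -(2c/n)^2$ to the log; summing over the $\binom{n}{2}$ pairs gives a log-probability at least $-2c^2$, so the second product is at least $e^{-2c^2}$. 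Both bounds are independent of $n$, which is what was needed. There is really no main obstacle here: the only subtlety is making sure that the $1/n^2$ second-order Poisson corrections aggregate to $O(1)$ rather than to a diverging quantity, which the factor $\binom{n}{2}$ just barely accommodates.
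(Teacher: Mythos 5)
Your proposal is correct and follows essentially the same route as the paper: identify the conditional Poisson law per pair via the algebraic identity $\frac{\lambda}{1+\lambda}=W_{k(u)k(v)}/n$, use independence to factorize, and lower-bound the simplicity probability by aggregating $O(1/n^2)$ per-pair corrections over $\binom n2$ pairs. The only cosmetic difference is that you use $\log(1+x)\ge x-x^2$ where the paper uses the equivalent $(1+p)e^{-p}\ge e^{-p^2/2}$.
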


As a consequence, all statements holding with high probability for $G'\sim \psbm(\bm V, W')$ also hold with high probability for $G\sim \sbm(\bm V, W)$, reducing the proof of any convergence in probability statements for 
$G\sim \sbm(\bm n, W)$ to those for $G'\sim \psbm(\bm n,W')$.  Thus a LLN for the (easier to analyze) SIR epidemic on the Poisson stochastic block model will imply the same LLN on the standard stochastic block model.

\begin{proof}
  If $G'$ is a multi-graph with edge-multiplicities $A_{uv}$, then the probability of drawing $G'$ from 
$ \psbm(\bm V,W')$ is equal to
$$\prod_{v\leq u} \frac {p_{k(u)k(v)}^{A_{uv}}}{A_{uv}!}e^{-p_{k(u)k(v)}}.
$$
where $p_{kk'} = n^{-1} W'_{kk'}$.
As a consequence,
$$\P(G' \text{ simple}) =  \prod_{u < v}\left(e^{-p_{k(u)k(v)}} + e^{-p_{k(u)k(v)}}p_{k(u)k(v)}\right)\prod_{u=v}e^{-p_{k(u)k(v)}}. $$
Furthermore, for any simple graph $G=(V,E)$
 \[
       \P(G' = G \mid G' \text{ simple}) =
       \frac{\P(G' = G)}{\P(G' \text{ simple})}
    =\prod_{uv\in E}\frac{p_{k(u)k(v)}}{1+p_{k(u)k(v)}} = \frac{W_{k(u)k(v)}}{n},
    \]
showing that the conditional distribution of $G'$ is 
given by $\sbm(\bm V, W)$.
To lower bound $\P(G' \text{ simple})$ we use that
$(1+p)e^{-p}\geq e^{-p^2/2}$, a bound which follows, e.g., from the fact that the derivative of
$\log (1+p)-p+p^2/2$ is non-negative for all $p\geq 0$.
Setting $p=\max_{k,k'}p_{kk'}$, we therefore get
$$\P(G' \text{ simple}) \geq  e^{-{n\choose 2}\frac{p^2}2-np \geq e^{-\frac{(np)^2}4-np}} .$$ Since $pn$ is uniformly bounded away from $0$ as $n\to\infty$, this completes the proof.
\end{proof}

\begin{remark}
    While the coupling of the full graph $G\sim\sbm(\bm V,W)$ to $G'\sim\psbm(\bm V,W')$ requires moving from $W$ to the modified matrix $W'$ and conditioning on $G'$ to be simple, it is not hard to show that if we are only interested in a small neighborhood of a given set of vertices, we can couple  
    $G\sim\sbm(\bm V,W)$ to $G'\sim\psbm(\bm V,W)$ such that within the small neighborhood, $G$ and $G'$ are identical with high probability, \emph{without having to condition $G'$ or moving from $W$ to $W'$} (see Lemma~\ref{lem:coupling-psbm-bp-to-sbm-bp} below, where we consider the set of vertices infected in the early stage of the infection).
\end{remark}

\subsection{Coupling the SIR dynamic and the Poisson Stochastic Block Model}
\label{sec:exploration}

We study the SIR epidemic on  a graph $G$ 
drawn from  the Poisson stochastic block model
by dynamically revealing the edges of $G$. {This turns out to be much easier for the Poisson stochastic block model than the original stochastic block model, due mainly to the independence inherent in the Poisson degrees as expressed in Remark~\ref{rem:Poisson-degrees}.

We keep track of the exploration procedure by dynamically updating what one might call a decorated exploration forest on $V$, defined as a forest on $V$, augmented by a set of half-edges attached to the leaves of the trees.  Vertices will be marked infected or recovered, and we refer to the number of half-edges attached to a leaf $v$ as it's active degree (denoted by $d_v(t)$), and to the half-edges as active half-edges.

The exploration process starts with a set of roots, consisting of the initially infected set of vertices, $V^I(0)$, plus $d_v(0)\sim\Pois(D_{k(v)})$ half-edges attached to each $v\in V^I(0)$, chosen independently for all $v$.  Given the exploration forest at some time $t$, for all active half-edges we independently explore a half-edge at rate $\irt$ and independent from that, i.i.d. for all infected vertices, move an infected vertex to the recovered state at rate $\rrt$.  When an active half-edge emanating from a vertex $u$ is explored, we remove it,  choose a label $\ell$ according to $p_{k(u)\to\ell}$, and then choose a vertex $v$ uniformly from all vertices with label $k(v)=\ell$.  If this vertex is
susceptible, we infect it, add the edge $uv$ to the exploration tree, and add $\Pois(D_\ell)$ active half-edges to $v$.  When a vertex $v$ recovers, we remove all its active half-edges, i.e., we set $d_v(t)$ to $0$.}
\begin{lemma}\label{lem:SIR-SBM-Coupling}
The above defined exploration process and the SIR-process on a random multi-graph  $G\sim\psbm(\bm V,W)$ can be coupled in such a way that for all times $t$, the sets of susceptible, infected, and recovered vertices are the same in both, with the forest traced out by the exploration process at time $t$ being equal to the infection forest at time $t$.
Furthermore, an edge $uv$ in the infection forest (with $uv$ oriented away from the roots) will appear in the exploration forest precisely at the time $t'\leq t$ when $v$ was infected by $u$.
\end{lemma}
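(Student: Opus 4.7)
The plan is to construct an explicit coupling between the SIR dynamic on $G \sim \psbm(\bm V, W)$ and the exploration process, and verify by induction on event times that the two trajectories agree. The two key ingredients are the Poisson decomposition of PSBM neighborhoods recorded in Remark~\ref{rem:Poisson-degrees} and the memorylessness and splitting properties of the $\Exp(\irt)$ infection clocks.

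At a high level, the coupling is built from the following observation: conditioning on the state of the exploration process up to any of its stopping times --- i.e., on the sets $V^S(t),V^I(t),V^R(t)$, on the half-edges revealed so far together with their targets, and on the fact that the unrevealed half-edge clocks have not yet fired --- the distribution of what happens next in the exploration matches the corresponding conditional distribution for the SIR dynamic on $G$. Specifically, by Poisson independence of the $A_{uv}$'s across pairs, the unrevealed part of $G$ retains its PSBM law, so the community-weighted uniform target choice prescribed by the exploration is exactly the distribution of the next infection attempt in SIR; by memorylessness, the $\Exp(\irt)$ waiting time for the next half-edge firing matches the waiting time for the next SIR infection event; and by Poisson splitting, attaching $\Pois(D_{k(v)})$ fresh half-edges to a newly infected $v$ is exactly the law of $v$'s outgoing edges in the not-yet-revealed part of $G$.

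To turn this distributional matching into a pathwise coupling, I would run the exploration process and use its internal randomness to build $G$ on the fly: whenever the exploration reveals a new edge $\{u,v\}$ (via a half-edge of $u$ firing to $v$, or later via a half-edge of $v$ firing back to an already-infected $u$), record it as an edge instance of $G$, and at the end of the exploration draw the remaining never-explored edges independently from their posterior PSBM distribution. This makes both the exploration forest and the SIR trajectory measurable functions of the same randomness, with identical status sets $V^S(t),V^I(t),V^R(t)$ and identical infection forests at every time; moreover the time an edge $uv$ appears in the exploration forest is, by construction, the time the corresponding firing clock goes off, which is the time of infection of $v$ by $u$ in the SIR dynamic.

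The main technical obstacle is the bookkeeping of half-edges whose target is already infected or recovered --- in particular, the ``reverse'' half-edges attached to a vertex $v$ at the time of its infection that later fire back to vertices infected earlier, together with multi-edges and self-loops. For the induction one needs to check, at each event type (half-edge firing into a susceptible target, half-edge firing into a non-susceptible target, and recovery), that the conditional PSBM distribution of the still-unrevealed part of $G$ is preserved and that the event rate and target distribution specified by the exploration match the corresponding SIR event. This step relies essentially on the Poisson independence of the $A_{uv}$'s: revealing one half-edge concerns a single $(u,v)$ pair and, by independence across pairs, does not distort the conditional distribution of any edges incident to vertices whose neighborhoods have not yet been exposed, so the splitting-based description of the ``next event'' remains valid throughout the exploration.
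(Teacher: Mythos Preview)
Your overall strategy --- couple by matching conditional laws at each event, leaning on Poisson splitting and memorylessness --- is the same as the paper's, and you correctly flag the ``reverse half-edge'' issue as the crux. But your proposed resolution has a gap. You suggest recording a half-edge of $v$ that fires back to an already-infected $u$ as an edge instance of $G$. That cannot be right: by the time $v$ is infected, the multiplicity $A_{uv}$ was already sampled (it contributed to $u$'s half-edges when $u$ was infected), and the exploration gives $v$ a \emph{fresh, independent} $\Pois(D_{k(v)})$ batch. The portion of that batch landing on $u$ is an independent $\Pois(W_{k(u)k(v)}/n)$ variable, not the same $A_{uv}$; recording both would give $G$ the wrong marginal on edges between pairs of non-susceptible vertices. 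Your appeal to ``Poisson independence across pairs'' does not fix this, because the problem is exactly that the same pair $(u,v)$ is being sampled twice with independent randomness.

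The paper's fix goes in the opposite direction: instead of building $G$ from the exploration, it starts with $G$ (the true $A_{uv}$'s) given, and couples the $\Pois(D_{k(u)})$ half-edges of a newly infected $u$ as
\[
d_u(t)=\sum_{v\in V_+}A_{uv}+\sum_{v\notin V_+}\tilde A_{uv},
\]
where $V_+$ is the set of not-yet-discovered vertices and the $\tilde A_{uv}$ are fresh independent copies. The half-edges to already-discovered vertices are thus ``fake'' (they use $\tilde A$, not $A$), and the key observation --- which your sketch never makes explicit --- is that these fake half-edges are harmless for the SIR trajectory because their targets are non-susceptible, so they can never trigger an infection. The infection forest therefore depends only on the true $A_{uv}$'s to susceptible $v$, and for those your Poisson-independence argument is exactly what is needed. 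In short: your intuition is correct, but the reverse half-edges must be discarded (treated as fake), not recorded into $G$.
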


\begin{proof}
We start the proof with a cautionary remark: while the degree of any given vertex is given by $d_v\sim \Pois(D_{k(v)})$, it is not true that this holds independently for any set of vertices containing more than one vertex, since the degrees of two vertices $u$ and $v$ share the randomness of $A_{uv}=A_{vu}$.  

After this cautionary remark, we proceed with the actual proof.  Consider a vertex $u$ infected at time $t$, and let $V_+$ consist of all vertices not yet discovered at this point.  We can then couple $d_u(t)$ to the Poisson random variables $A_{uv}=A_{vu}$ defining the multi-graph $G$ plus a second, independent copy
$\wt A_{uv}$ as follows:
$$
d_u(t)=\sum_{v\in V_+} A_{uv}+\sum_{v\in V\setminus V_+}\wt A_{uv}.
$$
We will refer to the half-edges corresponding to the second sum as compromised, and the vertices they point to as fake copies of the original vertex $v$.
Furthermore, we will call a half-edge virulent at time $t$ if its endpoint has not yet been explored and the hidden endpoint of the edge corresponding to it lies in $S(t)$.  Finally, after the initial infection of $u$, whenever one of the half-edges corresponding to the first sum is explored, we will update the other half-edges pointing to the same end-point to be compromised as well. 

After this setup, the proof of the lemma is straightforward: by construction, virulent half-edges will be not compromised, and for all vertices $u$, the total rate at which one of the Poisson clocks of  virulent half-edges  emanating from  $u$ clicks is just the rate at which $u$ infects susceptible vertices.  Finally, when a non-compromised half-edge is explored, its endpoint is a real, not fake vertex, showing that the distribution of discovered, newly infected vertices is correct as well.
This concludes the proof.
\end{proof}

\begin{lemma}
\label{lem:coupling-psbm-bp-to-sbm-bp}
Let $N_n$ be a sequence such that $N_n/n\to 0$ as $n\to\infty$, and fix a set of initially susceptible, infected, and recovered vertices. Then the SIR-process on the two graphs $G\sim\sbm(\bm V,W)$ and $G'\sim \psbm(\V,W)$ 
can be coupled in such a way that with probability tending to $1$ as $n\to\infty$,
the sets of susceptible, infected, and recovered vertices are the same as long as
$|(V^I(t)\cup V^R(t))\setminus V^R(0)|\leq N_n$.
\end{lemma}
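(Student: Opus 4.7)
The plan is to couple $G \sim \sbm(\bm V, W)$ and $G' \sim \psbm(\bm V, W)$ pair by pair, to drive both SIR dynamics from a single pool of Poisson and exponential clocks, and then to show that with probability tending to one no ``disagreeing'' pair is touched before the stopping time.

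First, for each unordered pair $\{u,v\}$ I would draw $(A_{uv}^G, A_{uv}^{G'})$ from the maximal coupling of $\Bern(\lambda_{uv})$ and $\Pois(\lambda_{uv})$ with $\lambda_{uv} = W_{k(u)k(v)}/n$, independently across pairs. Because $e^{-\lambda} \ge 1-\lambda$, this maximal coupling has the convenient structure that $A_{uv}^{G'} = 1$ forces $A_{uv}^G = 1$, while $\P(A_{uv}^G = 1 \mid A_{uv}^{G'}=0) = O(\lambda_{uv}^2) = O(1/n^2)$ and $\P(A_{uv}^{G'} \ge 2) = O(1/n^2)$ (any such multi-edge automatically disagreeing with the Bernoulli $A_{uv}^G$). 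The only possible disagreements are thus of type (a) an ``$\sbm$-only'' pair with $A_{uv}^{G'}=0$ and $A_{uv}^G = 1$, or of type (b) a PSBM multi-edge $A_{uv}^{G'} \ge 2$. I would then run SIR on both graphs from a common $\Exp(\rrt)$ recovery clock per vertex and common Poisson infection clocks $N_{u,v,j}$ of rate $\irt$ per ordered triple $(u,v,j)$; at a tick of $N_{u,v,j}$ with $u \in V^I(t^-)$, $v \in V^S(t^-)$, infection spreads in $G$ (respectively $G'$) iff $A_{uv}^G \ge j$ (respectively $A_{uv}^{G'} \ge j$).

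Let $T$ denote the first time at which $|(V^I(t)\cup V^R(t))\setminus V^R(0)|$ exceeds $N_n$ in the coupled process, and let $T_{G'}$ be the analogous stopping time computed from the $\psbm$ process alone. Because recoveries are synchronised, the first divergence between the two processes must be an infection attempt along an ordered pair $(u,v)$ with $u \in V^I(t^-)$ and $A_{uv}^G \ne A_{uv}^{G'}$; in particular $u$ then lies in the $\psbm$ ever-infected set $V_*^{G'}(T_{G'})$. The condition defining $T$ implicitly forces $|V^I(0)| \le N_n$, so $|V_*^{G'}(T_{G'}) \setminus V^R(0)| \le 2N_n$ deterministically on the event of interest, and vertices in $V^R(0)$ never participate in any infection event. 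It thus suffices to show that with probability tending to one, no pair of type (a) or (b) is incident to $V_*^{G'}(T_{G'}) \setminus V^R(0)$.

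I would bound the expected number of each type of bad incident pair by linearity. For (a), conditioning on the entire $\psbm$ graph $A^{G'}$ fixes $V_*^{G'}(T_{G'})$ and makes the residual indicators $\ind{A_{uv}^G = 1, A_{uv}^{G'}=0}$ independent across pairs with conditional mean $O(1/n^2)$; the expected number of $\sbm$-only pairs incident to $V_*^{G'}(T_{G'})$ is therefore at most $2 N_n \cdot n \cdot O(1/n^2) = O(N_n/n) = o(1)$. For (b), I would use the PSBM half-edge exploration of Lemma~\ref{lem:SIR-SBM-Coupling}: each $v \in V_*^{G'}(T_{G'})$ emits $d_v \sim \Pois(D_{k(v)})$ half-edges landing on uniform label-consistent endpoints, and every multi-edge at $v$ arises from a collision between two of those half-edges. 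The expected number of such collisions from a single $v$ is $O(\E[\binom{d_v}{2}]/n) = O(1/n)$, giving $O(N_n/n) = o(1)$ across $V_*^{G'}(T_{G'})$ in total. Markov's inequality then finishes the argument.

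The main obstacle is the correlation between the random set $V_*(T)$ and the edge-disagreement events: a naive union bound over all $\binom{n}{2}$ pairs would yield $O(1)$ rather than $o(1)$. The resolution is that the pairwise-independent coupling lets one condition on $A^{G'}$, decoupling the $\sbm$-only disagreements from the $\psbm$ process, while PSBM multi-edges are most naturally bounded by the collision count $O(N_n/n)$ in the half-edge exploration.
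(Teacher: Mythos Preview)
Your argument is correct and complete in substance; the maximal-coupling structure $A^{G'}\ge 1\Rightarrow A^G=1$ is a nice observation, and splitting the disagreements into SBM-only edges (handled by conditioning on $A^{G'}$) and PSBM multi-edges (handled via the half-edge exploration) works. One small imprecision: a multi-edge $\{u,v\}$ with both endpoints eventually infected shows up as a collision at whichever endpoint is infected \emph{first}, not necessarily at $v$; but since you sum collisions over all of $V_*^{G'}(T_{G'})\setminus V^R(0)$ the bound is unaffected.

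The paper takes a rather different and shorter route. Instead of coupling the undirected adjacency matrices, it draws \emph{oriented} variables $M_{uv}\sim\Bern(\lambda_{uv})$ and $M'_{uv}\sim\Pois(\lambda_{uv})$ independently across all ordered pairs $(u,v)$, coupled so that $M_{uv}=M'_{uv}$ with probability $1-O(1/n^2)$. The point is that in SIR each edge is only ever used in one direction (from the earlier-infected endpoint outward), so replacing the symmetric $A_{uv}=A_{vu}$ by independent directed copies does not change the law of the epidemic. With directed variables, when the $i$th vertex $u_i$ is infected one reveals the $n$ fresh pairs $(M_{u_iv},M'_{u_iv})_{v\in V}$, and the processes stay coupled as long as all of these agree; this gives $\P(A_i\mid A_{i-1})=1-O(1/n)$ directly, with no need to separate disagreement types or invoke the half-edge exploration. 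Your approach has the virtue of working with the actual graphs and exploiting the specific shape of the Bernoulli--Poisson maximal coupling; the paper's oriented-edge trick trades that structural information for a one-line sequential conditioning.
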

\begin{proof}
On a high level, the lemma can be shown to hold by using the exploration process from the proof of Lemma~\ref{lem:SIR-SBM-Coupling} for both models, and using that we are only exploring  $O(nN_n)$ possible edges to determine the course of the infection up to the time where $|V^I(t)\cup V^R(t)\setminus V^R(0)|=N_n$. The fact that the total variation distance between a $\Bern(W_{k(u)k(v)}/n)$ and a $\Pois(W_{k(u)k(v)}/n)$ random variable is $O(1/n^2)$ then implies the statement of the lemma.

Formally, we proceed as follows.  Draw,  independently for all oriented pairs  $(u,v)\in V\times V$, the following random variables
\begin{enumerate}
\item a sample $M_{uv}\sim\Bern(W_{k(u)k(v)}/n)$
\item a sample $M_{uv}'\sim \Pois(W_{k(u)k(v)}/n)$
\item an infinite sequence of i.i.d. infection times $T_{uv}^{(i)}\sim \Exp(\irt)$, $i=1,2,\dots$,
\end{enumerate}
with $M_{uv}'$ and $M_{uv}$ coupled in such a way that 
$M_{uv}=M_{uv}'$ with probability $1-O(1/n^2)$, and $T_{uv}^{(i)}$ drawn independently of the coupled random variables $M_{uv}'$ and $M_{uv}$.  Finally, draw independent samples of recovery times $T_u\sim \Exp(\rrt)$ for the vertices in $V$.

We then use the following coupling of the two processes:
starting from the set of initially infected vertices, we attach two sets of (oriented) active edges
to the vertices in $V^I(0)$: one red edge $uv$ to each of the vertices $v\in V$ with $M_{uv}=1$ and $M'_{uv}$ orange edges between $u$ and $v$ for all vertices $v$ with $M'_{uv}>0$.  We use the infection time $T_{uv}^{(1)}$ on the red edges $uv$ if $M_{uv}=1$, and the times
$T_{uv}^{(1)}, \dots ,T_{uv}^{(M'_{uv})}$ on the orange edges $uv$ if $M'_{uv}\geq 1$.  Using the random times $T_u$ for recovery for both processes, we see that the infections process on $G$ and $G'$ is identical until the first time when there exists a newly infected vertex $u$ and a vertex $v\in V$ such that $M_{uv}\neq M'_{uv}$.

The rest is straightforward: order the vertices in $V\setminus V^{I}(0)$ according to their infection times on $G$, and let $A_i$ be the event that all edge choices up to the point where $i$ got infected are such that the number of red and orange edges for each explored pair $uv$ is the same (including the edges out of $i$).  We then get that
$$
\P(A_i\mid A_{i-1})=\prod_{v\in V}\P(M_{iv}=M'_{iv})=(1-O(1/n^2))^n=1-O(1/n)
$$
with the initial probability of starting off with 
$M_{uv}=M'_{uv}$ for all edges out of $V^I(0)$ being
$$
\P(A_0)=\prod_{u\in V^I(0)}\prod_{v\in V}\P(M_{uv}=M'_{uv})=1-O(|V^I(0)|/n).
$$
Since $|V^I\cup V^R|$ is unaffected by recovery events, and grows by one precisely when a new vertex gets infected, we see that 
$$
\P(A_i)=1-O(\tilde N_i/n)
$$
where $\tilde N_i=|V^I(0)|+i$ is the size of $|V^I(t)\cup V^R(t)\setminus V^R(0)|$ when $i$ gets infected in $G$.  This implies the statement of the lemma.
\end{proof}

When analyzing the SIR epidemic on a graph
drawn from $\psbm(\bm V,W)$, we will henceforward work with the exploration process laid out at the beginning of this subsection, and will use 
$$
X_{k}(t)=\sum_{v:k(v)=k} d_v(t)
$$
to denote the number of active half-edges attached to vertices of type $k$ at time $t$. When proving Theorem~\ref{thm:LLN}, we will actually prove a LLN jointly for $S_k(t)/n$, $I_k(t)/n$ and
$X_k(t)/(D_k n)$, with $x_k(t)$ being the limit of $X_k(t)/(nD_k)$.  Note that conditioned on $I_k(0)$, the initial number of active half-edges $X_k(0)$ is equal in distribution to $\Pois(D_kI_k(0))$, so in particular, $X_k(0)/(D_k n)$ has the same expectation
as $I_k(0)/n$, motivating the initial condition
$x_k(0)=i_k(0)$ in the theorem.

To motivate the differential equations \eqref{eq:s_dfq_1}, \eqref{eq:x_dfq_1}, and \eqref{eq:i_dfq_1}, we note that at any given time
\begin{itemize}
    \item at rate $\rrt X_k$, we lose one
    active edge contributing to $X_k$ to recovery,
    \item at rate $\irt X_k$, we  explore one of the active half-edges contributing to $X_k$.
   \end{itemize}
When we explore an active half-edge contributing to $X_k$, $X_k$ decreases by $1$.  To determine the rate of increase of $X_k$, we note that when a half-edge labeled $\ell$ is explored, 
with probability $p_{\ell k}=W_{\ell k}n_k/(nD_\ell)$, we choose an endpoint with label $k$, and with probability $S_k/n_k$, this endpoint then hits a susceptible vertex, in which case $S_k$ decreases by $1$ and $X_k$ increases by  $\Pois(D_k)$.
    Therefore, at an overall rate of $\irt \frac{S_k}{n}\sum_\ell X_\ell \frac{W_{\ell k}}{D_\ell}$ we
    decrease $S_k$ by $1$ and increase $X_k$ by  $\Pois (D_k)$.
 Defining  
    $$
    \hat s_k(t) = \frac{S_k(t)}{n}, 
    \quad\hat x_k(t) = \frac{X_k(t)}{nD_k},\quad\text{and}\quad
    \hat i_k(t) = \frac{I_k(t)}{n},
    $$
we see that the expected rate of change for $\hat s_k$, $\hat x_k$ and $\hat i_k$, conditioned on the current value of these quantities, is
equal to
$$
-\irt \hat s_k(t)\sum_\ell \hat x_\ell(t) W_{\ell k},
\qquad
 -(\irt + \rrt) \hat x_k(t) + \irt \hat s_k(t) \sum_\ell \hat x_\ell(t) W_{\ell k}\quad\text{and}\quad -\rrt \hat i_k(t) + \irt \hat s_k(t) \sum_\ell \hat x_\ell(t) 
        {W_{\ell k}},$$
respectively.  Thus 
we expect that these quantities will obey a LLN given in terms of solutions to the differential equations
\eqref{eq:s_dfq_1}, \eqref{eq:x_dfq_1}, and \eqref{eq:i_dfq_1}.

\section{Law of Large Numbers on Bounded Time Intervals}
\label{sec:lln}


In this section, we prove Theorem~\ref{thm:LLN}. To this end, we first state and prove a version of the LLN that allows for more general initial conditions than those implicit in Theorem~\ref{thm:LLN}.  More specifically, we generalize the setting discussed in the last section, with $X_k(0)\sim \Pois(D_kI_k(0))$ to a setting where $X_k(0)$ and $I_k(0)$ can be specified separately.
We will later use this more general theorem to discuss an epidemic starting from a single infected vertex at time $0$.

\subsection{Formal statement for arbitrary initial conditions}\label{sec:lnn-for-gen-initial-cond}

We will write the differential equations \eqref{eq:s_dfq_1}, \eqref{eq:x_dfq_1} and \eqref{eq:i_dfq_1} in the compact form
$d\bm y_t/dt = \bm b(\bm y_t)$, where $\bm y_t\in \R_+^{3K}$ is the vector
$(s_1(t),\dots,s_K(t),i_1(t),\dots,i_K(t),x_1(t),\dots, x_K(t))$
 and

\begin{equation}\label{vector-field}
    b^{j}(\y) = \begin{cases}
     -\irt y^j {\sum_{k = 1}^{K}} y^{k+2K} W_{k j} & j = 1, \dots, K \\
     -\rrt y^j + \irt y^{j-K} \sum_{k = 1}^{K}y^{k+{2K}} W_{k (j-K)}& j = K+1, \dots, 2K \\
     -(\irt + \rrt)y^j + \irt y^{j-2K}{\sum_{k = 1}^{K}} y^{k{+2K} }W_{k (j-2K)}& j = 2K+1, \dots, 3K.
    \end{cases}
\end{equation}
We will consider solutions to these equations with initial conditions in the set
$$
U_0=\left\{\bm y\in [0,\infty)^{3K}\colon
\sum_{k=1}^K (y^k+y^{K+k}) \leq 1, ~ \sum_{k=1}^K( y^k+y^{k+2K})\leq 2\right\},
$$
which includes the initial conditions from Theorem~\ref{thm:LLN}, where we set $x_k(0)=i_k(0)$ leaving just the condition
$\sum_{k=1}^K (y^k+y^{K+k}) \leq 1$.

\begin{remark}
\label{rmk:si-sx-decreasing}
 Note that any solution of the differential equation $d \bm y_t/dt = \bm b(\bm y_t)$ will stay within $ U_0$ if 
$\bm y_0\in U_0$ by the fact that (i) $d(y^k_t+y^{k+K}_t)/dt\leq 0$ and $d(y^k_t+y^{k+2K}_t)/dt\leq 0$ for all $k\in [K]$, and (ii)
$dy^j_t/dt\geq 0$ if $y^j_t=0$.   
Using furthermore that the vector field $\bm b$ is Lipschitz in $U_0$, we see that for $\bm y_0\in U_0$,
the differential equation  $d \bm y_t/dt = \bm b(\bm y_t)$ has a unique solution $\bm y_t\in U_0$ for all $t\in [0,\infty)$.
\end{remark}

Our main theorem in this section gives a LLN for an arbitrary initial configuration of susceptible vertices, infected vertices, and active half-edges. 
We will formulate it for a deterministic initial configuration, and later use this theorem to discuss random initial conditions.  We use $\bm{\hat y}_t$ to denote the random vector
       $$\bm{\hat y}_t=\left(\frac{S_1(t)}{n}, \dots,\frac{S_K(t)}n, \frac{I_1(t)}{n},\dots,\frac{I_K(t)}{n},  \frac{X_1(t)}{nD_1}
    ,\dots, \frac{X_K(t)}{nD_K}\right).
    $$

\begin{theorem}[Law of Large Numbers for Finite Time]
\label{thm:lln}
 Let $L$ be the Lipschitz constant 
   of 
    the vector field \eqref{vector-field} on
$
 U=\left\{\bm y\in [0,\infty)^{3K}\colon
\|\bm y\|_2\leq 4\right\}
$, let $t_0 < \infty$, and choose $\delta$ such that
$0<\delta\leq \frac 13e^{-t_0L}$.
Choose $\bm y_0\in U_0$, and let
    $\bm y_t$ be the unique solution of 
    $$\frac {d \bm y_t}{dt} = \bm b(\bm y_t)$$
   with initial condition $\y_0$.
    Consider the coupling of the SIR epidemic on $\psbm(\bm V,W)$
    as defined in Section \ref{sec:prelim},
    starting from an arbitrary initial configuration at time $0$ obeying the conditions
    $$\max_{v\in [n]}d_v(0)\leq \log n
\quad\text{and}\quad
\|\bm y_0 - {\bm {\hat y}}_0\|_2\leq \delta.
$$
 If $\min_k D_k\geq D_0>0$, then  
    \begin{equation}\label{lnn-bd}
           \P\left( \sup_{t \in [0,t_0]} \left\| \bm{\hat y}_t-\bm{ y}_t\right\|_2 > 3\delta e^{t_0L}\right)\leq 
            \frac \zeta n\left(1+\frac{t_0(1+\log n)^2}{\delta^2}\right),
             \end{equation}
  where $\zeta$ is a constant depending on $\irt$, $\rrt$, $\|W\|_\infty$ and $D_0$.
   \end{theorem}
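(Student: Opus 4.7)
The plan is to follow the Darling--Norris paradigm (as laid out in \cite{darling08:dfq-markov}), combining the Dynkin semimartingale decomposition of $\bm{\hat y}_t$ with Doob's $L^2$ maximal inequality and Gronwall's lemma. The drift calculation at the end of Section~\ref{sec:actual-prelim} already shows that, thanks to the Poisson property of the PSBM, the expected instantaneous drift of $\bm{\hat y}_t$ equals $\bm b(\bm{\hat y}_t)$ \emph{exactly}, since the mean of the $\Pois(D_k)$ batch of fresh half-edges added at each infection is exactly $D_k$, so no lower-order boundary correction appears. This yields the clean decomposition
\begin{equation*}
\bm{\hat y}_t = \bm{\hat y}_0 + \int_0^t \bm b(\bm{\hat y}_s)\,ds + \bm M_t
\end{equation*}
with $\bm M_t$ a c\`adl\`ag martingale. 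Subtracting the ODE for $\bm y_t$ and invoking the Lipschitz bound on $U$ gives the standard integral inequality $\|\bm{\hat y}_t - \bm y_t\|_2 \leq \delta + \sup_{s\leq t}\|\bm M_s\|_2 + L\int_0^t \|\bm{\hat y}_s - \bm y_s\|_2\,ds$, valid as long as $\bm{\hat y}_t$ remains in $U$.

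To control the jump sizes and the martingale, I would introduce the stopping time $\tau := \tau_\Delta \wedge \tau_U$, where $\tau_\Delta := \inf\{t:\max_v d_v(t) > \log n\}$ and $\tau_U := \inf\{t : \bm{\hat y}_t \notin U\}$. The hypothesis $\max_v d_v(0) \leq \log n$ handles $t=0$; for $t>0$, each infection endows a fresh vertex with $\Pois(D_k)$ half-edges, and since $D_k \leq K\|W\|_\infty$, a Chernoff bound gives $\P(\Pois(D_k) > \log n) = n^{-\omega(1)}$, so a union bound over the at most $n$ infections in $[0,t_0]$ yields $\P(\tau_\Delta \leq t_0) = O(1/n)$. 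This accounts for the additive $\zeta/n$ term. On $\{t\leq \tau\}$ every coordinate of $\bm{\hat y}_t$ jumps by at most $\log n /(nD_0)$, while the total jump rate is $O(n)$ (recoveries at rate $\leq \rrt n$ and infection attempts at rate $\leq \irt \sum_k X_k = O(n)$ by $\bm{\hat y}_t \in U$). Hence $\E\langle \bm M\rangle_{t_0\wedge\tau} = O(t_0(\log n)^2/n)$, and Doob's $L^2$ inequality applied componentwise yields
\begin{equation*}
\P\Bigl(\sup_{t \leq t_0\wedge\tau} \|\bm M_t\|_2 > 2\delta \Bigr) = O\Bigl(\frac{t_0(\log n)^2}{n\delta^2}\Bigr),
\end{equation*}
absorbable into the claimed bound after enlarging $\zeta$.

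On the complementary good event, Gronwall applied to the integral inequality gives $\|\bm{\hat y}_t - \bm y_t\|_2 \leq 3\delta e^{tL} \leq 3\delta e^{t_0 L} \leq 1$ for all $t \leq \tau \wedge t_0$, the last step being the standing hypothesis $\delta \leq \tfrac13 e^{-t_0 L}$. Since $\bm y_t \in U_0$ satisfies $\|\bm y_t\|_2 \leq \sqrt 6$ (using $\sum_k y^k \leq 1$, $\sum_k y^{K+k} \leq 1$, and $\sum_k y^{k+2K} \leq 2$), a deviation of at most $1$ keeps $\bm{\hat y}_t$ strictly inside $U = \{\|\bm y\|_2 \leq 4\}$. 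A standard bootstrap then forces $\tau_U \wedge t_0 = t_0$ on the good event, so the only way $\tau < t_0$ can occur is via $\tau_\Delta$, and a union bound of the two failure probabilities yields the stated estimate.

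The main obstacle is the delicate bookkeeping of this bootstrap: $U$ must be large enough that the Gronwall deviation $3\delta e^{t_0 L} \leq 1$ keeps the stopped process strictly inside, yet the jump-size and jump-rate bounds entering the quadratic-variation estimate must both hold on $\{t \leq \tau\}$, giving a slightly circular set of constraints to balance. Once that is resolved, the remaining steps are routine generator computations and standard applications of Doob and Gronwall.
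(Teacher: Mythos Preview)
Your proposal is correct and follows essentially the same approach as the paper: both invoke the Darling--Norris framework (Dynkin decomposition, Doob's $L^2$ inequality, Gronwall, and the bootstrap showing $\tau_U>t_0$ on the good event), with the variance/quadratic-variation bound and the Poisson tail bound on fresh active degrees playing exactly the roles you describe. The only cosmetic difference is that the paper cites Theorem~4.1 of \cite{darling08:dfq-markov} as a black box and then verifies its hypotheses via two short lemmas (a jump-variance bound $\alpha(\xi)\le \zeta_0 n^{-1}(1+\max_v d_v^A)^2$ and the bound $\P(\max_v d_v(t)>\log n)\le n^{-1}e^{7\|W\|_\infty}$), whereas you unpack that theorem explicitly.
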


\subsection{Proof of Theorem~\ref{thm:lln}}

The proof of Theorem~\ref{thm:lln} is based on tools that are standard when proving laws of large numbers for continuous time Markov processes: Doob’s $L^2$-inequality and Gronwall’s lemma. We first introduce some notation, consider the continuous-time  Markov process defined via the exploration process introduced in Section~\ref{sec:exploration},
\begin{equation}\label{Ynt-def}
    Y^n(t) \defeq \left((V^S_k(t))_{k\in [K]}, (V^I_k(t))_{k\in[K]}, 
    (d_v^A(t))_{v \in[n]}\right),
\end{equation}
with $V^S_k(t), V^I_k(t), V^R_k(t)$ as in \ref{sec:model}, and $d_v^A(t)$ denoting the active degree of vertex $v$ at time $t$.
The state space of $\{Y^n(t): t\geq 0\}$ is given by
\begin{multline}
    \calE^n \defeq \bigg\{\left((V_k^S)_{k \in [K]}, (V_k^I)_{k \in [K]}, 
    (d_v^A)_{v \in V}\right) \colon\forall k,~ V_k^S, V_k^I\subseteq V_k\text{ and }
      V_k^S \cap V_k^I=\emptyset;\\
   \forall v \in V,~d_v^A \in \N_0 \text{ with }d_V^A=0\text{ if } v\notin \cup_{k\in [K]} V_k^I\bigg\},
\end{multline}
\noindent where $V_k \defeq \{v \in [n] : k(v) = k\}$. We denote the elements of $\calE^n$ by $\xi$,
and use $q(\xi,\xi')$ to denote that rate at which $\xi$ transitions to $\xi'$.
We  consider the following projection onto the coordinates for which we aim to establish the LLN,
\begin{align*}
    \y = (y^1, \dots, y^{3K}) : \calE^n &\to \R^{3K} \\
    \xi & \to \frac{1}{n}\left(S_1, \dots, S_K, I_1, \dots, I_K, \frac{X_1}{D_1}, \dots, \frac{X_K}{D_k} \right)
\end{align*}
where here we overload the notation using $S_k = |V_k^S|$, $I_k = |V_k^I|$ and $X_k = \sum_{v \in V_k^I}d_v^A$. In order to prove the LLN with the help of Doob's $L^2$-inequality, we need a bound on the variance of the jumps, 
\[
    \alpha(\xi) = \sum_{\xi' \neq \xi}|\y(\xi') - \y(\xi)|^2 q(\xi, \xi').
\]
\begin{lemma}\label{lem:variance-bd}
Fix $W$, $\rrt$ and $\irt$, and assume that $\min_kD_k\geq D_0$ for some $D_0>0$.  Then there exists a constant 
$\zeta_0=\zeta_0(\|W\|_\infty, D_0,  \rrt,\irt)$ such that 
$$
\alpha(\xi)\leq \frac{\zeta_0}n\left(1+\max_v d_v^A\right)^2 \text{ for all } \zeta\in \calE^n.
$$
\end{lemma}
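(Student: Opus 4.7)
The quantity $\alpha(\xi)$ is the total intensity with which the $\y$-valued projection of the Markov chain makes a squared jump, so the natural plan is to enumerate the transitions out of $\xi$ in the exploration dynamics of Section~\ref{sec:exploration}, compute the size of each jump in the projected coordinates, and sum against the rates. The transitions split into two families: (i) recovery of an infected vertex $v$ at rate $\rrt$, and (ii) exploration of an active half-edge at an infected vertex $u$ at rate $\irt$ per half-edge, with the second family branching further over the drawn label $\ell\in[K]$, over whether the uniformly sampled target of label $\ell$ is susceptible, and in the susceptible case over the $\Pois(D_\ell)$-distributed number $d$ of newly activated half-edges at the freshly infected vertex.

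For recovery of $v$ with label $k$, only the coordinates $I_k/n$ and $X_k/(nD_k)$ change, so the squared jump is $1/n^2+(d_v^A)^2/(nD_k)^2\leq (1+(d_v^A)^2/D_0^2)/n^2$. Multiplying by the rate $\rrt$ and summing over all infected $v$, using $|V^I|\leq n$ together with $\sum_v (d_v^A)^2\leq n(\max_v d_v^A)^2$, bounds the recovery contribution to $\alpha(\xi)$ by a constant depending on $\rrt$ and $D_0$ times $(1+\max_v d_v^A)^2/n$.

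For a single active half-edge at $u$ with label $k$, the jump in the susceptible-hit case has squared norm at most $3/n^2+d^2/(nD_0)^2$ (three coordinates shifted by $\pm 1/n$, one by $-1/(nD_k)$, and one by $+d/(nD_\ell)$, with obvious simplification when $k=\ell$), while a miss contributes squared norm $1/(nD_k)^2$. Averaging against the conditional weights $p_{k\to\ell}$, $S_\ell/n_\ell$ and the Poisson mass in $d$, and using that $\sum_{d\geq 0}d^2 e^{-D_\ell}D_\ell^d/d!=D_\ell+D_\ell^2$ is uniformly bounded because $D_\ell\leq K\|W\|_\infty$, bounds the per-half-edge contribution to $\alpha(\xi)$ by a constant times $\irt/n^2$, with the constant depending on $\|W\|_\infty$, $K$, and $D_0$. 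The number of active half-edges is $\sum_{v\in V^I}d_v^A\leq n\max_v d_v^A$, so the exploration contribution is at most a constant times $\max_v d_v^A/n\leq(1+\max_v d_v^A)^2/n$.

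Adding the two contributions and collecting constants into a single $\zeta_0=\zeta_0(\|W\|_\infty,D_0,\rrt,\irt)$ gives the claim. The only mildly non-routine point is the infinite branching in case (ii) coming from the Poisson activation of new half-edges: summing $|\y(\xi')-\y(\xi)|^2$ over the countably many destinations requires the uniform second-moment bound for $\Pois(D_\ell)$ noted above, and all remaining steps are direct inspection of the four ways the vector $\y$ can change.
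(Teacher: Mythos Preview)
Your argument is correct and follows essentially the same route as the paper: enumerate recovery and exploration transitions, bound each squared jump in the projected coordinates, and sum against the rates using $|V^I|\leq n$ and $\sum_v d_v^A\leq n\max_v d_v^A$. The only organizational difference is that the paper further splits the exploration contribution into the ``miss'' case (only $X_k$ changes) and the ``hit'' case (an actual infection), whereas you average over both in one step; both lead to the same $O(n^{-1}(1+\max_v d_v^A)^2)$ bound. One small bookkeeping point: in the hit case only two coordinates shift by $\pm 1/n$ (namely $S_\ell/n$ and $I_\ell/n$), and the $-1/(nD_k)$ shift should be bounded by $1/(nD_0)^2$ rather than absorbed into the $3/n^2$; this does not affect the conclusion since everything is still dominated by a constant depending on $D_0$ and $\|W\|_\infty$ times $(1+d^2)/n^2$.
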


\begin{proof}
We write $\alpha(\xi)$ as a sum of three contributions,
$$
\alpha(\xi)=\alpha_R(\xi)+\alpha_A(\xi)+\alpha_I(\xi),
$$
with the first term coming from  transitions
$\xi\to\xi'$ corresponding to the recovery of a vertex, the second comes from the exploration of an active half-edge which does not lead to the infection of a new vertex, and the last one comes from infection events.

   We start by analyzing the contribution to $\alpha_R(\xi)$.
   Vertices recover at rate $\rrt$. Each time a vertex recovers, the number of infected vertices decreases by $1$ and the number of active infected edges decreases by the active degree of the recovered vertex. Thus the contribution due to recoveries can
   be written as
     \begin{align*}
        \alpha_R(\xi) &= \rrt \sum_k\sum_{v \in V_k^I} \left(n^{-2} + \frac{(d_v^A)^2}{n^2D_k^2}\right)
        \leq \frac{\rrt}n\left(1+
        D_0^{-2}\max_v (d_v^A)^2\right).
         \end{align*}

Next we analyze transitions where a vertex gets explored, but no vertex gets infected, in which case all that happens is that one of the active degrees decreases by $1$.  
Since   
half-edges of type $\ell$ get explored at rate $\irt X_\ell$ and infect no vertex with probability
$$
1-\sum_{k=1}^K\frac {S_kW_{k\ell}}{nD_\ell}\leq 1,
$$
we can bound
$$
\alpha_A(\xi)
\leq \irt \sum_{\ell=1}^KX_\ell\frac 1{n^2D_\ell^{2}}
= 
\irt \sum_{\ell=1}^K\sum_{v\in V_\ell^I}d_v^A\frac 1{n^2D_\ell^{2}}
\leq\frac {\irt }{nD_0^2}\max_v d_v^A.
$$

Finally, consider the transitions where a vertex of type $\ell$ infects a vertex of type $k$.  In this case the number of infected vertices in community $k$ increases by $1$, the number of susceptible vertices in community $k$ decreases by $1$, and the number of free, infected, half-edges increases by $\Pois(D_k)-1$ if the explored half-edge was of the same type.  If it was of a different type, $\ell$ then $X_\ell$ decreases by $1$
and $X_k$ increases by $\Pois(D_k)$.  Thus the contributions
to $\alpha(\zeta)$ from infections can be written as
  \[
        \alpha_I = \sum_{k=1}^K \alpha_{I, k}.
    \]
    where
\[ 
    \alpha_{I, k} = \irt \frac{S_k}n\sum_{\ell}X_\ell\frac{W_{\ell k} }{D_\ell}\left(n^{-2}+n^{-2}+\frac 1{n^2D_k^2}\E[(Z_k-\delta_{k,\ell})^2 +(1-\delta_{k,\ell})]\right),
\]   
with $Z_k\sim \Pois(D_k)$.  Thus
\begin{align*}
    \alpha_{I, k} &\leq \irt n^{-2}\frac{S_k}n\sum_{\ell}X_\ell\frac{W_{\ell k} }{D_\ell}\left(2+\frac 1{D_k^2}\E[Z_k^2+1]\right)\\
    &=\irt n^{-2}\frac{S_k}n\sum_{\ell}X_\ell\frac{W_{\ell k} }{D_\ell}\left(3+D_k^{-1}+D_k^{-2}\right)\\
    & \leq \frac{\irt}n\frac{S_k}n(3D_0^{-1}+D_0^{-2}+D_0^{-3})\|W\|_\infty
 \max_vd_v^A
\end{align*} 
 As a consequence,
 \[
        \alpha_I  \leq \frac{\irt}n
        (3D_0^{-1}+D_0^{-2}+D_0^{-3})\|W\|_\infty
 \max_vd_v^A
    \]
\end{proof}


Next we prove the following lemma, which follows immediately from the construction in Section~\ref{sec:actual-prelim}.

\begin{lemma}\label{lem:Poisson-bounds}
  Consider the coupling of the SIR epidemic on $\psbm(\bm V,W)$
    as defined in Section \ref{sec:prelim}.
Then the following  statements hold with with  probability at least $1- \frac 1ne^{7\|W\|_\infty}$.

i) For an infection starting with a single, infected vertex $v$ with $d_v(0)\sim \Pois(D_{k(v)})$, 
  $$
  \sup_{t\geq 0}\max_{v\in [n]}d_v(t)\leq \log n.
  $$  

ii) For an arbitrary initial configuration
$$\sup_{t\geq 0}\max_{v\in [n]}d_v(t)\leq \max\left\{\max_{v\in [n]}d_v(0),\log n\right\}.
 $$ 
 \end{lemma}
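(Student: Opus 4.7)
The plan is to exploit the monotonicity of the active degree process from Section~\ref{sec:exploration}: once a vertex $v$ has been infected, $d_v(\cdot)$ is non-increasing, decreasing by $1$ whenever one of its active half-edges is explored and dropping to $0$ when $v$ recovers. Writing $\tau_v$ for the infection time of $v$, this gives $\sup_{t\geq 0} d_v(t) = d_v(\tau_v)$, and by construction $d_v(\tau_v)$ is a $\Pois(D_{k(v)})$ random variable, drawn independently for all vertices infected at a strictly positive time.

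For part (i), the single, initially infected vertex is also assigned a $\Pois(D_{k(v)})$ number of active half-edges, so one can enlarge the probability space to produce independent random variables $d_v^\star \sim \Pois(D_{k(v)})$ for every $v\in[n]$ that dominate $\sup_t d_v(t)$ pointwise. A standard Chernoff bound gives
\[
\P\bigl(\Pois(\lambda) \geq \log n\bigr) \leq \inf_{s>0} e^{-s\log n + \lambda(e^s-1)}.
\]
Taking $s=2$, using $D_k \leq \|W\|_\infty$, and the numerical inequality $e^2-1 < 7$, this is at most $n^{-2}e^{7\|W\|_\infty}$. A union bound over the $n$ vertices then yields $\P(\max_v d_v^\star \geq \log n) \leq n^{-1}e^{7\|W\|_\infty}$, which proves (i).

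For part (ii) the same Poisson coupling applies verbatim to every vertex whose infection time is strictly positive; each such vertex satisfies $\sup_t d_v(t) \leq d_v^\star$ with the $d_v^\star$'s independent. The union-bound argument above therefore shows that, with probability at least $1 - n^{-1}e^{7\|W\|_\infty}$, every vertex infected at a positive time has active degree at most $\log n$ throughout. For vertices in $V^I(0)$, monotonicity gives $\sup_t d_v(t) \leq d_v(0) \leq \max_u d_u(0)$ directly, and combining the two bounds yields the statement of (ii). The only mildly non-trivial point in the whole argument is the choice of the Chernoff parameter: $s=1$ would leave a factor of $e^{\lambda(e-1)}$ without the $n^{-1}$ saving after union bounding, whereas $s=2$ produces two factors of $n^{-1}$, one of which is consumed by the union bound and the other of which appears in the final estimate, with $e^2-1<7$ then giving the exact exponent stated in the lemma.
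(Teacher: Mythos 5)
Your proposal is correct and follows essentially the same route as the paper: monotonicity of the active degree process gives stochastic domination by independent $\Pois(D_{k(v)})$ variables, after which a Markov/Chernoff bound with exponential moment parameter $2$ and the estimate $e^2-1<7$ yields the $n^{-2}e^{7\|W\|_\infty}$ per-vertex bound, and a union bound over $n$ vertices finishes. The paper phrases the tail bound as $\P(e^{2Z_v}\geq n^2)\leq n^{-2}\E[e^{2Z_v}]$, which is exactly your Chernoff bound at $s=2$, so the two arguments are identical in substance.
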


\begin{proof}
i) Let $Z_v$, $v\in V$, be independent $\Pois(D_{k(v)})$ random variables.  
Recall the construction of the coupled process and note that $d_v(t)$ is decreasing in $t$, we see that 
$\max_{v:k(v)=k}d_v(t)
$
is stochastically dominated by $\max_{v\in [n]} Z_v$.
We know that
$$
\P(Z_v\geq \log n)=\P(e^{2Z_v}\geq n^2)\leq \frac 1{n^2}\E[e^{2Z_v}]
=\frac 1{n^2}e^{(e^2-1)D_{k(v)}}\leq \frac 1{n^2}e^{7\|W\|_\infty},
$$
where we used that $D_k\leq \|W\|_\infty$ for all $k$.
The result  follows by a union bound .

The proof of (ii) is a straightforward generalization of the proof of (i).
\end{proof}

\begin{proof}[Proof of Theorem~\ref{thm:lln}]

Given our preparations, this proof follows immediately from Theorem 4.1 in \cite{darling08:dfq-markov}, which in turn is based on Gronwall's inequality and Doob's $L^2$ inequality.
Recall the definition of $U$ from Theorem~\ref{thm:lln},
$$
 U=\left\{\bm y\in [0,\infty)^{3K}\colon
\|\bm y\|_2\leq 4\right\},
$$
and set $\eps =3\delta e^{t_0L}$.
Note that $U$  is a superset of $ U_0$ since $\bm y\in  U_0$
implies that $\|\bm y\|_1\leq 3$ and $\|\bm y\|_2\leq \|\bm y\|_1$.
Furthermore, we have that
\begin{equation}
\label{asmp:darling-boundary}
    \text{for all } t\in [0,\infty)
    \text{ and all } \xi \in \calE^n,~ \norm{\y(\xi) - \y_t}_2 \leq \eps \text{ implies that } \y(\xi) \in U.
\end{equation}
To see this, note that
$\bm y_0\in U_0$ implies that $\bm y_t\in U_0$, which in turn implies that $\|\bm y_t\|_2\leq \|\bm y_t\|_1\leq 3$.  The claim then follows from the triangle inequality and the fact that   $\eps\leq 1$ by our assumption that $\delta\leq \frac 13 e^{-t_0L}$.
%
This puts us in the setting of  \cite{darling08:dfq-markov}, Section 4.  

Theorem 4.1
 of \cite{darling08:dfq-markov} then states that for all $A\in \R_+$
 \begin{equation}\label{darlin-thm4.1-bd}
        \P\left( \sup_{t \leq t_0} \| \hat{\y}_t -  \y_t\|_2 > \eps \right) \leq \frac{4A t_0}{\delta^2 }+\P\left(\int_0^{T\wedge t_0} \alpha(Y^{n}(t))\; dt > At_0\right),
    \end{equation}
where $T := \inf\{t \geq 0: \hat{\y}_t \not \in U\}$. 
Set $$
A=\frac {\zeta_0}n(1+\log n)^2,
$$
where $\zeta_0$ is the constant from  Lemma~\ref{lem:variance-bd}.  
Combining the fact that $d_v(0)\leq \log n$
by the assumption of the theorem with the first statement of Lemma~\ref{lem:Poisson-bounds} (ii) and Lemma~\ref{lem:variance-bd}, we then have that
\begin{equation}
\label{eq:darling-2nd-moment-bound}
    \P\left(\int_0^{T\wedge t_0} \alpha(Y^{n}(t))\; dt > At_0\right)\leq \P\left(\sup_{t\in [0,t_0]}\max_{v\in [n]}d_v(t)\geq \log n\right) 
\leq \frac 1ne^{7\|W\|_\infty}
\end{equation}
This completes the proof.

\end{proof}

\begin{remark}
\label{rmk:drift-error-darling}
Theorem 4.1 of \cite{darling08:dfq-markov} covers the more general situation where the drift vector of 
the continuous time Markov-chain is only approximately equal to $\bm b(\hat {\y}_t)$.  To state this generalization, we assign a \emph{drift vector}
$\bm \beta(\xi) $ to each $\xi\in  \calE^n$ via
\[
    \bm \beta(\xi) = \sum_{\xi' \neq \xi}(\y(\xi') - \y(\xi)) q(\xi, \xi').
\]
In the setting of Theorem~\ref{thm:lln}, we had
$\bm \beta (Y^{n}(t))=\bm b(\hat{\y}_t)$ for all times $t$.  If this is violated, Theorem 4.1 of \cite{darling08:dfq-markov} gives an additional error term of
\begin{equation}
\label{eqn:starting-condition-error}
    \P\left(\int_0^{T\wedge t_0} \|\bm b(\hat{\y}_t)- 
\bm \beta(Y^{n}(t))\|_2
dt >\delta\right)
\end{equation}
on the right hand side of \eqref{darlin-thm4.1-bd}.  We will need this generalization in the proof of Theorem~\ref{thm:LLN}.
\end{remark}

\subsection{Proof of Theorem~\ref{thm:LLN}}



Recall that Theorem \ref{thm:lln} required that $\norm{\hat \y_0 - \y_0} \leq \delta$, while Theorem \ref{thm:LLN} allows for arbitrary 
initial conditions as long as they obey Assumption~\ref{ass:initial-SandI}.  To prove Theorem \ref{thm:LLN}, we therefore need to bound the probability 
\[
    \P\left(||\hat\y_0 - \y_0||_2 > \delta \right),
\]
where $\y_0$ is the initial conditions as defined in the statement of Theorem \ref{thm:LLN}.
Using the fact that $\x(0) = \i(0)$, we have
\begin{align*}
    \norm{ \hat\y_0 - \y_0}_2 &\leq \norm{\hat \s_0 - \s_0}_2 + \norm{\hat \i_0 - \i_0}_2 + \norm{\hat \x_0 - \i_0}_2 \\
    &\leq \norm{\hat \s_0 - \s_0}_2 + 2\norm{\hat \i_0 - \i_0}_2 + \norm{\hat \x_0 - \hat \i_0}_2.
\end{align*}
By Assumption~\ref{ass:initial-SandI}, the first two terms go to $0$ in probability, furthermore $\E[\hat\x_0] = \E[\hat \i_0]$, thus by standard concentration bounds, the last term therefore also goes to $0$ in probability. Showing that
for any $\delta>0$,
\begin{equation}
    \P\left(\norm{\hat \y_0 - \y_0}_2 > \delta \right) \to 0
    \quad\text{as}\quad n\to\infty.
\end{equation}
Next, as in the proof of Lemma~\ref{lem:Poisson-bounds}, 
\[ \P\left(\max_{v\in [n]}d_v(0)>\log n\right) \leq \frac 1ne^{7\|W\|_\infty},
 \]
which goes to $0$ as $n\to\infty$ as well. Finally, 
by Assumption \ref{ass:initial-SandI} \ref{asmp:init-s} it follows that $n_k/n$ is bounded away from $0$ uniformly in $n$. Combining that fact with the assumption that $\sum_\ell W_{k \ell} > 0$ for all $k \in [K]$, we know that
$\min_k D_k\geq D_0$ for some constant $D_0>0$ that does not depend on $n$.
Therefore, under the conditions of Theorem~\ref{thm:LLN}, those of Theorem \ref{thm:lln}
hold with probability tending to $1$ as $n\to\infty$,
reducing Theorem \ref{thm:LLN} to Theorem \ref{thm:lln} when $G \sim \psbm(\V, W)$.

To finish the proof of Theorem \ref{thm:LLN} when
$G \sim \sbm(\V, W)$, we use Lemma \ref{lem:coupling}, which states that
there exists a $W'$ such that the distribution of $G'\sim \psbm(\V, W')$ conditioned on being simple is that of $G$, and then apply 
 Theorem \ref{thm:lln}  to SIR epidemics on $G'$.  More precisely, we will use the extension described
 in Remark~\ref{rmk:drift-error-darling} to take into account that the drift vector ${\bm\beta}(\xi) $ of the continuous time Markov chain for the coupled SIR dynamics on $G'$ is given in terms of $W'$,
\begin{equation}
\footnotesize
    {\bm\beta}(\xi) = \left(\left(-\irt \frac{S_k}{n}\sum_\ell \frac{X_\ell}{n}\frac{W'_{\ell k}}{D_\ell}\right)_{k \in [K]}, \left(-\rrt \frac{I_k}{n} + \irt \frac{S_k}{n}\sum_\ell \frac{X_\ell}{n}\frac{W'_{\ell k}}{D_\ell}\right)_{k \in [K]}, \left(-(\irt + \rrt)\frac{X_k}{nD_k} + \irt \frac{S_k}{n}\sum_\ell \frac{X_\ell}{n} \frac{W'_{\ell k}}{D_\ell}\right)_{k \in [K]}\right),
\end{equation}
while we want to establish the law of large numbers for the differential equations given in terms of the vector field \eqref{vector-field} defined in terms of $W$.

%
We need an upper bound on the term in  \eqref{eqn:starting-condition-error}. Recall that $W'_{\ell k} = W_{\ell k}/(1- W_{\ell k}/n)$, and assume that $n\geq 2\|W\|_\infty$. For any $\xi \in \calE^n$ we have 
\begin{align*}
    \norm{\b(\hat \y(\xi)) - \bm \beta(\xi)}_2 &\leq \norm{\b(\hat \y(\xi)) - \bm \beta(\xi)}_1 
    \leq \sum_{k \in [K]}3\norm{\irt \hat s_k \sum_\ell \hat x_\ell\left(W'_{\ell k} - W_{\ell k}\right)}_1 \leq \frac{\zeta}{n}\norm{\sum_\ell \hat x_\ell}_1
\end{align*}
for some constant $\zeta$ depending on $\|W\|_\infty$ and $\irt$.
For $n$ sufficiently large, we therefore have that
\begin{align*}
    \P\left(\int_0^{T \wedge t_0} \norm{\b(\hat\y_t) - \bm \beta(Y^n(t))}_2 ~ dt > \delta \right) &\leq \P\left(\int_0^{T \wedge t_0} \frac{\zeta}{n}\norm{\hat \x(t)}_1 ~ dt > \delta\right) \\
    = \P\left(\frac{\zeta t_0}{n} \norm{\hat \x(t)}_1 > \delta\right) 
    &\leq \P\left(D_0^{-1}\sup_{t \geq 0}\max_{v \in [n]} d_v(t) > \frac{\delta n}{ \zeta t_0}\right) \\
    &\leq n^{-1}e^{7\norm{W'}_\infty}
   \leq n^{-1}e^{14\norm{W}_\infty}
\end{align*}
where the second to last inequality follows from a proof analogous to that of Lemma \ref{lem:Poisson-bounds}. 
This proves the desired law or large numbers for 
the epidemic on $G' \sim \psbm(\V, W')$.  Conditioning on $G'$ being simple, the statement of Theorem \ref{thm:LLN} for $G \sim \sbm(\V, W)$ follows with the help of Lemma \ref{lem:coupling}.

\section{Law of Large Numbers for the Final Size}
\label{sec:uniqueness-implicit-solution}

In this section, we will prove 
Theorem~\ref{thm:lln-final-size}, which gives a law of large numbers for the final state of the SIR epidemic on the stochastic block model.

\subsection{Solution to Differential Equations}

We start by establishing a few elementary facts about the solutions of the differential equations.  Throughout this section, $\bm b(\cdot)$
will be the vector field defined in \eqref{vector-field}.

\begin{lemma}
\label{lem:diff-eq-sol-cts-finite-t}
    For all finite $t$, the solution
    $\bm y_t$  to 
    the differential equation $d \bm y_t/{dt} = \bm b(\bm y_t)$
 is  continuous in the initial conditions $\y_0\in U_0$.
 
\end{lemma}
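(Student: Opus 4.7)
The plan is to prove continuity with respect to initial conditions using the standard Gronwall argument, exploiting the facts already noted in Remark~\ref{rmk:si-sx-decreasing}: solutions starting in $U_0$ stay in $U_0$ for all time, and $\bm b$ is Lipschitz on a neighborhood of $U_0$ (specifically on $U$, with some constant $L$).

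First, I would fix $t_0 \in (0,\infty)$ and two initial conditions $\bm y_0, \bm y_0' \in U_0$, and let $\bm y_t, \bm y_t'$ denote the corresponding (unique) solutions, which by Remark~\ref{rmk:si-sx-decreasing} remain in $U_0 \subseteq U$ for all $t \geq 0$. Writing each solution in integral form,
\begin{equation*}
\bm y_t - \bm y_t' = (\bm y_0 - \bm y_0') + \int_0^t \bigl(\bm b(\bm y_s) - \bm b(\bm y_s')\bigr)\, ds,
\end{equation*}
and using the Lipschitz bound $\|\bm b(\bm y) - \bm b(\bm y')\|_2 \leq L \|\bm y - \bm y'\|_2$ valid on $U$, I obtain
\begin{equation*}
\|\bm y_t - \bm y_t'\|_2 \leq \|\bm y_0 - \bm y_0'\|_2 + L \int_0^t \|\bm y_s - \bm y_s'\|_2 \, ds.
\end{equation*}

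Next, I would apply Gronwall's inequality to conclude
\begin{equation*}
\sup_{t \in [0,t_0]} \|\bm y_t - \bm y_t'\|_2 \leq \|\bm y_0 - \bm y_0'\|_2 \, e^{L t_0},
\end{equation*}
which gives the desired continuity: for any $t < \infty$, $\bm y_t$ depends continuously on $\bm y_0 \in U_0$, with modulus of continuity $e^{Lt}$.

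I do not expect any serious obstacle here, since the vector field $\bm b$ is a polynomial in $\bm y$ and is therefore smooth on the bounded set $U$, giving a finite Lipschitz constant; and the invariance of $U_0$ under the flow (already verified via the sign properties of $\bm b$ on the boundary of $U_0$) ensures that both solutions stay inside the Lipschitz domain uniformly in $t$. The only mildly non-routine aspect is to make the dependence on the time horizon explicit, but this is precisely what Gronwall delivers.
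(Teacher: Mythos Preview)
Your proposal is correct and is precisely the standard Gronwall argument the paper has in mind; the paper's own proof simply says ``standard, follows immediately from Gronwall's inequality and the fact that $\bm b(\cdot)$ is Lipschitz continuous.'' You have filled in exactly those details, including the explicit bound $\|\bm y_t-\bm y_t'\|_2\le e^{Lt_0}\|\bm y_0-\bm y_0'\|_2$.
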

\begin{proof}
The proof is standard and follows immediately from Gronwall's inequality and the fact that $\bm b(\cdot)$ is Lipschitz continuous.
\end{proof}

\begin{lemma}\label{lem:dfq_limit}
Let $\bm y_t=(s_1(t),\dots,s_K(t),i_1(t),\dots,i_K(t),x_1(t),\dots, x_K(t))$ be the solution to the differential equation
$d \bm y_t/{dt} = \bm b(\bm y_t)$ with initial condition $\y_0\in U_0$.  Then
\begin{enumerate}[(a)]
    \item \label{cond:s1-limit} 
    As $t\uparrow \infty$, $s_k(t)\downarrow s_k(\infty)\geq 0$.
    \item \label{cond:x-limit} 
    $\lim_{t \to\infty} x_k(t) = 0$ 
    \item \label{cond:i-limit} 
    $\lim_{t \to\infty} i_k(t) = 0$
    \item\label{sinfty>0} $s_k(\infty)>0$ if and only if $s_k(0)>0$
    \item \label{x>0forallt} Assume that $W$ is irreducible, $s_k(0)>0$ for all $k$, and that $\vec x(0)\neq 0$.  Then 
    $$x_k(t)>0,\;i_k(t)>0\; \text{ and } \;\frac {ds_k(t)}{dt}<0
    $$
   for all $ 0<t<\infty$ and all $k\in [K]$.\end{enumerate}
\end{lemma}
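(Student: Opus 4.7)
\medskip\noindent\textbf{Proof proposal.} My plan is to dispatch parts (a)--(d) with elementary ODE arguments and reserve the main effort for (e), which is the only nontrivial statement.

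For (a), equation \eqref{eq:s_dfq_1} gives $ds_k/dt = -\irt s_k \sum_\ell x_\ell W_{\ell k} \le 0$ by nonnegativity of $s_k$, $x_\ell$, and $W_{\ell k}$ (invariance of $U_0$ is recalled in Remark~\ref{rmk:si-sx-decreasing}), so $s_k(t)$ is non-increasing and bounded below by $0$, hence converges. For (b) and (c), I will exploit two telescoping identities: adding \eqref{eq:s_dfq_1} to \eqref{eq:x_dfq_1} yields $\frac{d}{dt}(s_k + x_k) = -(\irt+\rrt) x_k$, so $(\irt+\rrt)\int_0^\infty x_k(u)\,du \le s_k(0) + x_k(0) < \infty$; adding \eqref{eq:s_dfq_1} to \eqref{eq:i_dfq_1} gives analogously $\rrt\int_0^\infty i_k(u)\,du < \infty$. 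Since $\bm b$ is Lipschitz on the compact invariant set $U_0$, both $x_k$ and $i_k$ are uniformly Lipschitz in $t$, and the standard fact that a nonnegative, uniformly Lipschitz, integrable function must tend to $0$ at infinity delivers (b) and (c). For (d), if $s_k(0) = 0$ then $s_k(t) \equiv 0$ by uniqueness for the linear ODE satisfied by $s_k$; if $s_k(0) > 0$, integrating \eqref{eq:s_dfq_1} along the trajectory gives $s_k(t) = s_k(0)\exp\bigl(-\irt \sum_\ell W_{\ell k} \int_0^t x_\ell(u)\,du\bigr)$, and the exponent remains bounded uniformly in $t$ by (b), so $s_k(\infty) > 0$.

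The main obstacle is (e), where the challenge is to propagate positivity from a single component $x_{k_0}(0) > 0$ to every $x_k$ through the coupling provided by $W$. My key device will be the rescaling $X_k(t) := e^{(\irt+\rrt)t} x_k(t)$; a direct computation using \eqref{eq:x_dfq_1} shows
\[
\frac{dX_k}{dt} = \irt s_k(t) \sum_\ell W_{\ell k} X_\ell(t) \ge 0,
\]
so each $X_k$ is non-decreasing, and hence $X_{k_0}(t) \ge X_{k_0}(0) > 0$ for all $t \ge 0$. Using irreducibility of $W$, for any target $k$ I can pick a chain $k_0 = \ell_0, \ell_1, \dots, \ell_m = k$ with $W_{\ell_{j-1}\ell_j} > 0$ for each $j$, and propagate positivity along this chain: if $X_{\ell_{j-1}}(u) > 0$ for all $u > 0$, then for any $t > 0$
\[
X_{\ell_j}(t) \ge \int_0^t \irt s_{\ell_j}(u) W_{\ell_{j-1}\ell_j} X_{\ell_{j-1}}(u)\,du > 0,
\]
using $s_{\ell_j}(u) > 0$ from (d). Induction on $j$ yields $x_k(t) > 0$ for all $t > 0$ and all $k$. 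Then $i_k(t) > 0$ for $t > 0$ follows by variation of constants applied to \eqref{eq:i_dfq_1}, since for such $t$ the forcing term is strictly positive (choose any $\ell$ with $W_{\ell k} > 0$, which exists by the standing assumption on $W$, and use $x_\ell(u) > 0$ on $(0,t]$). Finally, $ds_k/dt < 0$ for $t > 0$ is immediate from \eqref{eq:s_dfq_1} together with $s_k(t) > 0$ and $\sum_\ell W_{\ell k} x_\ell(t) > 0$.
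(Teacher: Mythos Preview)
Your proof is correct. The overall architecture matches the paper's, but two of your technical devices differ and are worth noting.

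For (b) and (c), the paper uses the same identities $\frac{d}{dt}(s_k+x_k)=-(\irt+\rrt)x_k$ and $\frac{d}{dt}(s_k+i_k)=-\rrt i_k$, but instead of integrability plus uniform Lipschitzness it argues via monotone convergence of $s_k+x_k$ (and $s_k+i_k$) to deduce that the limits $x_k(\infty)$ and $i_k(\infty)$ exist, and then rules out a positive limit by the contradiction that $s_k+x_k$ would eventually become negative. Your argument is equally standard and arguably cleaner; the paper's route has the slight bonus that the existence of the limit is established separately from its value.

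For (e), the paper observes $dx_k/dt\ge -(\irt+\rrt)x_k$ (equivalent to your monotonicity of $X_k=e^{(\irt+\rrt)t}x_k$) and then runs a set-valued induction: it tracks $\mathcal K(t)=\{k:x_k(t)>0\}$ and shows, using irreducibility, that whenever $\mathcal K(t)\neq[K]$ some new index enters for all $t'>t$. Your integral-form propagation along an irreducibility chain is the same idea in disguise; the rescaling makes the monotonicity explicit and lets you pass positivity along the chain by a single lower bound on $X_{\ell_j}(t)$, which is a tidy alternative to the paper's incremental set argument.
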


\begin{proof}
The proof follows from a straightforward analysis of the differential equations \eqref{eq:s_dfq_1}, \eqref{eq:x_dfq_1}, and \eqref{eq:i_dfq_1} and is given in Appendix~\ref{app:diff-equ}.
\end{proof}


\begin{lemma} \label{lem:implicit-theta}
For all initial conditions $\y_0\in U_0$, the limit $\s(\infty)=\lim_{t\to\infty}\s(t)$ obeys the implicit equation
\begin{equation}\label{eq:implicit-eqn-w-starting-conditions}
       s_k(\infty) = s_k(0)\exp\left\{-\frac{\irt}{\irt + \rrt} \sum_\ell W_{\ell k}\Big(s_\ell(0)-s_\ell(\infty)+x_\ell(0)\Big)\right\}.
\end{equation}
\end{lemma}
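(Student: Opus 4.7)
The plan is to derive the implicit equation by integrating the differential equations \eqref{eq:s_dfq_1} and \eqref{eq:x_dfq_1} over $[0,\infty)$ and combining them, relying crucially on the vanishing of $x_k(t)$ at infinity established in Lemma~\ref{lem:dfq_limit}\eqref{cond:x-limit}.

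The first key observation is that adding \eqref{eq:s_dfq_1} and \eqref{eq:x_dfq_1} gives the particularly clean identity
\begin{equation*}
\frac{d}{dt}\bigl(s_k(t)+x_k(t)\bigr) = -(\irt+\rrt)\, x_k(t),
\end{equation*}
in which the nonlinear term cancels. Integrating from $0$ to $T$ and letting $T\to\infty$, and using $x_k(\infty)=0$ from Lemma~\ref{lem:dfq_limit}\eqref{cond:x-limit}, together with the monotone convergence $s_k(t)\downarrow s_k(\infty)$ from \eqref{cond:s1-limit}, yields
\begin{equation*}
\int_0^\infty x_k(t)\,dt \;=\; \frac{1}{\irt+\rrt}\Bigl(s_k(0)+x_k(0)-s_k(\infty)\Bigr).
\end{equation*}
In particular this integral is finite, which is a byproduct we will need in the next step.

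Next I would handle equation \eqref{eq:s_dfq_1}. On the set of indices where $s_k(0)>0$, Lemma~\ref{lem:dfq_limit}\eqref{sinfty>0} guarantees $s_k(t)>0$ for all $t\in[0,\infty]$, so we may divide through by $s_k$ to obtain
\begin{equation*}
\frac{d}{dt}\log s_k(t) \;=\; -\irt \sum_\ell W_{\ell k}\, x_\ell(t).
\end{equation*}
Integrating on $[0,\infty)$ and substituting the previous identity for $\int_0^\infty x_\ell(t)\,dt$ gives exactly
\begin{equation*}
\log \frac{s_k(\infty)}{s_k(0)} = -\frac{\irt}{\irt+\rrt}\sum_\ell W_{\ell k}\bigl(s_\ell(0)-s_\ell(\infty)+x_\ell(0)\bigr),
\end{equation*}
which upon exponentiating is the claimed equation \eqref{eq:implicit-eqn-w-starting-conditions}. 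For the remaining indices where $s_k(0)=0$, both sides of \eqref{eq:implicit-eqn-w-starting-conditions} are zero: the left side by Lemma~\ref{lem:dfq_limit}\eqref{sinfty>0}, and the right by the explicit factor of $s_k(0)$.

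The argument is essentially mechanical, so there is no real obstacle; the only subtle point is justifying the interchange of limit and integral, which is taken care of by the nonnegativity and monotone/bounded convergence of the relevant quantities, and the care required to handle the degenerate case $s_k(0)=0$ separately to avoid dividing by zero.
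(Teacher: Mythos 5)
Your proof is correct and follows essentially the same route as the paper: integrate the exact relation $\frac{d}{dt}(s_k+x_k)=-(\irt+\rrt)x_k$ to obtain a closed form for $\int_0^\infty x_k\,dt$, integrate $\frac{d}{dt}\log s_k$ over $[0,\infty)$, and substitute. The only differences are cosmetic: you spell out the degenerate case $s_k(0)=0$ and the positivity needed to divide by $s_k$, both of which the paper's terse proof leaves implicit.
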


\begin{proof}
Defining  $\chi_\ell(t)= \int_{0}^t x_\ell(\tau) d\tau$, we
can integrate \eqref{eq:s_dfq_1} to express $s_k(\infty)$ as
\begin{equation}
    s_k(\infty) = s_k(0)\exp\left\{-\irt \sum_\ell \chi_\ell(\infty)W_{\ell k}\right\}.
\end{equation}
Next we express the term $(\irt+\rrt)x_k(t)$ in \eqref{eq:x_dfq_1} as 
$(\irt+\rrt)\frac {d\chi_k(t)}{dt}$ and then use  \eqref{eq:s_dfq_1} and \eqref{eq:x_dfq_1} to conclude that the derivative of $s_k(t)+x_k(t)+(\irt+\rrt)\chi_k(t)$ is $0$, showing in particular that $s_k(\infty)+(\irt+\rrt)\chi_k(\infty)=s_k(0)+x_k(0)$.  Inserted into the above expression for $s_k(\infty)$ this proves the statement of the lemma.
\end{proof}
\begin{remark}
Assume that $s_k(0)>0$ for all $k$, and set $q_k=s_k(\infty)/s_k(0)$.
We can then rewrite the implicit equation \eqref{eq:implicit-eqn-w-starting-conditions} as an implicit equation for $q_k$,
 \begin{equation}\label{qk-with-x}
    q_k = e^{-a_k}\exp\left(-\sum_\ell \Msir_{\ell k}\left( 1 - q_\ell\right)\right)
    =e^{-a_k}G_k(\q)
\end{equation}
where  
$a_k=\frac \irt{\irt+\rrt}\sum_{\ell}W_{\ell k}x_\ell$, $\Msir= \frac{\irt}{\irt + \rrt} W\diag(\s(0))$, and  $G_k(\vec q)$ is the generating function for the off-spring distribution  of a Poisson multi-type branching process with mean matrix $\Msir$ starting from a root of type $k$. As we will see in the next section, for $x(0)>0$, the implicit equation \eqref{qk-with-x}
has a unique solution which is the survival probability of a certain ``backward'' branching process expressing the probability that a random vertex of type $k$ gets infected in the course of the infection.
\end{remark} 


\subsection{Backward Branching Process}
\label{sec:backward-BP}
In the previous section we derived the implicit equation \eqref{qk-with-x} by considering the infinite time limit of the solutions to the differential equations \eqref{eq:s_dfq_1}, \eqref{eq:x_dfq_1}, and \eqref{eq:i_dfq_1}.  As we will see, they can also be understood as an implicit equation for the survival probability of a certain branching process.  To motivate the definition of this branching process, we note that in order for an initially susceptible vertex to be eventually infected, it either has to be infected directly by a vertex infected at time $0$, or it has to be infected via a chain of infections going through initially susceptible vertices.  

Consider an active edge contribution to $X_\ell(0)$.  The probability that it directly infects a random vertex $v$ of label $k$ is $\frac\irt{\irt +\rrt}\frac{W_{\ell k}}{nD_\ell}$.  Treating these infection events as approximately independent, we see that the probability that $v$ is not directly infected is approximately equal to
$$
\prod_\ell\left(1-\frac\irt{\irt +\rrt}\frac{W_{\ell k}}{nD_\ell}\right)^{X_\ell}
\approx \exp\left(-\frac\irt{\irt +\rrt}\sum_\ell W_{\ell k}x_\ell\right) = e^{-a_k}.
$$
On the other hand, if a vertex $w$ that is connected to $v$ via an edge gets infected at some point during the course of the SIR epidemic, its probability to transmit the infection to $v$ is equal to $\frac \irt{\irt+\rrt}$.
Since $v$ has $\Pois(W_{k\ell}{\hat s}_\ell(0))\approx \Pois(W_{\ell k}s_\ell(0))$ many neighbors of of label $\ell$, who themselves can again be either directly of indirectly infected, we are lead to consider the following ``backward'' branching process.

Starting from a susceptible vertex $v$ of label $k$, we directly infect it with probability $1-e^{-a_k}$ (represented by a child of type $K+1$ in the backward branching process), while with probability $e^{-a_k}$ we give it $\Pois\left(\frac \irt{\irt+\rrt}W_{\ell k}s_\ell(0)\right)$ children of color $\ell$ which could transmit an infection to $v$.  In later generations, we proceed iteratively  with the same offspring distribution for parents of type $k\in [K]$, while parents which are infected stay infected (formally, with probability $1$, each parent of type $K+1$ has exactly one child of type $K+1$).  As argued above, the survival probability of the backward branching process starting from a root of type $k$ should then be asymptotically equal to the probability that a randomly chosen vertex from the vertices with label $k$ that are susceptible at time $0$ gets infected at some time during the epidemic.

While we don't derive this directly, we will prove it indirectly by showing that for $\sum_\ell a_\ell>0$ the implicit equation \eqref{qk-with-x} has a unique solution, and that this solution is the survival probability of the backward branching process starting from a  root  of type $k$, see Appendix~\ref{app:branching} for details.

\subsection{$\Reff$ and herd immunity}
\label{sec:herd-immunity}
To motivate the next definition and lemma, we note that the differential equation \eqref{eq:x_dfq_1} can be written in the form 
     \begin{equation}\label{dxdt-matrix-form}
        \frac{d\x(t)}{dt} = (\irt + \rrt)\x(t)(\Msir(t)-\1), 
    \end{equation}
where $\vec{x}(t) = (x_1(t), \dots, x_K(t))$, $\1$ is the $K\times K$ identity matrix and $\Msir(t)$ is the matrix 
    \[
        \Msir(t) = \frac{\irt}{\irt + \rrt} W\diag(\s(t)).
    \]
Using $\lambda_{\max}(\Msir(t))$ to denote the largest eigenvalue of the matrix $\Msir(t)$, we define 
    \begin{equation}
        \Reff(t) \defeq \lambda_{\max}( \Msir(t)).
    \end{equation}

\begin{lemma}\label{lem:dfq_subcritical}
$\Reff(t) $ is a continuous, non-increasing function of $t$.  If we assume that $W$ is irreducible, $s_k(0)>0$ for all $k$, and $\x(0)\neq \bm 0$, then $\Reff(t)$ is strictly monotone  and $\Reff(t)<1$ for all large enough $t$.
\end{lemma}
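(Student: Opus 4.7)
The plan is to establish the four claims in sequence: continuity, non-strict monotonicity, strict monotonicity under the extra hypotheses, and eventual $\Reff(t)<1$.

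For continuity and non-strict monotonicity, I would first note that $\Msir(t)=\frac{\irt}{\irt+\rrt}W\diag(\s(t))$ depends continuously on $t$ because $\s(t)$ is the $C^1$ solution of a Lipschitz ODE, and the spectral radius of a matrix is a continuous function of its entries, so $\Reff(t)=\lambda_{\max}(\Msir(t))$ is continuous. Equation \eqref{eq:s_dfq_1} gives $ds_k/dt\leq 0$ since all factors are non-negative, so $\s(t)$ is entrywise non-increasing; hence so is $\Msir(t)$. The Perron--Frobenius fact that the spectral radius of a non-negative matrix is monotone in its entries then yields $\Reff(t)$ non-increasing.

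For strict monotonicity under the additional assumptions ($W$ irreducible, $s_k(0)>0$, $\x(0)\neq \bm 0$), Lemma~\ref{lem:dfq_limit}(e) gives $ds_k/dt<0$ for all $t>0$ and all $k$, while Lemma~\ref{lem:dfq_limit}(d) gives $s_k(t)>0$ for all $t$ and $k$. Consequently, for $0\leq t_1<t_2$ with $t_2>0$, $\Msir(t_1)\geq \Msir(t_2)$ entrywise with strict inequality at every $(\ell,k)$ for which $W_{\ell k}>0$; irreducibility of $W$ ensures such an entry exists, and $\Msir(t_1)$ is itself irreducible. I would then invoke the classical strict Perron--Frobenius monotonicity result (if $0\leq A\leq B$, $A\neq B$, and $B$ is irreducible, then $\rho(A)<\rho(B)$) to conclude $\Reff(t_2)<\Reff(t_1)$.

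For the final claim, since $\Reff$ is continuous and non-increasing, $\lim_{t\to\infty}\Reff(t)=\lambda_{\max}(\Msir(\infty))=:\Reff(\infty)$ exists (using $\s(t)\to\s(\infty)$ from Lemma~\ref{lem:dfq_limit}(a) and continuity of the spectral radius), so it suffices to show $\Reff(\infty)<1$. I would argue by contradiction: suppose $\Reff(\infty)\geq 1$. Because $s_k(\infty)>0$ for all $k$, $\Msir(\infty)$ is irreducible, so it admits a right Perron eigenvector $\vec u>0$ with $(\Msir(\infty)-\1)\vec u=(\Reff(\infty)-1)\vec u\geq 0$. Since $\Msir(t)\geq \Msir(\infty)$ entrywise and $\vec u>0$, we get $(\Msir(t)-\1)\vec u\geq 0$ for all $t$. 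Using \eqref{dxdt-matrix-form} and $\x(t)\geq 0$,
\[
\frac{d}{dt}\bigl(\x(t)\vec u\bigr)=(\irt+\rrt)\,\x(t)\bigl(\Msir(t)-\1\bigr)\vec u\geq 0,
\]
so $\x(t)\vec u$ is non-decreasing; but $\x(t)\to 0$ by Lemma~\ref{lem:dfq_limit}(b) forces $\x(t)\vec u\to 0$, hence $\x(t)\vec u\leq 0$. Combined with $\x(t)\geq 0$ and $\vec u>0$, this gives $\x(t)\equiv \bm 0$, contradicting $\x(0)\neq \bm 0$. The main subtlety is precisely this last step, where one must couple the ODE for $\x$ with a time-\emph{independent} Perron eigenvector of $\Msir(\infty)$ (the entrywise monotonicity $\Msir(t)\geq \Msir(\infty)$ is essential to avoid tracking a moving eigenvector); everything else follows from standard Perron--Frobenius facts.
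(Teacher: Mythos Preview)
Your proof is correct and follows essentially the same approach as the paper: continuity and weak monotonicity from Perron--Frobenius plus the monotonicity of $\s(t)$, and the contradiction argument for $\Reff(\infty)<1$ using the right Perron eigenvector of $\Msir(\infty)$ together with the entrywise inequality $\Msir(t)\geq \Msir(\infty)$ is exactly the paper's argument. The one minor difference is in the strict monotonicity step, where the paper symmetrizes to $\Msir'(t)=\frac{\irt}{\irt+\rrt}\diag(\sqrt{\s(t)})\,W\,\diag(\sqrt{\s(t)})$ and uses the Rayleigh-quotient characterization, whereas you invoke the black-box strict Perron--Frobenius comparison $\rho(A)<\rho(B)$ directly; both are standard and equally short.
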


\begin{proof}
We start by proving monotonicity and continuity. Let  $\norm{\cdot}_F$ be the frobenius norm.
By Gelfand's formula for the spectral radius, $\Reff(t)=\lim_{m\to\infty}\norm{\Msir^m(t)}_F^{1/m} $.
Since $s_k(t)$ is weakly monotone decreasing in $t$ for all $k$, it follows that $(\Msir(t))_{\ell k} $ is weakly monotone as well. Furthermore, the elements of $\Msir(t)$ are non-negative for all $t$ so $\norm{\Msir^m(t)}_F$ is weakly monotone, implying the desired weak monotonicity of $\Reff(t)$.
To establish continuity, we use  continuity of eigenvalues as a function of the matrix entries, \cite{bhatia1990bounds}, combined with the continuity of $s_k(\cdot)$.

To prove strict monotonicity, we note that the eigenvalues of $\Msir(t)$ are equal to those of the symmetric  matrix $\Msir'(t)$ with entries
$(\Msir'(t))_{k\ell} = \frac{\irt}{\irt + \rrt} \sqrt{s_k(t)}W_{k\ell}\sqrt{s_\ell(t)}$, giving the alternative representation
$$
 \Reff(t)=\max_{\v}\v^T \Msir'(t)\v
$$
where the maximum goes over all normalized vectors $\v$, which by Perron-Frobenius can be chosen to have all positive entries.  Since $s_k(t)$ is strictly monotone by Lemma \ref{lem:dfq_limit} \ref{x>0forallt}, the entries of $\Msir'(t)$ are strictly monotone, which implies that $\Reff(t)$ is strictly monotone.

Finally, assume towards contradiction that $\Reff(t)\geq 1$ for all $t$.  By continuity, the same holds for the largest eigenvalue of $\Msir(\infty)$
, $\Reff(\infty)$.  Let $\v$ be the eigenvector of $\Msir(\infty)$ corresponding to the eigenvalue $\Reff(\infty)\geq 1$.  Again by Perron-Frobenius, we may assume that $\v$ has all positive entries.  Since the entries of $\Msir(t)$ are monotone decreasing in $t$, we have that $\Msir(t)\v\geq \Msir(\infty)\v\geq \v$ component-wise.  Inserted into \eqref{eq:x_dfq_1}, this implies
$
        \frac{d\vec{x}(t)}{dt}\cdot \vec{v} 
\geq 0
$  
for all $0\leq t<\infty$ which is a contradiction, 
as  $\x(0)\cdot \vec v> 0$ by the assumption that $\x(0)\neq 0$ and 
$\x(t)\cdot \v\to 0$ by Lemma~\ref{lem:dfq_limit}.
\end{proof}

\begin{remark}\label{rem:herd-immunity}
It is not hard to see that in the differential equation representation for the epidemic, $\Reff(t)=1$ is  the herd immunity threshold of the epidemic.  Indeed, if $\Reff(t_0)<1$ and $\v$ is the corresponding right eigenvector with all positive entries, then
$$
\frac{d\x(t)}{dt}\cdot\v\leq (\irt+\rrt)(\Reff(t_0)-1) \x(t)\cdot\v
\quad\text{for all}\quad t\geq t_0
$$
showing that $\x(t)\cdot \v$ and hence $\|\x(t)\|_1$ is decaying exponentially in $t-t_0$ for any initial condition $\x(t_0)$. As a consequence, a small infection at time $t_0$ will only have small overall impact on the final size of the infection.  By contrast, if $\Reff(t_0)>1$, then $\x(t)\cdot \v$ will grow exponentially in $t-t_0$ until  $\Reff(t)=1$.   Thus no matter how small $\x(t_0)$ is, the infection will always have a sizable effect on the final size, since the infection won't start to recede  before it has infected enough susceptible vertices to drive $\Reff(t)$ below one.
\end{remark}

In our subsequent proofs, it will be useful to have a version of Lemma~\ref{lem:dfq_subcritical} which gives a lower bound on $\Reff$ that holds uniformly over a suitable set of initial conditions. 

\begin{lemma}
\label{claim:reff_compactness}
    Consider an arbitrary compact set  $\calC$ of the intitial conditions such that for all initial conditions in  $\calC$,
    $\x(0)\neq 0$ and $s_k(0)>0$ for all $k$, and assume that $W$ is irreducible.
    Then there exists $\delta>0$ and $t_0<\infty$ such that $\Reff(t_0)<1-\delta$ for all initial conditions in $\calC$.
\end{lemma}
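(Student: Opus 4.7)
The plan is to upgrade the pointwise statement of Lemma~\ref{lem:dfq_subcritical} to a uniform one on $\calC$ via a standard compactness/continuity argument. The two ingredients are (i) continuity of the flow $\y_0 \mapsto \y_t$ on finite time intervals, established in Lemma~\ref{lem:diff-eq-sol-cts-finite-t}, which combined with continuity of eigenvalues as a function of matrix entries (already invoked in the proof of Lemma~\ref{lem:dfq_subcritical}) yields that the map $(\y_0, t) \mapsto \Reff(t; \y_0)$ is jointly continuous; and (ii) weak monotonicity of $t \mapsto \Reff(t; \y_0)$, also from Lemma~\ref{lem:dfq_subcritical}.

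First I would fix $\y_0 \in \calC$. Since the hypotheses of Lemma~\ref{lem:dfq_subcritical} are satisfied at every element of $\calC$, there exists a finite time $t(\y_0)$ with $\Reff(t(\y_0); \y_0) < 1$; pick $\delta(\y_0) > 0$ such that $\Reff(t(\y_0); \y_0) < 1 - 2\delta(\y_0)$. By continuity of $\y_0' \mapsto \Reff(t(\y_0); \y_0')$ (the time $t(\y_0)$ is now frozen), there is an open neighborhood $U(\y_0)$ of $\y_0$ such that $\Reff(t(\y_0); \y_0') < 1 - \delta(\y_0)$ for every $\y_0' \in U(\y_0)$. Note that this continuity step does not require $\y_0'$ to satisfy $\x'(0) \neq 0$ or $s_k'(0) > 0$; those hypotheses were only needed to produce the time $t(\y_0)$ for the anchor point $\y_0$.

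Next, I would cover $\calC$ by the family $\{U(\y_0)\}_{\y_0 \in \calC}$ and extract a finite subcover $U(\y_0^1), \ldots, U(\y_0^N)$ by compactness. Set
\[
  t_0 = \max_{1 \leq i \leq N} t(\y_0^i), \qquad \delta = \min_{1 \leq i \leq N} \delta(\y_0^i).
\]
For any $\y_0' \in \calC$, choose $i$ with $\y_0' \in U(\y_0^i)$. Then by the weak monotonicity of $t \mapsto \Reff(t; \y_0')$ from Lemma~\ref{lem:dfq_subcritical},
\[
  \Reff(t_0; \y_0') \;\leq\; \Reff(t(\y_0^i); \y_0') \;<\; 1 - \delta(\y_0^i) \;\leq\; 1 - \delta,
\]
which is exactly the desired uniform bound.

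There is no serious obstacle: the only step that deserves a sentence of justification is the joint continuity of $(\y_0, t) \mapsto \Reff(t; \y_0)$, which composes three standard facts (continuity of the flow in the initial datum, continuity of $\s \mapsto \Msir = \frac{\irt}{\irt+\rrt} W\diag(\s)$ in its entries, and continuity of the spectral radius via \cite{bhatia1990bounds}) that are all already in use in the preceding lemmas.
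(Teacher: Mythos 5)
Your proof is correct and follows essentially the same compactness/continuity argument as the paper's proof, fixing an anchor point, applying Lemma~\ref{lem:dfq_subcritical} to get a time and margin, using continuity in the initial condition (via Lemma~\ref{lem:diff-eq-sol-cts-finite-t} plus continuity of eigenvalues) to get an open ball, and extracting a finite subcover. You are slightly more explicit than the paper in the last step — taking $t_0 = \max_i t(\y_0^i)$, $\delta = \min_i \delta(\y_0^i)$ and invoking the weak monotonicity of $\Reff$ to conclude — but this is precisely the step the paper leaves implicit, so the two arguments are the same in substance.
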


\begin{proof}
    Consider any starting conditions $(\tilde \s(0), \tilde \x(0)) \in \calC$. By  Lemma~\ref{lem:dfq_subcritical}, we can  choose $\tilde t_0<\infty$ and  $\tilde\delta>0$ such that $\Reff(\tilde t_0) < 1- 2\tilde\delta$. By Lemma \ref{lem:diff-eq-sol-cts-finite-t} and the fact that eigenvalues are continuous in the elements of the matrix $C(\tilde t)$, there exists an open ball $B_{\tilde r}$ around $(\tilde \s(0), \tilde \x(0))$ such that $\Reff(\tilde t_0) < 1-\tilde \delta$ for all initial conditions in $B_{\tilde r} \cap \calC$. This uncountable collection of balls $B_{\tilde r}$ indexed by starting conditions in $\calC$ gives an open cover of the set $\calC$ with the property that (i) each starting condition $(\s(0), \x(0)) \in \calC$ is in some ball $B_{\tilde r}$ and (ii) for any starting condition in $B_{\tilde r}$ we have that $\Reff(\tilde t_0) < 1-\tilde\eps$  for some $\tilde t_0<\infty$ and $\tilde \delta>0$. Since $\calC$ is compact, this open cover has a finite subcover, giving the desired result.
\end{proof}

We will also use a similar lemma for the difference between $\y(t)$ and its asymptotic value $\y(\infty)$.

\begin{lemma}
\label{claim:y_compactness}
   Let $W$ be irreducible and let $\calC$ be as in Lemma~\ref{claim:reff_compactness}.  Then there exists $\delta_1>0$ such that
   $\min_ks_k(\infty)\geq \delta_1$ for all initial conditions in $\calC$.  In addition, for all $\eps>0$ there exists $t_1<\infty$ such that for all initial conditions in $\calC$,
   $$
   \|\y_{t}-\y_\infty\|_2\leq\eps\quad\text{for all}\quad t\geq t_1.
   $$
  \end{lemma}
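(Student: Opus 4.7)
The plan is to use the implicit equation from Lemma~\ref{lem:implicit-theta}. Since the continuous function $s_k(0)$ is strictly positive on the compact set $\calC$, it is bounded below by some $s_{\min}>0$ uniformly on $\calC$. Moreover every $(\s(0),\x(0))\in\calC\subset U_0$ satisfies $s_\ell(0)-s_\ell(\infty)+x_\ell(0)\leq s_\ell(0)+x_\ell(0)\leq 3$, so the exponent in the implicit equation is bounded below by $-\tfrac{3\irt}{\irt+\rrt}K\|W\|_\infty$ uniformly over $\calC$. This immediately yields $s_k(\infty)\geq\delta_1>0$ for all initial conditions in $\calC$.

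\textbf{Part 2, setup.} For the uniform convergence, I plan to use the herd-immunity argument of Remark~\ref{rem:herd-immunity}. Applying Lemma~\ref{claim:reff_compactness} gives a uniform pair $(t_0,\delta)$ with $\Reff(t_0)<1-\delta$ on $\calC$. Combining Part~1 with the monotonicity $s_k(t)\geq s_k(\infty)\geq\delta_1$ (Lemma~\ref{lem:dfq_limit}), the matrix $\Msir(t_0)=\tfrac{\irt}{\irt+\rrt}W\diag(\s(t_0))$ shares the zero pattern of $W$ and is therefore irreducible. By Perron--Frobenius its top eigenvalue $\Reff(t_0)$ is simple, with a strictly positive right eigenvector $\v$, normalized to $\|\v\|_1=1$. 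Simplicity gives continuity of $\v$ in the matrix entries; combined with continuity of $\s(t_0)$ in the initial condition (Lemma~\ref{lem:diff-eq-sol-cts-finite-t}) and compactness of $\calC$, this produces a uniform lower bound $v_k\geq v_{\min}>0$ on $\calC$.

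\textbf{Part 2, conclusion.} For $t\geq t_0$, monotonicity of $s_\ell(t)$ gives $\Msir(t)\v\leq\Msir(t_0)\v=\Reff(t_0)\v\leq(1-\delta)\v$ componentwise, and combining with \eqref{dxdt-matrix-form} yields
$$\frac{d}{dt}(\x(t)\cdot\v) = (\irt+\rrt)\x(t)(\Msir(t)-\1)\v \leq -(\irt+\rrt)\delta\,(\x(t)\cdot\v).$$
Using $v_{\min}\|\x(t)\|_1\leq\x(t)\cdot\v$ and the uniform bound $\|\x(t_0)\|_1\leq 3K$ (since $s_k+x_k$ is non-increasing and $U_0$ is bounded), I obtain $\|\x(t)\|_1\leq Ce^{-(\irt+\rrt)\delta(t-t_0)}$ uniformly on $\calC$. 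Integrating \eqref{eq:s_dfq_1} gives $|s_k(t)-s_k(\infty)|\leq\irt\|W\|_\infty\int_t^\infty\|\x(\tau)\|_1\,d\tau$, and applying Duhamel's formula to \eqref{eq:i_dfq_1} (splitting into $[0,t_0]$ and $[t_0,t]$) controls $i_k(t)$ by a similarly uniform exponentially decaying bound. All three components of $\y_t-\y_\infty$ thus decay uniformly, yielding the required $t_1=t_1(\eps)$.

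\textbf{Main obstacle.} The crux is the uniform positivity $v_k\geq v_{\min}>0$ of the Perron eigenvector. This depends on (i) uniform irreducibility of $\Msir(t_0)$ on $\calC$, which is exactly what Part~1 secures via the uniform positivity of $s_k(t_0)$, and (ii) continuity of the Perron eigenvector in the matrix, which requires simplicity of the top eigenvalue (again a consequence of irreducibility). Without this uniform bound the exponential-decay estimate for $\|\x(t)\|_1$ would degrade near initial conditions where some $v_k$ vanishes, and the uniform convergence would break down.
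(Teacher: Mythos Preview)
Your proof is correct, but the route differs from the paper's in several places worth noting.

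For Part~1 you use the implicit equation \eqref{eq:implicit-eqn-w-starting-conditions} directly: the exponent is uniformly bounded on $\calC$, so $s_k(\infty)\geq s_{\min}e^{-\text{const}}$. This is cleaner than what the paper does. The paper actually establishes the exponential decay of $\|\x(t)\|_2$ \emph{first} (using only a uniform lower bound on $s_k(t_0)$ obtained by compactness), then integrates to bound $\int_{t_0}^\infty\|\x(t)\|_2\,dt$, and only then extracts the uniform lower bound on $s_k(\infty)$ from the implicit equation. Your order is more economical.

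For the exponential decay of $\x$, you follow the Perron--Frobenius argument of Remark~\ref{rem:herd-immunity}, which forces you to track the eigenvector $\v$ of $\Msir(t_0)$ across $\calC$ and invoke continuity of simple eigenvectors to secure $v_{\min}>0$. The paper instead conjugates $\Msir(t_0)$ to the symmetric matrix $\Msir'(t_0)=\diag(\sqrt{\s(t_0)})\,\tfrac{\irt}{\irt+\rrt}W\,\diag(\sqrt{\s(t_0)})$ and bounds $\|e^{(\irt+\rrt)(\Msir'(t_0)-1)(t-t_0)}\|_{2\to 2}$ by $e^{-(\irt+\rrt)\delta(t-t_0)}$ directly via the spectral theorem. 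This sidesteps the eigenvector-continuity argument entirely at the cost of a factor $1/\min_k\sqrt{s_k(t_0)}$, which is handled by the same compactness you use. Both approaches yield the same uniform exponential bound; the paper's is slightly more self-contained, yours is closer to the heuristic of Remark~\ref{rem:herd-immunity}.

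For $\i(t)$ you use Duhamel on \eqref{eq:i_dfq_1}, which gives an explicit exponential rate. The paper instead uses a compactness argument to find $t_1$ with $\|\i(t_1)\|_2$ small, then the monotonicity of $s_k+i_k$ to propagate smallness to all $t\geq t_1$. Your route is more quantitative; the paper's is shorter but less explicit.
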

\begin{proof}
Let $t_0$ and $\delta$ be as in Lemma~\ref{claim:reff_compactness}, and let $C'(t)$ be the symmetric matrix defined in the proof of Lemma~\ref{lem:dfq_subcritical}.  For $t\geq t_0$, we then have
$$
\x(t)\leq \x(t_0)e^{(\irt+\rrt) (C(t_0)-1)(t-t_0)}=\x(t_0)A^{-1}e^{ (\irt+\rrt)(C'(t_0)-1)(t-t_0)}A
$$
where $A$ is the diagonal matrix with entries $\sqrt {s_k(t_0)}$.  Since $1\geq  {s_k(t_0)}>0$ for all initial conditions in $\calC$, continuity and compactness imply the existence of some $\delta'>0$ such that $1\geq  {s_k(t_0)}\geq\delta'$ for all initial conditions in $\calC$.  Combined with the fact that $\y_{t_0}\in U_0$ implies $\|\x(t_0)\|_2\leq\|\x(t_0)\|_1\leq 2$, we conclude that  for all $t\geq t_0$,
\begin{equation}\label{uniform-x-bound}
    \|\x(t)\|_2\leq \frac 2{\delta'}\|e^{(\irt+\rrt) (C'(t_0)-1)(t-t_0)}\|_{2\to 2}
=\frac 2{\delta'} e^{(\irt+\rrt)(\Reff(t_0)-1)(t-t_0)}\leq \frac 2{\delta'} e^{-(\irt+\rrt)\delta(t-t_0)},
\end{equation}
where $\|\cdot\|_{2\to 2}$ is the operator norm from $\ell_2$ to $\ell_2$.

Observe that by the above bound, $\int_{t_0}^\infty \|\x(t)\|_2 \, dt \leq \frac2{(\irt+\rrt)\delta\delta'}
$, which, by \eqref{eq:implicit-eqn-w-starting-conditions} and the lower bound on $s_k(t_0)$ shown earlier, in turn implies that
$s_k(\infty)$ is bounded away from zero uniformly for all initial conditions in $\calC$.  Furthermore, again by \eqref{eq:implicit-eqn-w-starting-conditions} and the fact that $1-e^{-x}\leq x$, we have that for all $t\geq t_0$
\begin{equation}\label{uniform-s-bound}
\|\s(t)-\s(\infty)\|_2\leq
\irt\|W\|_\infty \int_{t}^\infty\|\x(t)\|_2\, dt
\leq \frac{2\irt\|W\|_\infty }{(\irt+\rrt)\delta\delta'} e^{-(\irt+\rrt)\delta(t-t_0)}.   
\end{equation}

To complete the proof, we choose $\tilde t_1$ such that the right hand sides of
\eqref{uniform-x-bound} and \eqref{uniform-s-bound} are smaller than $\eps/4$ for all $t\geq \tilde t_1$ and then use the fact that $\i(t)\to 0$ for all initial conditions in $\calC$ and another continuity \& compactness argument to conclude that there exists a finite time $t_1\geq \tilde t_1$ such that $\|\i(t_1)\|_2\leq \eps/4$
for all initial conditions in $\calC$. Combined with the fact that $i_k(t)+s_k(t)$ is increasing in $t$, we infer that for $t\geq t_1$ we have
$$
\|\i(t)\|_2\leq \|\i(t_1)\|_2+\|\s(t_1)- \s(t)\|_2\leq\frac \eps 4 + \|\s(t_1)-\s(\infty)\|_2\leq \frac \eps 2
$$
to complete the proof.
\end{proof}

Next we introduce the analog of $\Reff$ for the actual SIR-infection on the Poisson stochastic block model.  In particular, given the state of the infection at time $t$ with $S_k(t)$ susceptible vertices with label $k$, we define
 \begin{equation}\label{C(t)}
        \hMsir(t) = \frac{\irt}{\irt + \rrt} W\diag(\hat\s(t))
        \quad\text{where}\quad \hat\s(t)=\frac 1n \S(t),
    \end{equation}
and set
    \begin{equation}
        \hReff(t) \defeq \lambda_{\max}(\hMsir(t)).
    \end{equation}
The threshold $ \hReff(t) =1$ can then be seen as the threshold for herd immunity for the (stochastic) SIR epidemic on the Poisson stochastic block model.
    
The following lemma shows that after passing this herd immunity threshold, a small initial outbreak (with a only a few active edges) will leave most of the vertices which are still susceptible untouched, while also not increasing the number of active half edges by more than a constant factor. It is a major ingredient in our proof of Theorem~\ref{thm:lln-final-size}.

\begin{lemma}\label{lem:herd-immune}
Consider an SIR-epidemic on $\psbm(\vec V, W)$ and assume that at time $t_0$ there are $X_k(t_0)$ active edges and $S_k(t_0)$ susceptible vertices of label $k$.  If $\hReff(t_0)<1$, then
$$
\E[\|\hat\s(t_0)-\hat\s(\infty)\|_2\mid \calF(t_0)]\leq  \frac 1{\min_k\sqrt{\hat s_k(t_0)}}\frac {\hReff(t_0)}{1-\hReff(t_0)} \|\hat \x(t_0)\|_2
$$
and
$$
\E\bigg[\sup_{t\geq t_0}\|\hat\x(t)\|_2\;\bigg|\:\calF(t_0)\bigg]\leq  \frac 1{\min_k\sqrt{\hat s_k(t_0)}}\frac {1}{1-\hReff(t_0)} \|\hat \x(t_0)\|_2,
$$
where the expectation is conditioned on the state of the epidemic at time $t_0$.
\end{lemma}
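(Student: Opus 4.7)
The plan is to dominate the post-$t_0$ evolution by a subcritical multi-type Galton-Watson branching process whose mean matrix is similar to $\hMsir(t_0)$, extract componentwise expected-value bounds from the Neumann series, and convert these into $\ell_2$ statements using $\|\vec v\|_2\le\|\vec v\|_1$ for componentwise non-negative $\vec v$ together with the symmetrization $\hMsir(t_0)=\diag(1/\sqrt{\hat\s(t_0)})\,\hMsir'(t_0)\,\diag(\sqrt{\hat\s(t_0)})$, where $\hMsir'(t_0)=\frac{\irt}{\irt+\rrt}\diag(\sqrt{\hat\s(t_0)})\,W\,\diag(\sqrt{\hat\s(t_0)})$ is symmetric with spectral radius $\hReff(t_0)$.

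Using the exploration process from Section~\ref{sec:exploration}, an active half-edge at a type-$\ell$ vertex fires before its source recovers with probability $\irt/(\irt+\rrt)$; conditional on firing, it targets type $k$ with probability $p_{\ell\to k}=W_{\ell k}n_k/(nD_\ell)$, and by monotonicity of $S_k(\cdot)$ the chosen vertex is susceptible with probability at most $S_k(t_0)/n_k$. A fresh type-$k$ infection spawns $\Pois(D_k)$ new active type-$k$ half-edges. Composing, the expected number of type-$k$ half-edges produced by a single type-$\ell$ half-edge is at most $\Lambda_{\ell k}=(D_k/D_\ell)(\hMsir(t_0))_{\ell k}$, so the mean matrix $\Lambda=\diag(\vec D)^{-1}\hMsir(t_0)\diag(\vec D)$ is similar to $\hMsir(t_0)$ and has spectral radius $\hReff(t_0)<1$.

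Let $\tilde N_\ell$ denote the conditional expected total number of type-$\ell$ active half-edges ever created in $[t_0,\infty)$; the recursion $\tilde N_\ell\le X_\ell(t_0)+\sum_k\Lambda_{k\ell}\tilde N_k$ combined with subcriticality gives $\tilde{\vec N}\le(\1-\Lambda^T)^{-1}\vec X(t_0)$. Dividing the $k$-th coordinate by $nD_k$ and using $X_k(t)\le$ (total type-$k$ half-edges ever) yields $\E[\sup_{t\ge t_0}\hat x_k(t)\mid\calF(t_0)]\le [(\1-\hMsir(t_0)^T)^{-1}\hat\x(t_0)]_k$; since each type-$\ell$ half-edge causes in expectation at most $(\hMsir(t_0))_{\ell k}/D_\ell$ new type-$k$ infections, dividing the total by $n$ gives $\E[\hat s_k(t_0)-\hat s_k(\infty)\mid\calF(t_0)]\le[(\1-\hMsir(t_0)^T)^{-1}\hMsir(t_0)^T\hat\x(t_0)]_k$. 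To pass to $\ell_2$ I apply $\|\vec v\|_2\le\|\vec v\|_1$ to the two componentwise non-negative random vectors and use linearity: for the susceptibles,
\[ \E\bigl[\|\hat\s(t_0)-\hat\s(\infty)\|_2\,\bigm|\,\calF(t_0)\bigr]\le\vec 1^T(\1-\hMsir(t_0)^T)^{-1}\hMsir(t_0)^T\hat\x(t_0)=\vec r^T\hat\x(t_0) \]
with $\vec r=\hMsir(t_0)(\1-\hMsir(t_0))^{-1}\vec 1$, and Cauchy-Schwarz bounds this by $\|\vec r\|_2\|\hat\x(t_0)\|_2$. Substituting the symmetrization, $\vec r=\diag(1/\sqrt{\hat\s(t_0)})\,\hMsir'(\1-\hMsir')^{-1}\sqrt{\hat\s(t_0)}$, and applying $\|\hMsir'(\1-\hMsir')^{-1}\|_{2\to 2}=\hReff(t_0)/(1-\hReff(t_0))$, $\|\diag(1/\sqrt{\hat\s(t_0)})\|_{2\to 2}=1/\min_k\sqrt{\hat s_k(t_0)}$, and $\|\sqrt{\hat\s(t_0)}\|_2^2=\sum_k\hat s_k(t_0)\le 1$ produces the stated constant. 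The supremum bound is parallel, with $(\1-\hMsir')^{-1}$ replacing $\hMsir'(\1-\hMsir')^{-1}$ and contributing an operator norm of $1/(1-\hReff(t_0))$.

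The main subtlety is passing from componentwise bounds on conditional expectations to bounds on the conditional expectation of an $\ell_2$ norm: naive Jensen only gives $\|\E\vec V\|_2\le\text{RHS}$, the wrong direction for what is needed. The inequality $\|\vec v\|_2\le\|\vec v\|_1$ for componentwise non-negative $\vec v$ bridges the gap at the apparent cost of passing to $\ell_1$, and the identity $\sum_k\hat s_k(t_0)\le 1$ is precisely what prevents an otherwise natural $\sqrt K$ loss in the $\ell_1$-to-$\ell_2$ conversion through the symmetrized resolvent, so the symmetric spectral constants appear unmodified in the final bound.
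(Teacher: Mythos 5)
Your proof is correct and takes essentially the same approach as the paper's: a subcritical branching-process domination of the post-$t_0$ exploration, summation of the Neumann series of $\hMsir(t_0)$, symmetrization by $\diag(\sqrt{\hat\s(t_0)})$, and passage from componentwise expected bounds to the $\ell_2$ statement via $\|\cdot\|_2\le\|\cdot\|_1$ combined with $\sum_k\hat s_k(t_0)\le 1$. Your introduction of $\Lambda=\diag(\vec D)^{-1}\hMsir(t_0)\diag(\vec D)$ and the recursion for $\tilde{\vec N}$ is a minor presentational variant of the paper's generation-by-generation accounting and arrives at the identical bound.
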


\begin{proof}
Recall the coupling of the epidemic and the Poisson stochastic block model from Section~\ref{sec:exploration}.  We will want to estimate the contribution from each active half-edge at time $t_0$ to the the difference $S_k(t_0)-S_k(\infty)$, which is nothing but the total number of newly infected vertices emanating from this half-edge after time $t_0$.  Assume that the half-edge in question has label $\ell$. With probability $\frac{\irt}{\irt+\rrt}$
the infection clock on this half-edge will click at some time $t$ before the recovery clock, at which point it will infect a vertex of label $\ell'$ with probability
$
\frac {W_{\ell\ell'}}{nD_\ell} S_{\ell'}(t),
$
for an overall probability of
$$
\frac{\irt}{\irt+\rrt}\frac {W_{\ell \ell'}}{nD_\ell} S_{\ell'}(t)\leq \frac 1{D_\ell}{\hMsir}_{\ell \ell'}(t_0)
$$
of infecting a vertex of label $\ell'$.  Each of these will give
$\Pois(D_\ell')$ new active half-edges of label $\ell'$, who in turn will each lead to a number of new infected vertices of label $\ell''$
with probability at most $ \frac 1{D_\ell'}{\hMsir}_{\ell' \ell''}(t_0)$.  Thus the expected number of infected vertices emanating from one of these  is bounded by $D_{\ell'}\frac 1{D_\ell'}{\hMsir}_{\ell' \ell''}(t_0)=
{\hMsir}_{\ell' \ell''}(t_0)$.  Continuing by induction, we see that in expectation, the total number of infected vertices of label $k$ in generation $m$, $m=1,\dots$ is bounded above
by
$$
\sum_\ell  \frac {X_\ell(t_0)}{D_\ell} ({\hMsir}(t_0)^{m})_{\ell k}.
$$
Summing up the contributions from all generations and observing that the total change in $S_k$ from time $t_0$ to the point where the infection dies out is just the total number of newly infected vertices after time $t_0$,
we see that in expectation we have the bound
$$
\E[\hat s_k(t_0)-\hat s_k(\infty)\mid \calF(t_0)]
\leq \sum_{m=1}^\infty (\hat \x(t_0){\hMsir}(t_0)^m)_k
=\left(\hat \x(t_0) \frac{{\hMsir}}{1-{\hMsir}}\right)_k
$$
Next we write $\hMsir$ as $\hMsir= {\widehat Q}^{-1} \widehat{M}\widehat Q$ where $\widehat Q$ is the diagonal matrix with entries
$\widehat Q_{kk}=\sqrt{\hat s_k(t_0)}$ and
$\widehat M=\frac{\irt}{\irt + \rrt} \widehat Q W\widehat Q$ to 
write $ \frac{{\hMsir}}{1-{\hMsir}}$ as
$\widehat Q^{-1}\frac{{\widehat M}}{1-{\widehat M}}\widehat Q$.  This gives the bound

\begin{align*}
    \E[\|\hat \s(t_0)-\hat \s(\infty)\|_2\mid \calF(t_0)]
    &\leq \E[\|\hat \s(t_0)-\hat \s(\infty)\|_1\mid \calF(t_0)]\\
&\leq \sum_k\left(\hat \x(t_0) \frac{{\hMsir}}{1-{\hMsir}}\right)_k
=\sum_k\left(\hat \x(t_0) \widehat Q^{-1}\frac{{\widehat M}}{1-{\widehat M}}\widehat Q\right)_k
\\
&\leq \|\widehat Q^{-1}\|_\infty \hat \x(t_0) \frac{{\widehat M}}{1-{\widehat M}}\v_M
\end{align*}
where $\v_M$ is the column vector with entries $M_k$.  Using the fact that $\widehat M$ is symmetric with the same eigenvalues as $\hMsir(t_0)$, we bound the operator norm of $\frac{\widehat M}{1-\widehat M}$ by $\frac{\hReff(t_0)}{1-\hReff(t_0)}$ to conclude that
\begin{align*}
    \E[\|\hat \s(t_0)-\hat \s(\infty)\mid \calF(t_0)\|_2&\leq
\|\widehat Q^{-1}\|_\infty \| \hat x(t_0)\|_2 \frac{\hReff(t_0)}{1-\hReff(t_0)}\|\v_M\|_2.
\end{align*}
Bounding 
$\|\v_M\|_2^2=\sum_k\hat s_k(t_0)\leq 1$
and observing that $\|\widehat Q^{-1}\|_\infty=\frac 1{\min_k\sqrt{\hat s_k(t_0)}} $ this completes the proof of the first inequality.

To prove the second inequality, we note 
$X_k(t)$ can be upper bounded by 
$X_k(t_0)$ plus the total number of active half-edges created after time $t_0$.  Proceed as in the proof of the bound on $\hat S(\infty)$, and denoting the number of new active half-edges of label $k$ generated in generation $m$ by $Z_{m,k}$, we get
$$
\E[Z_{m,k}\mid (Z_{m-1,\ell})_{\ell\in [K]}]\leq D_k\sum_{\ell}
\frac{\irt}{\irt+\rrt}Z_{m-1,\ell}\frac {W_{\ell k}}{nD_\ell}S_k(t_0)
=D_k \sum_{\ell}
\frac{Z_{m-1,\ell}}{D_\ell}\hMsir_{\ell k},
$$
which by induction implies that
$$
\E\Bigg[\frac{Z_{k,m}}{nD_k}\;\bigg|\;
\calF(t_0)
\bigg]
\leq \sum_\ell \frac{X_{\ell}(t_0)}{nD_\ell}(\hMsir^m)_{\ell k}
=\sum_\ell \hat x_\ell(t_0)(\hMsir^m)_{\ell k}.
$$
As a consequence,
$$
\E\Bigg[\sup_{t\geq t_0}\hat x_k(t)\;\bigg|\;
\calF(t_0)
\bigg]
\leq \sum_{m\geq 0}\sum_\ell \hat x_\ell(t_0)(\hMsir^m)_{\ell k}
=\left(\hat \x(t_0) \frac{1}{1-{\hMsir}}\right)_k
$$
Continuing as in the proof of the first bound, this implies the second bound of the lemma.
\end{proof}

\begin{remark}\label{rem:herd-immune} 
    Note that Lemma~\ref{lem:herd-immune} implies a similar statement for $I_k(t)$.  This follows immediately from the fact that both $S_k(t)$ and $S_k(t)+I_k(t)$ are non-increasing in $t$.  As a consequence, we get that for all $t\geq t_0$
    $$
    I_k(t)\leq I_k(t_0) +S_k(t_0)-S_k(t)\leq I_k(t_0) + S_k(t_0)-S_k(\infty),
    $$
    which together with the statements of Lemma~\ref{lem:herd-immune} implies that past the herd immunity threshold, a small initial outbreak (with a small number of initially active edges and infected vertices) will stay small for all $t\geq t_0$.
\end{remark}

\subsection{Final Size of the Epidemic}
\label{sec:final_size}

In this section we prove Theorem \ref{thm:lln-final-size}, which in particular implies a law of large numbers for the final size of the infection. 
As in Section~\ref{sec:lnn-for-gen-initial-cond},  we generalize the setting of Theorem \ref{thm:lln-final-size}, (with initial conditions which imply that $X_k(0)\sim \Pois(D_kI_k(0))$) to a setting where $X_k(0)$ and $I_k(0)$ can be specified separately, again using $\hat\x(t)$ to denote the state of the epidemic of the SIR model coupled to $G\sim\psbm(\V,W)$ as defined in Section~\ref{sec:exploration}.  
Theorem \ref{thm:lln-final-size} will follow in the same way as Theorem \ref{thm:LLN} followed from Theorem~\ref{thm:lln}. 

Note that in contrast to Theorem~\ref{thm:lln}, where all constants were independent of the initial conditions, the bounds in the next theorem are only  uniform over the initial conditions in a compact subset $\cal C\subset U_0$ obeying the condition
\begin{equation}\label{calC}
    \y(0)\in \calC \quad\Rightarrow\quad    \x(0)\neq 0\quad\text{and}\quad s_k(0)>0\quad\text{for all}\quad k\in [K].
\end{equation}



\begin{theorem}\label{thm:LLN-final-size}
Assume that $W$ is irreducible and $\max_kD_k\geq D_0$ for some $D_0>0$. Let $\cal C\subset U_0$ be a compact set obeying the conditions in \eqref{calC}, and let $\eps>0$.  Then there exists constants $\delta>0$ and $n_0<\infty$
such that if $\y_t$ is the solution of the differential equation $d \bm y_t/{dt} = \bm b(\bm y_t)$ with initial conditions $\y_0\in\calC$, then 
 \begin{equation}
       \P\left(\sup_{t\geq 0}\left\| \hat\y_t - \y_t\right\|_2\leq \eps\right)\geq 
     1- \eps
\end{equation}
 provided $n\geq n_0$ and  the initial configuration at time $0$ obeys the bound
    \begin{equation}
        \label{in-condition-final}
    \max_{v\in [n]}d_v(0)\leq \log n
\quad\text{and}\quad
\|\bm y_0 - {\bm {\hat y}}_0\|_2\leq \delta.
\end{equation}
\end{theorem}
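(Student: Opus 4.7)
The plan is to combine the finite-time law of large numbers (Theorem~\ref{thm:lln}) with the herd immunity bound (Lemma~\ref{lem:herd-immune}), using the uniformity over the compact set $\calC$ furnished by Lemmas~\ref{claim:reff_compactness} and~\ref{claim:y_compactness} to patch the two arguments together. The intuition is that once the ODE solution passes the herd immunity threshold, both the tail of the ODE and the tail of the stochastic process after that time contribute negligibly to $\|\hat\y_t - \y_t\|_2$, so it suffices to control the two trajectories on a bounded time window.

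First, I would fix a small auxiliary $\eta > 0$ and apply Lemma~\ref{claim:reff_compactness} and Lemma~\ref{claim:y_compactness} to choose a single finite time $t_1$ and a constant $\delta_1 > 0$ such that, uniformly for $\y_0 \in \calC$, one has $\Reff(t_1) \leq 1 - \eta$, $\min_k s_k(t_1) \geq \delta_1$, $\min_k s_k(\infty) \geq \delta_1$, $\|\x(t_1)\|_2 \leq \eta$, and $\sup_{t \geq t_1}\|\y_t - \y_\infty\|_2 \leq \eta$. Compactness plus continuity of the flow and of eigenvalues makes this a single choice of $t_1$ good for all starting points in $\calC$. Next, with this $t_1$ in hand, I would apply Theorem~\ref{thm:lln} on the interval $[0, t_1]$ with a Lipschitz constant $L$ for $\bm b$ on $U$: for any target precision $\eps'' > 0$, the theorem gives a $\delta$-neighborhood $\|\hat\y_0 - \y_0\|_2 \leq \delta \leq \tfrac13 \eps'' e^{-t_1 L}$ such that $\sup_{t \leq t_1}\|\hat\y_t - \y_t\|_2 \leq \eps''$ with probability at least $1 - \tfrac{\zeta}{n}(1 + t_1(1 + \log n)^2/\delta^2)$, which tends to $1$.

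On the high-probability event from Theorem~\ref{thm:lln} applied at time $t_1$, choosing $\eps''$ small relative to $\delta_1$, $\eta$, and $\eps$, I get $\min_k \hat s_k(t_1) \geq \delta_1/2$, $\hReff(t_1) \leq 1 - \eta/2$ (by continuity of eigenvalues applied to $C(t_1)$ vs.\ $\hMsir(t_1)$), and $\|\hat\x(t_1)\|_2 \leq 2\eta$. Conditioning on $\calF(t_1)$ and invoking Lemma~\ref{lem:herd-immune} with these constants yields
\[
\E[\|\hat\s(t_1) - \hat\s(\infty)\|_2 \mid \calF(t_1)] \;\leq\; \frac{2}{\sqrt{\delta_1}} \cdot \frac{1 - \eta/2}{\eta/2} \cdot 2\eta,
\qquad
\E\Bigl[\sup_{t \geq t_1}\|\hat\x(t)\|_2 \;\Bigm|\; \calF(t_1)\Bigr] \;\leq\; \frac{2}{\sqrt{\delta_1}} \cdot \frac{2\eta}{\eta/2},
\]
both of order $\eta$. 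Markov's inequality converts these conditional expectations into high-probability bounds of order $\sqrt{\eta}$, and Remark~\ref{rem:herd-immune} propagates the same control to $\hat\i(t)$ via $I_k(t) \leq I_k(t_1) + S_k(t_1) - S_k(\infty)$.

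Finally, I would assemble the pieces by the triangle inequality. For $t \leq t_1$, closeness is delivered directly by Theorem~\ref{thm:lln}. For $t \geq t_1$, write
\[
\|\hat\y_t - \y_t\|_2 \;\leq\; \|\hat\y_t - \hat\y_{t_1}\|_2 + \|\hat\y_{t_1} - \y_{t_1}\|_2 + \|\y_{t_1} - \y_t\|_2,
\]
where the first term is controlled by the herd-immunity tail bound, the second by Theorem~\ref{thm:lln} at time $t_1$, and the third by the uniform choice of $t_1$ via Lemma~\ref{claim:y_compactness}. Tuning $\eps''$ and $\eta$ in terms of $\eps$ at the start of the argument produces the required bound $\sup_{t \geq 0}\|\hat\y_t - \y_t\|_2 \leq \eps$ with probability at least $1 - \eps$ for $n$ large enough. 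The main obstacle is securing the uniformity of $t_1$, $\delta_1$, and $\eta$ over $\calC$ before committing to a $\delta$, since the finite-time LLN pays an $e^{t_1 L}$ factor; Lemmas~\ref{claim:reff_compactness} and~\ref{claim:y_compactness} are precisely what makes this compactness-plus-continuity argument work without any $n$-dependent blow-up.
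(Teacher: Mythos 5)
Your proof follows the same architecture as the paper's: choose a uniform time horizon via the compactness lemmas, apply the finite-time LLN on a bounded window, then control the tail using Lemma~\ref{lem:herd-immune} plus Markov, and assemble with the triangle inequality. However, there is a genuine error in the way you allocate constants, and it breaks the step where you claim the conditional expectations are ``both of order $\eta$.''

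The problem is that you use a single $\eta$ to play two incompatible roles. You write $\Reff(t_1) \leq 1 - \eta$ and $\|\x(t_1)\|_2 \leq \eta$, and then feed $1 - \hReff(t_1) \geq \eta/2$ and $\|\hat\x(t_1)\|_2 \leq 2\eta$ into Lemma~\ref{lem:herd-immune}. But then
\[
\frac{2}{\sqrt{\delta_1}} \cdot \frac{1 - \eta/2}{\eta/2} \cdot 2\eta \;=\; \frac{8(1-\eta/2)}{\sqrt{\delta_1}},
\]
which is bounded away from zero as $\eta \to 0$: the $\eta$ from the eigenvalue gap in the denominator exactly cancels the $\eta$ from the small active-edge count in the numerator. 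Markov's inequality applied to a conditional expectation that is $\Theta(1)$ does not produce a high-probability bound that shrinks with $\eta$, so the tail is not controlled.

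The correct bookkeeping, which the paper uses, decouples the two constants. Lemma~\ref{claim:reff_compactness} gives a \emph{fixed} pair $(t_1, \delta_0)$ with $\Reff(t_1) < 1 - \delta_0$ uniformly over $\calC$; since $\Reff$ is non-increasing, this gap persists for all $t \geq t_1$, so $\delta_0$ is not shrinking in anything. Then Lemma~\ref{claim:y_compactness} lets you pick a possibly much later time $t_0 \geq t_1$ so that $\|\x(t_0)\|_2$ is as small as you like — the paper chooses it to be at most $\frac{\eps^2}{64}\,\frac{1-\Reff(t_1)}{1+\Reff(t_1)}\,\min_k\sqrt{s_k(t_0)}$. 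With the eigenvalue gap held at the fixed $\delta_0$ and only $\|\x(t_0)\|_2$ shrinking, the conditional expectations from Lemma~\ref{lem:herd-immune} are genuinely $O(\eps^2)$, Markov's inequality then gives a probability bound of order $\eps$, and the triangle inequality closes the argument. Your proof goes through once you replace the $\eta$ in the spectral gap with the fixed $\delta_0$ from Lemma~\ref{claim:reff_compactness} and reserve a separately tunable parameter solely for the size of $\|\x(t_1)\|_2$.
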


\begin{proof}
The proof follows immediately from Theorem \ref{thm:lln}, the fact that $\y_t\to\y_{\infty}$ as $t\to\infty$, 
 and the fact that for $t$ large enough, the infection has passed the herd immunity threshold and thus dies out quickly, implying that
$\hat \y_t-\hat\y_{\infty}$ is small when $t$ is large. Lemmas~\ref{claim:reff_compactness}, ~\ref{claim:y_compactness},~\ref{lem:herd-immune} and
Remark~\ref{rem:herd-immune} provide the needed quantitative estimates.

Concretely, we first use Theorem \ref{thm:lln} to see that for all $t_0<\infty$, we can choose 
$n_0<\infty$ and $\delta>0$ such that if $n\geq n_0$, $\y_0\in U_0$  and the conditions in \eqref{in-condition-final}  hold, then
\begin{equation*}
      \P\bigg(\sup_{0\leq t\leq t_0}\left\| \hat\y_t - \y_t\right\|_2\leq \eps\bigg)\geq 1-\eps/4.
\end{equation*}
To prove Theorem~\ref{thm:LLN-final-size}, it is therefore enough to to show that 
\begin{equation}\label{lln-bd-t0}
     \P\bigg( \sup_{ t\geq t_0}\left\| \hat\y_t - \y_t\right\|_2\leq \eps\bigg)\geq 1-3\eps/4,
\end{equation}
uniformly for all $\y_0\in \calC$.
To this end, we use the triangle inequality to conclude that for $t\geq t_0$
 \begin{equation}\label{triangle-herd}
    \| \y_t - \hat\y_t\|_2\leq \| \y_t - \y_{t_0}\|_2 +  \| \y_{t_0} - \hat\y_{t_0}\|_2 + \| \hat\y_{t_0} - \hat \y_t\|_2,
\end{equation}
and then use the fact that
$\y_t$ converges as $t\to\infty$ to bound the first term,
Theorem \ref{thm:lln} to bound the second, and Lemma~\ref{lem:herd-immune} and Remark~\ref{rem:herd-immune} to bound the last.

The details are tedious but straightforward. First, by Lemma \ref{claim:reff_compactness}, there exists $\delta_0$ and $t_1 < \infty$ such that for all initial conditions in $\calC$, $\Reff(t_1) < 1-\delta_0$. By Lemma \ref{claim:y_compactness}, there exists $\delta_1 > 0$ such that $\min_k s_k(\infty) \geq \delta_1$, and we can choose $t_0 \geq t_1$ large enough such that for any starting condition in $\calC$,
    \[
        \|\x(t_0)\|_2\leq \frac{\eps^2}{64}\frac {1-\Reff(t_1)}{1+\Reff(t_1)} {\min_k\sqrt{s_k(t_0)}}, \quad
        \|\i(t_0)\|_2\leq \frac \eps 8
        \quad\text{and}\quad
        \|\y_\infty-\y_t\|_2\leq \frac\eps 8 \quad \text{for all }t \geq t_0.
    \]
    In particular, $\|\y_\infty - \y_t\|_2 \leq \eps/8$ for all $t \geq t_0$ implies that 
    \[
        \|\y_t - \y_{t_0}\|_2 \leq \|\y_t - \y_\infty\|_2 + \|\y_{t_0} - \y_\infty\|_2 \leq \frac{\eps}{4} \quad \text{ for all } t\geq t_0,
    \]
giving a uniform bound on the first term in \eqref{triangle-herd}.


Next we prove an upper bound on the right hand side of the bounds in Lemma~\ref{lem:herd-immune}.  To this end, we use Theorem \ref{thm:lln} to conclude that given any $\eps'>0$ we can choose $n_0$ large enough and $\delta$ small enough, such that for all $n\geq n_0$ and
under the condition \eqref{in-condition-final}, we have that with probability at least $1-\eps/4$
$$\|\hat \s(t_0)-\s(t_0)\|_2\leq\eps'\delta_1 \quad\text{and}\quad \|\hat\x(t_0)\|_2\leq \|\x(t_0)\|_2+\eps'.
$$
The first bound implies that
$\hat s_k(t_0)\leq (1+\eps') s_k(t_0)$ for all $k$, showing that
$\hMsir(t_0)\leq (1+\eps')\Msir(t_0)$
and thus $\hReff(t_0)\leq (1+\eps') \Reff(t_0)\leq (1+\eps')\Reff(t_1)$, in addition to the lower bound $\hat s_k(t_0)\geq (1-\eps')s_k(t_0)$.  By choosing $\eps'$ sufficiently small, we therefore have that with probability at least $1-\eps/4$
\begin{align*}
    \frac 1{\min_k\sqrt{\hat s_k(t_0)}}
    &\frac {1+\hReff(t_0)}{1-\hReff(t_0)} \|\hat \x(t_0)\|_2
\leq 
\frac 32\frac 1{\min_k\sqrt{s_k(t_0)}}\frac {1+\Reff(t_1)}{1-\Reff(t_1)} \|\hat \x(t_0)\|_2\\
&\leq \frac 32\frac 1{\min_k\sqrt{s_k(t_0)}}\frac {1+\Reff(t_1)}{1-\Reff(t_1)} \|\x(t_0)\|_2 +
\frac 32\frac 1{\sqrt{\delta_1}}\frac{2+\delta_0}{\delta_0}\eps'
\leq \frac 32\frac{\eps^2}{64}+\frac 12 \frac{\eps^2}{64}=\frac{\eps^2}{32}.
\end{align*}
Combined with Lemma~\ref{lem:herd-immune}, this implies that with probability at least  $1-\eps/4$.
\begin{equation}\label{expectation-bound-s-x}
\E \Big[\Big(
    \|\hat\s(t_0)-\hat\s(\infty)\|_2
    +\sup_{t\geq t_0}\|\hat \x(t)\|_2
    \Big)\Big | \calF(t_0)\Big]\leq  
    \frac{\eps^2}{32}.
\end{equation}
Applying Markov's inequality to the conditional expectation, and then using that the above bound holds with probability at least $1-\eps/4$, we see that with probability at least $ 1-\eps/2$,
$$
    \|\hat\s(t_0)-\hat\s(\infty)\|_2
    +\sup_{t\geq t_0}\|\hat \x(t)\|_2
   \leq \frac \eps 8.
$$
Combined with the fact that $\|\hat \s(t_0) - \hat\s(t)\|_2$  is monotone in $t$ and the bound from Remark~\ref{rem:herd-immune}, we get that with probability at
least $1-\eps/2,$
\begin{align*}
\|
\hat\y_{t_0} - \hat\y_t\|_2&\leq \| \hat\s({t_0})-\hat\s(t)\|_2+\|\hat \x(t_0)\|_2+\|\hat \x(t)\|_2
+\|\hat \i(t_0)\|_2+\|\hat \i(t)\|_2
\\
&\leq
2\| \hat\s({t_0})-\hat\s(\infty)\|_2+\|\hat \x(t_0)\|_2+\|\hat \x(t)\|_2
+2\|\hat \i(t_0)\|_2
\\
&\leq \frac \eps 4 +2\|\i(t_0)\|_2+2\|\hat \i(t_0)- \i(t_0)\|_2
\leq \frac\eps 2 +2\|\hat \i(t_0)- \i(t_0)\|_2.
\end{align*}
Inserting into \eqref{triangle-herd} and using that $\|\y_t-\y_{t_0}\|_2\leq \frac\eps 4$ for all $t\geq t_0$, this gives that with probability at least $1- \eps /2$
$$
\sup_{t\geq t_0}\|\y_t-\hat\y_t\|_2\leq \frac {3\eps}4 +  \| \y_{t_0} - \hat\y_{t_0}\|_2
+2\|\hat \i(t_0)- \i(t_0)\|_2
$$
Using  Theorem \ref{thm:lln} a second time (and adjusting $n_0$ and $\delta$ if needed), the remaining terms on the right can be made smaller than $\eps/4$ with probability at least $1-\eps/4$, completing the proof.
\end{proof}

\section{Law of Large Numbers for $o(n)$ Initially Infected Vertices}
\label{sec:o-n}

In this section we prove Theorem \ref{thm:one-vertex-final-size}, the law of large numbers for the final size starting from one (or $o(n)$) many initially infected vertices. In order to do so, we use a branching process approximation and martingale analysis for the initial phase of the epidemic to show that with high probability the epidemic either dies out or reaches size ${\Theta(n)}$. This puts us in the setting of Theorem \ref{thm:LLN}, which we then use alongside results from Section~\ref{sec:uniqueness-implicit-solution} to prove the LLN result.
\subsection{Branching Process Approximation for the Initial Phase}
\label{sec:infect-tree}

The infection tree of the initial phase of the epidemic on the Poisson SBM starting from one, randomly seeded, initially infected node is well-approximated by a continuous time branching process. Consider the following dynamics:
\begin{itemize}
    \item The branching process starts with one infected node $v_0$ chosen randomly from all nodes of label $k$ and $\Pois(D_{k})$ active half-edges attached to it. 
      \item Infected nodes infect down active half-edges at rate $\irt$, independently for all active half-edges. When a half-edge emanating from a vertex $u$ of label $k'$ passes down the infection, a new infected vertex $v$ with label $\ell$ chosen with probability $p_{k'\to \ell} = \frac 1{D_k}W_{k'\ell}\frac{n_\ell}{n}$ is born, and the active half-edge gets replaced by an (inactive) full edge $uv$.  Finally, $v$ is given $\Pois(D_\ell)$ new active half-edges.  
      
       \item Infected vertices recover at rate $\rrt$. Once a vertex recovers, all active half-edges emanating from this vertex get deleted, while the inactive (full) edges remain. 
\end{itemize}

Observe that the tree generated from the dynamics above, represented by vertices labelled by group membership and state at time $t$ (infected or recovered) and inactive edges between such vertices is equal to $\calT_k^{\bp(\hMgen)}(t)$, 
with 
$$
\hMgen=W\diag(\vec n/n).
$$
Throughout this section,  we will
usually omit the reference to $\hMgen$ and just use the notation $\calT_k^{\bp}(t)$.  We will further slightly abuse notation by including the currently active half-edges as part of the tree $\calT_k^{\bp}(t)$, while in Section~\ref{sec:prelim} these were not included.


\label{sec:branching_process}

Let $\calT_k^{\bp}(t)$ be the tree generated by the dynamics described above at time $t$, represented as a tree with vertices labelled by group membership and the state at time $t$ (infected or recovered), and edges labelled as active or inactive. Let $\calT_k^{\psbm}(t)$ be the SIR infection tree starting from a vertex in community $k$ on $G \sim \psbm(\V, W)$, again including the active half-edges.

We can construct the process $\calT_k^{\psbm}(\cdot)$ from the process $\calT_k^{\bp}(\cdot)$ as follows: generate a sample $(\calT_k^{\bp}(t))_{t\geq 0}$ from the coupling described at the beginning of this section, but each time an infection passes along an edge $uv$ in $\calT_k^{\psbm}(\cdot)$,
delete the edge with probability $h_\ell(t^-)$, where $h_\ell(t^-) := 1-S_\ell(t^-)/n_\ell$, $t$ is the time the edge transitions from active to inactive in $\calT_k^{\bp}(\cdot)$, $t^-$ indicates a limit from the left, and $\ell$  is the label of $v$ in $\calT_k^{\bp}(t)$.
This takes into account the fact that edges only appear in the SIR infection tree when the endpoint is connected to a susceptible individual, and couples the infection tree of PSBM, $\mathcal T_{k}^{\psbm}(t)$, to the branching process tree $\calT_k^{\bp}(t)$ in such a way that
$\calT_{k}^{\psbm}(t)$ is a sub tree of $\calT_k^{\bp}(t)$ for all $t\in [0,\infty)$. 
Since $S_\ell(t)$ decreases by $1$ precisely when a new vertex with label $\ell$ is added to $\calT_{k}^{\psbm}(t)$, we have that $h_\ell(t)$ is equal to $\frac 1{n_\ell}$ times the number of vertices of label $\ell$ in $\calT_{k}^{\psbm}(t)$, showing in particular that $\calT_{k}^{\psbm}(t)$ is a Markov Process.  

The above construction implies that
$|\calT_{k}^{\psbm}(t)| \preceq |\calT_k^{\bp}(t)|$, where $\preceq$ means stochastic domination in the usual sense. It also implies that whenever
$|\calT_{k}^{\psbm}(t)|=|\calT_k^{\bp}(t)|$, then $|\calT_{k}^{\psbm}(t')|=|\calT_k^{\bp}(t')|$, and in fact
$\calT_{k}^{\psbm}(t')=\calT_k^\bp(t')$,
for all $t'\leq t$. 


\begin{lemma}
\label{lem:exact-coupling-sir-bp}
Let $N_n$ be such that $N_n/\sqrt n\to 0$ and assume that $\liminf_{n\to\infty} \frac{n_k}n>0$ for all $k$.
   Under the above coupling
    \[
      \mathbb P\left( \calT_{k}^{\psbm}(t)=\calT_k^\bp(t) \text{ for all $t$ such that }
     |\calT_k^{\bp}(t)| \leq N_n\right)\to 1\text{ as }n\to\infty.
    \]

\end{lemma}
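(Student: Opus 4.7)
The plan is to exploit the explicit coupling between $\calT_k^\bp(\cdot)$ and $\calT_k^\psbm(\cdot)$ described just before the lemma, under which $\calT_k^\psbm(t)$ is obtained from $\calT_k^\bp(t)$ by independently ``thinning'' each infection event: whenever an active half-edge in the branching process passes infection at time $t$ to a new vertex carrying community label $\ell$, we delete the resulting edge (and the entire subtree it roots) with probability $h_\ell(t^-)=1-S_\ell(t^-)/n_\ell$. Equality $\calT_k^\psbm(t)=\calT_k^\bp(t)$ holds precisely when no such deletion has occurred by time $t$, so the goal is to show that no deletion occurs during the time window in which $|\calT_k^\bp(t)|\le N_n$.

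First, I would introduce the stopping time $\tau=\inf\{t\ge 0:|\calT_k^\bp(t)|> N_n\}$. Since $|\calT_k^\psbm(t)|\le |\calT_k^\bp(t)|$ under the coupling, and since, as observed in the text, $h_\ell(t^-)$ equals the number of label-$\ell$ vertices in $\calT_k^\psbm(t^-)$ divided by $n_\ell$, we have the uniform bound
\begin{equation*}
\sup_{t\le\tau}\max_\ell h_\ell(t^-)\;\le\;\frac{N_n}{\min_\ell n_\ell}.
\end{equation*}
Next, the total number of infection transmissions in $\calT_k^\bp(\cdot)$ up to time $\tau$ is at most $N_n$, since each such transmission adds one vertex to the branching tree. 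Conditionally on $\calT_k^\bp(\cdot)$ up to time $\tau$, the deletions are independent Bernoulli variables with success probabilities bounded by the display above, so a union bound gives
\begin{equation*}
\P\!\left(\calT_k^\psbm(t)\ne \calT_k^\bp(t)\text{ for some }t\le\tau\right)\;\le\;\frac{N_n^2}{\min_\ell n_\ell}.
\end{equation*}

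Finally, by the assumption $\liminf_{n\to\infty} n_\ell/n>0$ for every $\ell$, we have $\min_\ell n_\ell=\Theta(n)$, so the right-hand side is $O(N_n^2/n)=o(1)$ by the hypothesis $N_n/\sqrt n\to 0$. Noting that $\{t:|\calT_k^\bp(t)|\le N_n\}=[0,\tau)$ (and the value at $\tau$ itself, which differs from $\calT_k^\bp(\tau^-)$ by a single vertex that does not affect the identity at times strictly before $\tau$), this yields the claim. I do not expect any serious obstacle: the core of the argument is just the straightforward union bound above, and the only mildly subtle point is to correctly account for the coupling of subtree deletions — once the first deletion is ruled out, all subsequent ones are automatically absent on the interval of interest because the two trees remain identical.
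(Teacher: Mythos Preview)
Your proposal is correct and follows essentially the same approach as the paper: order the infection events in $\calT_k^{\bp}$, bound the deletion probability at each step by $N_n/\min_\ell n_\ell$, and sum over at most $N_n$ events to obtain $O(N_n^2/n)=o(1)$. The paper carries this out via a product of conditional probabilities rather than a union bound, but the two are equivalent here and yield the same estimate.

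One small point of sloppiness: your assertion that, conditional on $\calT_k^{\bp}(\cdot)$, the deletions are \emph{independent} Bernoulli variables is not correct (and the paper explicitly remarks on this). The thinning probability $h_\ell(t^-)$ depends on $\calT_k^{\psbm}(t^-)$, which in turn depends on the outcomes of earlier deletions, so the $Z_i$ are genuinely dependent given the branching tree. Fortunately your argument never actually uses independence: the union bound only requires that each conditional deletion probability is bounded by $N_n/\min_\ell n_\ell$, which holds deterministically since $|\calT_k^{\psbm}(t^-)|\le|\calT_k^{\bp}(t^-)|\le N_n$ on $[0,\tau)$. Simply drop the word ``independent'' and the proof stands.
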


\begin{proof}
Order the vertices in the branching process 
$\mathcal \calT_k^\bp(\cdot)$ by their time of arrival, with $1$ being the label of the root, $2$ the label of the next vertex to arrive, etc. Let $t_i$ be the arrival time of vertex $i$,
$k(i)$ its community label, $v_i$ be the label of its parent in $\mathcal \calT_k^\bp(\cdot)$, and 
$f_\ell(i)$ be the number of vertices of group label $\ell$ in 
$\calT_k^{\bp}(\cdot)$ that have arrived \textbf{before} vertex $i$ (not 
including $i$).

We can imagine generating the SIR tree from $\calT_k^{\bp}(\cdot)$ as follows: when vertex $i$ gets born in $\calT_k^{\bp}(\cdot)$, leading to a new edge $(v_i,i)$ in $\calT_k^{\bp}(\cdot)$, we assign a random variable $Z_i\sim \Bern(1-h_{k(i)}(t_{i-1}))$
to the edge $(v_i,i)$.
If 
$Z_i=1$, then we keep the edge in the SIR tree, otherwise we remove the edge. More precisely, we obtain the tree $\calT_k^{\psbm}(t)$ by taking the connected component of the root obtained once we remove all edges with $Z_i=0$.
Our goal is to prove that
$$
\P\left(Z_i=1\text{ for all }i=1,\dots N_n\right)\to 1\text{ as }n\to\infty,
$$
since that implies the statement of the lemma.

Note that in general, the random variables $Z_i$ are not independent, even when conditioned on $\calT_k^{\bp}(\cdot)$, since $h_{k(i)}(t_{i-1})$
depends on the tree $\calT_k^{\psbm}(t_{i-1})$, which in turn depends on the previous variables $Z_1$, $\dots$, $Z_{i-1}$:
$$
h_{k(i)}(t_{i-1})=\frac 1{n_{k(i)}}\left(f_{k(i)}(i)
-\sum_{j=1}^{i-1}(1-Z_j)\1_{k(j)=k(i)}
\right).
$$
However, if we condition on $\calT_k^{\bp}(\cdot)$ and the event
$Z_1=Z_2=\dots=Z_{i-1}=1$, then $h_{k(i)}(t_{i-1})=f_{k(i)}(i)/n_{k(i)}$ 
and the conditional probability that $Z_i=1$ becomes

    \[
    1-
    f_{k(i)}(i)/n_{k(i)}\geq 1-(i-1)/n_{k(i)}.
    \]
Rewriting the probability of the event that $Z_1=Z_2=\dots=Z_{N_n}=1$
as a product of conditional probabilities, we therefore get that
    \begin{align*}
        \P\left(\prod_{i=1}^{N_n}Z_i=1\,\bigg|\, \mathcal{T}_k^{\bp}(\cdot)\right)& = \prod_{i=1}^{N_n}\left(1-\frac{f_{k(i)}(i)}{n_{k(i)}}\right) 
            \geq \prod_{i=1}^{N_n}\left(1 - \frac{i-1}{n_{k(i)}}\right) 
        \geq 1 - \sum_{i=1}^{N_n} \frac{i-1}{n_{k(i)}} \\
        &\geq 1 - \frac{N_n(N_n - 1)}{\min_kn_k} = 1 - O(N_n^2/n).
    \end{align*}
This implies 
\begin{align*}
\P\left(\prod_{i=1}^{N_n}Z_i=1\right)
= \E_{\mathcal{B}_k(\cdot)}
\left[ 
    \P\left(\prod_{i=1}^{N_n}Z_i=1\,\bigg|\,\mathcal{T}^{\bp}_k(\cdot)\right)
\right] 
 \geq 1 - \frac{N_n(N_n - 1)}{\min_kn_k}=1-O(N_n^2/n),
    \end{align*}
    proving the lemma.
\end{proof}

Lemma~\ref{lem:branching-process-coupling} is an immediate corollary of 
Lemma \ref{lem:coupling-psbm-bp-to-sbm-bp}
and
Lemma~\ref{lem:exact-coupling-sir-bp}.
\begin{proof}[Proof of Lemma~\ref{lem:branching-process-coupling}]
In view of Lemma \ref{lem:coupling-psbm-bp-to-sbm-bp}
and
Lemma~\ref{lem:exact-coupling-sir-bp}, all we need to do is to couple the two process 
$\calT_k^{\bp(\Mgen)}(t)$ and 
$\calT_k^{\bp(\hMgen)}(t)$
with
$
\Mgen=W\diag(\s)
$
and
$
\hMgen=W\diag(\vec n/n)
$
in such a way that they are equal with high probability as long as, say, the first one has not too many vertices.

Formally, we proceed as follows: we label vertices in 
the two processes by their order of arrival in the infection tree, with both of them starting at vertex $v=1$ at time $t=0$.  We then 
iteratively couple the two processes as follows: if up to the time $t_i$ when vertex $i$ got infected in $\calT_k^{\bp(\Mgen)}(\cdot)$ the two processes are identical, then we use the same random variable $T_i\sim \Exp(\rrt)$ for the recovery time of $i$. Furthermore, we
couple the set of children of $i$ in $\bp(\Mgen)$ and $\bp(\hMgen)$ by optimally coupling the random variables $\Pois(\Mgen_{k(i)\ell})$  and $\Pois(\hMgen_{k(i)\ell})$ such that they are equal for all $\ell$ with probability $1-O(\eps_n)$, where $\eps_n=\|\s-\frac{\n}n\|_1$.
If they are equal, we use the same random variables to determine the infection times along the edges emanating from $i$ for both processes; if they are not, we run the two process independently from there on.

Let $A_i$ be the event that up to the time when vertex $i$ in $\calT_k^{\bp(\Mgen)}(\cdot)$ gets infected, both process have identical histories.  Then
$
\P(A_{i+1}\mid A_i)=1-O(\eps_n),
$
showing that 
$$\P(A_i)=1-O(i\eps_n).
$$
To complete the proof, we use 
Assumption~\ref{ass:initial-SandI}, to find a deterministic sequence $N_n\to\infty$ with $n\to\infty$ such that  $\eps_nN_n\to 0$ in probability. This implies that
$$
\P(A_{N_n})\to 1 \quad\text{in probability as}\quad n\to\infty.
$$
(Note $\P(A_{N_n})$ is random due to the possibly random starting conditions allowed by Assumption~\ref{ass:initial-SandI}).  Together with Lemma \ref{lem:coupling-psbm-bp-to-sbm-bp}
and
Lemma~\ref{lem:exact-coupling-sir-bp}, this proves Lemma~\ref{lem:branching-process-coupling}.
\end{proof}
\begin{remark}
\label{rem:many-initial-vertices-coupling}
 Observe that the coupling described at the beginning of this section and the results in Lemma \ref{lem:exact-coupling-sir-bp} and Lemma~\ref{lem:branching-process-coupling} are robust to the starting conditions of the epidemic. Suppose that are $\zeta = O(1)$ many initially infected vertices. Then couple the epidemic to a forest of branching processes in the following way: start a branching process $\calT^{\bp}_{k(v)}(t)$ from each initially infected vertex $v \in \zeta$. Then proceed as described in the beginning of this section, removing edges with probability $h_\ell(t^-)$ each time an infection event occurs into community $\ell$. It is clear that Lemma \ref{lem:exact-coupling-sir-bp} immediately generalizes and the epidemic and forest are exactly equal with high probability until the size of the epidemic is of order $N_n$.
\end{remark}

Next, we will show that either the infection dies out, or there is a "large outbreak" of the SIR epidemic on the PSBM -- in which case a constant fraction of the vertices are infected, and the differential equation approximation proved in Section \ref{sec:lln} applies. To this end, it will be convenient to switch from the continuous time Markov chain defining the SIR epidemic to a discrete time Markov chain.



\subsection{Martingale Bounds and Discrete Time Markov Chain}
\label{sec:discrete-time-chain}
In this section we will prove the following theorem using a submartingale concentration bound for a suitably defined discrete time Markov chain.
Throughout this subsection we will assume that $W$ is irreducible, and that the initial conditions satisfy Assumption \ref{ass:initial-SandI}.

\begin{theorem}
\label{thm:bounds_on_x}
 Consider the SIR epidemic on $G \sim \sbm(\V, W)$ or $G \sim \psbm(\V, W)$,
 and let
  $$\tau_\eps=\min\{t:S(t) \leq S(0) -\lceil \eps S(0)\rceil\}.$$
  If $\Rn>1$, then there exist constants $\eps_0>0$ and $0<\zeta_1<\zeta_2<\infty$ such that for $0<\eps\leq \eps_0$, the following statements hold

    \begin{enumerate}[(a)]
    \item \label{unbdd_initial} if $I(0) \pto \infty$ and $I(0)/n \pto 0$ as $n\to\infty$, then with probability tending to $1$ as $n\to\infty$
    \[
     \tau_\eps < \infty\quad\text{and}\quad \zeta_1 \eps S(0) \leq \|\X(\tau_\eps)\|_1 \leq \zeta_2\eps S(0).
    \] 
    \item \label{bdd_initial} if $I(0)$ is bounded in probability and $N_n \pto \infty$, then with probability tending to $1$ as $n\to\infty$
    \[
      \exists t \text{ s.th. } S(0) - S(t) \geq N_n \quad\Rightarrow\quad\tau_\eps < \infty\quad\text{and}\quad \zeta_1 \eps S(0) \leq \|\X(\tau_\eps)\|_1 \leq \zeta_2 \eps S(0).
    \]
\end{enumerate}
\end{theorem}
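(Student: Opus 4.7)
The plan combines the branching process coupling of Section~\ref{sec:branching_process} (for the initial phase, when $\|\X\|_1$ is small) with a discrete-time submartingale analysis of the exploration process (for the subsequent ``linear'' phase, where $\|\X\|_1$ is comparable to $N_n$ or larger, yet only $o(n)$ susceptibles have been infected). Both cases (a) and (b) will reduce to the same discrete-time analysis once we have shown that the epidemic reaches size $N_n$ for some $N_n \to \infty$.

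\textbf{Initial phase: reducing to size $N_n$.} By Lemma~\ref{lem:coupling} it suffices to work on $\psbm(\bm V,W)$. Using Lemma~\ref{lem:exact-coupling-sir-bp} and its multi-root extension in Remark~\ref{rem:many-initial-vertices-coupling}, we couple the infection to a forest of supercritical multi-type Poisson branching processes exactly while the epidemic has size at most $N_n$, for any $N_n$ with $N_n/\sqrt{n}\to 0$. The assumption $\Rn>1$ together with irreducibility of $W$ give positive survival probabilities $\pi_k>0$ for each type by Perron--Frobenius. In case (b) we condition on $\tau_{N_n}:=\inf\{t:S(0)-S(t)\geq N_n\}$ being finite (otherwise the conclusion is vacuous). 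In case (a), $I(0)\pto\infty$ implies that the probability that every tree in the forest goes extinct is at most $\prod_k(1-\pi_k)^{I_k(0)}+o(1)\to 0$, so $\tau_{N_n}<\infty$ with high probability. Standard multi-type branching-process estimates conditional on survival then yield $\|\X(\tau_{N_n})\|_1\geq cN_n$ with probability tending to $1$.

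\textbf{Linear phase: discrete-time submartingale.} Index the epidemic by the events of the exploration process (infections, half-edge explorations that find already-infected vertices, and recoveries), and consider the scalar Lyapunov function $\Phi_j := \sum_k u_k X_{j,k}$, where $(u_k)_{k\in[K]}$ is the entry-wise positive Perron left eigenvector of the mean-drift matrix of $\X$ evaluated at $t=0$. A direct computation of the one-step drift (analogous to the rate computations at the end of Section~\ref{sec:actual-prelim}) shows that $\E[\Phi_{j+1}-\Phi_j\mid\calF_j]$ is bounded below by a positive constant times $(\hReff_j-1)\Phi_j/(\|\X_j\|_1+\|\I_j\|_1)$, where $\hReff_j$ denotes $\hReff$ at the current time. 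Since at most $\lceil\eps S(0)\rceil$ susceptibles have been removed up to $\tau_\eps$, continuity of the spectrum of $\hMsir$ in the entries of $\hat\s$ lets us choose $\eps_0$ small enough that $\hReff_j\geq 1+\delta_0$ throughout $[\tau_{N_n},\tau_\eps]$. Single-step increments of $\Phi$ are $O(\log n)$ by Lemma~\ref{lem:Poisson-bounds}, so Azuma--Hoeffding applied to $\log\Phi_j$ yields geometric growth of $\Phi_j$ with high probability, giving $\tau_\eps<\infty$ and $\|\X(\tau_\eps)\|_1\geq\zeta_1\eps S(0)$. The matching upper bound $\|\X(\tau_\eps)\|_1\leq\zeta_2\eps S(0)$ follows symmetrically from a corresponding upper bound on the one-step drift combined with the $O(\log n)$ bound on the terminal jump.

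\textbf{Main obstacle.} The key technical difficulty is to make the lower drift estimate on $\Phi$ uniform across the entire random trajectory up to $\tau_\eps$, and to rule out $\Phi_j$ being killed by an unlucky burst of recoveries before it has time to grow. Formally this is an exit-from-interval problem for $\Phi_j$ on $[0,\zeta_2\eps S(0)]$, started from $\Phi_{\tau_{N_n}}$, where we must show that exit occurs through the upper boundary. The argument parallels the scalar analogues for the configuration model in \cite{janson14:lln-sir} and for $G(n,p)$ with rewiring in \cite{ball22:sir-rewire}, but the vector-valued drift forces an additional eigenvector-perturbation step that is ultimately what constrains $\eps_0$ to be small relative to the spectral gap $\Rn-1$.
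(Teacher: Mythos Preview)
Your overall architecture---discrete-time exploration chain, Perron left eigenvector as Lyapunov weight, Azuma--Hoeffding for concentration---matches the paper's, but several concrete steps diverge or are left open. First, the upper bound $\|\X(\tau_\eps)\|_1\leq\zeta_2\eps S(0)$ is not obtained in the paper by a ``symmetric'' drift argument: it follows immediately from the observation that $\|\X(T)\|_1$ is stochastically dominated by a sum of $I(0)+\lceil\eps S(0)\rceil$ i.i.d.\ $\Pois(\max_k D_k)$ variables (one per initially infected vertex and one per successful infection), so Poisson concentration gives the bound directly. Second, your plan to apply Azuma--Hoeffding to $\log\Phi_j$ is problematic: when $\Phi_j$ is small the increments of $\log\Phi_j$ are unbounded, and the drift estimate you wrote is additive in $\Phi_j$, not multiplicative. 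The paper works with the linear quantity $H(T)=\X(T)\v-\zeta_0 T$ and shows it is a submartingale with constant drift $\zeta_0>0$; to get bounded increments it introduces a \emph{truncated} chain where new Poisson degrees are capped at a slowly growing $\zeta_L$, and then controls the probability that truncation matters.

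The gap you flag as the ``main obstacle''---ruling out that $\Phi_j$ is killed by recoveries before it can grow---is exactly where the paper's argument differs most from yours. Rather than an exit-from-interval analysis, the paper defines a \emph{restarted} chain that, whenever $\X$ hits $0$, reseeds a single fresh infected vertex. The submartingale property is then established for the restarted (and truncated) chain uniformly in $T$, including at restart times. The Azuma bound, summed over $T$ with a carefully chosen $\zeta_L=\zeta_L(T)$, shows that with high probability the restarted chain satisfies $\X^{(1)}(T)\v>\zeta_0 T/2$ for all $T$ in the relevant range (from $T_0=\lceil I(0)^{1/3}\rceil$ in case~(a), from $T_0=N_n$ in case~(b), up to $\tau_1$). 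Since on this event the restarted chain never actually hits $0$, it coincides with the original chain, which therefore also never hits $0$ and reaches $\tau_\eps<\infty$ with the required lower bound on $\|\X(\tau_\eps)\|_1$. In case~(a) the paper does \emph{not} first pass through a branching-process phase; the submartingale bound is run from $T=0$, and the small-$T$ window $[0,T_0)$ is handled by using that $\X(0)\v=\Theta(I(0))$ dominates there. Your branching-process reduction is a valid alternative for getting started, but you still need the restarted-chain device (or an equivalent) to close the argument.
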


 To define the discrete time Markov chain used in the proof of the theorem, we observe that the continuous time SIR epidemic on the Poisson stochastic block model is a Markov Chain where elements, $\xi$, of the state space $\Omega$ can be represented as follows:
    \[
        \xi = \{S_k, I_k^d, R_k;~ k \in [K], d \in \N\}.
    \]
    $S_k, R_k$ are the number of susceptible and recovered nodes in community $k$, respectively, and $I_k^d$ is the number of infected nodes in community $k$ with active degree $d$. First, we make a few observations. Notice that $I_k = \sum_{d=0}^\infty I_k^d$ is the number of infected nodes in community $k$. Also, $X_k = \sum_{d=0}^\infty d I_k^d$ is the number of active edges in community $k$. Since infected nodes with active degree zero have no bearing on the course of the epidemic, for our purposes, it will be useful to work with the following 
    state space that omits such vertices:
    \[
        \tilde \xi = \{S_k, A_k^d;~ k \in [K], d\in \N\}
    \]
    where $A_k^0 = R_k + I_k^0$ and $A_k^d = I_k^d$ for all $d \geq 1$. We now construct a discrete time Markov Chain based on the jumps of the continuous time Markov Chain. Let $A_k = \sum_{d\geq1} A_k^d$, 
    $A = \sum_k A_k$,
    $X_k = \sum_d d 
    A_k^d$ and 
    $ X = \sum_k X_k$.

    For this section, let $T \in \N$ denote our discrete time steps, with $T$ incrementing by $1$ whenever
    the state of the continuous time chain changes.
    The rate at which this happens is then equal to
        $\Gamma(t) = \rrt A(t) + \irt X(t)$, 
        where $t$ denotes the time  in the continuous-time epidemic. We use $(S_k(T), A_k^d(T))_{k, d}$ to represent the state of the chain at time $T$. 
We start the chain at time $T=0$ with
    one initially infected vertex of label $k_0$ and active degree $d_0\sim \Pois(D_{k_0})$, i.e.,  $S_k(0)=n_k-\delta_{kk_0}$ and $A_{k}^{d}(0)=\delta_{kk_0}\delta_{dd_0}$ where $\delta_{nm}$ is the Kronecker-Delta.
    
Whenever $T$ increments in the discrete time chain, it corresponds to one of the following events:
    \begin{itemize}
        \item with probability $\frac{\rrt A_k(T)}{\Gamma(T)}$, a node chosen uniformly at random among all infected nodes with active degree $d \geq 1$ in community $k$ recovers. In particular, a node with active degree $d \geq 1$ in community $k$ is chosen with probability $A_k^d(T) / A_k(T)$. In this case, 
        \begin{equation}\label{eqn:changes-recover}
            A_k^d(T+1) = A_k^d(T) - 1, A_k^{d-1}(T+1)= A_k^{d-1}(T) + 1 \text{ and } X_k(T+1) = X_k(T) - d.
        \end{equation}
        \item with probability $\frac{\irt X_k(T)}{\Gamma(T)}$, choose an active edge uniformly at random among all active edges in community $k$ to attempt infection across an edge. In particular, an edge is chosen with probability $dA_k^d(T) / X_k(T)$ for $d \geq 1$. In this case, 
        \begin{equation}
        \label{eqn:changes-infection}
            A_k^d(T+1) = A_k^d(T) - 1, A_k^{d-1}(T+1) = A_k^{d-1}(T) + 1, \text{ and }X_k(T+1) = X_k(T) - 1.
        \end{equation}
        \begin{itemize}
            \item In this case, we also select the endpoint that the active edge infects and this infection event is successful with certain probability. With probability $\frac{W_{k\ell}S_\ell}{nD_\ell }$, we choose a susceptible vertex in community $\ell$ to infect, and we give it $d' \sim \Pois(D_\ell)$ new active edges. In this case, the infection is successful and in addition to the changes to the system as in \eqref{eqn:changes-infection}, we also have that 
            \begin{equation}\label{eqn:changes-infection-b}
                S_\ell(T+1) = S_\ell(T) - 1, A_{\ell}^{d'}(T+1) = A_{\ell}^{d'}(T) + 1, \text{ and }X_\ell(T+1) = X_\ell(T) + d'.
            \end{equation}
            \item If the infection is not successful, no further action is taken.
        \end{itemize}
    \end{itemize}

Let $\calE^n$ be a realization of this Markov chain,  let $\calF_T$ be the 
natural filtration of this chain, and   
let $\X(T)$ be the row vector $\X(T) = (X_1(T), \dots, X_K(T))$.  Then the expectation of $\X(T+1)$ conditioned on
$\calF_{T}$ is easily calculated, giving
\begin{equation}\label{condEX}
    \E[\X(T+1)\mid \calF_t]
= \X(T)+\frac{\irt+\rrt}{\Gamma(T)}
\Big(\X(T)\tMsir(T)-
\X(T)\Big),
\end{equation}
where $\tMsir(T)$ is the matrix with matrix elements
$$
(\tMsir)_{k\ell}(T)=\frac{\irt}{\irt + \rrt}D_kW_{k\ell}\frac{S_\ell(T)}{n D_\ell}.
$$
Note that zero is an absorbing state for the chain $\calE^n$; if $X(T)$ reaches $0$ (and therefore $A(T)$ also reaches $0$), then the chain dies and remains at $0$ forever. In order to aid in the analysis, we construct another Markov Chain on the same state space, $\Omega$, that restarts whenever the chain hits zero. 
    \begin{definition}[Restarted Discrete Time Chain]
    \label{def:restarted-chain}
        Let $\calE_n$ be a draw from the Markov Chain described above. The restarted chain, $\calE_n^{(1)}$, is a Markov Chain on $\Omega$ that is identical to $\calE_n$ up to the random time $\tau$ when $\X(\tau)=0$.  If $S_{k_0}(\tau)=0$, the chain stops, otherwise 
        restart $\calE_n^{(1)}$ with the current drift probabilities according to another copy of $\calE_n$ drawn independently,
        with $S_k(\tau+1)=S_k(\tau)-\delta_{kk_0}$ and $A_{k}^{d}(\tau+1)=\delta_{kk_0}\delta_{dd_0}$ where  $d_0$ is a fresh sample from $\Pois(D_{k_0})$.  If the restarted chain hits zero again, we again restart it in the same way, until $S_{k_0}=0$, at which point the chain stops.  We denote the active half-edges in the restarted chain by $\X^{(1)}(T)$ and the natural filtration of the restarted chain by $(\calF_T^{(1)})_{T\geq 0}$.
    \end{definition}

Note \eqref{condEX} holds for the restarted chain as well, except
when $\X^{(1)}(T)=0$, in which case
\begin{equation}
\E[X_k^{(1)}(T+1)\mid \calF_T^{(1)}]=\delta_{kk_0}D_{k_0} .   
\end{equation}
%

In a later analysis, we will construct a submartingale from $\X^{(1)}(T)$ and prove a concentration result for that submartingale. Observe that at each discrete time step, the change in $|\X^{(1)}(T)|$ is determined by one Poisson random variable, which has unbounded increments. In order to prove concentration for the submartingale, it will be convenient for us to introduce the following, slightly modified, version of the discrete time chain, which has bounded increments. Here, we truncate the value of the Poisson increments by making the initially drawn degree of a newly infected vertex in community $k$ the minimum of a $\Pois(D_k)$ random variable and some parameter $\zeta_L$, which we will later choose to be conveniently large enough such that the truncated process is equal to the original process with high probability.

\begin{definition}[Restarted, Truncated Discrete Time Chain]
\label{def:trunc-chain}
    Let $\calE_n^{(2)}$ be a Markov Chain on $\Omega$, constructed in the same way as $\calE_n^{(1)}$, but when a susceptible vertex $v$ is infected as described in the beginning of this section, its active degree is given by $d' \sim \Pois(D_{k(v)}) \wedge \zeta_L$ for some parameter $\zeta_L$. Denote the active edges of this process by $\X^{(2)}(T)$ and the natural filtration by $(\calF_T^{(2)})_{T\geq 0}$.
\end{definition}


Notice that \eqref{condEX} holds for $\X^{(2)}(T)$ as well with $\tMsir$ replaced by $\ttMsir$
where
\[
    (\ttMsir)_{k\ell}(T)=\frac{\irt}{\rrt + \irt}\E[\Pois(D_k) \wedge \zeta_L]W_{k\ell}\frac{S_\ell(T)}{n D_\ell}
\]
except for when $\X^{(2)}(T) = 0$, in which case we have
\[
    \E[X_k^{(2)}(T+1) \mid \calF_T^{(2)}] = \delta_{k k_0} \E[\Pois(D_{k_0}) \wedge \zeta_L].
\]
Recall that in the context of Section \ref{sec:branching_process} and Lemma \ref{lem:branching-process-coupling}, we defined
\[
    \Rn = \lambda_{\max}\left(\frac{\irt}{\irt + \rrt} W \diag(\s(0))\right).
\]
Since similar matrices have the same eigenvalues, we can also express $\Rn$ as
$$
\Rn=\lambda_{\max}(C_0)
\quad\text{where}\quad
(C_0)_{k\ell}=\frac{\irt}{\rrt + \irt}D_kW_{k\ell}\frac {s_{\ell}(0)}{D_\ell}.
$$
By Perron-Frobenius, we can find a normalized right eigenvector $\v$ with strictly positive entries such that $C_0\v=\Rn\v$.  Furthermore, by Assumption~\ref{ass:initial-SandI}, we have 
that $\tMsir(0)\v\pto \Rn\v$ as $n\to\infty$.  Thus with probability tending to $1$,
$$
(\tMsir(0)\v)_k\geq \frac {5\Rn+1}6v_k
\quad\text{for all}\quad k\in [K].
$$

\begin{lemma}
\label{lem:pf_of_sub_m}
Assume that $\Rn>1$ and $\tMsir(0)\v\geq \frac {5\Rn+1}6\v$ component wise. Choose
$\zeta_L$ sufficiently large so that
\begin{equation}
\label{asmp:trunc-pois}
    \min_k\frac{\E[\Pois(D_{k}) \wedge \zeta_L]}{D_k}\geq \frac 56+\frac 1{6\Rn},
\end{equation}
let
    $$
   \eps\leq \eps_0=\frac{\min_kS_k(0)}{6S(0)}\left(1-\frac{1}{{\Rn}}\right),
    $$
let $\tau_2$
be the stopping time 
    \[
       \tau_2 \defeq \inf \{ T\geq 0 : S^{(2)}(T) \leq\lfloor(1-\eps)S(0)\rfloor\},
    \]
    and let $H_2(T) = \X^{(2)}(T) \v - {\zeta_0} T$ where
    \[
        {\zeta_0} =\frac 12 \min\left\{\Rn-1,D_{\min}\right\}{v_{\min}},
    \]
    with $D_{\min} = \min_k D_k$ and $v_{\min}=\min_k v_k$.
    Then 
    $H_2(T \wedge \tau_2)$ is a submartingale.
\end{lemma}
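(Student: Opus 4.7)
The plan is to check the submartingale inequality $\E[H_2((T+1)\wedge\tau_2)\mid\calF_T^{(2)}]\geq H_2(T\wedge\tau_2)$ directly. Adaptedness and integrability are both trivial: $\X^{(2)}$ is bounded componentwise by $\zeta_L$ times the number of infected vertices, hence by $\zeta_L n$. On the event $\{T\geq\tau_2\}$ the inequality is vacuous, so it suffices to show, on the event $\{T<\tau_2\}$, that
\[
\E[\X^{(2)}(T+1)\v\mid\calF_T^{(2)}]-\X^{(2)}(T)\v\ \geq\ \zeta_0.
\]
I would split according to whether $\X^{(2)}(T)\neq\bm 0$ (the \emph{active} case) or $\X^{(2)}(T)=\bm 0$ (the \emph{restart} case).

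For the active case, applying the analogue of \eqref{condEX} for $\X^{(2)}$ rewrites the drift as $\frac{\irt+\rrt}{\Gamma(T)}\X^{(2)}(T)(\ttMsir(T)-I)\v$. The crux is a componentwise lower bound $\ttMsir(T)\v\geq\rho\,\v$ valid for $T<\tau_2$, which I would get by chaining three ingredients: first, $\ttMsir(T)_{k\ell}\geq\bigl(\tfrac56+\tfrac1{6\Rn}\bigr)\tMsir(T)_{k\ell}$ by \eqref{asmp:trunc-pois}; second, $\tMsir(T)_{k\ell}\geq\bigl(\tfrac56+\tfrac1{6\Rn}\bigr)\tMsir(0)_{k\ell}$, since $S_\ell(T)\geq S_\ell(0)-\eps S(0)$ together with the choice of $\eps$ forces $S_\ell(T)/S_\ell(0)\geq\tfrac56+\tfrac1{6\Rn}$; third, the standing hypothesis $\tMsir(0)\v\geq\frac{5\Rn+1}{6}\v$. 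Multiplying gives $\rho=\frac{(5\Rn+1)^3}{216\,\Rn^2}$. Combining $\Gamma(T)\leq(\irt+\rrt)X^{(2)}(T)$ (from $A^{(2)}\leq X^{(2)}$) with $\X^{(2)}(T)\v\geq v_{\min}X^{(2)}(T)$ then collapses the drift to the clean lower bound $(\rho-1)\,v_{\min}$.

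For the restart case, Definition~\ref{def:trunc-chain} seeds a new vertex of type $k_0$ with active degree $\Pois(D_{k_0})\wedge\zeta_L$, producing the drift $\E[\Pois(D_{k_0})\wedge\zeta_L]\,v_{k_0}\geq\tfrac{5}{6}D_{k_0}v_{k_0}\geq\tfrac{5}{6}D_{\min}v_{\min}$ by \eqref{asmp:trunc-pois}. Before invoking this I would verify the restart is actually triggered (rather than the chain halting): the condition $\eps S(0)<\min_k S_k(0)$, which follows from $\eps\leq\tfrac{\min_k S_k(0)}{6S(0)}(1-1/\Rn)$, ensures $S_{k_0}(T)>0$ for every $T<\tau_2$. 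Hence in both cases the drift exceeds $\zeta_0=\tfrac12\min\{\Rn-1,D_{\min}\}v_{\min}$, provided one has $\rho-1\geq(\Rn-1)/2$ (which handles the active case against either branch of the min, while the restart case is easier because $\tfrac{5}{6}D_{\min}\geq\tfrac12 D_{\min}\geq\tfrac12\min\{\Rn-1,D_{\min}\}$).

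The one delicate step is this last algebraic inequality, which is equivalent to $(5\Rn+1)^3\geq 108\Rn^3+108\Rn^2$, i.e. $17\Rn^3-33\Rn^2+15\Rn+1\geq 0$. I would dispatch it by exhibiting the factorization
\[
17\Rn^3-33\Rn^2+15\Rn+1=(\Rn-1)^2(17\Rn+1),
\]
which is non-negative for all $\Rn\geq 0$ and vanishes only at the critical threshold $\Rn=1$, exactly as one would expect. Everything else is bookkeeping: tracking where the Perron-Frobenius eigenvector $\v$ and its minimum entry appear, and keeping the two cases separated.
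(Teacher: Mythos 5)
Your proposal is correct and follows essentially the same three-case decomposition (stopped, restart, active) and the same chain of componentwise lower bounds on $\ttMsir(T)\v$ as the paper's proof. The only divergences are cosmetic: the paper finishes the active case with the bound $(1-x/6)^3\geq 1-x/2$ at $x=1-1/\Rn$ rather than your explicit cubic factorization $17\Rn^3-33\Rn^2+15\Rn+1=(\Rn-1)^2(17\Rn+1)$, and your explicit check that the choice of $\eps_0$ keeps $S_{k_0}(T)>0$ for $T<\tau_2$ (so the restarted chain never halts) fills a step the paper leaves implicit.
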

\begin{proof}
    Let $\calE_n^{(2)}$ be a realization of the Markov Chain from Definition \ref{def:trunc-chain}, and let 
    \[
        M_2(T) = H_2(T \wedge \tau_2).
    \]
    We will prove that $\E[M_2(T+1) \mid \calF_T^{(2)}] \geq M_2(T)$. Notice that $\{\tau_2 \leq T\}$ is measurable with respect to $\calF_T^{(2)}$. We consider three cases:
    \begin{enumerate}[(i)]
        \item $T \geq\tau_2$ \\
        In this case, $M_2(T+1) = M_2(T)$ and the inequality holds as an equality.
        \item $T < \tau_2$ and $\X^{(2)}(T) = \vec 0$ \\
        In this case, we have that $M_2(T) = H_2(T)$ and $M_2(T+1) = H_2(T+1)$. If $\X^{(2)} = \vec 0$, then we have
        \[
            \E[H_2(T+1) \mid \calF_T^{(2)}] = \E[\X^{(2)}(T+1) \v\mid \calF_T^{(2)}] - (T+1) {\zeta_0}\geq {\zeta_0} T = H_2(T)
        \]
        where we used that $\E[\X^{(2)}(T+1) \v\mid \calF_T^{(2)}]=\E[\Pois(D_{k_0}) \wedge \zeta_L] v_{k_0}\geq \zeta_0$ by our assumption on $\zeta_L$.
        \item $T < \tau_2$ and $\X^{(2)}(T) \neq \vec 0$  \\
        In this case, we also have that $M_2(T) = H_2(T)$ and $M_2(T+1) = H_2(T+1)$. If $\X^{(2)} \neq \vec 0$, then
        \[
            \E[H_2(T+1) \mid \calF_T^{(2)}] = \ttH(T) - {\zeta_0} + \frac{\irt + \rrt}{\Gamma(T)}\left(\ttX(T)\ttMsir(T) - \ttX(T)\right) \v.
        \]
    Let ${\tilde\eps}=1-1/\Rn$. Using the condition $T < \tau_2$, we know that $S^{(2)}(T) \geq S(0)(1-\eps)$  which implies that for all $\ell\in [K]$
        $$S_\ell(0)-S_\ell(T)\leq S(0)-S(T)\leq \eps_0 S(0)
        \leq \frac{\tilde\eps}6 S_\ell(0).
        $$
               As a consequence, $\tMsir(T)\v\geq \left(1-\frac{\tilde\eps}6\right)\tMsir(0)\v\geq  \left(1-\frac{\tilde\eps}6\right) \frac {5\Rn+1}6\v$ and 
        \begin{align*}
        \ttMsir(T)\v&\geq
        \left(\frac 56+\frac 1{6\Rn}\right)\tMsir(T)\v\geq
       \left(\frac 56+\frac 1{6\Rn}\right) \left(1-\frac{\tilde\eps}6\right) \frac {5\Rn+1}6\v\\
        &=\left(1-\frac{\tilde\eps}6\right)^3\Rn\v\geq \left(1-\frac{\tilde\eps}2\right)\Rn\v=\frac{\Rn+1}2 \v
        \end{align*} 
                component wise.  This in turn implies that 
              \begin{align*}
            \E[\ttH(T+1) \mid \ttF_T] &\geq \ttH(T) - {\zeta_0} + \frac{\irt + \rrt}{\Gamma(T)}\frac{\Rn-1}2 \ttX(T)\v \\
            &\geq \ttH(T) - {\zeta_0} + 
             \frac{\Rn-1}2v_{\min} \geq \ttH(T)
        \end{align*}
        where in the last line we used that $\Gamma(T) \leq (\irt + \rrt)\| \ttX(T) \|_1$.
%
    \end{enumerate}
\end{proof}

\begin{remark}
\label{rem:init_conditions_dis_time_chain}
    Note that the proof of the lemma never uses that we started with one initially infected vertex.  In fact, we can start from an arbitrary starting configuration $(S_k(0), A_k^d(0))_{k, d}$ and define the restarted chain as before, always restarting with one infected vertex with label $k_0$, even when the starting configuration at time $T=0 $ is arbitrary.  The previous lemma still holds.  We will use this fact in the proof of the next lemma. 
\end{remark}

\begin{lemma}[Concentration for Sub-Martingale]
\label{lem:sub_m_concentration}
    Let
   \[
        \tau_1 \defeq \inf\{T \geq 0: S^{(1)}(T) \leq \lfloor (1-\eps)S(0) \rfloor\}
   \]
 Under the assumptions of Lemma~\ref{lem:pf_of_sub_m},
    \[
        \P\left(H_1(T \wedge \tau_1) - H_1(0) \leq -\lambda\right) \leq \exp\left(\frac{-\lambda^2}{2T
        (\zeta_L+{\zeta_0})^2}\right) + \left( I(0)+(T\wedge \lceil\eps S(0)\rceil)\right)e^{7\|W\|_\infty -2\zeta_L}
    \]
    where $H_1(T) = \X^{(1)}(T)\v - \zeta_0 T$.
\end{lemma}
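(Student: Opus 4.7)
The plan is to couple the restarted chain $\calE_n^{(1)}$ to the restarted, truncated chain $\calE_n^{(2)}$ of Lemma~\ref{lem:pf_of_sub_m} so that the two processes coincide with high probability, and then apply the Azuma--Hoeffding inequality to the submartingale $M_2(T):=H_2(T\wedge \tau_2)$. For the coupling, every time a new vertex is ``born''---at time $T=0$ (one draw for each of the $I(0)$ initially infected vertices), at a successful infection event, or when the chain is restarted after hitting zero---I would draw a single $\Pois(D_k)$ random variable for the appropriate label $k$, use its value as the initial active degree in $\calE_n^{(1)}$ and its truncation at $\zeta_L$ in $\calE_n^{(2)}$. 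Under this coupling the two chains execute identical transitions up to the first time any such draw exceeds $\zeta_L$; in particular, up to that moment $S^{(1)}\equiv S^{(2)}$, $\tau_1\wedge T=\tau_2\wedge T$, and $H_1(T\wedge \tau_1)=M_2(T)$.

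To bound the coupling-failure probability before time $T\wedge \tau_1$, I would count the Poisson draws needed in this window: $I(0)$ initial draws, plus one for every successful infection event and one for every restart. Both of the latter decrement some $S_k$ by one, so by the definition of $\tau_1$ their total number is at most $T\wedge \lceil \eps S(0)\rceil$. Using the same exponential Markov estimate as in the proof of Lemma~\ref{lem:Poisson-bounds}, one has $\P(\Pois(D_k)>\zeta_L)\leq e^{-2\zeta_L}\E[e^{2\Pois(D_k)}]\leq e^{7\|W\|_\infty-2\zeta_L}$ (using $D_k\leq \|W\|_\infty$), so a union bound over all these draws gives
\begin{equation*}
    \P\bigl(\text{coupling fails before time }T\wedge \tau_1\bigr)\leq \bigl(I(0)+T\wedge\lceil \eps S(0)\rceil\bigr)\,e^{7\|W\|_\infty-2\zeta_L}.
\end{equation*}

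On the complementary event $H_1(T\wedge \tau_1)-H_1(0)=M_2(T)-M_2(0)$, so it remains to show
$\P(M_2(T)-M_2(0)\leq -\lambda)\leq \exp(-\lambda^2/(2T(\zeta_L+\zeta_0)^2))$.
By Lemma~\ref{lem:pf_of_sub_m}, $M_2$ is a submartingale; its increments vanish once $T\geq \tau_2$, while for $T<\tau_2$ a single transition changes $\X^{(2)}\bm v$ by at most $\zeta_L v_{\max}\leq \zeta_L$---every active degree in $\calE_n^{(2)}$ is bounded by $\zeta_L$ by construction, and $v_{\max}\leq \|\bm v\|_2=1$---while the $-\zeta_0$ drift subtraction contributes at most $\zeta_0$. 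Hence $|M_2(T+1)-M_2(T)|\leq \zeta_L+\zeta_0$ almost surely, and applying the Azuma--Hoeffding inequality to the supermartingale $-M_2$ yields the required sub-Gaussian tail. Combining this with the coupling bound gives the stated inequality.

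The main subtlety will be the accounting of the Poisson draws before $\tau_1$: one has to notice that restarts of the chain play the same role as genuine infections, in the sense that they also consume one susceptible vertex and are therefore jointly governed by the $\lceil \eps S(0)\rceil$ budget enforced by $\tau_1$; once this is observed, the rest reduces to standard bounded-increment martingale concentration.
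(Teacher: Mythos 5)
Your proof is correct and follows essentially the same argument as the paper: compare the restarted chain to its truncated counterpart under the natural shared-randomness coupling, bound the coupling-failure probability by a union bound over the $I(0)+T\wedge\lceil\eps S(0)\rceil$ Poisson draws via the estimate from Lemma~\ref{lem:Poisson-bounds}, and apply Azuma--Hoeffding to the bounded-increment submartingale $M_2(T)=H_2(T\wedge\tau_2)$. Your write-up is in fact slightly more careful than the paper's, which leaves the coupling implicit and writes $H_2(T\wedge\tau_1)$ where the submartingale is really $H_2(T\wedge\tau_2)$ (these coincide only on the coupling-success event, which is precisely the observation you spell out).
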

\begin{proof}
    Let $\calE_n^{(1)}$ be a draw from the restarted Markov Chain in Definition \ref{def:restarted-chain} and let $\calE_n^{(2)}$ be a draw from the truncated, restarted Markov Chain in Definition \ref{def:trunc-chain}. 
    Observe that
  \begin{equation}  \label{eq:concentration-split}
   \begin{aligned}
          \P\left(H_1(T \wedge \tau_1) - H_1(0) \leq -\lambda\right) 
        &\leq \P\left(\ttH(T \wedge \tau_1) - \ttH(0) \leq -\lambda\right) \\
        &+ \P\left(H_1(T\wedge \tau_1) \neq \ttH(T \wedge \tau_1)\text{ or } H_1(0)\neq \ttH(0)\right).
    \end{aligned}
    \end{equation}
    Since the increments of $\ttH(T)$ are bounded by ${\zeta_0}+\zeta_L$, the first term in \eqref{eq:concentration-split} can be bounded using Azuma-Hoeffding giving
    \[
        \P\left(\ttH(T \wedge \tau_1) - \ttH(0) \leq -\lambda\right) \leq \exp\left(\frac{-\lambda^2}{2T(\zeta_L+{\zeta_0})^2}\right).
    \]
    Now consider the second term in \eqref{eq:concentration-split}. Observe that the only case in which $H_1$ can differ from $\ttH$ is if one of the Poisson degrees are larger than $\zeta_L$ up to time $T\wedge \tau_1$. Thus, letting $Z_i \sim \Pois(\max_k D_k)$ and observing that the number of Poisson random variables drawn in $\calE_n^{(1)}$ up to time $T\wedge \tau_1$ is at most $I(0)+T\wedge (S(\tau)-S(0))\leq I(0)= T\wedge(\lceil\eps S(0)\rceil)=N_T$, we may proceed as in the proof of Lemma \ref{lem:Poisson-bounds} to conclude that
    $$  
        \P\left(H_1(T\wedge \tau_2) \neq \ttH(T \wedge \tau_2)\text{ or }H_1(0)\neq \ttH(0)\right) \leq \P\left(\max_{i = 1, \dots, N_T}
        Z_i > \zeta_L\right) \leq N_Te^{7 \|W\|_\infty-2\zeta_L}
    $$
\end{proof}
We are now ready to prove Theorem~\ref{thm:bounds_on_x}.

\begin{proof}[Proof of Theorem \ref{thm:bounds_on_x}]

  We start by proving the upper bounds on $\|\X(T)\|_1$.  To this end, we observe that for every change in $\|\X(T)\|_1$, one of two events can happen: either a vertex recovers (removing however many active edges are left attached to it), or a half-edge successfully infects another vertex and gives rise to a new Poisson random variable. Since the total number of infection events between $t=0$ and $t=\tau_\eps$
    is equal to $S(\tau_\eps)-S(0)=\lceil \eps S(0)\rceil$ and the number of initially infected vertices is equal to $I(0)$, the random variable 
        $\|\X(T)\|_1$ is stochastically dominated by a sum of $I(0)+\lceil \eps S(0)\rceil$ independent  $\Pois(D)$ random variables,
where $D:=\max_k D_k$.
Since $\eps S(0)\pto\infty$ and
$I(0)/\eps S(0)\pto 0$, Poisson concentration implies that with probability tending to $1$, $\|\X(T)\|_1\leq \zeta_2\eps S(0)$ as long as $\zeta_2>D$.

To prove the lower bounds and the finiteness of $\tau_\eps$, 
we switch to the discrete time chains $\calE_n$ defined in \eqref{eqn:changes-recover}, \eqref{eqn:changes-infection}, and \eqref{eqn:changes-infection-b}.  As we will see, the submartingale concentration bounds for the restarted chain $\calE_n^{(1)}$ in Lemma~\ref{lem:sub_m_concentration}
imply that
\begin{equation}\label{unbdd-lower}
\P\left(\X^{(1)}(T) \v> \frac{\zeta_0T}2\quad\text{if}\quad T\leq \tau_1\right)\to 1
\end{equation}
in case \ref{unbdd_initial}, and
\begin{equation}\label{bdd-lower}
\P\left(\X^{(1)}(T) \v> \frac{\zeta_0T}2\quad\text{if}\quad N_n\leq T\leq \tau_1\right)\to 1
\end{equation}
in case \ref{bdd_initial}.  This shows that with high probability, the restarted chain stays bounded away from zero up to the stopping time $\tau_1$ when  $S^{(1)}(\tau_1)= S(0) -\lceil \eps S(0)\rceil$ in case \ref{unbdd_initial}, and for all times between the time $T_0$ when  $S^{(1)}(T)=S(0) - N_n$ and the time $\tau_1$
when $S^{(1)}(\tau_1)= S(0) -\lceil \eps S(0)\rceil$
in case \ref{bdd_initial} (note that $T\geq T_0$ implies $T\geq N_n$ since each step in the discrete chain decreases $S$ by either $0$ or $1$).
But if the restarted chain does not hit zero before $S(T)\leq S(0) -\lceil \eps S^{(1)}(0)\rceil$, the original chain does not hit $0$ before this time either, showing that up to this point, the two are the same, which  in particular shows that
 $
  \tau_0=\min\{T\in\N:S(T)\leq S(0) -\lceil \eps S(0)\rceil\}<\infty
  $
or equivalently that $\tau_\eps<\infty$ in the continuous time chain. The lower bounds in both cases follow by using the lower bounds in \eqref{unbdd-lower} and \eqref{bdd-lower} at $T=\tau_1$, giving that with probability tending to $1$, 
 $$
\|\X(\tau_\eps)\|_1\geq\frac{ \X(\tau_\eps)\v}{v_{\max}}=\frac{ \X^{(1)}(\tau_1)\v}{v_{\max}}
\geq \frac{ \X(0)\v+\zeta_0\tau_1}{2v_{\max}}
\geq \frac{ \zeta_0}{2v_{\max}}\eps S(0).
$$
This gives the desired the  lower bounds with $\zeta_1=\zeta_0/2v_{\max}$. 

We are left with the proof of the bounds \eqref{unbdd-lower} and \eqref{bdd-lower}.  Let
$$M_1(T)=H_1(T\wedge\tau_1)\quad\text{with}\quad
H_1(T)=\X^{(1)}(T)\v - \zeta_0 T.
$$
Using Lemma \ref{lem:sub_m_concentration} with $\lambda = \frac 1 2 (M_1(0) + \zeta_0T) =\frac{1}{2}(\X^{(1)}(0)\v+\zeta_0 T)$ and an appropriately chosen $\zeta_L$ (to be determined later), we get that
    \begin{equation}
    \label{eq:general_bound}
    \begin{aligned}
             \P\left(M_1(T)-M_1(0)\leq -\lambda\right) 
        &\leq \exp\left(\frac{-(\X^{(1)}(0)\v + \zeta_0T)^2}{8 T(\zeta_L + \zeta_0)^2}\right) + (I(0) + T)e^{7\|W\|_\infty - 2\zeta_L}
        \end{aligned}
    \end{equation}
provided $\zeta_L$ is large enough to guarantee that Assumption \ref{asmp:trunc-pois} from Lemma~\ref{lem:pf_of_sub_m} holds.
 
Starting with case \ref{unbdd_initial}, 
we set $T_0 = \lceil I(0)^{1/3}\rceil$ and  $\zeta_L = T^{1/3} \vee T_0^{1/3}$, observing  that Assumption \ref{asmp:trunc-pois} from Lemma \ref{lem:pf_of_sub_m} holds for all large enough $n$ by monotone convergence and the fact that $\zeta_L\to\infty$. Using the fact that $\sum_{T\geq T_0}f(T)\leq \int_{T_0}^\infty f(T) dT+\sup_{T\geq T_0}f(T)$ whenever $f$ is unimodal, the bound 
(\ref{eq:general_bound}) implies that
    \begin{equation}
    \label{eq:sub_m_sum_upper_tail}
        \sum_{T \geq T_0} 
        \P\left(M_1(T)-M_1(0)\leq -\lambda\right) 
         \leq \int_{T_0}^\infty \left[\exp\left(\frac{-\zeta_0^2 T}{8(\zeta_L + \zeta_0)^2}\right) + (I(0) + T)e^{7\|W\|_\infty - 2\zeta_L}\right] dT +o(1)
    \end{equation}     
    The integral of the first term is of order $O\left(T_0^{2/3}e^{-\Theta(T_0^{1/3})}\right)$ which goes to $0$ with $n$, while the integral of the second term in the bound is of order $O\left(T_0^{2/3}(I(0)+T_0) e^{-T_0^{1/3}}\right)$.  Using the fact that $I(0)\leq T_0^3$, we see that this term goes to $0$ as well. 
    
    Finally, we sum the bound \eqref{eq:general_bound} from $0$ to $T_0-1$     
     using our choice of $\zeta_L$ and $T_0$:
    \begin{align*}
        \sum_{T < T_0}\P(M_1(T) - M_1(0) \leq -\lambda) &\leq \sum_{T < T_0} \left[\exp\left(\frac{-(\X^{(1)}(0)\v )^2}{8T(\zeta_L + \zeta_0)^2}\right) + (T_0^3 + T)e^{7\|W\|_\infty - 2\zeta_L}\right] \\
        &\leq T_0 \exp\left(\frac{-(\X^{(1)}(0)\v )^2}{8T_0(T_0^{1/3} + \zeta_0)^2}\right) + (T_0^{4}+T_0^2)e^{7\|W\|_\infty - 2T_0^{1/3}}.
    \end{align*}
    Here, the second term clearly goes to $0$ with $n$. To see that the first goes to zero, note that
    $\X^{(1)}(0)\v$ is a weighted sum of $I(0)$ independent Poisson random variables with expectation bounded from below by $I(0)D_{\min}v_{\min}$, showing  that $\X(0)\v\geq \Theta(I(0))$ with probability tending to $1$ as $n\to\infty$.
        Together, these bounds imply that
    \begin{align*}
    \P&\left(\X^{(1)}(T) \v> \frac{\X(0)\v+\zeta_0T}2\quad\text{for }\quad 0\leq T\leq \tau_1\right) \\
   &\quad \geq  1-    \P(\text{there exists } T \text{ s.th. }M_1(T) - M_1(0) \leq -\lambda) \xrightarrow{n\to\infty} 1,
    \end{align*}
    establishing the bound \eqref{unbdd-lower}.

To prove the bound \eqref{bdd-lower}, we
again use the bound \eqref{eq:sub_m_sum_upper_tail}, this time with $T_0=N_n$. Indeed, bounding  the integral of the two terms as before, and using that now  $I(0)$ is bounded,  the right hand side is again of order $o(1)$, implying that
\begin{align*}
    \P&\left(\X^{(1)}(T) \v> \frac{\X(0)\v+\zeta_0T}2\quad\text{for }\quad N_n\leq T\leq \tau_1\right) \xrightarrow{n\to\infty} 1
    \end{align*}
as required.   
\end{proof}

The arguments above prove Theorem \ref{thm:bounds_on_x} for $G \sim \psbm(\V, W')$. By conditioning on $G$ being simple, we can immediately extend the results to any $G\sim \sbm(\V, W)$ with $W$ depending on $W'$ as in Lemma \ref{lem:coupling}.

\subsection{Final Size}
\label{sec:one-vertex}



\noindent As we can see from the results obtained in the last two subsections, if the epidemic starts with a constant, non-zero number of initially infected vertices, then a constant fraction of the population ends up being infected with probability converging to the survival probability of the branching process (forest) $\bigcup_{v \in V^I(0)}\calT_{k(v)}^{\bp}$, while for an initial infection of size which goes to infinity but is  $o(n)$, this probability goes to $1$ as long as $\Rn>1$.

In this subsection, we will connect these results to those derived in Section~\ref{sec:uniqueness-implicit-solution} to prove Theorem~\ref{thm:one-vertex-final-size} on the final size of an infection starting with $o(n)$ initially infected vertices. We will need one additional lemma.

\begin{lemma}
\label{lem:sinfty-continuity}
Assume that $W$ is irreducible, and that $s_k(0)>0$ for all $k$. 
 Let $\theta_k$ be the survival probability of the backward process defined in 
\eqref{backward-def} and \eqref{theta-k-def}, let
$\tilde\s(t)$ and $\tilde\x(t)$ be the solution  to 
    the differential equations \eqref{eq:s_dfq_1} and \eqref{eq:x_dfq_1} 
 with initial conditions $\tilde\s(0)$ and $\tilde\x(0)\neq 0$, and let $\tilde\s(\infty)=\lim_{t\to\infty}\tilde \s(t)$.  If $\tilde\x(0)\to 0$ and $\tilde\s(0)\to \s(0)$,
 then $\tilde \s(0) - \tilde\s(\infty)\to \diag (\vec\theta)\s(0)$. 
\end{lemma}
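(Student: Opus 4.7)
The plan is to reduce the statement to analyzing the implicit equation of Lemma~\ref{lem:implicit-theta}. Setting $\tilde q_k := \tilde s_k(\infty)/\tilde s_k(0)$, $\tilde r_k := 1-\tilde q_k$, $\tilde a_k := \frac{\irt}{\irt+\rrt}\sum_\ell W_{\ell k}\tilde x_\ell(0)$, and $\tilde\Msir := \frac{\irt}{\irt+\rrt}W\diag(\tilde\s(0))$, Lemma~\ref{lem:implicit-theta} rewrites (after taking logs) as
\[
-\log(1-\tilde r_k) \;=\; \tilde a_k + (\tilde\Msir\,\tilde{\vec r})_k,
\]
a perturbation of the extinction-probability equation $-\log(\mathbf 1-\vec r) = \Msir\,\vec r$ for the backward Poisson branching process of Section~\ref{sec:backward-BP} with mean matrix $\Msir = \frac{\irt}{\irt+\rrt}W\diag(\s(0))$. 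Since $\tilde\s(0)-\tilde\s(\infty) = \diag(\tilde{\vec r})\tilde\s(0)$ and $\tilde\s(0)\to\s(0)$, it suffices to show $\tilde{\vec r}\to\vec\theta$. Under the hypotheses $\tilde{\vec a}\to\vec 0$ and $\tilde\Msir\to\Msir$, so by compactness of $[0,1]^K$ every subsequence of $\tilde{\vec r}$ has a sub-subsequence converging to some $\vec r^*$, and passing to the limit in the displayed equation shows $\vec r^*$ satisfies $-\log(\mathbf 1-\vec r^*) = \Msir\,\vec r^*$. Irreducibility of $\Msir$ (inherited from that of $W$ and $s_k(0)>0$) combined with standard multi-type branching-process theory yields: when $\Rn=\lambda_{\max}(\Msir)\leq 1$, the unique solution in $[0,1]^K$ is $\vec r^*=\vec 0 = \vec\theta$, and the claim is immediate; when $\Rn>1$ there are exactly two solutions $\vec r^*=\vec 0$ and $\vec r^*=\vec\theta$, so the task reduces to ruling out $\vec r^*=\vec 0$ in the supercritical regime.

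To do this, let $\vec u>0$ be the left Perron--Frobenius eigenvector of $\tilde\Msir$ with eigenvalue $\tilde\Reff(0)\to\Rn>1$, chosen to vary continuously with $\tilde\s(0)$. Taking the inner product of the log-equation with $\vec u$ and using $\vec u^T\tilde\Msir = \tilde\Reff(0)\,\vec u^T$ gives
\[
\sum_k u_k\bigl(-\log(1-\tilde r_k)\bigr) \;=\; \vec u\cdot\tilde{\vec a} + \tilde\Reff(0)\,\vec u\cdot\tilde{\vec r}.
\]
Combining the sharp Taylor bound $-\log(1-r)\leq r+c(r_0)r^2$ for $r\in[0,r_0]$, with $c(r_0):=(-\log(1-r_0)-r_0)/r_0^2\to 1/2$ as $r_0\downarrow 0$, together with $\sum_k u_k\tilde r_k^2\leq\|\tilde{\vec r}\|_\infty\,\vec u\cdot\tilde{\vec r}$, and rearranging yields, whenever $\|\tilde{\vec r}\|_\infty\leq r_0$,
\[
(\tilde\Reff(0)-1)\,\vec u\cdot\tilde{\vec r} + \vec u\cdot\tilde{\vec a} \;\leq\; c(r_0)\,\|\tilde{\vec r}\|_\infty\,\vec u\cdot\tilde{\vec r}.
\]
Now $\tilde{\vec a}\neq\vec 0$ forces $\tilde{\vec r}\neq\vec 0$ (otherwise the log-equation collapses to $\vec 0=\tilde{\vec a}$), so $\vec u\cdot\tilde{\vec r}>0$ and dividing through produces $\|\tilde{\vec r}\|_\infty\geq (\tilde\Reff(0)-1)/c(r_0)$. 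Choosing $r_0$ small enough that $(\Rn-1)/c(r_0)>r_0$ (possible since $c(r_0)\to 1/2$) and then $\tilde\s(0)$ close enough to $\s(0)$ so the same inequality holds with $\tilde\Reff(0)$ in place of $\Rn$, the hypothesis $\|\tilde{\vec r}\|_\infty\leq r_0$ is contradicted. Hence $\|\tilde{\vec r}\|_\infty>r_0$ uniformly, excluding $\vec r^*=\vec 0$ and forcing $\vec r^*=\vec\theta$. Since every subsequential limit equals $\vec\theta$, $\tilde{\vec r}\to\vec\theta$, giving $\tilde\s(0)-\tilde\s(\infty)=\diag(\tilde{\vec r})\tilde\s(0)\to\diag(\vec\theta)\s(0)$.

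The main obstacle is precisely this uniform lower bound on $\|\tilde{\vec r}\|_\infty$ in the supercritical regime. The perturbed map $\vec r\mapsto \mathbf 1 - e^{-\tilde{\vec a}}\exp(-\tilde\Msir\,\vec r)$ has a unique fixed point for $\tilde{\vec a}>\vec 0$ (as per Section~\ref{sec:backward-BP}), but at $\tilde{\vec a}=\vec 0$ both $\vec 0$ and $\vec\theta$ become fixed points of the unperturbed map; one must show that the unique perturbed solution latches onto the nontrivial branch $\vec\theta$ rather than collapsing to $\vec 0$. The Perron--Frobenius spectral control of $\tilde\Msir$ together with the sharp second-order expansion of $-\log(1-r)$ at the origin is what pins this down quantitatively.
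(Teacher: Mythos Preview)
Your proof is correct and shares its overall skeleton with the paper's: both reduce to the implicit equation from Lemma~\ref{lem:implicit-theta}, pass to subsequential limits by compactness, identify any limit as a fixed point of the extinction map $\q\mapsto\vec G(\q)$, and then argue that in the supercritical case the limit cannot be the trivial fixed point.

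The genuine difference lies in how the trivial fixed point is ruled out when $\Rn>1$. The paper's argument is probabilistic and monotonicity-based: it views $\tilde q_k$ as the extinction probability of the backward branching process with parameters $(\tilde\Msir,\tilde{\vec a})$, uses Remark~\ref{lem:upper-bound-implicit-eqn} to bound $\q(\tilde\Msir,\tilde{\vec a})\leq \q(\tilde\Msir,\vec 0)$, and then invokes monotonicity of Poisson branching-process extinction probabilities in the mean matrix to conclude that $\q(\tilde\Msir,\vec 0)\leq \q((1-\eps)\Msir,\vec 0)<\vec 1$ once $\tilde\Msir\geq (1-\eps)\Msir$ and $\rho((1-\eps)\Msir)>1$. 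Your argument is purely analytic: pairing the implicit equation with the left Perron--Frobenius eigenvector of $\tilde\Msir$ and using the sharp second-order bound $-\log(1-r)\leq r+c(r_0)r^2$ on $[0,r_0]$ yields the quantitative lower bound $\|\tilde{\vec r}\|_\infty>(\tilde\Reff(0)-1)/c(r_0)$, which immediately excludes $\vec r^*=\vec 0$. Your route avoids invoking branching-process monotonicity altogether and gives an explicit uniform gap, at the cost of needing continuity of the Perron eigenvector (harmless here since the eigenvalue is simple). Both approaches are short; yours is somewhat more self-contained, the paper's is more in keeping with the branching-process framework developed in Appendix~\ref{app:branching}.
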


We will prove the lemma in Appendix~\ref{app:branching} by expressing $\s(\infty)$ in terms of the backward process defined in Section~\ref{sec:backward-BP} and then showing that the survival probability of that backwards process converges to that of the backward process defined in \eqref{backward-def} and \eqref{theta-k-def} as $\tilde\x(0)\to 0$ and $\tilde\s(0)\to \s(0)$.




\begin{proof}[Proof of Theorem~\ref{thm:one-vertex-final-size}]
\phantom{a}\newline
We first prove the theorem when $\Rn \leq 1$, in which case 
$\vec\theta=0$, and our goal is to show that
 $\frac 1n\left(S(0)-S(\infty)\right)\to 0$ in probability as $n\to\infty$.
As it turns out, this will be an easy consequence of  Theorem~\ref{thm:lln-final-size} and the following claim, which states that the final size of the epidemic, $|V^R(\infty)|$, increases if the set of initially infected vertices increases.

\begin{claim}
Consider the SIR epidemic on an arbitrary graph $G=(V,E)$, let 
$V^I(0)$ and $V^S(0)$ be the set of initially susceptible and infected vertices, respectively, and let $V^R(\infty)$ be the final set of  recovered vertices.  If we add a set $W\subset V^S(0)$ to the set of infected vertices, then the SIR dynamic with these new initial conditions can be coupled to the original one such that the new final set of recovered vertices,
$\tilde V^R(\infty)$, is a superset of $V^R(\infty)$.
\end{claim}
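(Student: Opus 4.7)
The plan is to prove this monotonicity claim via the standard Sellke-style coupling, which encodes all the randomness of the SIR dynamics in edge-indexed infection clocks and vertex-indexed recovery clocks that can be reused across different initial conditions. Concretely, I would first draw, independently for all vertices $v\in V$, the infectious period $T_v\sim\Exp(\rrt)$, and independently for every ordered pair $(u,v)$ with $\{u,v\}\in E$, the variable $X_{uv}\sim\Exp(\irt)$ representing the time at which $u$ would first attempt to infect $v$ after becoming infected. These variables determine the SIR dynamic from any initial state: define infection times $\tau_v$ recursively by setting $\tau_v=0$ for $v\in V^I(0)$ and, for $v\notin V^I(0)$,
\[
\tau_v=\min_{u\sim v}\bigl\{\tau_u+X_{uv}:X_{uv}<T_u\bigr\},
\]
with the convention $\min\emptyset=\infty$. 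The set of ever-infected (equivalently, eventually recovered) vertices is then $V^R(\infty)=\{v:\tau_v<\infty\}$, and standard arguments show that this construction produces the correct SIR law.

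Using the \emph{same} realization of $(T_v)_v$ and $(X_{uv})_{uv}$, I would define $\tilde\tau_v$ analogously starting from the larger initial set $\tilde V^I(0)=V^I(0)\cup W$. The heart of the proof is then the pointwise inequality $\tilde\tau_v\leq\tau_v$ for every $v\in V$, which I would establish by induction on the (countable) ordering of finite values of $\tau_v$. The base case $\tau_v=0$ is immediate, since $V^I(0)\subseteq\tilde V^I(0)$. For the inductive step, if $\tau_v<\infty$ is achieved by a neighbor $u$ with $\tau_v=\tau_u+X_{uv}$ and $X_{uv}<T_u$, then the inductive hypothesis gives $\tilde\tau_u\leq\tau_u$, so $u$ is infectious at time $\tilde\tau_u+X_{uv}\leq\tau_v$ in the enlarged process (the condition $X_{uv}<T_u$ still certifies that $u$'s recovery has not yet occurred), whence $\tilde\tau_v\leq\tilde\tau_u+X_{uv}\leq\tau_v$.

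The inequality $\tilde\tau_v\leq\tau_v$ immediately implies $\{v:\tau_v<\infty\}\subseteq\{v:\tilde\tau_v<\infty\}$, which is exactly the statement $V^R(\infty)\subseteq\tilde V^R(\infty)$ under the coupling. I do not expect any serious obstacle: the only point requiring care is to make sure that shifting the infectious window of a vertex $v\in W$ earlier (in the enlarged process it is infectious on $[0,T_v]$, rather than on $[\tau_v,\tau_v+T_v]$) does not invalidate the induction, but this is handled by the fact that the criterion $X_{uv}<T_u$ depends only on $u$'s infectious period length and not on when that period started, so every infection transmission that occurred in the original process has a witness in the new one.
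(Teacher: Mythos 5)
Your proof is correct and rests on the same underlying idea as the paper's: encode all randomness in vertex-indexed recovery periods $T_v$ and edge-indexed infection clocks, observe that whether transmission occurs along a directed edge $(u,v)$ depends only on the comparison $X_{uv}<T_u$ (not on when $u$ became infected), and conclude monotonicity in the initial infected set. The paper presents this as a BFS-style exploration of the ``open'' edges and invokes monotonicity of reachability directly, whereas you use the Sellke construction and prove the stronger pointwise bound $\tilde\tau_v\le\tau_v$ by induction on infection order --- a bit more work than needed (reachability in the fixed percolation graph already gives the claim), but it yields a coupling in which the enlarged epidemic is pointwise faster, which is a nice strengthening.
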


\begin{proof}
    Consider the following construction of the set $V^R(\infty)$, starting from $V^R(\infty)=V^R(0)$.
    \begin{enumerate}[(i)]
        \item \label{first_step} Add all vertices $v \in V^I(0)$ to $V^R(\infty)$ and color them red initially.
        \item \label{second_step} For every red vertex, draw its recovery time $T_v \sim \Exp(\rrt)$ and for every edge incident to $v$, draw the infection time $Z_{v, i} \sim \Exp(\irt)$ where $i \in \{1, \dots, |N(v)|\}$ and $N(v)$ is the set of neighbors of $v$. Once a vertex has been explored, color it black.
        \item \label{third_step}For all $i \in N(v)$ such that $Z_{v, i} < T_v$, color them red and add $i$ to $V^R(\infty)$. Repeat step \ref{second_step}.
        \item Repeat steps \ref{second_step} and \ref{third_step} until the infection ends (i.e. there are no more new vertices reachable from step \ref{second_step}).
    \end{enumerate}
  While the order in which vertices added to 
  $V^R(\infty)$ is not necessarily the same as the one in which they get added in the course of the continuous time epidemic, it is easy to see (and well known) that the final set   $V^R(\infty)$ is correctly obtained by the above construction.  The construction also makes it obvious that 
$|V^R(\infty)|$ can only become larger if we add
vertices to the initial set of infected vertices.
\end{proof}


    Thus, we may assume that we start with more infected vertices and the final size with these new starting conditions will be an upper bound on the final size with the original starting conditions. This in turn will put us into the setting of Theorem~\ref{thm:lln-final-size}.  Concretely, choose $\hat\eps>0$ arbitrary.
    By Lemma~\ref{lem:sinfty-continuity} and the fact that $\vec\theta=0$, we can choose $\eps>0$ such that whenever
    $\|\tilde{ \x}(0)\|_2\leq\eps$ and
    $\|\tilde \s (0)-\s(0)\|_2\leq\eps$,
    we have that ${\tilde s}(0) -{\tilde s}(\infty)\leq \hat\eps/2$.
Next we 
change the initial conditions, raising the number of infected vertices from $I_k(0)=o(n)$ to $\tilde I_k(0)=\lfloor \frac\eps {2K}n\rfloor$ and decreasing $S_k(0)$
to $\tilde S_k(0)=S_k(0)+I_k(0)-\tilde I_k(0)$.  With probability tending to $1$ as $n\to\infty$,
we then have that $\tilde I_k(0)\geq I_k(0)$, implying that
$$
S_k(0)-S_k(\infty)\leq \tilde S_k(0)-\tilde S_k(\infty).
$$
Furthermore, with the help of Assumption \ref{ass:initial-SandI},
we have that 
 $\frac 1n{\tilde I_k}(0)\pto\tilde x_k(0)=\frac {\eps}{2K}$ and
 $\frac 1n\tilde S_k(0)\pto \tilde s_k(0)=s_k(0)-\frac {\eps}{2K}$, which implies that with probability tending to $1$, the solution of differential equations with starting conditions ${\tilde\s}(0)$ and ${\tilde \x}(0)={\tilde \i}(0)$ obey the bound
 $$\tilde s(0)-\tilde s(\infty)\leq \frac {\hat\eps}2.
 $$
With the help of 
Theorem~\ref{thm:lln-final-size} we conclude that with probability tending to $1$ as $n\to\infty$
$$
\frac 1n(S_k(0)-S_k(\infty))\leq\frac 1n( \tilde S_k(0)-\tilde S_k(\infty))\leq \tilde s(0)-\tilde s(\infty)+\frac{\hat\eps}2\leq \hat\eps.
$$
Since $\hat\eps>0$ was arbitrary, this implies that $
\frac 1n(S_k(0)-S_k(\infty))\pto 0$, as desired.

Next we consider the case when $\Rn > 1$ and $I(0)$ is bounded in probability.
 In this case,  the existence of the limit $\pi$ in Theorem \ref{thm:one-vertex-final-size} implies convergence of $\I(0)$ in probability, with $\pi$ being the survival probability of a branching forest consisting of the disjoint union
   of $I(0)$ independent trees $\calT^{\bp}_{k(v)}$, $v\in V^I(0)$. On the other hand, if $N_n\to\infty$, the survival probability of this branching forest is the limit of it growing to size at least $N_n$, which by  
   Lemma~\ref{lem:exact-coupling-sir-bp}
   and  Remark~\ref{rem:many-initial-vertices-coupling} is asymptotically equal to the probability that there exists some $t<\infty$ such that $S(0)-S(t)\geq N_n$ provided $N_n$ is chosen appropriately.
Consider the stopping time $\tau_\eps$ defined in Theorem~\ref{thm:bounds_on_x}.  By the second statement of the theorem, we then get that for $\eps>0$ small enough,
\begin{equation}
\pi
=\lim_{n\to\infty}\P(\exists t<\infty\text{ s.th. }S(0)-S(t)\geq N_n)=\lim_{n\to\infty}\P(\tau_\eps<\infty).
\end{equation}
Furthermore, conditioned on $\tau_\eps<\infty$,
we have that with high probability
$$
|s(0)-\hat s(\tau_\eps)|\leq 2\eps s(0)
\quad\text{and}\quad
\tilde\zeta_1\eps s(0) \leq \|\hat x(\tau_\eps)\|_1 \leq \tilde\zeta_2\eps  s(0)$$
for some constants $0<\tilde\zeta_1<\tilde\zeta_2<\infty$. Here we moved from the statements in Theorem~\ref{thm:bounds_on_x} for $S(t)$ and $\vec X(t)$ to those for 
$\hat x_k(t) = \frac{X_k(t)}{nD_k}$ and  $\hat \s(t) = \frac{\S(t)}{n}$ 
by absorbing factors of $\frac 1{D_{\max}}$ and
$\frac 1{D_{\min}}$ into $\tilde\zeta_1$
and $\tilde\zeta_2$ and 
using 
Assumption \ref{ass:initial-SandI} to control the difference between $\hat s(0)$ and $s(0)$.  Finally, note that with high probability
$$
\hat\y_{\tau_\eps}\in U_0 \quad\text{and}\quad \max_{u\in V}d_u(\tau_\eps)\leq \log n
$$
since by Lemma~\ref{lem:Poisson-bounds} these statements actually hold for all $t$.  
Defining the compact set
$$
\calC_\eps=\{\tilde \y(0)\in U_0:
|s(0)-\tilde s(0)|\leq 2\eps s(0)
\quad\text{and}\quad
\tilde\zeta_1\eps s(0) \leq \|\tilde\x(0)\|_1 \leq \tilde\zeta_2\eps  s(0)
\}
$$
and the event
$$
\calA_\eps=\{\tau_\eps<\infty, \;
\hat \y_{\tau_\eps}\in \calC_\eps\;\text{and}\; \max_{u\in V}d_u(\tau_\eps)\leq \log n\}
$$
we therefore have that
\begin{equation}
\label{pi<==>tau<infty}
\pi
=\lim_{n\to\infty}\P(\exists t<\infty\text{ s.th. }S(0)-S(t)\geq N_n)=\lim_{n\to\infty}\P(\calA_\eps).
\end{equation}
Note  that $\calC_\eps$ obeys the conditions in  \eqref{thm:LLN-final-size} provided
$\eps$ is small enough to guarantee that $|s(0)-\tilde s(0)|\leq 2\eps s(0)$ implies $\tilde s_k(0)>0$ for all $k$ (if needed, we will decrease $\eps_0$ to make sure this holds).

Let $\tilde \y_t$ denote the solution at time $t$ to the differential equations starting at $\tilde\y_0 = \hat\y_{\tau_\eps}$, and let $\hat\eps>0$ be arbitrary. In view of Lemma~\ref{lem:sinfty-continuity}, we can choose $\eps_0'>0$ such that  $|\tilde \s(0) - \tilde\s(\infty)-\diag (\vec\theta)\s(0)|\leq \eps'$ whenever $\eps\leq\eps_0'$ and $\tilde \y(0)\in\calC_\eps$. For $\eps\leq\min\{\eps_0,\eps_0',\frac 12\hat\eps\}$ we then have that 
\begin{align*}
    \P(\| (\s(0) &- \hat\s(\infty)) - \diag(\vec\theta) \s(0)\|_2 > 4 \hat \eps \mid \calA_\eps) \\
    &\leq \P\left(\| \s(0) - \tilde \s(0)\|_2 > 2\hat\eps \mid  \calA_\eps\right) + \P(\|(\tilde \s(0) - \tilde \s(\infty)) - \diag(\vec\theta) \s(0)\|_2 > \hat\eps \mid  \calA_\eps) \\
    &\qquad+ \P(\|\tilde \s(\infty) - \hat\s(\infty)\|_2 > \hat\eps \mid  \calA_\eps).
\end{align*}
The first term is $0$ since $\tilde \y_0=\y_{\tau_\eps}\in \calC_\eps$ implies
$\|\s(0)-\tilde\s (0)\|_2\leq \|\s(0)-\tilde s(0)\|_1\leq 2\eps s(0)\leq \hat\eps$, the second term is $0$ by Lemma \ref{lem:sinfty-continuity} and the fact that $\eps\leq\eps_0'$, and the last term goes to $0$ by Theorem \ref{thm:LLN-final-size}. Since $\P(\calA_\eps)\to \pi$, this proves  Theorem~\ref{thm:one-vertex-final-size} in the case when $\Rn > 1$ and $I(0)$ is bounded in probability.

When $\Rn>1$ and $I(0)$ is not bounded in probability, we in fact have that $I(0)\pto\infty$ since otherwise the limit defining $\pi$ would not exist.  But then
$$
\pi=1=\lim_{n\to\infty}(\calA_\eps),$$
where the second equality follows by the first statement of Theorem~\ref{thm:bounds_on_x}.  The above argument then again implies the statement of Theorem~\ref{thm:one-vertex-final-size}.
\end{proof}

Finally, we prove Corollary \ref{cor:final-size}.

\begin{proof}
For the proof of the corollary, we will choose a 
coupling of $\calT_{G, v}(t)$ and $\calT_k^{\bp(M)}(t)$
and a
sequence $N_n'$ with $N_n'\to\infty$ and  $N_n'\leq N_n$  such
the statements of Lemma \ref{lem:branching-process-coupling} hold for $N_n'$, i.e., 
    \[
        \P\left(\calT_{G,v}(t) = \calT_k^{{\bp(\Mgen)}}(t) \text{ for all } t \text{ such that }|\calT_k^{\sbm}(t)| \leq N_n'\right) \to 1 \text{ as } n\to\infty.
    \]

We start with the case $\Rn \leq 1$.  Then
$\pi_k = 0$ and the branching process $\calT_k^{\bp(M)}(\cdot)$ dies out in finite time, implying that with probability tending to $1$, $\calT_k^{\bp(M)}(\cdot) \leq N_n'$.  Combined with Lemma \ref{lem:branching-process-coupling}, we conclude that with probability tending to $1$, $\calT_k^{\bp(M)}(t) = \calT_k^{\sbm}(t)$ for all $t$ and 
\[
    \P(S(0) - S(\infty) > N_n)
   \leq  \P(S(0) - S(\infty) > N'_n)\xrightarrow{n\to\infty} \pi_k.
\]
This proves (ii) and (i) in the case where $\Rn \leq 1$, with the exception of the exponentially decaying tails statement for $\Rn<1$. To prove that, we first note that the offspring distribution of a Poisson branching process has exponentially decaying tails.  Since $\calT_k^{\bp(M)}(\infty)$ was constructed as a sub-tree of $\bp_k(M)$, its offspring distribution has exponentially decaying tails as well.  For $\Rn<1$, $\calT_k^{\bp(M)}(\infty)$  is therefore a sub-critical branching process whose offspring distribution has exponentially decaying tails, implying that $|\calT_k^{\bp(M)}(\infty)|$ itself has exponentially decaying tails (see Lemma \ref{lem:subcritical-tails} for a proof of this fact). 
    
Turning to the case $\Rn>1$, we first prove the following claim.
\begin{claim}
\label{claim:final-size-outside-this-set}
    Let $N_n'=o(n)$ be a sequence for which the statements  from Lemma \ref{lem:branching-process-coupling} hold, and assume that
   $\Rn > 1$. If $0<\eps<\vec\theta\cdot\vec s(0)$, then
    \[
        \lim_{n\to\infty} \P(S(0) - S(\infty) \in (N_n', \eps n)) = 0.
    \]
\end{claim}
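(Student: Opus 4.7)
The plan is to establish the dichotomy that when $\Rn>1$, the epidemic either dies out before reaching size $N_n'$ or it ``takes off'' and with high probability has final size close to $(\vec\theta\cdot\vec s(0))n$, which by the hypothesis $\eps<\vec\theta\cdot\vec s(0)$ strictly exceeds $\eps n$. Intermediate sizes in $(N_n',\eps n)$ are therefore forbidden in the limit. This reduces the proof to chaining two ingredients we already have: Theorem~\ref{thm:bounds_on_x}(b), which handles the transition from size $N_n'$ up to a linear-sized outbreak by showing that the stopping time $\tau_{\eps'}$ is finite and the active-edge count is $\Theta(\eps' n)$ at that moment, and Theorem~\ref{thm:LLN-final-size}, which propagates the LLN from $\tau_{\eps'}$ up to time infinity.

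Concretely, given $\eps<\vec\theta\cdot\vec s(0)$ I would first fix $\hat\eps>0$ small enough that $\vec\theta\cdot\vec s(0)-4\sqrt{K}\hat\eps>\eps$, and then choose $\eps'\leq\min\{\eps_0,\eps_0',\hat\eps/2\}$ in the spirit of the recipe used in the final part of the proof of Theorem~\ref{thm:one-vertex-final-size} (with $\eps_0$ from Theorem~\ref{thm:bounds_on_x} and $\eps_0'$ from Lemma~\ref{lem:sinfty-continuity}). Since $\{S(0)-S(\infty)>N_n'\}\subseteq\{\exists t:S(0)-S(t)\geq N_n'\}$, applying Theorem~\ref{thm:bounds_on_x}(b) with threshold $N_n'$ gives that, on this event and with probability tending to one, the stopping time $\tau_{\eps'}$ is finite and $\tilde\zeta_1\eps' s(0)\leq\|\hat\x(\tau_{\eps'})\|_1\leq\tilde\zeta_2\eps' s(0)$. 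Combined with Lemma~\ref{lem:Poisson-bounds} (controlling $\max_v d_v(\tau_{\eps'})\leq\log n$) and the fact that only $O(\eps' n)$ vertices have been infected by $\tau_{\eps'}$ (so $\|\hat\s(\tau_{\eps'})-\s(0)\|_\infty$ is $O(\eps')$ via Assumption~\ref{ass:initial-SandI}), this places $\hat\y_{\tau_{\eps'}}$ in the compact set $\calC_{\eps'}$ of the form defined in the proof of Theorem~\ref{thm:one-vertex-final-size}, with high probability.

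Next I would restart the analysis at $\tau_{\eps'}$ and apply Theorem~\ref{thm:LLN-final-size} together with Lemma~\ref{lem:sinfty-continuity} (which, since $\x(\tau_{\eps'})$ is small but $\s(\tau_{\eps'})$ is close to $\s(0)$, converts the ODE endpoint into $\diag(\vec\theta)\s(0)$ up to error $\hat\eps$ uniformly over $\calC_{\eps'}$ for $\eps'$ small). By the strong Markov property, this yields $\|(\hat\s(0)-\hat\s(\infty))-\diag(\vec\theta)\s(0)\|_2\leq 4\hat\eps$ with probability tending to one conditional on the above event, and hence
\[
\frac{S(0)-S(\infty)}{n}=\sum_k(\hat s_k(0)-\hat s_k(\infty))\geq\vec\theta\cdot\vec s(0)-4\sqrt{K}\hat\eps>\eps.
\]
Combining, $\P(\{S(0)-S(\infty)>N_n'\}\cap\{S(0)-S(\infty)<\eps n\})\to 0$, which proves the claim.

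The main obstacle is not really hard but largely bookkeeping: one must verify that the parameter choices $(\hat\eps,\eps')$ and the uniform constants defining $\calC_{\eps'}$ line up so that the hypotheses of Theorem~\ref{thm:LLN-final-size} (compactness, $\tilde\s_k(0)>0$, $\tilde\x(0)\neq 0$) hold simultaneously with high probability at $\tau_{\eps'}$. Since the final paragraphs of the proof of Theorem~\ref{thm:one-vertex-final-size} already carry out essentially this same chain of estimates, the proof of this claim amounts to repackaging that argument into a uniform lower bound on the conditional final size, extracting the dichotomy as a byproduct.
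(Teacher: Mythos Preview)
Your argument is correct, but it takes a much longer road than necessary. You are essentially re-running the entire machinery behind Theorem~\ref{thm:one-vertex-final-size} (the stopping time $\tau_{\eps'}$ from Theorem~\ref{thm:bounds_on_x}, the compact set $\calC_{\eps'}$, Theorem~\ref{thm:LLN-final-size}, and Lemma~\ref{lem:sinfty-continuity}) in order to extract a conditional lower bound on the final size. The paper, by contrast, simply \emph{applies} Theorem~\ref{thm:one-vertex-final-size} as a black box: by the branching-process coupling (Lemma~\ref{lem:branching-process-coupling}), $\P(S(0)-S(\infty)>N_n')\to\pi_k$; by Theorem~\ref{thm:one-vertex-final-size} (and the hypothesis $\eps<\vec\theta\cdot\vec s(0)$), $\P(S(0)-S(\infty)\geq\eps n)\to\pi_k$ as well; since for large $n$ the latter event is contained in the former, the probability of their difference vanishes. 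This is a three-line argument once Theorem~\ref{thm:one-vertex-final-size} is in hand, whereas your route repeats most of that theorem's proof. What your approach does buy is self-containment: it would establish the dichotomy directly without invoking Theorem~\ref{thm:one-vertex-final-size}, which could be useful if the claim were needed \emph{inside} that theorem's proof rather than in the proof of Corollary~\ref{cor:final-size} that follows it.
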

\begin{proof}
   Recall that $\pi_k$ is the survival probability of the backward branching process. Then for an infection starting from one vertex with label $k$, $\pi = \pi_k$. 
    This implies that
    \[
        \lim_{n\to\infty} \P(S(0) - S(\infty) \leq N_n') = \lim_{n\to\infty} \P(|\calT_k^{\sbm}(\infty)| \leq N_n') = 1-\pi_k,
    \]
    or equivalently that 
    \[
        \lim_{n\to\infty} \P(S(0) - S(\infty) > N_n') = \pi_k.
    \]
    Furthermore, Theorem \ref{thm:one-vertex-final-size} implies that 
    \[
        \lim_{n\to\infty} \P(S(0) - S(\infty) \geq \eps n) = \pi_k.
    \]
    Since $N_n' = o(n)$, $\{S(0) - S(\infty) \geq \eps n\} \subseteq \{S(0) - S(\infty) > N_n'\}$ and we then see that 
    \[
        \lim_{n\to\infty} \P \left(S(0) - S(\infty) \in (N_n', \eps n)\right) = 0.
    \]
\end{proof}

Using the just proven claim, we see that for $\eps>0$ small enough,
the probability that $S(0) - S(\infty) > N_n$ is asymptotically equal to the probability that $S(0) - S(\infty) > \eps n$,
which converges to $\pi_k$ by Theorem~\ref{thm:one-vertex-final-size}.
This proves (i) in the case where $\Rn > 1$. 

To prove (iii), we
need to show that for all fixed  $N<\infty$,
\[
\P\Big(S(0) - S(\infty)=N\,\Big|\, S(0) - S(\infty) \leq  N_n\Big)\to
\P\Big(|\calT_k^{{\bp(\Mgen)}}(\infty)| =N \,\Big|\,|\calT_k^{{\bp(\Mgen)}}(\infty)|<\infty\Big).
\]
But this follows immediately from (i) and Lemma~\ref{lem:branching-process-coupling}.  Indeed,  (i) implies that
$\P(S(0) - S(\infty)\leq N_n)\to 1-\pi_k=
\P\Big(|\calT_k^{{\bp(\Mgen)}}(\infty)|<\infty)$
while Lemma~\ref{lem:branching-process-coupling} together with the fact that $N_n\to\infty$ 
implies that 
\begin{align*}
   \lim_{n\to\infty} &\P\Big(\{S(0) - S(\infty)=N\}\cap\{S(0) - S(\infty)\leq  N_n\}\Big) \\
   &=  
    \lim_{n\to\infty} \P\Big(S(0) - S(\infty)=N\Big)=\P\Big(|\calT_k^{\bp(M)}(\infty)| =N\Big) .
\end{align*}
Finally, we prove (iv). It suffices to show that
\[
    \lim_{n\to\infty} \P\left(\left|\frac{\S(0) - \S(\infty)}{n} -\diag(\vec \theta)\s(0) \right| < \eps\right) = \lim_{n\to\infty} \P\left(S(0) - S(\infty) > N_n\right).
\] 
Let $\eps \in (0, \vec \theta \cdot \s(0))$. Observe that for $N_n = o(n)$ and $n$ large enough,
the event 
\[
    \left\{ \left|\frac{\S(0) - \S(\infty)}{n} -\diag(\vec \theta)\s(0) \right| < \eps\right\}
\]
implies $\{S(0) - S(\infty) > N_n\}$.
Thus 
\begin{align*}
    \P\left(\left\{ \left|\frac{\S(0) - \S(\infty)}{n} -\diag(\vec \theta)\s(0) \right| < \eps\right\}\cap \{S(0) - S(\infty) > N_n\}\right) &= \P\left(\left|\frac{\S(0) - \S(\infty)}{n} -\diag(\vec \theta)\s(0) \right| < \eps\right) \\
    &\xrightarrow{n\to\infty} \pi_k
\end{align*}
where the convergence is true by Theorem \ref{thm:one-vertex-final-size}. Furthermore
\[
    \lim_{n\to\infty} \P(S(0) - S(\infty)> N_n) = \pi_k
\]
by (i) and this gives the desired result.
\end{proof}



\section{Discussion}
\label{sec:discussion}
In this section, we discuss our results and relate them to notions like herd immunity, the force of an infection, and the so-called pair-approximation, concluding with the discussion of possible extensions.  Throughout, we consider the setting where $s_k(0)>0$ for all $k$ and $W$ is irreducible.

\subsection{Herd Immunity}
The herd immunity threshold for a disease is the threshold at which the infection starts slowing down due to the fact that a significant proportion of the (finite) population have been infected, and new infection attempts are unlikely to lead to a successful infection. Mathematically, this is the point at which the effective reproductive number
falls below one,
and new infections die out exponentially fast.

In the deterministic, homogeneous mixing setting for the SIR epidemic, described by the differential equations 
\begin{equation}
    \frac{ds}{dt} = -\irt s i
\end{equation}
\begin{equation}
    \frac{di}{dt} = \rrt\left( \frac{\irt s}\gamma-1\right)i
\end{equation}
\begin{equation}
    \frac{dr}{dt} = \rrt i
\end{equation}
the effective reproductive number is given by $\Reff(t) = \irt s(t) / \rrt$, and the herd immunity threshold is at $\Reff(t) = 1$, below which the number of infected vertices is exponentially decreasing. 

To discuss the notions of  herd immunity and effective reproductive number for SIR on the stochastic block model, we
recall the vector form of the differential equations for the number of susceptible vertices and the number of active half-edges for SIR on the  SBM:
\begin{equation}\label{dsdt-matrix}
    \frac{d\s(t)}{dt} = -(\irt + \rrt)\x(t) C(t)
\end{equation}
\begin{equation}\label{dxdt-matrix}
    \frac{d\x(t)}{dt} = (\irt + \rrt)\x(t)(C(t)-1)
\end{equation}
where $C(t) = \frac{\irt}{\irt + \rrt} W \diag(\s(t))$. As  pointed out in Remark~\ref{rem:herd-immunity} in Section~\ref{sec:herd-immunity}, which we rephrase here to keep our discussion self-contained, the largest eigenvalue of this matrix,  $\Reff(t)$, is the appropriate generalization of the effective reproductive number to our setting, with $\Reff(t)=1$ determining the herd-immunity threshold. 

Indeed, if $\Reff(t_0)<1$ and $\v_0$ is the corresponding right eigenvector with all positive entries, then
\begin{align*}
\frac{d\x(t)}{dt}\cdot\v_0&=
 (\irt + \rrt)\x(t)(C(t)-1)\v_0\\
 &\leq  (\irt + \rrt)\x(t)(C(t_0)-1)\v_0\\
 &= (\irt+\rrt)(\Reff(t_0)-1) \x(t)\cdot\v_0
\quad\text{for all}\quad t\geq t_0
\end{align*}
showing that $\x(t)\cdot \v$ and hence $\|\x(t)\|_1$ is decaying exponentially in $t-t_0$ for any initial condition $\x(t_0)$. As a consequence, a small infection at time $t_0$ will only have small overall impact on the final size of the infection.  By contrast, if $\Reff(t_0)>1$, then $\x(t)\cdot \v$ will grow exponentially in $t-t_0$ until  $\Reff(t)=1$.   Thus no matter how small $\x(t_0)$ is, the infection will always have a sizable effect on the final size, since the infection won't start to recede  before it has infected enough susceptible vertices to drive $\Reff(t)$ below one.

Notice that here, the parameters that drive the infection are not the number of infected vertices as in the case of the standard SIR, but rather the numbers of active half-edges in each community weighted by the connectivity between communities. This quantity differs by community based on the connectivity in between communities and gives rise to $K$ parameters that govern the trajectory of the epidemic, one in each community. This implies that the trajectories of the epidemic curves in different communities may peak at different times, and furthermore that it is possible for the epidemic to be spreading in a community despite the global herd immunity threshold being reached in the population. 

We make two observations about $\Reff(t)$ and its relation to the number of active half-edges at time $t$. First, notice that if $\Reff(t) > 1$ then there exists at least one community $k$ such that $dx_k(t)/dt > 0$. In other words, before the herd immunity threshold, at least one community experiences exponential growth in the number of half-edges. Second, if $\Reff(t) < 1$, then in at least one community $k$, $dx_k(t)/dt < 0$ and there is exponential decay of half-edges in that community. We can see this as follows: let $\v_0$ be the right Perron-Frobenius eigenvector of $C(t)$. If $\Reff(t) > 1$ then
\begin{align*}
    \frac{d\x(t)}{dt}\cdot \v_0 &= (\irt + \rrt)\x(t) (C(t) - 1)\v_0 \\
    &= (\irt + \rrt) (\Reff(t) - 1)\x(t)\cdot \v_0 > 0.
\end{align*}
Since $\v_0$ has all strictly positive entries, this implies that at least one component of $d\x(t)/dt$ must be larger than $0$. Similarly, in the case where $\Reff(t) < 1$, there must exist some $k$ such that $dx_k(t)/dt$ is less than $0$.



Thus, after the herd immunity threshold the number of half-edges in at least one community is decreasing.
Note that eventually all components decay exponentially (we prove this in Lemma~\ref{lem:x-exp-decay} in Appendix~\ref{app:diff-equ}) furthermore, though a small infection at $t_0$ will never grow large this does not preclude some components of $\x(t)$ from growing even after the herd immunity threshold.  All we know is that a small infection can only grow by at most a constant factor before dying out.

\subsection{Numerical Simulations for $K=2$}
In this section, we demonstrate the relationship between the herd immunity threshold and the various maxima of the components of $\x$ and $\i$. In Figure \ref{fig:ode_sims} we illustrate the relationship between $\Reff(t)$ and $\x(t)$ in the differential equations for a two-community stochastic block model. We start with a certain proportion, $\eps$, of infected individuals in the first community and plot the curves for $x_1(t), x_2(t), i_1(t),$ and $i_2(t)$. The peaks of each curve are also plotted in relation to the first time $t$ at which $\Reff(t) < 1$. Observe that the threshold $\Reff(t) < 1$ always occurs between the first and last peak in the $\x(t)$. The first row simulates the ODE with the two communities being the same size, in which case as $\eps$ gets smaller, the trajectory of the curves for the two communities converge. The last row simulates the ODE for two communities of different sizes, with the contact matrix $W$ adjusted so that the expected number of edges in each community is the same. For small $\eps$, the peaks between the two communities are visibly separated, but as $\eps$ gets smaller, the peaks get closer.

\begin{figure}[H]
    \begin{subfigure}[h]{\textwidth}
        \centering
        \begin{subfigure}[h]{0.3\textwidth} 
            \centering
            \includegraphics[width=\textwidth]{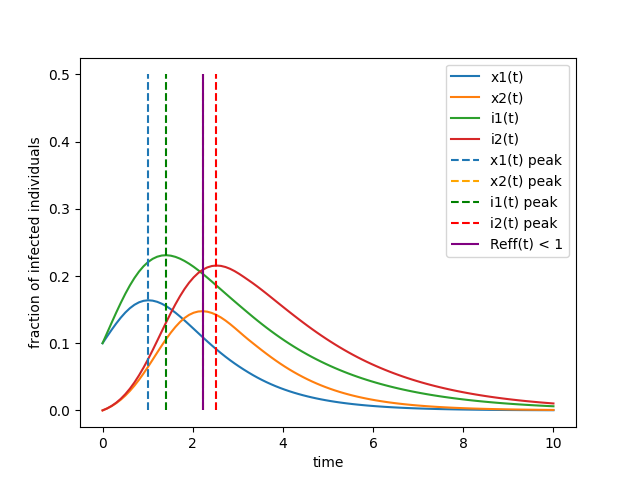} 
        \end{subfigure}
        \hfill
        \begin{subfigure}[h]{0.3\textwidth}
            \centering
            \includegraphics[width=\textwidth]{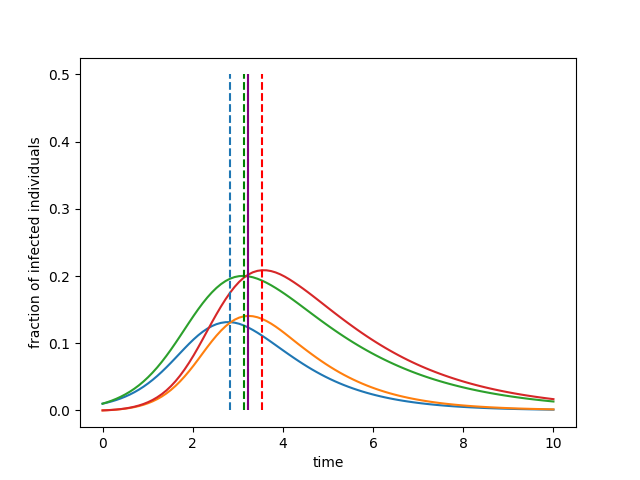} 
        \end{subfigure}
        \hfill
        \begin{subfigure}[h]{0.3\textwidth}
            \centering
            \includegraphics[width=\textwidth]{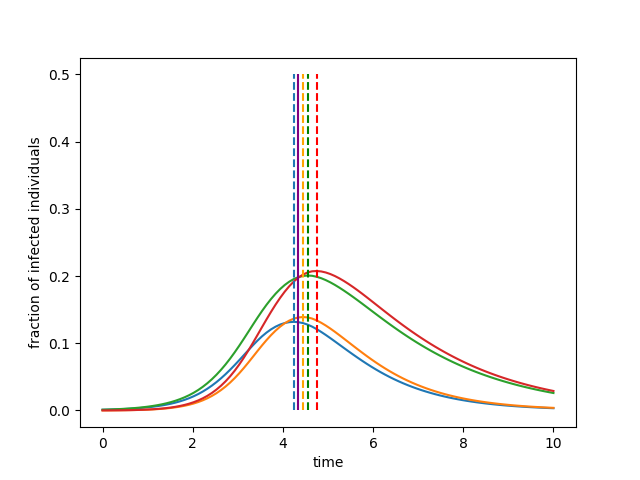} 
        \end{subfigure}
        \caption{Simulations for two communities of equal size ($s_1(0) = s_2(0) = 1/2$) and $W = \begin{pmatrix}
            10 & 1 \\ 1 & 10
        \end{pmatrix}$.} 
    \end{subfigure}

    \begin{subfigure}[h]{\textwidth}
        \centering
        \begin{subfigure}[h]{0.3\textwidth}
            \centering
            \includegraphics[width=\textwidth]{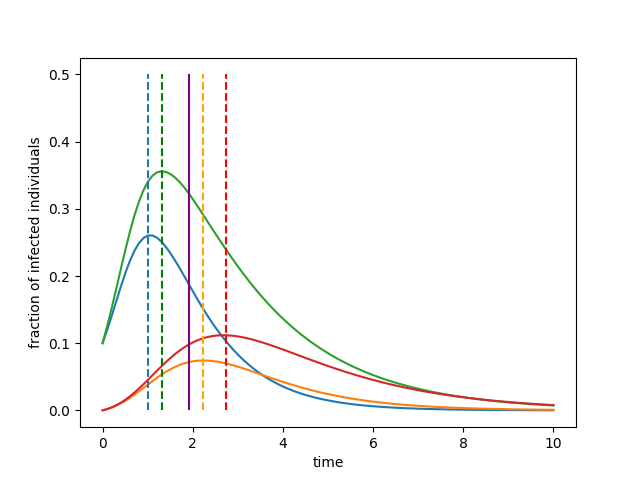} 
        \end{subfigure}
        \hfill
        \begin{subfigure}[h]{0.3\textwidth}
            \centering
            \includegraphics[width=\textwidth]{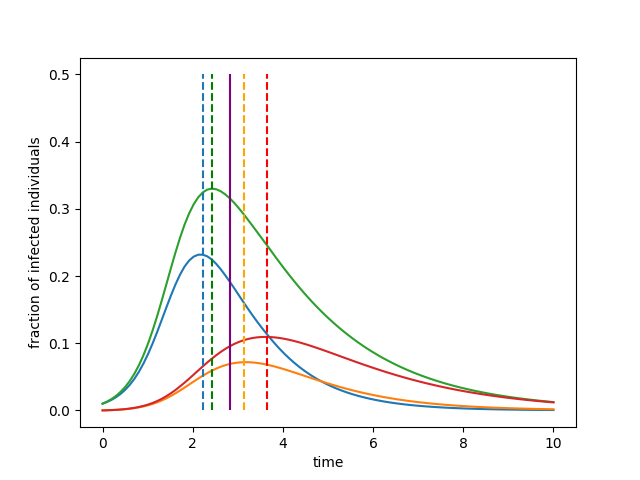} 
        \end{subfigure}
        \hfill
        \begin{subfigure}[h]{0.3\textwidth}
            \centering
            \includegraphics[width=\textwidth]{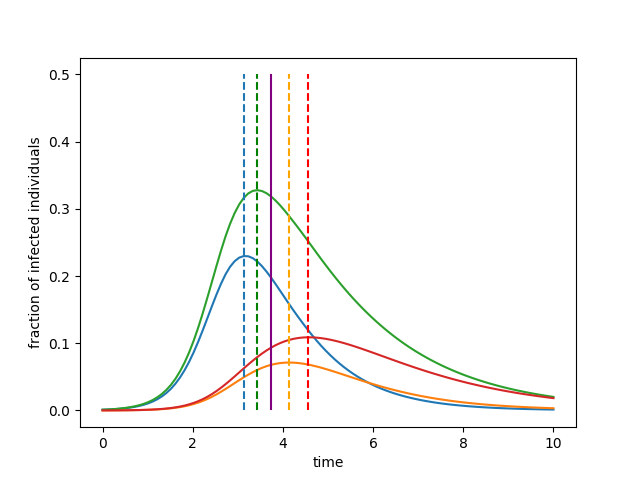} 
        \end{subfigure}
        \caption{Simulations for two communities of different sizes ($s_1(0) = 2/3, s_2(0) = 1/3$) and $W = \begin{pmatrix}
            10 & 1 \\ 1 & 10
        \end{pmatrix}$.}
    \end{subfigure}

    \begin{subfigure}[h]{\textwidth}
        \centering
        \begin{subfigure}[h]{0.3\textwidth}
            \centering
            \includegraphics[width=\textwidth]{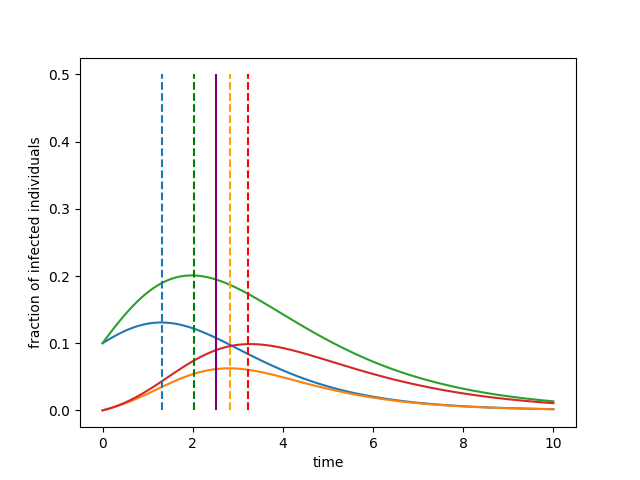} 
        \end{subfigure}
        \hfill
        \begin{subfigure}[h]{0.3\textwidth}
            \centering
            \includegraphics[width=\textwidth]{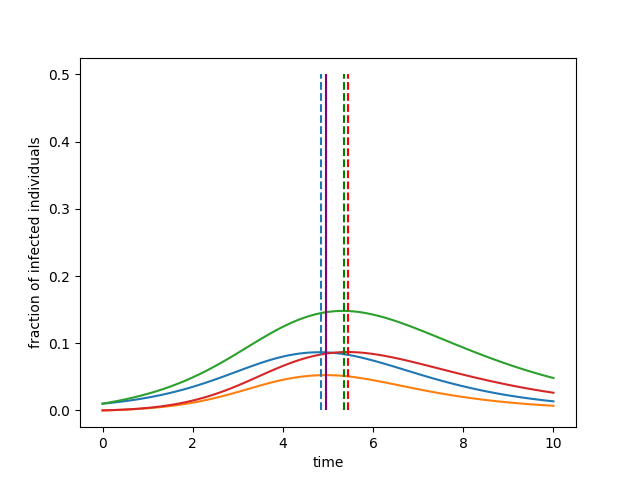} 
        \end{subfigure}
        \hfill
        \begin{subfigure}[h]{0.3\textwidth}
            \centering
            \includegraphics[width=\textwidth]{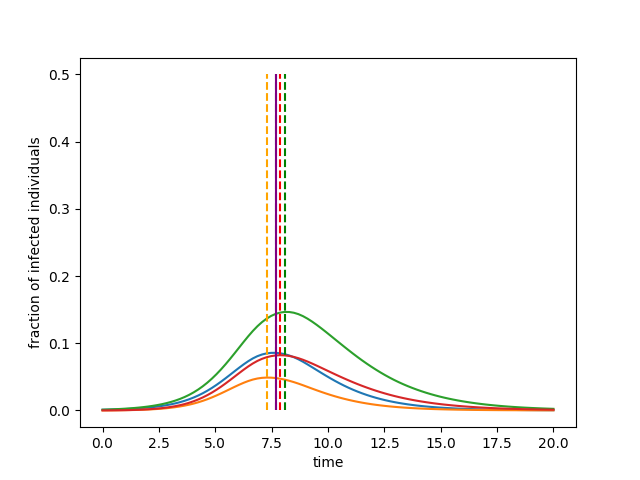} 
        \end{subfigure}
        \caption{Simulations for two communities of different sizes ($s_1(0) = 2/3, s_2(0) = 1/3$) and $W = \begin{pmatrix}
            5 & 1 \\ 1 & 10
        \end{pmatrix}$ so the expected number of edges within each community is the same.}
    \end{subfigure}
    \caption{A simulation of our system of ordinary differential equations on a two-community stochastic block model. We vary parameter values of $\eps$, the initial fraction of infected individuals going from $0.1, 0.01, 0.001$ from the left column to the right column; $W$, the contact matrix; and $s_1(0)$ and $s_2(0)$, the initial proportion of susceptible individuals in each community. For all simulations, $\irt = \rrt = 1/2$.}
    \label{fig:ode_sims}
\end{figure}

    Notice that in all of the simulations, in each community the  peak of the fraction of infected individuals occurs after the  peak in the number of active half-edges for that community, and furthermore $x_k$ stays below $i_k$ for all times $t>0$.  As the following lemma shows, this is not a numerical coincidence, but a direct consequence of the differential equations for $\s$, $\x$ and $\i$.

    \begin{lemma}\label{lem:I-after-X} Assume that $W$ is irreducible, that 
$s_k(0)>0$ for all $k\in [K]$, that $x(0)>0$, and that $x_k(0)=i_k(0)$. Then $x_k(t)<i_k(t)$ for all $t>0$.  If we assume in addition that
$\left.\frac {dx_k}{dt}\right|_{t=0}>0$, then the first local maximum of $x_k$, occurs after the first local maximum of $i_k$.  
\end{lemma}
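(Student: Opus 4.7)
The plan is to work with the difference $\delta_k(t) := i_k(t) - x_k(t)$. Subtracting \eqref{eq:x_dfq_1} from \eqref{eq:i_dfq_1}, the shared infection-flow term $F_k(t) := \irt s_k(t) \sum_\ell x_\ell(t) W_{\ell k}$ cancels, leaving the scalar linear ODE $d\delta_k/dt = \irt x_k - \rrt \delta_k$ with initial condition $\delta_k(0) = 0$. Duhamel's formula yields the explicit representation
\begin{equation*}
\delta_k(t) = \irt \int_0^t e^{-\rrt(t-s)} x_k(s)\, ds.
\end{equation*}
By Lemma~\ref{lem:dfq_limit}(\ref{x>0forallt}), the standing hypotheses of the lemma imply $x_k(s) > 0$ for every $s > 0$ and every $k$, so the integrand above is strictly positive and $\delta_k(t) > 0$ for all $t > 0$. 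This proves the first claim $x_k(t) < i_k(t)$.

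For the timing claim, let $\tau_x := \inf\{t > 0 : dx_k/dt \leq 0\}$. The hypothesis $dx_k/dt|_{t=0} > 0$ combined with continuity of the right-hand side of \eqref{eq:x_dfq_1} guarantees $\tau_x > 0$, with $dx_k/dt > 0$ on $[0, \tau_x)$ and $dx_k/dt = 0$ at $\tau_x$; thus $\tau_x$ is the first local maximum of $x_k$, and $x_k$ is strictly increasing on $[0, \tau_x]$, so $x_k(s) < x_k(\tau_x)$ for $s \in [0, \tau_x)$. Substituting this strict pointwise bound into the Duhamel representation gives
\begin{equation*}
\delta_k(\tau_x) < \irt\, x_k(\tau_x) \int_0^{\tau_x} e^{-\rrt(\tau_x - s)}\, ds = \frac{\irt}{\rrt}\bigl(1 - e^{-\rrt \tau_x}\bigr)\, x_k(\tau_x) < \frac{\irt}{\rrt}\, x_k(\tau_x),
\end{equation*}
i.e., $\rrt\, \delta_k(\tau_x) < \irt\, x_k(\tau_x)$.

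Finally, the condition $dx_k/dt|_{\tau_x} = 0$ rearranges to $F_k(\tau_x) = (\irt + \rrt) x_k(\tau_x)$, and substituting into \eqref{eq:i_dfq_1} while using $i_k = x_k + \delta_k$ yields
\begin{equation*}
\frac{di_k}{dt}\bigg|_{\tau_x} = F_k(\tau_x) - \rrt\, i_k(\tau_x) = (\irt + \rrt)\, x_k(\tau_x) - \rrt\, i_k(\tau_x) = \irt\, x_k(\tau_x) - \rrt\, \delta_k(\tau_x) > 0.
\end{equation*}
Hence $i_k$ is still strictly increasing at time $\tau_x$, so its first local maximum occurs strictly after $\tau_x$; written in the phrasing of the lemma, the first local maxima of $x_k$ and $i_k$ are ordered in the way asserted by the statement (matching the label \texttt{lem:I-after-X} and the figures preceding the lemma, where the peak of $i_k$ in each community lags behind the peak of $x_k$). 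The key and only substantive step is the monotonicity-based Duhamel bound on $\delta_k(\tau_x)$: it is here that strict monotonicity of $x_k$ on $[0, \tau_x]$, and hence the hypothesis $dx_k/dt|_0 > 0$, enters. Without that hypothesis $\tau_x$ could collapse to $0$ and the comparison between $\rrt \delta_k$ and $\irt x_k$ would be uninformative.
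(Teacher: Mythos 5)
Your proof of the first assertion via the Duhamel representation $\delta_k(t) = \irt \int_0^t e^{-\rrt(t-s)} x_k(s)\,ds$ is correct, and it takes a genuinely different (arguably cleaner) route than the paper's: the paper tracks the ratio $x_k/i_k$, using a Taylor expansion near $t=0$ together with a contradiction at the first crossing of $1$ to establish $x_k < i_k$, whereas variation of constants applied to the difference $\delta_k = i_k - x_k$ gives the sign immediately. You also correctly identified the mis-ordering in the statement of the lemma: both proofs establish that the first local maximum of $i_k$ comes \emph{after} that of $x_k$.

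There is, however, a genuine logical gap at the very last step. You show $di_k/dt|_{\tau_x} > 0$ and then conclude that ``its first local maximum occurs strictly after $\tau_x$.'' But strict increase of $i_k$ at the single time $\tau_x$ does not rule out a local maximum (followed by a local minimum) of $i_k$ at some earlier time $t_1 < \tau_x$. What you actually need is $di_k/dt > 0$ on all of $[0,\tau_x]$ — which is exactly the target the paper's proof sets for itself. Fortunately your own machinery closes the gap: since $x_k$ is strictly increasing on $[0,t]$ for every $t\in(0,\tau_x]$, the same Duhamel bound gives $\rrt\,\delta_k(t) < \irt\,x_k(t)$ on the whole interval, not just at the endpoint. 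Combining this with the identity $di_k/dt = dx_k/dt + \irt x_k - \rrt\delta_k$ and $dx_k/dt \geq 0$ on $[0,\tau_x]$ (and at $t=0$ using $dx_k/dt|_0 > 0$ together with $\delta_k(0)=0$) yields $di_k/dt > 0$ on all of $[0,\tau_x]$, so $i_k$ has no local maximum there and the conclusion follows. Add this step and the proof is complete.
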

    
While the details of the proof are a little tedious, the basic idea is simple and based on the observation that
    $$
    \frac {di_k}{dt}
-\frac {dx_k}{dt}=(\irt+\rrt)x_k- \rrt i_k, $$
showing that at least near $t=0$, where $\frac{x_k}{i_k}$ starts at $1$, $x_k$ grows slower than $i_k$.  To prove our statement on the local maxima, all one needs to show is that the first point where $\frac{x_k}{i_k}$ falls below $\frac{\rrt}{\irt+\rrt}$ happens after the first local maximum of $x_k$.  We will prove this fact in Appendix~\ref{app:diff-equ}, where we give the complete proof of Lemma~\ref{lem:I-after-X}.

In Figure \ref{fig:stochastic_sims} we visualize the LLN behavior for the SIR epidemic on a stochastic block model with two communities. The figures contain 100 trajectories of the fraction of infected vertices in simulated epidemics with 
\[
    W = \begin{pmatrix} 5 & 1 \\ 1 & 10 \end{pmatrix}, \quad \eps = 0.01, \quad \irt = \rrt = 1/2.
\]
We sampled networks with size $n = 200, 2000$ and $10,000$ nodes split equally amongst the two communities with $n_1 = n_2 = n/2$, where $n_1$ and $n_2$ are the number of nodes in community $1$ and $2$, respectively. A realization $G$ of the stochastic block model with parameters $W, (n_1, n_2)$ was drawn and then an epidemic, in which an $\eps$ fraction of the population in community $1$ was initially infected and the remaining nodes were all susceptible, was run on $G$. The solid lines in the figure depict the corresponding ODE solutions for $i_1(t)$ and $i_2(t)$. The stochastic epidemic was simulated using algorithms from \cite{Miller2019}.

\begin{figure}[H]
    \begin{subfigure}[b]{\textwidth}
        \centering
        \begin{subfigure}[b]{0.3\textwidth}
            \centering
            \includegraphics[width=\textwidth]{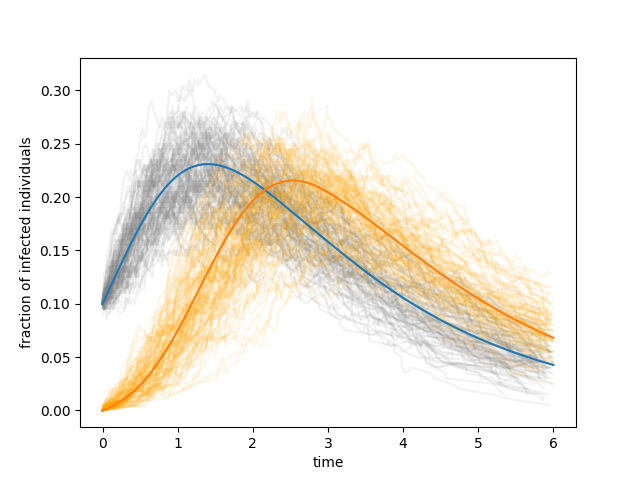}
            \caption{$n_1 = 100, n_2 = 100$}
        \end{subfigure}
        \hfill
        \begin{subfigure}[b]{0.3\textwidth}
            \centering
            \includegraphics[width=\textwidth]{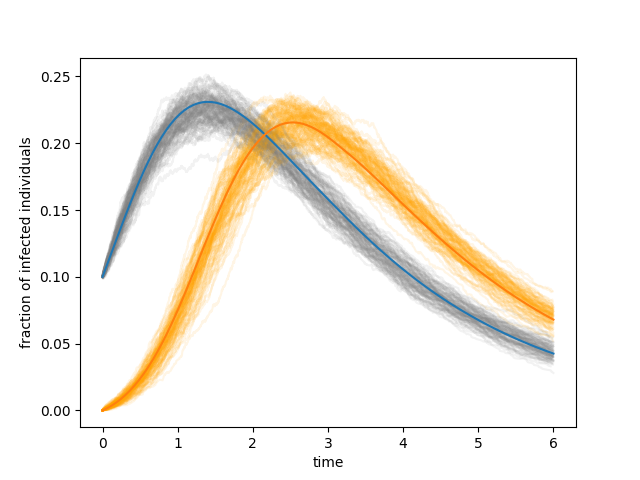} 
            \caption{$n_1 = 1000, n_2 = 1000$}
        \end{subfigure}
        \hfill
        \begin{subfigure}[b]{0.3\textwidth}
            \centering
            \includegraphics[width=\textwidth]{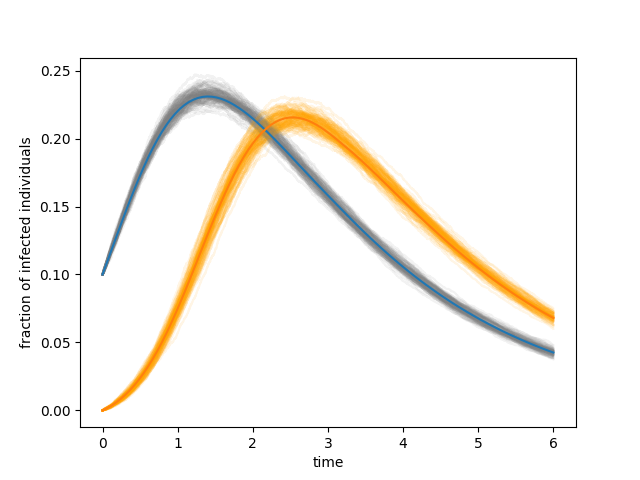}
            \caption{$n_1 = 5000, n_2 = 5000$}
        \end{subfigure}
    \end{subfigure}
    \caption{Simulations for one hundred stochastic SIR trajectories of the fraction of infected individuals on a stochastic block model with two communities of the same size. From left to right, we increase the number of nodes in the network keeping the ratio $\frac{n_1}{n_2}$ the same. The solid blue and orange curves are the solution to the differential equations with the same parameters.}
    \label{fig:stochastic_sims}
\end{figure}

\subsection{Force of the Infection}
From a practical perspective, it is hard to estimate $\x(t)$, since this quantity is generated from a coupling of the SIR epidemic on the SBM and is difficult to interpret in the context of the epidemic. However, what can be observed is the force of infection. In the epidemiology literature, force of infection (also called hazard rate or incidence rate) refers to the rate at which susceptible individuals in a population are infected \cite{vynnycky2010}. In the standard SIR setting, the force of infection on the susceptible population is given by $\irt \frac{I}n$, while in our setting, the force of infection on susceptible vertices in community $k$ is given by $\irt \sum_\ell\frac{X_\ell}{nD_k} W_{\ell k}$. Again, observe that the force of infection is determined by the active half-edges and \textit{not} by the number of infected individuals in each community. Denoting the limit of the force of the infection on individuals of with label $k$ by $F_k$, we see that in terms of the vector $\boldsymbol F=(F_k)_{k\in [K]}$ the limiting dynamics of the epidemic is described by
\begin{equation}\frac {d\s(t)} {dt}=-\diag(\boldsymbol F(t))\s(t)
\end{equation}
\begin{equation}\label{dFdt-matrix}
    \frac{d\boldsymbol F(t)}{dt} = (\irt + \rrt)\boldsymbol F(t)(C^T(t)-1)
\end{equation}
with initial conditions for $\boldsymbol F$ given by $F_k(0)=\irt \sum_\ell i_\ell(t) W_{\ell k}$, and $C^T(t)$ denoting the transpose of $C(t)$. Since $C^T(t)$ has the same eigenvalues as $C(t)$, this shows that our discussion of the herd immunity threshold from the last subsection can equivalently be formulated in terms of the observable force of the infection, rather than the unobservable number of active half-edges.

\subsection{Pair-Approximation}
In this paper, we couple the stochastic epidemic on the stochastic block model to a dynamic process where the graph and the infection are revealed at the same time, allowing us to derive differential equations for the stochastic epidemic and to understand the parameters that drive the infection. An alternative, {\it a priori} not mathematically rigorous, approach to deriving a system of limiting ordinary differential equations for the stochastic epidemic is to draw a realization of the graph first and then use a mean-field approximation, relying on the number of edges connecting individuals in different compartments. Below, we derive a system of ODEs using the heuristics of one such mean-field approach: the pair-approximation (cf. Ch.7 of \cite{Andersson2000} or, Ch. 4 of \cite{kiss2017mathematics}). Note that this approach is conjectured to be exact for locally tree like graphs, but to our knowledge, this has not been established rigorously \citep*{Sharkey2015exact, Kiss2015networkloops, kiss2017mathematics, kiss2022necessarysufficientconditionsexact}.


Our quantities of interest remain the expected number of susceptible, infected, and recovered vertices in community $k$ at time $t$.  Let $G \sim \sbm(\V, W)$ be a realization of the stochastic block model on $n$ vertices, let $u\in [n]$ be the labels of the nodes of $G$, and let $g$ be the adjacency matrix of $G$. For $A \in \{S, I, R\}$,
let $I_u^A$ be the indicator function that $u$ is susceptible, infected, or recovered, respectively, and let
\begin{equation}\label{eq:pairwise_def}
    [A_k]_n = \sum_{\substack{u \in [n] \\ k(u) = k}} I_u^A, \quad [A_kB_\ell]_n = \sum_{\substack{u, v \in [n] \\ k(u) = k \\ k(v) = \ell}}I_u^A g_{uv}I_v^B, \quad [A_kB_\ell C_m]_n \sum_{\substack{u, v, w\in [n], u \neq w \\ k(u) = k \\ k(v) = \ell \\ k(w) = m}} I_u^A g_{uv} I_v^B g_{vw} I_w^C.
\end{equation}
We say an edge $uv$ is of type $A_kB_\ell$ if $u$ has the community label $k$ and is in state $A$ and $v$ has label $\ell$ and is in state $B$, i.e., if $uv$ contributes to $[A_kB_\ell]$, and similarly for a type $A_mB_kC_\ell$ path $uvw$.

Assume that all quantities in \eqref{eq:pairwise_def} when normalized by $n$ converge to deterministic limits as $n \to \infty$. Observing that the rate at which a given susceptible vertex $v$ becomes infected is equal to the number of edges joining $v$ to an infected vertex, and that the rate at which infected vertices recover, is equal to  $\gamma$, we heuristically obtain 
the following system of differential equations:
\begin{equation}\label{eq:pair-S}
    \frac{d}{dt}[S_k] = - \irt \sum_{\ell \in [K]} [I_\ell S_k]
\end{equation}
\begin{equation}\label{eq:pair-I}
    \frac{d}{dt}[I_k] =\irt \sum_{\ell \in [K]} [I_\ell S_k] -\rrt [I_k]  =- \frac{d}{dt}[S_k] -\rrt [I_k]
\end{equation}
with errors which should be $o(n)$ as $n\to\infty$.  With a little bookkeeping, this heuristic also gives
\begin{equation}\label{eq:pair_approx}
    \frac{d}{dt}[I_\ell S_k] = -\rrt [I_\ell S_k] + \irt \sum_{m\in [K]} [I_m S_\ell S_k] - \irt\left( [I_\ell S_k]+\sum_{m \in [K]} [I_m S_k I_\ell]  \right)
\end{equation}
\begin{equation}\label{eq:pair_approx'}
    \frac{d}{dt}[S_\ell S_k] = -2 \irt \sum_{m \in [K]}[I_m S_\ell S_k].
\end{equation}
We give a heuristic explanation for \eqref{eq:pair_approx}, leaving the heurisic derivation of \eqref{eq:pair_approx'} to the reader.
At rate $\rrt [I_\ell S_k]$, the infected endpoint in an $I_\ell S_k$ edge recovers and the number of $I_\ell S_k$ edges decreases by one. At rate $\irt[I_m S_\ell S_k]$ for any $m \in [K]$, the infected vertex in an $I_m S_\ell S_k$ infects its $S_\ell$ neighbor and adds an $I_\ell S_k$ edge. At rate $\irt[I_m S_k I_\ell]$ for any $m \in [K]$, an $I_m S_k I_\ell$ set of edges becomes an $I_m I_k I_\ell$ set of edges and removes an $S_k I_\ell$ edge. Finally, at rate $\irt[I_\ell S_k]$ the $I_\ell$ node infects the $S_k$ node and removes an $I_\ell S_k$ edge.

In the so-called pair-approximation, it is then assumed that we can make the approximation
\[
    [A_mB_kC_\ell] = \frac{[A_mB_k][B_kC_\ell]}{[B_k]},
\]
giving us the following equations

\begin{equation}\label{eq:pair_approx-IS}
    \frac{d}{dt}[{I_\ell S_k}] = -(\irt + \rrt) [I_\ell S_k] + \irt \sum_{m \in [K]} \left(\frac{[I_m S_\ell][S_\ell S_k]}{[S_\ell]} - \frac{[I_m S_k][S_k I_\ell]}{[S_k]}\right)
\end{equation}
\begin{equation}\label{eq:pair_approx-SS}
    \frac{d}{dt}[S_\ell S_k] = -2 \irt \sum_{m \in [K]}\frac{[I_m S_\ell][S_\ell S_k]}{[S_\ell]}.
\end{equation}
Finally, under the assumption that the initially susceptible vertices of type $k$ are a random set of size $[S_k]$, we can assume that at time $0$,
$$
[S_kS_\ell]_{t=0}=\frac {W_{k\ell}}n [S_k]_{t=0}[S_\ell]_{t=0}.
$$
On the other hand, combining \eqref{eq:pair-S} and \eqref{eq:pair_approx-SS} and using the symmetry $k\leftrightarrow \ell$ of $[S_kS_\ell]$,  we get
\begin{equation}
\frac d{dt}\left([S_kS_\ell]-\frac {W_{k\ell}}n [S_k][S_\ell]\right)=
     - \irt \sum_{m \in [K]}\left(\frac{[I_m S_\ell]}{[S_k]}+\frac{[I_m S_k]}{[S_\ell]}\right)\left(
   [S_\ell S_k]-\frac {W_{k\ell}}n[S_\ell][S_k]\right)
\end{equation}
which we can integrate to get that
\begin{equation}\label{eq:ss_approx}
    [S_\ell S_k] =[ S_\ell][ S_k] \frac{W_{k\ell}}{n}
\end{equation}
for all $t$.

Let $[a_k] = n_{-1}[A_k]$ for $A \in {S, I, R}$ and let $[y_k] = n^{-1}\sum_{\ell \in [K]} [I_\ell S_k]$. The pair approximation and \eqref{eq:ss_approx} imply the following system of differential equations:
\begin{equation}\label{eq:s_pair_approx}
    \frac{d}{dt}[s_k] = -\irt [y_k]
\end{equation}
\begin{equation}\label{eq:i_pair_approx}
    \frac{d}{dt}[i_k] =- \frac{d}{dt}[s_k] -\rrt [i_k]
\end{equation}
\begin{equation}\label{eq:x_pair_approx}
    \frac{d}{dt}[y_k] = -(\irt + \rrt) [y_k] + \irt  \sum_{\ell}[y_\ell]
        W_{\ell k}[s_k] - \frac{\irt}{[s_k]}[y_k]^2.
\end{equation}
As we will see, the time evolution for $s_k$ and $i_k$ following from these equations is identical to the time evolution derived rigorously in this paper.  To show this, we consider the force of the infection, $$F_k=-\frac 1{[s_k]}  \frac{d}{dt}[s_k]
=\irt\frac{[y_k]}{[s_k]}$$ and use \eqref{eq:ss_approx} and \eqref{eq:x_pair_approx} to derive the differential equation
\begin{align*}
\frac {dF_k}{dt}
&=-\irt\frac  {[y_k]}{[s_k]^2}\frac {d[s_k]}{dt}+\irt\frac 1{[s_k]}\frac {d[y_k]}{dt}
=\left(\irt\frac{[y_k]}{[s_k]}\right)^2
 -(\irt + \rrt)\left(\irt \frac{[y_k]}{[s_k]}\right)
 + \irt^2  \sum_{\ell}[y_\ell]
 W_{\ell k}
 - \left(\irt\frac{[y_k]}{[s_k]}\right)^2\\
 &=
 -(\irt + \rrt)F_k
 + \irt  \sum_{\ell}F_\ell[s_\ell]
 W_{\ell k},
    \end{align*}
which is the time evolution for the force of infection derived rigorously  in the previous section.

\subsection{Extensions}
In this work, we have given a rigorous derivation of the limiting system of ordinary differential equations for the stochastic SIR epidemic on the stochastic block model. While there are some general alternative approaches for approximating the epidemic trajectories using e.g. the forward and backward branching processes of the model (see \cite{barbour2013approximating, bhamidi2014front}) or local graph limits (see \cite{alimohammadi2023epidemic, cocomello2023exact}), to our knowledge this paper is first to rigorously establish a system of differential equations for the limiting behavior allowing for general (including sublinear) starting conditions. Our law of large numbers results give convergence of the stochastic epidemic to the limiting ODE for all $t$ finite and infinite, and demonstrate that the infection curves in different communities are driven by different parameters. In particular, we prove a law of large numbers for the $t$ to infinity limit (final size) of the epidemic. To prove the final size result, we introduce a novel method using the idea of herd immunity and the $\Reff(t)$ threshold. Our system of ODEs implies the system of ODEs derived using the pair-approximation, justifying the use of mean-field approximations in this setting. 

In our model, we consider a constant rate of infection $\irt$ across all communities, however a natural extension would be to vary the rate of infection by community and consider a matrix $\bm{\irt}=(\irt_{k\ell})_{k,\ell\in [K]}$ and vector $\bm \rrt=(\rrt_k)_{k\in [K]}$ instead, where $ \irt_{k\ell}$ is the rate at which an infected individual in community $k$ infects its neighbors in community $\ell$, and $ \rrt_k$ is the rate at which vertices in community $k$ recover. In such a model where the transmission rates vary by community, we can also derive a set of differential equations based on the notion of active half-edges. Let $X_{k \ell}$ be the number of active half-edges going out of community $k$ with an endpoint (not yet realized) in community $\ell$. 
Then at any given time
\begin{itemize}
    \item at rate $\rrt_k X_{k\ell}$, we lose one active half-edge contributing to $X_{k\ell}$ 
    \item at rate $\irt_{k\ell} X_{k\ell}$, we explore one of the active half-edges contributing to $X_{k\ell}$.
\end{itemize}
When we explore an active half-edge contributing to $X_{k\ell}$, $X_{k\ell}$ decreases by $1$. Now, let us consider how $X_{k\ell}$ increases. Note that at rate $\irt_{j k} X_{j k}$ a half-edge in community $j$ with (future) endpoint in community $k$ is explored. With probability $S_k/n_k$ the exploration leads to an infection, and in this case $X_{k\ell}$ increases by $\Pois(D_{k\ell})$, where we recall that $D_{k\ell} = W_{k\ell}n_\ell/n$ is the expected number of half-edges from community $k$ into community $\ell$. Therefore, 
\begin{itemize}
    \item at an overall rate $\frac{S_k}{n_k}\sum_{\ell}  \irt_{\ell k} X_{\ell k}$, $S_k$ goes down by $1$ and $I_k$ goes up by $1$
    \item at an overall rate $(\irt_{k\ell} +  \rrt_k) X_{k\ell}$, $X_{k\ell}$ goes down by 1, and
    \item at an overall rate $\sum_j \ \irt_{jk} X_{jk} \frac{S_k}{n_k}$, $X_{k\ell}$ goes up by $\Pois(D_{k\ell})$.
\end{itemize}
Furthermore, at time $t=0$,
$
\E[X_{k\ell}(0)]=D_{k\ell} I_k(0)
$
Normalizing
\[
    \hat s_k(t) = \frac{S_k(t)}{n}, \quad \hat i_k(t) = \frac{I_k(t)}{n}, \quad \text{and} \quad \hat x_{k\ell}(t) = \frac{X_{k\ell}(t)}{nD_{k\ell}},
\]
the expected rates of change for $\hat s_k, \hat x_{k\ell},$ and $\hat i_k$ are given by
\[
    -\hat s_k\sum_j  \irt_{j k} W_{j k} \hat x_{j k}, \quad -( \irt_{k\ell} + \rrt_k)\hat x_{k\ell} + \hat s_k\sum_j  \irt_{j k}W_{j k} \hat x_{j k}, \quad -\rrt_k \hat i_k +\hat s_k\sum_j  \irt_{j k} W_{j k}\hat x_{j k},
\]
respectively.
The methods developed in Section~\ref{sec:lln} immediately imply a LLN for finite time horizons, leading to the differential equations
\begin{equation}
    \frac{ds_k}{dt} = - s_k \sum_j  \irt_{j k} W_{j k}  x_{j k}
\end{equation}
\begin{equation}
    \frac{dx_{k \ell}}{dt} = -( \irt_{k\ell} + \rrt_k)x_{k\ell} + s_k  \sum_j  \irt_{j k} W_{j k}  x_{j k}
\end{equation}
\begin{equation}
    \frac{di_k}{dt} = -\rrt_k i_k-  \frac{ds_k}{dt}
\end{equation}
with initial conditions $x_{k\ell}(0)=i_k(0)$.
The herd immunity threshold is now the threshold for which a new infection with $x_{k\ell}(t_0)=i_k(t_0)$ does not grow by more than a constant.  As the reader may easily verify, this threshold is now governed by the spectral radius 
$\Reff(t)$  of the matrix
$$
C_{k\ell}(t)=\frac{\irt_{k\ell}}{\irt_{k\ell}+\rrt_k}
W_{k\ell}s_\ell(t).
$$
While the technical details will be more complicated, we believe that our techniques will again allow one to prove an LLN which is uniform in the time horizon, but we have not worked out the details.

We further believe that our methods should allow one to analyze a dynamic version of the stochastic block model where susceptible-infected edges are rewired to susceptible-susceptible edges as in \cite{ball22:sir-rewire}. 
Finally, while we believe that the study of SIR dynamics on many random graph models will be amenable to the technology developed here, others, as e.g., an LLN for the time evolution and final size of the SIR epidemic on dynamically grown models such as preferential attachment might be more challenging.


\bibliography{references}

\begin{thebibliography}{40}
\providecommand{\natexlab}[1]{#1}
\providecommand{\url}[1]{\texttt{#1}}
\expandafter\ifx\csname urlstyle\endcsname\relax
  \providecommand{\doi}[1]{doi: #1}\else
  \providecommand{\doi}{doi: \begingroup \urlstyle{rm}\Url}\fi

\bibitem[Abbe(2018)]{abbe2018community}
E.~Abbe.
\newblock Community detection and stochastic block models: recent developments.
\newblock \emph{Journal of Machine Learning Research}, 18\penalty0
  (177):\penalty0 1--86, 2018.

\bibitem[Alimohammadi et~al.(2023)Alimohammadi, Borgs, van~der Hofstad, and
  Saberi]{alimohammadi2023epidemic}
Y.~Alimohammadi, C.~Borgs, R.~van~der Hofstad, and A.~Saberi.
\newblock Epidemic forecasting on networks: Bridging local samples with global
  outcomes.
\newblock Technical report, Technical report, Working paper, 2023.

\bibitem[Andersson(1998)]{andersson1998limit}
H.~Andersson.
\newblock Limit theorems for a random graph epidemic model.
\newblock \emph{Annals of Applied Probability}, 8\penalty0 (4):\penalty0
  1331--1349, 1998.

\bibitem[Andersson and Britton(2000)]{Andersson2000}
H.~Andersson and T.~Britton.
\newblock \emph{Stochastic Epidemic Models and Their Statistical Analysis},
  volume 151 of \emph{Lecture Notes in Statistics}.
\newblock Springer, New York, NY, 1 edition, 2000.

\bibitem[Ball and Britton(2022)]{ball22:sir-rewire}
F.~Ball and T.~Britton.
\newblock Epidemics on networks with preventive rewiring.
\newblock \emph{Random Structures \& Algorithms}, 61\penalty0 (2):\penalty0
  250--297, 2022.

\bibitem[Ball and Neal(2008)]{ball2008network}
F.~Ball and P.~Neal.
\newblock Network epidemic models with two levels of mixing.
\newblock \emph{Mathematical Biosciences}, 212\penalty0 (1):\penalty0 69--87,
  2008.

\bibitem[Ball et~al.(2010)Ball, Sirl, and Trapman]{Ball2010}
F.~Ball, D.~Sirl, and P.~Trapman.
\newblock Analysis of a stochastic sir epidemic on a random network
  incorporating household structure.
\newblock \emph{Mathematical Biosciences}, 224\penalty0 (2):\penalty0 53--73,
  2010.
\newblock \doi{10.1016/j.mbs.2009.12.001}.

\bibitem[Barbour and Reinert(2013)]{barbour2013approximating}
A.~Barbour and G.~Reinert.
\newblock Approximating the epidemic curve.
\newblock 2013.

\bibitem[Barbour(2014)]{barbour:couplings-for-bp}
A.~D. Barbour.
\newblock Couplings for locally branching epidemic processes.
\newblock \emph{Journal of Applied Probability}, 51A:\penalty0 43--56, 2014.
\newblock ISSN 00219002.
\newblock URL \url{http://www.jstor.org/stable/43284107}.

\bibitem[Bartlett(1949)]{bartlett1949evolutionary}
M.~S. Bartlett.
\newblock Some evolutionary stochastic processes.
\newblock \emph{Journal of the Royal Statistical Society: Series B},
  11:\penalty0 211, 1949.

\bibitem[Bhamidi et~al.(2014)Bhamidi, van~der Hofstad, and
  Komj{\'a}thy]{bhamidi2014front}
S.~Bhamidi, R.~van~der Hofstad, and J.~Komj{\'a}thy.
\newblock The front of the epidemic spread and first passage percolation.
\newblock \emph{Journal of Applied Probability}, 51\penalty0 (A):\penalty0
  101--121, 2014.
\newblock \doi{10.1239/jap/1417528470}.

\bibitem[Bhatia et~al.(1990)Bhatia, Elsner, and Krause]{bhatia1990bounds}
R.~Bhatia, L.~Elsner, and G.~Krause.
\newblock Bounds for the variation of the roots of a polynomial and the
  eigenvalues of a matrix.
\newblock \emph{Linear algebra and its applications}, 142:\penalty0 195--209,
  1990.

\bibitem[Bohman and Picollelli(2012)]{Bohman2012}
T.~Bohman and M.~Picollelli.
\newblock Sir epidemics on random graphs with a fixed degree sequence.
\newblock \emph{Random Structures \& Algorithms}, 41\penalty0 (2):\penalty0
  179--214, 2012.
\newblock \doi{10.1002/rsa.20386}.

\bibitem[Bollob\'{a}s et~al.(2007)Bollob\'{a}s, Janson, and
  Riordan]{bollobas_2007}
B.~Bollob\'{a}s, S.~Janson, and O.~Riordan.
\newblock The phase transition in inhomogeneous random graphs.
\newblock \emph{Random Structures \& Algorithms}, 31\penalty0 (1):\penalty0
  3–122, June 2007.
\newblock ISSN 1098-2418.
\newblock \doi{10.1002/rsa.20168}.
\newblock URL \url{http://dx.doi.org/10.1002/rsa.20168}.

\bibitem[Britton et~al.(2008)Britton, Deijfen, Lageras, and
  Lindholm]{Britton2008}
T.~Britton, M.~Deijfen, A.~N. Lageras, and M.~Lindholm.
\newblock Epidemics on random graphs with tunable clustering.
\newblock \emph{Journal of Applied Probability}, 45\penalty0 (3):\penalty0
  743--756, 2008.

\bibitem[Bui et~al.(1987)Bui, Chaudhuri, Leighton, and Sipser]{bui1987graph}
T.~A. Bui, S~Chaudhuri, F.~T. Leighton, and M.~Sipser.
\newblock Graph bisection algorithms with good average case behavior.
\newblock \emph{Combinatorica}, 7\penalty0 (2):\penalty0 171--191, 1987.
\newblock ISSN 0209-9683.
\newblock \doi{10.1007/BF02579448}.
\newblock URL \url{http://dx.doi.org/10.1007/BF02579448}.

\bibitem[Cocomello and Ramanan(2023)]{cocomello2023exact}
J.~Cocomello and K.~Ramanan.
\newblock Exact description of limiting sir and seir dynamics on locally
  tree-like graphs.
\newblock \emph{arXiv preprint arXiv:2309.08829}, 2023.

\bibitem[Daley and Gani(1999)]{daley1999epidemic}
D.~J. Daley and J.~Gani.
\newblock \emph{Epidemic Modelling: An Introduction}.
\newblock Cambridge University Press, Cambridge, 1999.

\bibitem[Darling and Norris(2008)]{darling08:dfq-markov}
R.~W.~R. Darling and J.~R. Norris.
\newblock Differential equation approximations for markov chains.
\newblock \emph{Probability Surveys}, 5:\penalty0 37--79, 2008.

\bibitem[Decreusefond et~al.(2012)Decreusefond, Dhersin, Moyal, and
  Tran]{Decreusefond2012}
L.~Decreusefond, J.~S. Dhersin, P.~Moyal, and V.~C. Tran.
\newblock Large graph limit for an sir process in random network with
  heterogeneous connectivity.
\newblock \emph{Annals of Applied Probability}, 22\penalty0 (2):\penalty0
  541--575, 2012.
\newblock \doi{10.1214/10-AAP745}.

\bibitem[Diekmann and Heesterbeek(2000)]{diekmann00:math-epi}
O.~Diekmann and J.~A.~P. Heesterbeek.
\newblock \emph{Mathematical Epidemiology of Infectious Diseases: Model
  Building, Analysis and Interpretation}.
\newblock Wiley Series in Mathematical \& Computational Biology. Wiley, 2000.
\newblock ISBN 9780471492412.
\newblock URL \url{https://books.google.com/books?id=5VjSaAf35pMC}.

\bibitem[Ethier and Kurtz(1986)]{ethier1986random}
S.~N. Ethier and T.~G. Kurtz.
\newblock Markov processes.
\newblock Wiley, 1986.

\bibitem[Girvan and Newman(2002)]{girvan2002community}
M.~Girvan and M.~E.~J. Newman.
\newblock Community structure in social and biological networks.
\newblock \emph{Proceedings of the National Academy of Sciences}, 99\penalty0
  (12):\penalty0 7821--7826, 2002.
\newblock \doi{10.1073/pnas.122653799}.
\newblock URL \url{https://doi.org/10.1073/pnas.122653799}.

\bibitem[Harris et~al.(1963)]{harris63:branching-process-theory}
T.~E. Harris et~al.
\newblock \emph{The theory of branching processes}, volume~6.
\newblock Springer Berlin, 1963.

\bibitem[Holland et~al.(1983)Holland, Laskey, and
  Leinhardt]{holland1983stochastic}
P.~W. Holland, K.~B. Laskey, and S.~Leinhardt.
\newblock Stochastic blockmodels: First steps.
\newblock \emph{Social Networks}, 5\penalty0 (2):\penalty0 109--137, 1983.
\newblock ISSN 0378-8733.
\newblock \doi{10.1016/0378-8733(83)90021-7}.

\bibitem[House(2012)]{house2012modelling}
T.~House.
\newblock Modelling epidemics on networks.
\newblock \emph{Contemp. Phys.}, 53\penalty0 (3):\penalty0 213--225, 2012.

\bibitem[Jacquez et~al.(1988)Jacquez, Simon, Koopman, Sattenspiel, and
  Perry]{Jacquez1988}
J.~A. Jacquez, C.~P. Simon, J.~Koopman, L.~Sattenspiel, and T.~Perry.
\newblock Modeling and analyzing hiv transmission: the effect of contact
  patterns.
\newblock \emph{Mathematical Biosciences}, 92:\penalty0 119--199, 1988.
\newblock \doi{10.1016/0025-5564(88)90031-4}.
\newblock URL \url{https://doi.org/10.1016/0025-5564(88)90031-4}.

\bibitem[Janson et~al.(2014)Janson, Luczak, and Windridge]{janson14:lln-sir}
S.~Janson, M.~Luczak, and P.~Windridge.
\newblock Law of large numbers for the sir epidemic on a random graph with
  given degrees.
\newblock \emph{Random Structures \& Algorithms}, 45\penalty0 (4):\penalty0
  726--763, 2014.

\bibitem[Kermack and McKendrick(1927)]{kermack1927}
W.~O. Kermack and A.~G. McKendrick.
\newblock A contribution to the mathematical theory of epidemics.
\newblock \emph{Proceedings of the Royal Society of London. Series A,
  Containing Papers of a Mathematical and Physical Character}, 115:\penalty0
  700--721, 1927.
\newblock \doi{10.1098/rspa.1927.0118}.
\newblock URL \url{http://doi.org/10.1098/rspa.1927.0118}.

\bibitem[Kiss et~al.(2017)Kiss, Miller, and Simon]{kiss2017mathematics}
I.~Z. Kiss, J.~C. Miller, and P.~L. Simon.
\newblock \emph{Mathematics of Epidemics on Networks}, volume 598 of
  \emph{Springer Series in Statistics}.
\newblock Springer, Cham, 2017.

\bibitem[Kiss et~al.(2015)Kiss, Morris, Sélley, Simon, and
  Wilkinson]{Kiss2015networkloops}
I.Z. Kiss, C.G. Morris, F.~Sélley, P.L. Simon, and R.R. Wilkinson.
\newblock Exact deterministic representation of markovian sir epidemics on
  networks with and without loops.
\newblock \emph{Journal of Mathematical Biology}, 70\penalty0 (3):\penalty0
  437--464, 2015.
\newblock \doi{10.1007/s00285-014-0772-0}.
\newblock URL \url{https://doi.org/10.1007/s00285-014-0772-0}.
\newblock Epub 2014 Mar 4.

\bibitem[Kiss et~al.(2022)Kiss, Kenah, and
  Rempala]{kiss2022necessarysufficientconditionsexact}
I.Z. Kiss, E.~Kenah, and G.A. Rempala.
\newblock Necessary and sufficient conditions for exact closures of epidemic
  equations on configuration model networks, 2022.
\newblock URL \url{https://arxiv.org/abs/2208.07983}.

\bibitem[Lee and Wilkinson(2019)]{lee2019review}
C.~Lee and D.~J. Wilkinson.
\newblock A review of stochastic block models and extensions for graph
  clustering.
\newblock \emph{Appl Netw Sci}, 4:\penalty0 122, 2019.
\newblock \doi{10.1007/s41109-019-0232-2}.
\newblock URL \url{https://doi.org/10.1007/s41109-019-0232-2}.

\bibitem[Miller and Ting(2019)]{Miller2019}
J.C. Miller and T.~Ting.
\newblock Eon (epidemics on networks): a fast, flexible python package for
  simulation, analytic approximation, and analysis of epidemics on networks.
\newblock \emph{Journal of Open Source Software}, 4\penalty0 (44):\penalty0
  1731, 2019.
\newblock \doi{10.21105/joss.01731}.
\newblock URL \url{https://doi.org/10.21105/joss.01731}.

\bibitem[Neal(2003)]{neal2003sir}
P.~Neal.
\newblock Sir epidemics on a bernoulli random graph.
\newblock \emph{J. Appl. Probab.}, 40\penalty0 (3):\penalty0 779--782, 2003.

\bibitem[Newman(2002)]{newman2002spread}
M.~E.~J. Newman.
\newblock Spread of epidemic disease on networks.
\newblock \emph{Physical Review E}, 66\penalty0 (1):\penalty0 016128, 2002.

\bibitem[Sharkey et~al.(2015)Sharkey, Kiss, Wilkinson, and
  Simon]{Sharkey2015exact}
K.J. Sharkey, I.Z. Kiss, R.R. Wilkinson, and P.L. Simon.
\newblock Exact equations for sir epidemics on tree graphs.
\newblock \emph{Bulletin of Mathematical Biology}, 77\penalty0 (4):\penalty0
  614--645, 2015.
\newblock \doi{10.1007/s11538-013-9923-5}.
\newblock URL \url{https://doi.org/10.1007/s11538-013-9923-5}.

\bibitem[Volz(2008)]{volz08:sir}
E.~Volz.
\newblock Sir dynamics in random networks with heterogeneous connectivity.
\newblock \emph{Journal of mathematical biology}, 56:\penalty0 293--310, 2008.

\bibitem[Von~Bahr and Martin-L\"{o}f(1980)]{vonbahr1980threshold}
B.~Von~Bahr and A.~Martin-L\"{o}f.
\newblock Threshold limit theorems for some epidemic processes.
\newblock \emph{Advances in Applied Probability}, 12\penalty0 (2):\penalty0
  319--349, 1980.
\newblock URL \url{https://www.jstor.org/stable/1426600}.

\bibitem[Vynnycky and White(2010)]{vynnycky2010}
E.~Vynnycky and R.G. White.
\newblock \emph{An Introduction to Infectious Disease Modelling}.
\newblock Oxford University Press, 2010.

\end{thebibliography}
\nocite{andersson1998limit}
\nocite{newman2002spread}
\nocite{ball2008network}
\newpage

\appendix
\section*{\hfil Appendix \hfil}
\section{Branching Process Results}\label{app:branching}
In this appendix, we derive several  results about discrete time, multi-type branching processes used in the main body of this paper. 

\subsection{Preliminaries}
We start by establishing a few facts for multi-type branching processes which are usually formulated in the more restricted setting of ``positively regular'' branching processes, see, e.g., \cite{harris63:branching-process-theory}, but hold more generally, often with proofs which are straightforward extensions of those in the literature.

We  assume a finite type space $[K]$ and the associated state-space $\N_0^K$, with
 the state corresponding to a single individual of type $k$ represented by the unit vector $\e_k\in \N_0^K$ in direction $k$.
 We  use $\vec Z_m$ to denote the state of the branching process in generation $m$ and $\P_k$ to denote the probability distribution of $(\vec Z_m)_{m\geq 0}$ when starting from $\vec Z_0=\e_k$, $$\P_k(\cdot)=\P(\cdot\mid \vec Z_0=\e_k).  $$
We define $G_k(\cdot)$ to be
the generating function of the offspring distribution $p_k(\z)=\P_k(\vec Z_1=\z)$, and more generally define the generating function of  $\vec Z_m$ by
$$
G^{(m)}_k(\vec s)=\sum_{\z} \P_k(\vec Z_m=\z) \prod_{i=1}^Ks_i^{z_i}. 
$$
Setting $\vec G(\vec s)=(G_1(\vec s),\dots,G_K(\vec s) )$ and
$\vec G^{(m)}(\vec s)=(G_1^{(m)}(\vec s),\dots,G_K^{(m)}(\vec s) )$,
it is easy to see that
$
G^{(m)}_k(\vec s)=G_k(\vec G^{(m-1)}(\vec s)),
$
showing that $\vec G^{(n)}(\cdot)$ is the $n^\text{th}$ iterate of
$\vec G(\cdot)=\vec G^{(1)}(\cdot)$, the generating function for the offspring distribution.  Finally, we define the vector of extinction probabilities via
$$q_k=\P_k(\vec Z_m=0\text{ for some }m<\infty).$$

\begin{lemma}\label{lem:transience}
     Let $(\vec Z_m)_{m\geq 0}$ be a
    branching process such that all states in  $\N_0^K\setminus\{\vec 0\}$ are transient.  If $0\leq s_k<1$ for all $k$, then $\vec G^{(m)}(\s)\to \q$, where $\q=(q_1,\dots,q_k)$ is the vector of extinction probabilities.
\end{lemma}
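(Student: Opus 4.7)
The plan is to interpret $G^{(m)}_k(\vec s)$ probabilistically as
\[
G^{(m)}_k(\vec s)=\E_k\!\left[\prod_{i=1}^K s_i^{Z_{m,i}}\right]
\]
and then split the expectation on whether or not the population has gone extinct by generation $m$. Namely, I would write
\[
G^{(m)}_k(\vec s)=\P_k(\vec Z_m=\vec 0)+\E_k\!\left[\prod_{i=1}^K s_i^{Z_{m,i}}\,\mathbf{1}_{\vec Z_m\neq \vec 0}\right],
\]
the first term handling the contribution that ``matches'' $q_k$, and the second term being the error we need to drive to zero.

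For the first term, I would use that $\vec 0$ is absorbing, so the events $\{\vec Z_m=\vec 0\}$ are increasing in $m$, which by monotone convergence gives $\P_k(\vec Z_m=\vec 0)\uparrow \P_k(\exists m\geq 0\colon \vec Z_m=\vec 0)=q_k$. For the second term, set $s_{\max}=\max_k s_k<1$ and note that on the event $\vec Z_m\neq\vec 0$,
\[
\prod_{i=1}^K s_i^{Z_{m,i}}\leq s_{\max}^{\|\vec Z_m\|_1}.
\]
Then for any integer $M\geq 1$,
\[
\E_k\!\left[\prod_i s_i^{Z_{m,i}}\mathbf{1}_{\vec Z_m\neq\vec 0}\right]\leq \P_k\bigl(1\leq \|\vec Z_m\|_1< M\bigr)+s_{\max}^M.
\]

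The key step is to argue that for any fixed $M$, $\P_k(1\leq \|\vec Z_m\|_1<M)\to 0$ as $m\to\infty$. For this I would invoke the transience hypothesis: for every $\vec z\in\N_0^K\setminus\{\vec 0\}$, transience of $\vec z$ (together with the strong Markov property applied at the first hitting time of $\vec z$) gives $\sum_{m\geq 0}\P_k(\vec Z_m=\vec z)<\infty$, and in particular $\P_k(\vec Z_m=\vec z)\to 0$. Since $\{\vec z\in\N_0^K\colon 1\leq \|\vec z\|_1<M\}$ is a finite set, summing this statement over its elements yields the desired conclusion.

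Combining the three ingredients, for every $M$,
\[
\limsup_{m\to\infty}\bigl|G^{(m)}_k(\vec s)-q_k\bigr|\leq s_{\max}^M,
\]
and letting $M\to\infty$ completes the proof. The main (mild) obstacle is just being careful about interpreting ``transient'' for a Markov chain whose state space is not irreducible: the step where I pass from transience of the individual state $\vec z$ (which, a priori, refers only to the process started at $\vec z$) to the statement $\sum_m \P_k(\vec Z_m=\vec z)<\infty$ needs the strong Markov property at the first visit to $\vec z$, but is otherwise standard.
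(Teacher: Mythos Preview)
Your proof is correct and follows essentially the same approach as the paper: both split the expression for $G_k^{(m)}(\vec s)$ into the extinction term $\P_k(\vec Z_m=\vec 0)\to q_k$ and a remainder over nonzero states, then handle the remainder by separating states with bounded $\ell_1$-norm (a finite set, controlled via transience) from states with large norm (controlled by $s_{\max}^M$). Your treatment is in fact slightly more careful than the paper's in spelling out why transience of each $\vec z$ implies $\P_k(\vec Z_m=\vec z)\to 0$ from an arbitrary starting state.
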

\begin{proof}

Expressing  $q_k$ as the limit of $\P_k(\vec Z_m=0)=G_k^{(m)}(\vec 0)$, we need to prove that
$
G_k^{(m)}(\vec s)-G_k^{(m)}(\vec 0)\to 0$. 
To this end, we decompose the sum representing this difference into a finite sum plus a tail,
\begin{equation}
G_k^{(m)}(\vec s)-G_k^{(m)}(\vec 0)=\sum_{\z\neq \vec 0}
\P_k(\vec Z_m=\z) s_1^{z_1} \dots s_k^{z_k}\leq 
\sum_{\z\neq \vec 0:\norm{\z}_1 \leq R} \P_k(\vec Z_m=\z)  + O(\max_k s_k^{R}).
\end{equation}
Since all $\z\neq \vec 0$ are transient, the terms in the first sum go to $0$ as $m\to\infty$ for any fixed $R$.  Choosing first $R$ large enough, and then $m$ large enough, the right hand side can be made as small as desired, proving the lemma.
\end{proof}

To state the next lemma, we 
define a branching process to be irreducible if for all $i,j\in [K]$, there exists an  $m<\infty$ such that $\P(\vec Z_m\cdot \e_j>0\mid \vec Z_0=\e_i)>0$.

\begin{lemma}\label{lem:irred-bp-sols}
    Let $(\vec Z_m)_{m\geq 0}$ be an irreducible
    branching process such all states in  $\N_0^K\setminus\{\vec 0\}$ are transient.  If  there exists a solution $\s\neq \vec 1$ to $\vec G(\vec s) = \vec s$ in $[0,1]^K$, then it is equal to $\vec q$, and $q_k < 1$ for all $k$.
\end{lemma}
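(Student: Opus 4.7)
The plan is to show that any fixed point $\s \in [0,1]^K$ of $\vec G$ other than $\vec 1$ must coincide with $\vec q$, and that this forces $q_k < 1$ for all $k$. The argument has four steps.

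First, I would verify that $\vec q$ itself is a fixed point: conditioning on the first generation and using the independence of the subtrees rooted at the children gives $q_k = \E_k\bigl[\prod_j q_j^{Z_{1,j}}\bigr] = G_k(\vec q)$, so $\vec G(\vec q) = \vec q$.

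Second, I would use irreducibility to promote the hypothesis $\s \neq \vec 1$ to the stronger statement $s_j < 1$ for every $j \in [K]$. Pick $k$ with $s_k < 1$. For any $j$, irreducibility gives some $m$ with $\P_j(Z_{m,k} > 0) > 0$. Since all coordinates of $\s$ are in $[0,1]$, we have $\prod_i s_i^{Z_{m,i}} \leq 1$ almost surely, and on the positive-probability event $\{Z_{m,k} > 0\}$ this product is strictly bounded above by $s_k^{Z_{m,k}} < 1$. Taking expectations yields $G_j^{(m)}(\s) < 1$. But iterating the fixed-point equation $\vec G(\s) = \s$ gives $\vec G^{(m)}(\s) = \s$, so $s_j = G_j^{(m)}(\s) < 1$.

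Third, now that $s_j < 1$ for every $j$, Lemma \ref{lem:transience} applies and $\vec G^{(m)}(\s) \to \vec q$ as $m \to \infty$. Since the left side is the constant sequence $\s$, we conclude $\s = \vec q$.

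Finally, because $\vec q = \s \neq \vec 1$, there is some coordinate of $\vec q$ strictly less than $1$, and the same irreducibility argument from step two (applied to $\vec q$ instead of $\s$) yields $q_j < 1$ for all $j$. I expect no real obstacle here; the only delicate point is the irreducibility upgrade in step two, which is the mechanism that both pins down the fixed point and rules out ``partially trivial'' extinction vectors.
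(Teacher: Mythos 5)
Your proof is correct and matches the paper's argument: use irreducibility to upgrade $\s \neq \vec 1$ to $s_j < 1$ for all $j$, then combine the fixed-point identity $\vec G^{(m)}(\s) = \s$ with Lemma~\ref{lem:transience} to conclude $\s = \vec q$. Your steps 1 and 4 are harmless but redundant --- step 1 isn't needed to identify $\s$ with $\vec q$, and step 4's conclusion already follows from step 2 once $\s = \vec q$ is established.
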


\begin{proof}
In view of the previous lemma, all we need to show is that if $\vec s = \vec G(\vec s)$ and $\vec s\neq 1$, then $s_k<1$ for all $k$.  Assume w.l.o.g. that $s_1<1$, and fix $k$. By irreducibility, there exists an $m$ such that $\P_k(\vec Z_m\cdot \e_1 > 0)>0$, implying that
    the generating function $G_k^{(m)}(\vec s)$  depends explicitly on $s_1$ and
    \begin{align*}
        G_k^{(m)}(\vec s) = \sum_{\vec z} \P_k(\vec Z_m = \vec z ) s_1^{z_1} \dots s_K^{z_K} 
        &\leq \sum_{\vec z, z_1 = 0} \P_k(\vec Z_m = \vec z ) + \sum_{\vec z, z_1 > 0} \P_k(\vec Z_m = \vec z )s_1^{z_1} \\
        &< \sum_{\vec z, z_1 = 0} \P_k(\vec Z_m = \vec z ) + \sum_{\vec z, z_1 > 0}\P_k(\vec Z_m = \vec z ) = 1.
    \end{align*}
    But, since $\vec s$ is a solution of $\vec s=\vec G(\vec s)$, it is also a solution of $\vec s = \vec G^{(m)} (\vec s)$ implying $s_k<1$ and hence the claim.
\end{proof}

To use these lemmas, we will need to establish that all states $\z\neq\vec 0$ are transient.  While not the most general lemma one can prove, the following will be sufficient for us.

\begin{lemma}\label{lem:irred-transience}
    Let $\mathbb S\subset\N_0^K$ be a set of states $\z$ such that $\P(\vec Z_1=\vec z|\vec Z_0=\vec z)=1$, and assume that for all $\vec z\notin \mathbb S$ there exists a state $\vec z'\in \mathbb S$ such that $\P(\vec Z_1=\z'|\vec Z_0=\z)>0$. Then all states $\z\notin \mathbb S$ are transient, so in particular $\lim_{m\to\infty}\P_k(\vec Z_m=\z)=0$ for all $\vec z\notin \mathbb S$ and all $k\in [K]$.
\end{lemma}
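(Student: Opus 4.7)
The plan is to derive transience directly from the absorbing-plus-one-step-escape structure, and then deduce the limit statement as a standard Markov-chain consequence of transience.

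First, I would fix an arbitrary $\vec z\notin\mathbb S$ and show that it is a transient state of the branching Markov chain on $\N_0^K$. By hypothesis, there exists $\vec z'\in\mathbb S$ with $p:=\P(\vec Z_1=\vec z'\mid \vec Z_0=\vec z)>0$. Since $\P(\vec Z_1=\vec z'\mid \vec Z_0=\vec z')=1$, once the chain enters $\vec z'\in\mathbb S$ it remains there forever, so in particular it never returns to $\vec z$. Hence the return probability $f_{\vec z}:=\P(\vec Z_m=\vec z\text{ for some }m\geq 1\mid \vec Z_0=\vec z)$ satisfies
\begin{equation*}
1-f_{\vec z}\geq \P(\vec Z_1=\vec z'\mid \vec Z_0=\vec z)=p>0,
\end{equation*}
so $f_{\vec z}<1$ and $\vec z$ is transient in the usual Markov-chain sense.

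Next, to obtain the ``in particular'' statement, I would use the standard transience criterion that $\E_{\vec z}[N_{\vec z}]=(1-f_{\vec z})^{-1}<\infty$, where $N_{\vec z}=\sum_{m\geq 0}\mathbf{1}_{\{\vec Z_m=\vec z\}}$ counts total visits to $\vec z$. For any starting type $k\in[K]$, strong Markov decomposition at the first hitting time of $\vec z$ gives
\begin{equation*}
\E_k[N_{\vec z}]=\P_k(T_{\vec z}<\infty)\,\E_{\vec z}[N_{\vec z}]\leq \E_{\vec z}[N_{\vec z}]<\infty,
\end{equation*}
where $T_{\vec z}=\inf\{m\geq 0:\vec Z_m=\vec z\}$. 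Since $\E_k[N_{\vec z}]=\sum_{m\geq 0}\P_k(\vec Z_m=\vec z)$ is the sum of non-negative terms and is finite, we conclude $\P_k(\vec Z_m=\vec z)\to 0$ as $m\to\infty$, which is the stated claim.

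I do not anticipate a real obstacle here: the only subtlety is observing that the hypothesis combined with the absorbing property of $\mathbb S$ directly kills the return probability from $\vec z$, which is exactly the definition of transience. The convergence $\P_k(\vec Z_m=\vec z)\to 0$ is then the classical consequence of transience via summability of the Green function, applied in the branching chain (which, as a countable-state Markov chain on $\N_0^K$, admits the usual first-passage decomposition without modification).
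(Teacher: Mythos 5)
Your proof is correct and takes essentially the same approach as the paper: the key observation in both is that from any state $\vec z\notin\mathbb S$, the chain can move in one step with positive probability to an absorbing state $\vec z'\in\mathbb S$, from which it can never return to $\vec z$, so the return probability is strictly less than one. You additionally spell out the ``in particular'' limit via the Green function/first-passage decomposition, which the paper leaves implicit as a standard consequence of transience.
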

\begin{proof}
Fix $\z\notin \mathbb S$ and $\z'\in \mathbb S$ such that
$\P(\vec Z_1=\z'\mid \vec Z_0=\vec z)=\eps>0$, and let $T_\z$ be the first $m$ such that $\vec Z_m$ revisits $\z$ when starting at $\z$, i.e., let $T_\z=\min\{m>0\colon \vec Z_m=\z\}$ conditioned on $\vec Z_0=\z$, where the minimum is defined to be $\infty$ if there exists no such $m$.  Then $\P(T_\z<\infty\mid \vec Z_0=\z)$ is the probability that the chain revisits $\z$ at least once, and
$$
\P(T_\z=\infty\mid \vec Z_0=\z)\geq \P(\vec Z_1=z'\mid \vec Z_0=\z)=\eps>0.
$$
On the other hand, by the strong Markov property of $(\vec Z_m)_{m\geq 0}$, the probability distribution of $\vec Z_m$ after the stopping time $T_\z$ is independent of the history up to time $T_\z$ and identical to the original distribution (after the obvious time shift by $T_\z$).  As a consequence, the probability that the chain revisits $\z$ at least $k$ times is equal to 
$$\P(T_\z<\infty\mid \vec Z_0=\z)^k\leq (1-\eps)^k\to 0\quad\text{as}\quad k\to\infty,$$
showing that $\vec z$ is transient.
\end{proof}

\begin{remark}\label{rem:Poisson-BP-->assumptions}
The forward and backward branching processes of the SIR model introduced in the main part of this paper clearly obey the assumption of Lemma~\ref{lem:irred-transience}, since any state has non-zero probability of going extinct in one step by the properties of the Poisson distribution.
Furthermore, both are irreducible if $W$ is irreducible, allowing us to use both Lemma~\ref{lem:transience} and Lemma~\ref{lem:irred-bp-sols} in this case.
\end{remark}

\begin{lemma}\label{lem:subcritical-tails}
Let $\vec Z=\sum_m\vec Z_m$ be the total size of the branching process $(Z_m)_{m\geq 0}$ with offspring mean matrix $
C_{k\ell}=\E[\vec Z_1\cdot\e_\ell\mid \vec Z_0=\e_k]
$.  If $C$ has spectral radius $\rho(C)<1$ and  all components of $\vec Z_1$ have exponential tails, then all components of $\vec Z$ have exponential tails as well.
\end{lemma}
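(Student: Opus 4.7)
The plan is to reduce the claim to showing that the total progeny $Z := \|\vec Z\|_1$ has a finite moment generating function in a neighborhood of the origin; since $\vec Z \cdot \vec e_\ell \leq Z$ for every $\ell$, exponential tails for $Z$ immediately give the statement for each component.

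First I would introduce the truncated total progeny $Z^{(M)} := \sum_{m=0}^{M} \|\vec Z_m\|_1$ and the vector of MGFs $\vec\psi_M(t)$ with components $\psi_M^{(k)}(t) = \E_k[e^{t Z^{(M)}}]$. Decomposing on the first generation via the branching property yields the recursion $\vec\psi_{M+1}(t) = e^t\,\vec G(\vec\psi_M(t))$ with $\vec\psi_0(t) = e^t \vec 1$, where $G_k(\vec s) = \E_k[\prod_j s_j^{(\vec Z_1)_j}]$ is the offspring generating function starting from a type-$k$ root. Since $Z^{(M)} \uparrow Z$ and $t \geq 0$, monotone convergence reduces the task to bounding $\vec\psi_M(t)$ uniformly in $M$ for some single $t > 0$.

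Next I would apply the implicit function theorem. The exponential-tail hypothesis on $\vec Z_1$ makes $\vec G$ analytic on a real neighborhood of $\vec 1$, with Jacobian $DG(\vec 1) = C$. Apply the implicit function theorem to $F(t,\vec\phi) := \vec\phi - e^t \vec G(\vec\phi)$ at $(0,\vec 1)$: the Jacobian $\partial_{\vec\phi} F(0,\vec 1) = I - C$ is invertible because $\rho(C)<1$, and moreover $(I-C)^{-1} = \sum_{m \geq 0} C^m$ has non-negative entries. This produces a smooth solution $\vec\phi(t)$ with $\vec\phi(0) = \vec 1$ and $\vec\phi(t) = e^t \vec G(\vec\phi(t))$ on some $[0,\delta]$. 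I would then induct on $M$ to show $\vec\psi_M(t) \leq \vec\phi(t)$ componentwise: the induction step uses monotonicity of $\vec G$ (non-negative power-series coefficients), giving $\vec\psi_{M+1}(t) = e^t\vec G(\vec\psi_M(t)) \leq e^t\vec G(\vec\phi(t)) = \vec\phi(t)$, and the base case $\vec\psi_0(t) = e^t \vec 1 \leq \vec\phi(t)$ follows as soon as $\vec\phi(t) \geq \vec 1$ for small $t$, which in turn follows from $\vec\phi'(0) = (I-C)^{-1}\vec 1 \geq \vec 1$ componentwise. Concluding: $\E_k[e^{tZ}] = \lim_M \psi_M^{(k)}(t) \leq \phi_k(t) < \infty$ on $[0,\delta]$, which is the exponential tail bound.

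The hard part will be the base case of the induction, i.e.\ verifying $\vec\phi(t) \geq e^t\vec 1$ on a non-trivial interval. A first-order Taylor expansion gives $\vec\phi(t) - e^t\vec 1 = t\,C(I-C)^{-1}\vec 1 + O(t^2)$, which is componentwise non-negative and strictly positive in any coordinate $k$ for which type-$k$ individuals have a positive probability of producing descendants. The remaining (degenerate) coordinates are those with $C\vec 1 = 0$ in row $k$; there a type-$k$ root has no offspring almost surely, so $Z \equiv 1$ and both $\psi_M^{(k)}(t)$ and $\phi_k(t)$ are identically $e^t$, and the bound holds with equality. Apart from this small case analysis, the argument is a routine combination of analyticity of $\vec G$ near $\vec 1$, monotonicity of generating functions, and the positivity of $(I-C)^{-1}$ in the subcritical regime.
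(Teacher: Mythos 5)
Your proof is correct, but it takes a genuinely different route from the paper. The paper works with the \emph{left} Perron--Frobenius eigenvector $\vec v$ of the mean matrix $C$: it shows via a Taylor expansion of the offspring MGF that $\E[e^{\alpha\vec v\cdot\vec Z_m}\mid\vec Z_{m-1}]\leq e^{\alpha\tilde\rho\,\vec v\cdot\vec Z_{m-1}}$ for any $\rho(C)<\tilde\rho<1$ and $\alpha$ small, then iterates this one-step contraction to bound $\E[e^{\alpha\vec v\cdot\vec Z}]$ by a convergent geometric series. Your proposal instead proceeds through the generating-function fixed point for the total progeny: the vector $\vec\psi(t)=(\E_k[e^{tZ}])_k$ should satisfy $\vec\psi=e^t\vec G(\vec\psi)$, and you invoke the implicit function theorem at $(t,\vec\phi)=(0,\vec 1)$ (where the Jacobian $I-C$ is invertible precisely because $\rho(C)<1$) to produce a finite solution $\vec\phi(t)$ on a small interval, which then dominates the truncated MGFs $\vec\psi_M$ by monotonicity of $\vec G$. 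Both routes are sound; the paper's is more quantitative (it produces explicit constants $\alpha_0,\tilde\rho$ and an explicit exponent) whereas yours is closer to the textbook treatment of total progeny in the single-type case and needs less bookkeeping once the IFT is invoked. One small point you should tighten: the claim that $\vec\phi(t)\geq e^t\vec 1$ on some interval $[0,\delta]$ does not follow directly from the first-order Taylor expansion at $t=0$; that expansion gives the inequality only for $t$ in a coordinate-dependent neighbourhood of $0$, and one has to shrink $\delta$ to a uniform value (finitely many types, so this is immediate), and separately verify the degenerate coordinates where $(C\vec 1)_k=0$ as you do. A second, implicit point worth making explicit: finiteness of $\vec G$ and its Jacobian in a real neighbourhood of $\vec 1$ uses not just that each marginal $(\vec Z_1)_j$ has exponential tails but that their sum $\|\vec Z_1\|_1$ does, which follows from H\"older's inequality since the components are non-negative.
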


\begin{proof}
Let $\tilde\rho$ be such that $\rho(C)<\tilde\rho<1$, and let  $\vec v$ be a left eigenvector for the eigenvalue $\rho(C)$, chosen to have all positive entries (which is possible by Perron-Fr\"obenius).  To prove the lemma, we will first prove 
the existence of a constant $\alpha_0$ such that
\begin{equation}\label{eq:off_dist_upper_bound}
        \E\left[e^{\alpha \v \cdot \vec Z_m} \mid \vec Z_{m-1}\right]     \leq 
        e^{ \alpha \tilde\rho \vec v \cdot \vec Z_{m-1}}\quad\text{for all}\quad\alpha\leq \alpha_0,
    \end{equation}
and then use this bound to show that
\begin{equation} 
\label{exp-mom-exists}\E\left[e^{\alpha \v \cdot \vec Z}\mid \vec Z_0=\e_k\right]\leq e^{\frac \alpha{1-\tilde\rho}\v\cdot\e_k}
\quad\text{for all}\quad\alpha\leq \alpha_0(1-\tilde\rho),
\end{equation}
establishing the existence of the moment-generating function of $\vec Z$ in some neighborhood around $0$, and hence proving the claim.

   Let $\vec N_k$ be a random variable such that $\P(\vec N_k = \vec z) = p_k(\vec z)$. By the assumption that the offspring distribution has exponential tails, there exists some constant $\zeta<\infty$ such that for all $k$ and all sufficiently small $\alpha$
    \begin{equation}
    \label{eq:off_dist_tail}
        \E\left[e^{\alpha \vec v \cdot \vec N_k} ( \vec v \cdot \vec N_k)^2\right] \leq \zeta.
        \end{equation}
 Combined with the bound $e^x\leq 1+x+\frac{x^2}2e^x$, valid for all $x\geq 0$, and the fact that
$\E\left[\vec v \cdot \vec N_k\right] = \v C \vec e_k =\rho(C)\v\cdot \e_k$, this implies that
    \begin{equation}
    \label{eq:taylor_expansion}
        \E\left[e^{\alpha  \vec v \cdot \vec N_k}\right] \leq 1 + \alpha \E\left[\vec v \cdot \vec N_k\right] + \frac{\alpha^2}{2}\E\left[(\vec v \cdot \vec N_k)^2 e^{\alpha \vec v \cdot \vec N_k}\right]
        \leq 1 + \alpha \rho(C)\vec v \cdot \vec e_k + \frac{\alpha^2}{2}\zeta.
    \end{equation}
For $\alpha$
sufficiently small, we thus get that
    \begin{equation}
        \E\left[e^{\alpha  \vec v \cdot \vec N_k}\right] \leq 1 + \alpha \left(\rho(C)\v \cdot\vec e_k + \frac{\alpha \zeta}{2}\right) \leq 1 + \alpha \tilde\rho \v \cdot \vec e_k \leq e^{\alpha\tilde\rho\v \cdot \vec e_k}.
    \end{equation}
  Furthermore, letting $\vec N_{k, i} \sim \vec N_k$ i.i.d., we have
    \begin{align*}
        \E\left[e^{\alpha \v \cdot \vec Z_m} \mid \vec Z_{m-1}\right] 
        = \prod_{\ell \in [K]} \prod_{i=1}^{(\vec Z_{m-1})_\ell} \E\left[e^{\alpha \v \cdot \vec N_{\ell, i}}\right]        \leq \prod_{\ell \in [K]} \prod_{i=1}^{(\vec Z_{m-1})_\ell} e^{ \alpha \tilde\rho \v \cdot \vec e_\ell}
        = e^{ \alpha \tilde\rho \vec v \cdot \vec Z_{m-1}}.
    \end{align*}
This gives the bound \eqref{eq:off_dist_upper_bound}.

As a consequence of \eqref{eq:off_dist_upper_bound} applied twice in a row, we we get  that for $\alpha(1+\tilde\rho)\leq \alpha_0$,
  \begin{align*}
        \E\Big[e^{\alpha \v \cdot (\vec Z_m+\vec Z_{m-1})} \Big| \vec Z_{m-2}\Big] = \E\Big[\E[e^{\alpha \v \cdot \vec Z_{m}}\mid \vec Z_{m-1}]e^{\alpha \v \cdot \vec Z_{m-1}}
       \Big| \vec Z_{m-2}\Big]
        \leq e^{ \alpha (\tilde\rho+\tilde\rho^2) \vec v \cdot \vec Z_{m-2}}.
    \end{align*}
Continuing by induction, this proves that for $\alpha\sum_{i=0}^{m-1}\tilde\rho^i\leq \alpha_0$,
 \begin{align*}
        \E\Big[e^{\alpha \sum_{i=0}^m \v \cdot\vec Z_i} \Big| \vec Z_0=\e_k\Big] 
        \leq e^{ \alpha\v\cdot\e_k\sum_{i=0}^m\tilde\rho^i}.
    \end{align*}
   The bound 
    \eqref{exp-mom-exists} follows by
    by monotone convergence.
\end{proof}

\subsection{Backward Branching Process}

In this section, we formally define the  backwards branching process and use it to prove Lemma~\ref{lem:sinfty-continuity}.The process has label space $[K+1]$, with $k=K+1$ representing an infected state and labels $k\in [K]$ representing susceptible vertices of type $k$, and state space  $\N_0^{K+1}$. We will use the notation
$\bar \z=(\vec z,z_{K+1})=(z_1,\dots, z_{K+1})$ for vectors in $\N_0^{K+1}$, and similarly for vectors $\bar \s\in [0,1]^{K+1}$.  The offspring distribution of the backward process is then defined 
as follows.

Let $a_k\geq 0$ and let $p_k(\cdot)$ be the Poisson offspring distribution
$$
p_k(\z)=\prod_{\ell=1}^K\P(\Pois( C_{k\ell})=z_\ell).  
$$
For $k\in [K]$, we set
$$
\bar p_k(\bar\z)=\begin{cases}
e^{-a_k}p_k(\z)&\text{if }    z_{K+1}=0\\
1-e^{-a_k}&\text{if } z_{K+1}=1,
\end{cases}
$$
and for $k=K+1$ we set
$$
\bar p_{K+1}(\bar\z)=1\text{ if } \bar\z=(0,\dots,0,1)
\text{ and } 0 \text{ otherwise.}
$$
We will denote the generating function of this offspring distribution by $\bar{\vec F}$,  the state of the process in generation $m$ by  $\bar{\vec Z}_m$, and its generating function by ${\bar{\vec F}}^{(m)}$. It is easy to express $\bar{\vec F}$ in terms of the generating function $\G$ of the offspring distribution for the Poisson branching process, giving
$$
\bar { F}_k(\bar\s)=\begin{cases}
{e^{-a_k}}G_k(\s)  + \left(1-e^{-a_k}\right)s_{K+1}&\text{if }k\in [K] \\
s_{K+1}&\text{if } k=K+1.
\end{cases}
$$
If $s_{K+1}=0$, this expression, as well as the expression for the iterates takes a particular simple form, giving in particular
$$
\bar {F}_k^{(m)}((\s,0))=\begin{cases}
F_k^{(m)}(\s) &\text{if }k\in [K] \\
0&\text{if } k=K+1,
\end{cases}
$$
where $F^{(m)}$ is the $m^\text{th}$ iterate of
$$
F_k(\s)=e^{-a_k}G_k(\s).$$
Note that the function $F_k$ already appeared in the implicit equation 
\eqref{qk-with-x} for the final proportion of susceptible vertices, which we can now write in the form
\begin{equation}\label{eqn:implicit-equation}
    s_k= F_k(\s).
\end{equation}
It is clear that the survival probability of the backward branching process starting from the state $\bar{\vec Z}_0=\e_{K+1}$ is one.  The following lemma gives an expression for the extinction (and equivalently survival) probability starting from $e_k$ for $k\in [K]$.

\begin{lemma}\label{lem:backwards-survival}
      If $0\leq s_k<1$ for all $k\in [K]$, then $\vec F^{(m)}(\s)\to \q(\a)$, where $\q(a)=(q_1(\a),\dots,q_k(\a))$ is the vector of extinction probabilities, i.e.,  
      $q_k(\a)=\P(\bar{\vec Z}_m=0\text{ for some }m<\infty\mid \bar{\vec Z}_0=\e_k).$
   \end{lemma}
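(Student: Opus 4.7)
The plan is to use the probabilistic interpretation of $\vec F^{(m)}(\s)$ as a sub-probability generating function for the backward process, combined with the transience of all non-absorbing states and a tail bound that exploits $\max_\ell s_\ell < 1$.

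First I would establish the identity $\bar{\vec F}^{(m)}((\s,0)) = (\vec F^{(m)}(\s),0)$. This follows by induction on $m$ from $\bar F_\ell((\s,0)) = e^{-a_\ell}G_\ell(\s) = F_\ell(\s)$ for $\ell \in [K]$ and $\bar F_{K+1}((\s,0)) = 0$. In particular, for each $k \in [K]$,
\begin{align*}
F_k^{(m)}(\s)
&= \bar F_k^{(m)}((\s,0))
= \E_k\Bigl[\,\prod_{\ell=1}^K s_\ell^{(\bar{\vec Z}_m)_\ell}\,\ind{(\bar{\vec Z}_m)_{K+1}=0}\Bigr]\\
&= \P_k(\bar{\vec Z}_m = \vec 0) + \sum_{\vec z \neq \vec 0}\P_k\bigl(\bar{\vec Z}_m = (\vec z,0)\bigr)\prod_\ell s_\ell^{z_\ell},
\end{align*}
and the first term converges upward to $q_k(\a)$ by the definition of the extinction probability.

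For the sum, the key step is to show that every state outside $\mathbb S := \{\vec 0\} \cup \{(\vec 0, j): j \geq 1\}$ is transient. These are the only absorbing states of the backward process, since each type-$(K+1)$ vertex deterministically produces exactly one type-$(K+1)$ child, while from any $(\vec z, z_{K+1})$ with $\vec z \neq \vec 0$ there is a positive probability, at least $\prod_\ell F_\ell(\vec 0)^{z_\ell} > 0$, that every $[K]$-typed vertex simultaneously chooses the ``not infected, zero Poisson offspring'' option, sending the state to $(\vec 0, z_{K+1}) \in \mathbb S$ in one step. An argument identical to the proof of Lemma \ref{lem:irred-transience} then yields $\P_k(\bar{\vec Z}_m = (\vec z, 0)) \to 0$ as $m \to \infty$ for every $\vec z \neq \vec 0$.

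Finally I would fix $R > 0$ and split the sum over $\vec z \neq \vec 0$ according to whether $\|\vec z\|_1 \leq R$ or $\|\vec z\|_1 > R$. The bounded part is a finite sum of probabilities each tending to $0$ by transience, while the tail is bounded uniformly in $m$ by $(\max_\ell s_\ell)^R$ using $\prod_\ell s_\ell^{z_\ell} \leq (\max_\ell s_\ell)^{\|\vec z\|_1}$. Sending $m \to \infty$ first and then $R \to \infty$, and invoking $\max_\ell s_\ell < 1$, shows the sum vanishes, giving $\vec F^{(m)}(\s) \to \q(\a)$. The main obstacle is verifying the one-step absorption routes needed for transience; everything else is a routine dominated-convergence style argument.
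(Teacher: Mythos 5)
Your proposal is correct and takes essentially the same approach as the paper: you write $F_k^{(m)}(\s)$ as the sub-probability generating function of $\bar{\vec Z}_m$ restricted to $\{z_{K+1}=0\}$, split off the $\vec z = \vec 0$ term (which converges to $q_k(\a)$ monotonically), and kill the remaining sum via transience of states with $\vec z \neq \vec 0$ plus a geometric tail bound from $\max_\ell s_\ell < 1$. You are in fact slightly more careful than the paper's write-up in identifying the absorbing set $\mathbb S = \{\vec 0\}\cup\{(\vec 0,j): j\geq 1\}$ explicitly: the paper momentarily asserts transience for all $\bar\z\neq\vec 0$ (which would wrongly include the absorbing states $(\vec 0, j)$), but then correctly restricts to $\vec z \neq \vec 0$ when the transience is actually used, which is exactly the set you work with.
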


\begin{proof}
    First, we note that by the same arguments as those used in the proof of Lemma~\ref{lem:irred-transience}, all states $\bar \z=(\z,z_{K+1})$ with $\bar\z\neq 0$ are transient. To see this, set $T_{\bar\z}$ to be the first time the chain revisits the state $\bar\z$,  and note that
    \[
     \P(T_{\bar\z}=\infty)\geq   \P(\bar{\vec Z}_1 = (0, \dots, 0, z_{K+1} \mid \vec Z_0 = \bar\z) > 0
    \]
   since the probability that Poisson offspring distribution has $0$ children is bounded away from $0$.
   
Using the transience of the states $\bar\z=(\z,z_{K+1})$ with $\z\neq 0$, we may then proceed as in the proof of Lemma~\ref{lem:transience} to conclude that
\begin{align*}
F_k^{(m)}(\vec s)-\P_k(\bar{\vec Z}_m=0) 
&=
\bar F_k^{(m)}((\vec s,0))-\bar F_k^{(m)}((\vec 0,0))\\
&=\sum_{\z\neq \vec 0}
\P_k(\bar{\vec Z}_m=(\z,0)) s_1^{z_1} \dots _k^{z_k}\leq 
\sum_{\z\neq \vec 0:\norm{\z}_1 \leq R} \P_k(\bar{\vec Z}_m=\z)  + O(\max_k s_k^{R}).
\end{align*}
As before, the right hand side can be made arbitrary small by choosing first $R$ and then $m$ large enough, showing that the left hand side goes to $0$ as $m\to\infty$.   
 \end{proof}

\begin{lemma}[Uniqueness of Solution to Implicit Equation]
   Assume that $W$ is irreducible, and that $\sum_\ell a_\ell>0$.  Then the implicit equations \eqref{eqn:implicit-equation} are uniquely solved by $\s=\q(\a)$, where $\q(\a)$ is the vector of survival probabilities for the backward branching process, and $q_k(\vec a)<1$ for all $k$.
   \end{lemma}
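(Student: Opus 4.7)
The strategy is to reduce the claim to Lemma~\ref{lem:backwards-survival}. First one verifies that the extinction probability vector $\q(\vec a)$ of the backward branching process satisfies the implicit equation \eqref{eqn:implicit-equation}; the substance of the lemma is then to show that $\q(\vec a)$ is the \emph{only} solution in $[0,1]^K$, and that every one of its entries is strictly less than $1$.

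The first step is a one-step conditioning argument. Starting from $\bar{\vec Z}_0=\e_k$ for $k\in[K]$, with probability $e^{-a_k}$ the root produces Poisson offspring in the types $[K]$, and with probability $1-e^{-a_k}$ it produces a type-$(K+1)$ child, from which the process never goes extinct (so $\bar q_{K+1}=0$). Using the product form of extinction probabilities for independent subtrees one gets
\[
    q_k = e^{-a_k}\sum_{\vec z\in\N_0^K}p_k(\vec z)\prod_{i=1}^K q_i^{z_i} = e^{-a_k}G_k(\q) = F_k(\q),
\]
so $\q(\vec a)$ is indeed a fixed point of $\vec F$.

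For uniqueness, the key claim is that any fixed point $\s\in[0,1]^K$ of $\vec F$ must have $s_k<1$ for every $k$. Suppose towards contradiction that $s_k=1$ for some $k$. Then $1=e^{-a_k}G_k(\s)$ forces $G_k(\s)\geq 1$; combined with $G_k(\s)\leq G_k(\vec 1)=1$, this yields $a_k=0$ and $G_k(\s)=1$. Since $G_k(\s)=\exp(\sum_\ell C_{k\ell}(s_\ell-1))$ and every term in the exponent is non-positive, we conclude $s_\ell=1$ for all $\ell$ with $C_{k\ell}>0$. Iterating this at each such $\ell$ and invoking irreducibility of $W$ (hence of $C$), the set $\{\ell:s_\ell=1\}$ is closed under the adjacency of $C$ and non-empty, so it must be all of $[K]$; and then $a_\ell=0$ for every $\ell$, contradicting $\sum_\ell a_\ell>0$.

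Once $s_k<1$ for all $k$ is established, Lemma~\ref{lem:backwards-survival} finishes the proof: iterating $\vec F$ from the fixed point $\s$ yields $\vec F^{(m)}(\s)=\s$ for every $m$, so passing to the limit gives $\s=\q(\vec a)$. Applying the same coordinate-wise argument to $\s=\q(\vec a)$ itself gives $q_k(\vec a)<1$ for all $k$. The main obstacle is the pinning-down step that forbids any coordinate of a fixed point from reaching $1$; everything hinges on using both the irreducibility of $W$ and the strict positivity $\sum_\ell a_\ell>0$ to propagate the equality $s_k=1$ throughout $[K]$ and then extract a contradiction.
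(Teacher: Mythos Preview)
Your proof is correct and follows the same overall structure as the paper's: verify that $\q(\vec a)$ is a fixed point of $\vec F$, show that any fixed point has all coordinates strictly below $1$, and then invoke Lemma~\ref{lem:backwards-survival}. The only minor difference is in the middle step: you propagate the equality $s_k=1$ one step at a time using the explicit Poisson form $G_k(\s)=\exp\big(\sum_\ell C_{k\ell}(s_\ell-1)\big)$, whereas the paper first observes that some $s_\ell<1$ and then uses the iterated bound $s_k=F_k^{(m)}(\s)\leq G_k^{(m)}(\s)$ together with the argument from Lemma~\ref{lem:irred-bp-sols}.
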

\begin{proof}
In view of the last lemma, all we need to prove is that any solution $\s$ of \eqref{eqn:implicit-equation} is smaller than $1$ component-wise.  To this end, we first note that by the fact that $G_k(\s)\leq 1$ and our assumption on $\a$, there exists at least one component $s_\ell<1$.  Next we use that $\vec s=\vec F^{(m)}(\vec s)$ and 
 $\vec F\leq \vec G$ component wise imply that
\begin{equation}\label{s-leq-Gm(s)}
    s_k\leq G_k^{(m)}(\vec s).
\end{equation}
 But $s_\ell<1$ and irreducibility of the Poisson branching process imply that there exists an $m$ such that $G_k^{(m)}(\vec s)<1$, see the proof of Lemma~\ref{lem:irred-bp-sols}.  Since $k\in [K]$ was arbitrary, this completes the proof.
\end{proof}

 \begin{remark}\label{lem:upper-bound-implicit-eqn}
 It follows immediately from the above proof that
        $\vec q({\vec a}) \leq \vec q$ component-wise, where $\vec q$ is the smallest solution of the equation $\vec q = \vec G(\vec q)$ (which by Lemma~\ref{lem:irred-bp-sols} is also the vector of survival probabilities of the Poisson branching process with mean matrix $C$).  Indeed, by \eqref{s-leq-Gm(s)}, $\q(\vec a)\leq \vec G^{(m)}(\q(\a))$, and by Lemma~\ref{lem:transience}, the right hand side converges to $\q$.
    \end{remark}

 With these preparations, we are now ready to prove prove Lemma~\ref{lem:sinfty-continuity}. 
\begin{proof}[Proof of Lemma~\ref{lem:sinfty-continuity}]

 Assume that $(\tilde \s(0),\tilde \x(0))\to (\s(0),\x(0))$ along some sequence.  Let $(\tilde C,\tilde{\a})$ be the corresponding values for the matrix $C$ and the vector $\a$, and let $ \q(\tilde C, \tilde \a)$ be the corresponding sequence of survival probabilities.  We want to show that $ \q(\tilde C, \tilde \a)\to \q$, where $\q$ is the vector of survival probabilities of the Poisson branching process with mean matrix $C$.
Consider some subsequence $(C^j,\vec a^j)_{j \in \N}$  with   $(C^j,\vec a^j) \to (C,\vec 0)$ such that the limit of this subsequence $\wt{\vec q} := \lim_{j\to\infty} \vec q(C^j,{\vec a^j})$ exists, and let
$F_k^j(\cdot)$ be the corresponding sequence of generating funtions. Then
\[
    \wt q_k = 
    \lim_{j\to\infty} q_k(C^j,{\vec a^j}) 
    = \lim_{j\to\infty} F^j_k(\vec q(C^j{\vec a^j})) = G_k(\wt \q),
\]
    where the last equality is true by the fact that $F_k^j(\vec s)$ is jointly continuous in $\s$ and the parameters $C^j,\a^j$.
Thus $\wt{\vec q}$ satisfies $\wt{\vec q} = \vec G(\wt {\vec q})$. To complete the proof, we consider two cases.  If the spectral radius $\rho(C)$ of $C$ is at most one, we know that the survival probability of the Poisson branching process is $0$ and the implicit equation has only one solution, $\wt\q=\vec 1$, proving the desired convergence.  If $\rho(C)>1$, we can find some $\eps>0$ such that $\rho(C(1-\eps))>1$, and by the convergence of $C^j$, some $j_0$
such that for $j\geq j_0$, $C^j\geq (1-\eps) C$.  Since the survival probabilities are monontone in the entries of the mean matrix, and $q_k(C^j,\vec a^j)\leq q_k(C^j,0)$ by 
Remark~\ref{lem:upper-bound-implicit-eqn}, we conclude that
$\wt{\vec q} \leq \vec q_\eps$ component-wise, where $\q_\eps$ is survival probability of the Poisson branching process with mean matrix $(1-\eps)C$.  Since the latter has a strictly positive survival probability, we conclude that $\wt{\vec q}<\vec 1 $ component wise.
Therefore, $\wt{\vec q} = \vec q$ by uniqueness of the non-trivial solution to $\vec q = \vec G(\vec q)$ for probability generating functions. This gives the desired continuity result.
\end{proof}

\section{Analysis of the Differential Equations for
the SIR Epidemic on the SBM}
\label{app:diff-equ}

We start this appendix with the proof of Lemma~\ref{lem:dfq_limit}.
\begin{proof}[Proof of Lemma~\ref{lem:dfq_limit} ]
We will write the differential equation $d \bm y_t/{dt} = \bm b(\bm y_t)$ in the explicit form 
\eqref{eq:s_dfq_1}, \eqref{eq:x_dfq_1}, and \eqref{eq:i_dfq_1} and note that $\y_t\in U_0$ for all $t<\infty$ if $\y_0\in U_0$,
see Remark~\ref{rmk:si-sx-decreasing}.

By  \eqref{eq:s_dfq_1}, $d s_k(t)/dt \leq 0$ for all $t$.  Since $s_k(t)$ is non-negative, statement \ref{cond:s1-limit}  follows by monontone convergence.
Next we combined
\eqref{eq:s_dfq_1} and \eqref{eq:x_dfq_1} to conclude that
\begin{equation}\label{eq:s-x-down}
   \frac{dx_k(t)}{dt} + \frac{ds_k(t)}{dt} =  -(\irt + \rrt) x_k(t) \leq 0 
\end{equation}
and \eqref{eq:s_dfq_1} and  \eqref{eq:i_dfq_1} to conclude that
\begin{equation}\label{eq:s-i-down}
   \frac{di_k(t)}{dt} + \frac{ds_k(t)}{dt} =  - \rrt i_k(t) \leq 0.
\end{equation}
Monotone convergence then implies the existence of the limits $\lim_{t\to\infty}(x_k + s_k)(t)$ and $\lim_{t\to\infty}(i_k + s_k)(t)$.  Combined with the already established existence of the limit for $s_k$, this gives the existence of the limits in \ref{cond:x-limit} and \ref{cond:i-limit}.

To prove that $x_k(\infty)=0$, suppose towards contradiction that $ x_k(\infty) > 0$. Then there exist some constants $t_0<\infty$ and
$\zeta>0$ such that 
$x_k(t) \geq \zeta$ for all $t\geq t_0$.  
Inserted into
\eqref{eq:s-x-down}, this implies that
$$
x_k(t) + s_k(t)\leq x_k(t_0) + s_k(t_0)-(\irt+\rrt)\zeta(t-t_0)
$$
for all $t\geq t_0$, which is a contradiction since
$\y_t\in U_0$ so in particular $x_k(t)+s_k(t)\geq 0$ for all $t<\infty$.  This proves that $x_k(\infty)=0$. 
 The proof that $i_k(\infty)=0$ is identical.

 Next we prove \ref{sinfty>0}.  We  
 integrate \eqref{eq:s_dfq_1} to express $s_k(t)$ as
\begin{equation}\label{s-integrated}
    s_k(t) = s_k(0)\exp\left\{-\irt \sum_\ell \chi_\ell(t)W_{\ell k}\right\},
\end{equation}
where 
 \begin{equation}\label{chi-integrated}
\chi_\ell(t)= \int_{0}^t x_\ell(\tau) d\tau =\frac{x_\ell(0) + s_\ell(0)-s_\ell(t)-x_\ell(t)}{\irt + \rrt}.
\end{equation}
Here the identity in \eqref{chi-integrated} defines $\chi_\ell(t)$ and the second follows by
integrating the differential equation \eqref{eq:s-x-down} from $0$ to $t$.  This shows that $\chi_\ell(t)$ is bounded uniformly in $t$ implying claim \ref{sinfty>0}.

To prove \ref{x>0forallt}, we first note that $\frac{d}{dt}x_k(t)\geq -(\irt+\rrt)x_k(t)$, showing that $x_k(t)>0$ for all finite $t\geq t_0$ if $x_k(t_0)>0$, and similarly for $i_k(t)$.  Consider the set $\cal K(t)$ of indices $k$ such that $x_k(t)>0$ at time $t\geq 0$.  We want to prove that $\cal K(t)=[K]$ for all $t>0$. To this end, we will prove that if $\cal K(t)\neq [K]$, then we can find an $\ell\notin\cal K(t)$ such that $\ell\in\cal K(t')$ for all $t'>t$.
To see this, we use that by irreducibly of $W$, we can find $\ell\notin\cal K(t)$ and $k\in \calK(t)$ such that $W_{k\ell }>0$, which shows that
$$
\frac{dx_\ell}{dt}=-(\irt+\rrt)x_\ell(t)+\sum_{k'}x_{k'}(t)W_{k'\ell}s_\ell(t)>x_{k}(t)W_{k\ell}s_\ell(t)>0.
$$
This shows that  $\ell\in\cal K(t')$ for $t'>t$ close enough to $t$, and by our previous observation that $x_\ell(t'')>0$ for all $t''\geq t'$ if $x_\ell(t')>0$, we get $\ell\in\cal K(t')$ for all $t'>t$.  Starting from the observation that $\cal K(0)\neq\emptyset$,
this implies by induction that $\cal K(t)=[K]$ for all $t>0$, i.e,
$x_k(t)>0$ for all $k\in [K]$ and all $t>0$.
With the help of \eqref{eq:i_dfq_1} (and the fact that $i_k(t)>0$ implies $i_k(t')>0$ for all $t'>t$) this in turn implies that $i_k(t)>0$ for all $k\in [K]$ and all $t>0$.  Finally, 
$x_k(t)>0$ for all $0<t<\infty$ implies that $ds_k/dt<0$ with the help of \eqref{eq:s_dfq_1}.
\end{proof}

Next we state and prove a lemma alluded to in the discussion section of this paper, showing that  eventually, all components of $\x(t)$ decay exponentially fast.

\begin{lemma}\label{lem:x-exp-decay}
Assume that $W$ is irreducible and that $s_k(0)>0$ for all $k\in [K]$, let $\Rinf$ be the largest eigenvalue of the matrix $C(\infty) = \frac{\irt}{\irt + \rrt} W \diag(\s(\infty))$, and let $\v_\infty$ be the corresponding left eigenvector, chosen to have all non-negative entries. If $x(0)>0$, then $\Rinf<1$ and
    $$
    \lim_{t\to\infty}\x(t)e^{(\eta+\rrt)(1-\Rinf)t}=D\v_\infty,
    $$
    and
     $$
    \lim_{t\to\infty}\frac{d\x(t)}{dt}e^{(\eta+\rrt)(1-\Rinf)t}=-D(\eta+\rrt)(1-\Rinf)\v_\infty,
    $$
   where $D$ is a positive constant depending on the parameters of the model and the initial conditions (and the chosen normalization for $\v_\infty$).
\end{lemma}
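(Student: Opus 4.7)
The plan is to linearize the ODE $d\x/dt = \x A(t)$ around $A(\infty)$, where $A(t) := (\irt+\rrt)(\Msir(t)-\1)$, and identify the dominant asymptotic mode via Perron--Frobenius together with a monotonicity argument. The claim $\Rinf<1$ is essentially part of Lemma~\ref{lem:dfq_subcritical}, which shows $\Reff(t)$ is strictly decreasing and eventually strictly below one, hence $\Rinf = \lim_{t\to\infty}\Reff(t) \in [0,1)$. Since $W$ is irreducible and $s_\ell(\infty) > 0$ for all $\ell$ by Lemma~\ref{lem:dfq_limit}\ref{sinfty>0}, the matrix $\Msir(\infty)$ is a nonnegative irreducible matrix, similar via $\diag(\sqrt{s_\ell(\infty)})$ to a symmetric matrix (cf.\ the proof of Lemma~\ref{lem:dfq_subcritical}), so $A(\infty)$ is diagonalizable over $\R$ with a simple top eigenvalue $\lambda_1 := (\irt+\rrt)(\Rinf-1) < 0$, second eigenvalue $\lambda_2 < \lambda_1$, strictly positive left eigenvector $\v_\infty$ and strictly positive right eigenvector $\vec{u}_\infty$; I normalize so that $\v_\infty\cdot\vec{u}_\infty = 1$ and let $P_1 = \vec{u}_\infty\v_\infty$ and $P_2 = \1 - P_1$ be the associated spectral projectors. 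I decompose $\x(t) = f(t)\v_\infty + \vec{r}(t)$ with $f(t) := \x(t)\cdot\vec{u}_\infty$ and $\vec{r}(t) := \x(t)P_2$.

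The central step is to control $f(t)$, where positivity comes out of a one-sided derivative bound. Since $s_\ell(t)\geq s_\ell(\infty)$ by Lemma~\ref{lem:dfq_limit}\ref{cond:s1-limit}, we have $\Msir(t) \geq \Msir(\infty)$ component-wise, so $(A(t)-\lambda_1\1)\vec{u}_\infty = (\irt+\rrt)(\Msir(t)-\Msir(\infty))\vec{u}_\infty \geq 0$ component-wise (using $\vec{u}_\infty > 0$), and together with $\x(t)\geq 0$ this yields
\[
\frac{d}{dt}\bigl(f(t)e^{-\lambda_1 t}\bigr) = e^{-\lambda_1 t}\,\x(t)(A(t)-\lambda_1\1)\vec{u}_\infty \geq 0.
\]
Hence $f(t)e^{-\lambda_1 t}$ is non-decreasing with limit $D \geq f(0) = \x(0)\cdot\vec{u}_\infty > 0$, and strict positivity of $D$ drops out with no appeal to any genericity argument on $\x(0)$. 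To see $D<\infty$, I set $B(t) := A(t)-A(\infty)$ and use that by the proof of Lemma~\ref{claim:y_compactness} both $\|\x(t)\|_2$ and $\|\s(t)-\s(\infty)\|_2$ (and hence $\|B(t)\|_{op}$, via the identity $B(t)=(\irt+\rrt)(\Msir(t)-\Msir(\infty))$) decay exponentially with rates $\nu$ and $\kappa$ that can each be chosen arbitrarily close to (but less than) $-\lambda_1=(\irt+\rrt)(1-\Rinf)$. Variation of parameters gives
\[
f(t)e^{-\lambda_1 t} = f(0) + \int_0^t e^{-\lambda_1 s}\,\x(s)B(s)\vec{u}_\infty\,ds,
\]
and taking $\nu,\kappa$ with $\nu+\kappa > -\lambda_1$ (possible since each may be chosen larger than $|\lambda_1|/2$) makes the integrand $O(e^{-(\nu+\kappa+\lambda_1)s})$, hence integrable.

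For the complementary piece, $\vec{r}(t)$ satisfies $d\vec{r}/dt = \vec{r}A(\infty) + \x(t)B(t)P_2$, and on the $P_2$-invariant subspace $A(\infty)$ has top eigenvalue $\lambda_2$, so $\|\vec{w}P_2 e^{A(\infty)t}\|_2 \leq C e^{\lambda_2 t}\|\vec{w}\|_2$. Variation of parameters together with the exponential bounds on $\x(s)$ and $B(s)$ then gives $\|\vec{r}(t)\|_2 \leq C\max\{e^{\lambda_2 t},\,e^{-(\nu+\kappa)t}\}$, and each of these terms is $o(e^{\lambda_1 t})$: the first because $\lambda_2 < \lambda_1$, the second because $\nu+\kappa > -\lambda_1$. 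Thus $\vec{r}(t)e^{-\lambda_1 t}\to 0$, and combining with the previous step, $\x(t)e^{-\lambda_1 t} = f(t)e^{-\lambda_1 t}\v_\infty + \vec{r}(t)e^{-\lambda_1 t} \to D\v_\infty$. The derivative statement is immediate from substituting back into the ODE:
\[
\frac{d\x}{dt}e^{-\lambda_1 t} = \bigl(\x(t)e^{-\lambda_1 t}\bigr)A(t) \to D\v_\infty A(\infty) = D\lambda_1\v_\infty = -D(\irt+\rrt)(1-\Rinf)\v_\infty.
\]

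The main obstacle I anticipate is the exponential-rate bookkeeping in the two variation-of-parameters estimates: one has to check that the decay rates available from Lemma~\ref{claim:y_compactness} for $\|\x(t)\|_2$ and $\|\s(t)-\s(\infty)\|_2$ can be brought close enough to $-\lambda_1$ for their sum to exceed $-\lambda_1$, so that the integrands for $f$ and $\vec{r}$ are genuinely integrable. The other point that is potentially delicate --- strict positivity of the limiting constant $D$ --- is, pleasantly, handled for free by the componentwise monotonicity of $\Msir(t)$ in $t$, which forces $f(t)e^{-\lambda_1 t}$ to be non-decreasing and hence $D \geq \x(0)\cdot\vec{u}_\infty > 0$.
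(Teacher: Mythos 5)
Your proposal is correct, and it follows the same high-level strategy as the paper's proof: linearize the ODE $d\x/dt = \x A(t)$ around the terminal matrix $A(\infty)$, invoke Perron--Frobenius for the spectral decomposition, and exploit the exponential decay of the perturbation $C(t)-C(\infty)$ (via \eqref{uniform-s-bound}) together with the exponential decay of $\|\x(t)\|$ (via \eqref{uniform-x-bound}). The paper's own proof is essentially a one-paragraph sketch that asserts these ingredients combine to give the result; your write-up supplies the missing bookkeeping, and does so correctly (the rates $\nu,\kappa$ can indeed each be pushed arbitrarily close to $(\irt+\rrt)(1-\Rinf)$ by choosing $t_0$ large, so $\nu+\kappa>-\lambda_1$ is achievable, and your two-case bound $\|\vec r(t)\|\lesssim\max\{e^{\lambda_2 t},e^{-(\nu+\kappa)t}\}$ is stated carefully enough to cover the possibility $\nu+\kappa\leq -\lambda_2$). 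The one place where you genuinely add something beyond fleshing out the sketch is the positivity of $D$: the paper gestures at ``the scalar product of $\x(t_0)$ with a suitably normalized right eigenvector'' without explaining why the limit as $t_0\to\infty$ is positive, whereas your observation that $\Msir(t)\geq\Msir(\infty)$ component-wise forces $f(t)e^{-\lambda_1 t}$ to be non-decreasing gives $D\geq \x(0)\cdot\vec u_\infty>0$ cleanly and with no extra assumptions. That is a nice, self-contained argument worth keeping.
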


\begin{proof}
Recall the vector form of the differential equations for the number of active half-edges \eqref{dxdt-matrix} in the SIR on SBM.
     By     
     the results from Section~\ref{sec:herd-immunity}, see in particular Lemma~\ref{lem:dfq_subcritical} and its proof, $\Rinf<1$, and by the bound    
     \eqref{uniform-s-bound}, the entries of $C(t)$ differ from those of $C(\infty)$ by an amount which is  decaying exponentially fast in $t$.  Combined with the Perron-Frobenius theorem, which  implies that $\x(t_0)e^{(\irt+\rrt)(C(\infty)-\Rinf)t}$ converges to $\v_\infty$ times the scalar product of $\x(t_0)$ with a suitably normalized right eigenvector and errors which are also exponentially small in $t$, this implies the first statement. Inserted back into the differential equation for $\x(t)$, this gives the second.
\end{proof}

We close this appendix with the proof of 
Lemma~\ref{lem:I-after-X}.

\begin{proof}[Proof of  
Lemma~\ref{lem:I-after-X}.]
  Rescaling  time so that $\irt+\rrt=1$ (and $\irt=1-\rrt$),
and using the differential equations for $i_k$ and $x_k$, we write the difference of the derivative of $i_k$ and $x_k$ as  
\begin{equation}\label{di-dx}
     \frac {di_k}{dt}-\frac {dx_k}{dt}=x_k- \rrt i_k. 
\end{equation}

To prove the fact that $x_k<i_k$ for all $t>0$, we first analyze the 
 ratio of
${x_k}$ and ${i_k}$ for small $t$, distinguishing the case $x_k(0)>0$ and $x_k(0)=0$.
  If $x_k(0)=i_k(0)>0$, we 
Taylor expand $x_k(t)$ and $i_k(t)$
as $x_k(t)=x_k(0)+x_k'(0)t+O(t^2)$ and 
$i_k(t)=x_k(0)+x_k'(0)t + (i_k'(0)-x_k'(0))t+O(t^2)$ where we used that $i_k(0)=x_k(0)$.  Using the fact that 
 $i_k'(0)-x'_k(0)=x_k(0)-\rrt i_k(0)=\irt x_k(0)$ by \eqref{di-dx}, this implies that
$$
\frac{x_k}{i_k}=1-\irt t+O(t^2).
$$
when $x_k(0)=i_k(0)>0$.
If  $x_k(0)=i_k(0)=0$ we have that
$x_k'(0)=i_k'(0)=(\x(0) C(0))_k>0$ by the irreducibility of $C(0)$ and the fact that $x(0)>0$, and
$i_k''(0)-x_k''(0)=x_k'(0)-\rrt i_k'(0)=\irt x_k'(0)$.
As a consequence, we may
expand $\frac{x_k}{i_k}$ as
$$
\frac{x_k}{i_k}=1-\frac{i_k-x_k}{i_k}
=1-\frac{\irt x_k'(0) \frac {t^2}2}{x_k'(0)t}(1+O(t))=
1-\frac\irt 2 t +O(t^2),
$$
showing that in both cases $\rrt<\frac{x_k}{i_k}<1$ for all sufficiently small, positive $t$.

To prove that 
 $$\frac{x_k}{i_k}<1\quad\text{for all}\quad t>0,
 $$
we rewrite the differential equations for $x_k$ and $i_k$ as
$$
\frac d{dt}x_k=(Z_k-1)x_k
\quad\text{and}\quad \frac d{dt}i_k=Z_kx_k-\rrt i_k
$$
where
$$
Z_k=\frac{(\x C(t))_k}{x_k}.
$$
(Note that for all $t>0$, both  $Z_k$ and  the ratio $\frac{x_k}{i_k}$ are well defined since $i_k>0$ and $x_k>0$ by Lemma~\ref{lem:diff-eq-sol-cts-finite-t}).  Next, we calculate the derivative of the ratio $\frac{x_k}{i_k}$, giving
$$
\frac {d}{dt}\frac{x_k}{i_k}
=
\frac{x_k}{i_k}\left(\frac{x'_k}{x_k}-\frac{i'_k}{i_k}\right)=
\frac{x_k}{i_k}
\left( Z_k-1+\rrt-Z_k\frac{x_k}{i_k}\right)
=\frac{x_k}{i_k}
\left( Z_k\left(1-\frac{x_k}{i_k}\right)-\irt\right).
$$
Assume now towards a contradiction that
there exists a time $0<t<\infty$ such that $\frac{x_k}{i_k}=1$, and let $t^*$ be the first such time.  For $t=t^*$, we then have that 
$$
\frac {d}{dt}\frac{x_k}{i_k}
=1 \cdot 
\left( Z_k\cdot 0 -\irt\right)=-\irt,
$$
which implies that shortly before $t^*$,
 $\frac{x_k}{i_k}>1$.  Since $\frac{x_k}{i_k}$ is continuous on $(0,\infty)$ and was strictly smaller than $1$ for small positive $t$, there must be another time $t'$ such that $\frac{x_k}{i_k}=1$, a contradiction.

To formalize the statement about the local maxima, we let $t_0$ be the location of the first local maximum of $x_k$, which means that $\frac{dx_k}{dt}\geq 0$ for all $t\leq t_0$, with equality if and only if $t=t_0$. Our goal will be to prove that  $\frac{di_k}{dt}>0$ for $0\leq t\leq t_0$, including $t=t_0$.

We start by proving this fact for $t=0$.  To this end, we note that  if $i_k(0)=x_k(0)=0$, then
$i'_k(0)=(\x(0)C(0)_k>0$ as we already observed earlier, while for
$i_k(0)=x_k(0)>0$ the
fact that  $x_k'(0)\geq 0$ and
 \eqref{di-dx}  imply that
$i_k'(0)\geq (1-\irt)x_k(0)>0$.   

 Next consider the quantity $Z_k$ for $0<t\leq t_0$.  Since the derivative of $x_k$ is non-negative for these $t$, we conclude that $Z_k\geq 1$, and combined with the fact that $\frac{x_k}{i_k}<1$ for all $0<t<\infty$, we conclude that
$$
\frac {d}{dt}\frac{x_k}{i_k}
\geq 
\frac{x_k}{i_k}
\left(1-\frac{x_k}{i_k}-\irt\right)=\frac{x_k}{i_k}
\left(\rrt-\frac{x_k}{i_k}\right)
\quad\text{for all}\quad 0<t\leq t_0.
$$
Recalling that $\rrt<\frac{x_k}{i_k}$ for all sufficiently small $t>0$, let $t_\rrt$ be the first time that
$\rrt=\frac{x_k}{i_k}$ (if there is no such time, we set 
$t_\rrt=\infty$).  In view of \eqref{di-dx}, we want to show that $t_\rrt>t_0$, since this would imply that
$\frac{di_k}{dt}>\frac{dx_k}{dt}\geq 0$ for $0\leq t\leq t_0$.  

Assume towards a contradiction that $t_\rrt\leq t_0$, and define
 $\eps(t)=\log \frac{x_k(t)}{\rrt i_k(t)}$.  Then
$0<\eps(t)\leq \eps(0)=-\log \rrt$ for $0<t<t_\rrt$ and
$\eps(t_\rrt)=0$. 
Furthermore
$$
\frac {d}{dt}\eps(t)
\geq \rrt
\left(1-e^{\eps(t)}\right)\geq -\rrt\eps(t)e^{\eps(t)}
\geq -\rrt^2\eps(t)
\quad\text{for all}\quad 0<t\leq t_\rrt,
$$
implying that
$$
0=\eps(t_\rrt)\geq e^{-\rrt^2  t_\rrt}\eps(0)>0,$$
a contradiction.
This completes the proof of the lemma.

\end{proof}

\end{document}